\newcommand{\myauthor}{Benjamin Antieau, David Gepner, and Jeremiah Heller}
\newcommand{\mytitle}{$K$-theoretic obstructions to bounded
$t$-structures}
\title{\mytitle}
\author{Benjamin Antieau\footnote{Benjamin Antieau was supported
by NSF Grants DMS-1461847 and DMS-1552766.}~, David Gepner\footnote{David
Gepner was supported
by NSF Grants DMS-1406529 and DMS-1714273.}~, and Jeremiah Heller\footnote{Jeremiah Heller was supported by NSF Grant  DMS-1710966.}}
\date{}
\newcommand{\df}[1]{{\bf #1} \index{#1}}
\definecolor{todo}{rgb}{1,0,0}
\definecolor{conditional}{rgb}{0,1,0}
\definecolor{e-mail}{rgb}{0,.40,.80}
\definecolor{reference}{rgb}{.20,.60,.22}
\definecolor{mrnumber}{rgb}{.80,.40,0}
\definecolor{citation}{rgb}{0,.40,.80}
\let\oldmarginpar\marginpar
\renewcommand\marginpar[1]{\-\oldmarginpar[\raggedleft\footnotesize #1]%
{\raggedright\footnotesize #1}}
\newcommand{\Ascr}{\mathcal{A}}
\newcommand{\Bscr}{\mathcal{B}}
\newcommand{\Cscr}{\mathcal{C}}
\newcommand{\Dscr}{\mathcal{D}}
\newcommand{\Escr}{\mathcal{E}}
\newcommand{\Fscr}{\mathcal{F}}
\newcommand{\Gscr}{\mathcal{G}}
\newcommand{\Hscr}{\mathcal{H}}
\newcommand{\Oscr}{\mathcal{O}}
\newcommand{\Pscr}{\mathcal{P}}
\newcommand{\Tscr}{\mathcal{T}}
\newcommand{\C}{\mathrm{C}}
\newcommand{\D}{\mathrm{D}}
\newcommand{\E}{\mathrm{E}}
\newcommand{\F}{\mathrm{F}}
\renewcommand{\H}{\mathrm{H}}
\newcommand{\K}{\mathrm{K}}
\renewcommand{\L}{\mathrm{L}}
\newcommand{\N}{\mathrm{N}}
\renewcommand{\S}{\mathrm{S}}
\renewcommand{\AA}{\mathds{A}}
\newcommand{\CC}{\mathds{C}}
\newcommand{\EE}{\mathds{E}}
\newcommand{\NN}{\mathds{N}}
\newcommand{\PP}{\mathds{P}}
\newcommand{\QQ}{\mathds{Q}}
\newcommand{\RR}{\mathds{R}}
\renewcommand{\SS}{\mathds{S}}
\newcommand{\ZZ}{\mathds{Z}}
\newcommand{\dR}{\mathrm{dR}}
\newcommand{\proj}{\mathrm{proj}}
\newcommand{\dg}{\mathrm{dg}}
\newcommand{\op}{\mathrm{op}}
\newcommand{\ex}{\mathrm{ex}}
\newcommand{\lex}{\mathrm{lex}}
\newcommand{\cn}{\mathrm{cn}}
\newcommand{\perf}{\mathrm{perf}}
\newcommand{\st}{\mathrm{st}}
\newcommand{\sing}{\mathrm{sing}}
\newcommand{\Sp}{\mathrm{Sp}}
\newcommand{\heart}{\heartsuit}
\newcommand{\ev}{\mathrm{ev}}
\newcommand{\coev}{\mathrm{coev}}
\DeclareMathOperator{\id}{id}
\newcommand{\coker}{\mathrm{coker}}
\newcommand{\pfp}{\pi_*\textrm{-fp}}
\renewcommand{\geq}{\geqslant}
\renewcommand{\leq}{\leqslant}
\newcommand{\Ho}{\mathrm{Ho}}
\newcommand{\cone}{\mathrm{cone}}
\newcommand{\Cat}{\mathrm{Cat}}
\newcommand{\Frob}{\mathscr{F}\mathrm{rob}}
\DeclareMathOperator{\Ext}{Ext}
\newcommand{\Kbar}{\mathrm{K}^{\mathrm{Bar}}}
\newcommand{\KU}{\mathrm{KU}}
\newcommand{\ko}{\mathrm{ko}}
\newcommand{\KO}{\mathrm{KO}}
\DeclareMathOperator{\Fun}{Fun}
\DeclareMathOperator{\End}{End}
\DeclareMathOperator{\Hom}{Hom}
\newcommand{\Map}{\mathrm{Map}}
\newcommand{\Loc}{\mathrm{Loc}}
\newcommand{\Ch}{\mathrm{Ch}}
\newcommand{\Ac}{\mathrm{Ac}}
\newcommand{\Mod}{\mathrm{Mod}}
\newcommand{\Perf}{\mathrm{Perf}}
\newcommand{\Ind}{\mathrm{Ind}}
\newcommand{\Alg}{\mathrm{Alg}}
\newcommand{\CAlg}{\mathrm{CAlg}}
\newcommand{\coh}{\mathrm{coh}}
\newcommand{\Gm}{\mathds{G}_{m}}
\DeclareMathOperator*{\colim}{colim}
\DeclareMathOperator{\Spec}{Spec}
\newcommand{\we}{\simeq}
\newcommand{\iso}{\cong}
\newcommand{\rcof}{\rightarrowtail}
\newcommand{\rfib}{\twoheadrightarrow}
\newcommand{\lcof}{\leftarrowtail}
\newcommand{\lfib}{\twoheadleftarrow}
\theoremstyle{plain}
\newtheorem{theorem}{Theorem}[section]
\newtheorem*{theorem*}{Theorem}
\newtheorem{lemma}[theorem]{Lemma}
\newtheorem{proposition}[theorem]{Proposition}
\newtheorem{conjecture}[theorem]{Conjecture}
\newtheorem{corollary}[theorem]{Corollary}
\newtheoremstyle{named}{}{}{\itshape}{}{\bfseries}{.}{.5em}{#1 \thmnote{#3}}
\theoremstyle{named}
\newtheorem*{namedconjecture}{Conjecture}
\theoremstyle{definition}
\newtheorem{definition}[theorem]{Definition}
\newtheorem{notation}[theorem]{Notation}
\newtheorem{example}[theorem]{Example}
\newtheorem{question}[theorem]{Question}
\newtheorem{construction}[theorem]{Construction}
\newtheorem{remark}[theorem]{Remark}
\begin{document}

\maketitle

\begin{abstract}
    \noindent
    Schlichting conjectured that the negative $K$-groups of small abelian
    categories vanish and proved this for noetherian abelian categories and
    for all abelian categories in degree $-1$. The main results of this paper
    are that $\K_{-1}(E)$ vanishes when $E$ is a small stable $\infty$-category
    with a bounded $t$-structure and that $\K_{-n}(E)$ vanishes for all $n\geq 1$
    when additionally the heart of $E$ is noetherian. It follows that Barwick's
    theorem of the heart holds for nonconnective $K$-theory spectra when the
    heart is noetherian. We give several applications, to non-existence results
    for bounded $t$-structures and stability conditions, to possible
    $K$-theoretic obstructions to the existence of the motivic $t$-structure,
    and to vanishing results for the negative $K$-groups of a large class of dg
    algebras and ring spectra.

    \paragraph{Key Words.}
    Negative $K$-theory, $t$-structures, abelian categories, $K$-theory of dg
    algebras and ring spectra.

    \paragraph{Mathematics Subject Classification 2010.}
    Primary:
    \href{http://www.ams.org/mathscinet/msc/msc2010.html?t=16Exx&btn=Current}{16E45},
    \href{http://www.ams.org/mathscinet/msc/msc2010.html?t=18Exx&btn=Current}{18E30},
    \href{http://www.ams.org/mathscinet/msc/msc2010.html?t=19Dxx&btn=Current}{19D35}.
    Secondary:
    \href{http://www.ams.org/mathscinet/msc/msc2010.html?t=16Pxx&btn=Current}{16P40},
    \href{http://www.ams.org/mathscinet/msc/msc2010.html?t=18Exx&btn=Current}{18E10},
    \href{http://www.ams.org/mathscinet/msc/msc2010.html?t=55Pxx&btn=Current}{55P43}.
\end{abstract}

\tableofcontents

\section{Introduction}\label{sec:introduction}

We prove the following theorems about negative and nonconnective $K$-theory.

\begin{theorem}\label{thm:introkm1}
    If $E$ is a small stable $\infty$-category\footnote{The conjectures and
    results of this paper apply equally well to any triangulated category
    with a bounded $t$-structure that admits a model, either as a dg
    category or a stable $\infty$-category. This includes all
    examples of triangulated categories with bounded $t$-structures we have
    found in the literature. For background on stable $\infty$-categories, see
    Section~\ref{sec:stable}.} with a bounded $t$-structure, then $\K_{-1}(E)=0$.
\end{theorem}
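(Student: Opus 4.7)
The plan is to combine Barwick's theorem of the heart with a cofinality argument to reduce to the idempotent-complete case, and then to analyze the vanishing of $\K_{-1}$ using the bounded $t$-structure together with Schlichting's theorem for abelian categories.

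First, I would show that the bounded $t$-structure on $E$ extends to a bounded $t$-structure on the idempotent completion $E^\natural$, whose heart is again the abelian category $\Ascr = E^\heart$; the key point is that abelian categories are Karoubian, so the heart is not enlarged by idempotent completion. Barwick's theorem of the heart, applied to both $E$ and $E^\natural$, then gives $\K^{\cn}(E) \we \K^{\cn}(\Ascr) \we \K^{\cn}(E^\natural)$, and in particular an isomorphism $\K_0(E) \iso \K_0(E^\natural)$. Combined with the Thomason--Neeman cofinality theorem, which identifies the cofibre of $\K(E) \to \K(E^\natural)$ in nonconnective $K$-theory with the Eilenberg--MacLane spectrum on $\K_0(E^\natural)/\K_0(E)$, this shows $\K_{-1}(E) \iso \K_{-1}(E^\natural)$. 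We may therefore assume $E$ is idempotent complete.

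With $E$ idempotent complete, I would use a concrete model for $\K_{-1}(E)$---for instance Schlichting's Bass-type description, presenting $\K_{-1}$ as a cokernel built from $\K_0$ of auxiliary categories---together with the truncation functors $\tau_{\leq n}$ and $\tau_{\geq n}$ coming from the $t$-structure, in order to move any class in $\K_{-1}(E)$ to one supported on a single cohomology degree. Such a class then arises from the heart $\Ascr$ and is killed by Schlichting's vanishing $\K_{-1}(\Ascr) = 0$ for abelian categories. The main obstacle is this inductive step: the bounded $t$-structure does not decompose $E$ as $\bigoplus_n \Ascr[n]$, because higher $\mathrm{Ext}$-classes produce genuine extensions between cohomology objects, and one must show---presumably via the cofibre sequences $\tau_{\geq n+1} X \to X \to \tau_{\leq n} X$ and a careful induction on cohomological amplitude---that these extensions do not generate new $\K_{-1}$-classes beyond those already detected in the heart.
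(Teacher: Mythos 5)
Your first step is fine but unnecessary in the form you give it: the paper shows (Corollary~\ref{cor:idemcomplete}) that a small stable $\infty$-category with a bounded $t$-structure is \emph{automatically} idempotent complete, by extending the $t$-structure to the idempotent completion, noting that the hearts agree, invoking $\K_0(E^\heartsuit)\iso\K_0(E)$, and applying Thomason's classification of dense subcategories. So the reduction you perform via Barwick plus cofinality is correct but can be replaced by the stronger statement that $E\we E^\natural$.

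The real problem is your second step, and you have essentially named the gap yourself without closing it. A class in $\K_{-1}(E)$ is not ``supported'' on an object of $E$: in any Bass--Schlichting delooping, $\K_{-1}(E)$ is $\K_0$ of a quotient category such as $\widetilde{\Ind(E)^\kappa/E}$, whose objects are not objects of $E$, and the truncation functors $\tau_{\geq n}$, $\tau_{\leq n}$ do not act on such a quotient in any obvious way --- indeed, whether a $t$-structure descends to a Verdier quotient is exactly the delicate Serre-subcategory condition of Proposition~\ref{prop:tlocalization}, and it fails in general. Your proposed ``induction on cohomological amplitude'' to show that extensions between cohomology objects create no new $\K_{-1}$-classes is precisely the open content of Conjecture~C in degree $-1$; asserting it is circular. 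What the paper actually does is construct a specific excisive square
\begin{equation*}
    \xymatrix{
        E\ar[r]\ar[d]   &   \Ind^+_{A}(E)\ar[d]\\
        \Ind^-_{A}(E)^\kappa\ar[r]    &   \Ind_{A}(E)^\kappa
    }
\end{equation*}
with $A=E^\heartsuit$, where the off-diagonal corners are the bounded-above (resp.\ $\kappa$-compact bounded-below) objects of $\Ind(E)$ with homotopy objects in $A$. Verifying that this square is excisive (via the adjointability criterion of Proposition~\ref{prop:adjointability}) and that the two half-bounded corners are $K$-acyclic by an Eilenberg swindle yields $\K_0(\Ind_A(E)^\kappa)\iso\K_{-1}(E)$; then the cofiber sequence $\tau_{\geq 0}x\to x\to\tau_{\leq -1}x$ shows every class in this $\K_0$ is a sum of classes coming from the two $K$-acyclic corners, hence zero. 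Note that the heart $A$ and Schlichting's vanishing $\K_{-1}(\Dscr^b(A))=0$ play no role in this argument; the $t$-structure is used only to produce the two half-bounded subcategories and the splitting of $\K_0$. Without identifying such an auxiliary category on which the $t$-structure survives and whose $\K_0$ computes $\K_{-1}(E)$, your outline does not constitute a proof.
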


\begin{theorem}\label{thm:mainintro}
    If $E$ is a small stable $\infty$-category equipped with a bounded
    $t$-structure such that $E^{\heartsuit}$ is noetherian, then $\K_{-n}(E)=0$
    for $n\geq 1$.
\end{theorem}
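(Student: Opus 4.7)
The plan is to prove the vanishing by induction on $n$, with Theorem~\ref{thm:introkm1} supplying the base case $\K_{-1}(E)=0$ and a flasque enlargement of $E$ driving the inductive step. Assume for induction that $\K_{-k}(F)=0$ for all $1\leq k<n$ and every small stable $\infty$-category $F$ carrying a bounded $t$-structure whose heart is noetherian. My goal is to produce a small stable $\infty$-category $\hat E$ and an exact fully faithful embedding $E\hookrightarrow\hat E$ satisfying two conditions: (i) $\hat E$ admits an Eilenberg swindle, so that the nonconnective $K$-theory spectrum $\K(\hat E)$ is contractible; and (ii) the Verdier quotient $\hat E/E$ inherits a bounded $t$-structure whose heart is again noetherian.

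The natural source of $\hat E$ is the Ind-completion $\mathcal{A}^{\wedge}=\Ind(E^{\heartsuit})$, which is a locally noetherian Grothendieck abelian category. In the spirit of Schlichting's construction for noetherian abelian categories, $\hat E$ should be a small stable $\infty$-category sitting between the bounded derived $\infty$-category of $E$ and the bounded derived $\infty$-category of $\mathcal{A}^{\wedge}$, containing $E$ as its subcategory of compact objects but being closed under countable coproducts; shifting a countable coproduct against itself then gives the Eilenberg swindle. The quotient heart $\hat E^{\heartsuit}/E^{\heartsuit}$ is a Serre quotient of a locally noetherian category by its noetherian objects and should again be noetherian, so that the inductive class is closed under formation of $\hat E/E$.

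Granted such an $\hat E$, nonconnective $K$-theory is a localizing invariant (by Blumberg--Gepner--Tabuada), so the Verdier sequence $E\to\hat E\to\hat E/E$ yields a cofiber sequence of spectra
\[\K(E)\longrightarrow\K(\hat E)\longrightarrow\K(\hat E/E).\]
Using (i), this collapses to an equivalence $\K(E)\simeq\Omega\K(\hat E/E)$, whence $\K_{-n}(E)\cong\K_{-n+1}(\hat E/E)$. For $n\geq 2$ the inductive hypothesis applied to $\hat E/E$ gives $\K_{-n+1}(\hat E/E)=0$, while for $n=1$ Theorem~\ref{thm:introkm1} yields $\K_{-1}(E)=0$ directly, completing the induction.

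The main obstacle is the construction of $\hat E$ together with the verification of (ii). Producing a small stable $\infty$-category that is genuinely flasque, contains $E$ exactly as its compact objects, and whose Verdier quotient carries a bounded $t$-structure with noetherian heart requires care; the noetherian hypothesis on $E^{\heartsuit}$ is used in two essential places, to guarantee that $\mathcal{A}^{\wedge}$ is locally noetherian and that the Serre quotient appearing as the heart of $\hat E/E$ remains noetherian, thereby keeping the inductive class closed under $E\mapsto \hat E/E$.
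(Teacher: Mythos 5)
The skeleton of your argument (flasque enlargement, Eilenberg swindle, localization sequence, $\K_{-n}(E)\cong\K_{-n+1}(\hat E/E)$) is fine, and in the noetherian case the quotient $\hat E/E$ does inherit a bounded $t$-structure: since $E^\heartsuit$ is noetherian, it is closed under subobjects in the locally noetherian category $\Ind(E^\heartsuit)$, so the Serre condition needed to push the $t$-structure to the Verdier quotient (Proposition~\ref{prop:tlocalization}) holds. The genuine gap is your claim (ii) that the heart of $\hat E/E$ is \emph{again noetherian}. It is not: the heart of the quotient $t$-structure is the Serre quotient $\hat E^{\heartsuit}/E^{\heartsuit}$, and quotients of a locally noetherian category by its noetherian objects are essentially never noetherian. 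Concretely, take $E^\heartsuit$ the category of finitely generated $\ZZ$-modules, so $\hat E^\heartsuit$ consists of (say) countably presented $\ZZ$-modules; for a strictly increasing chain of infinite subsets $S_1\subsetneq S_2\subsetneq\cdots$ of $\NN$ with each $S_{k+1}\setminus S_k$ infinite, the images of $\bigoplus_{i\in S_k}\ZZ$ form a non-stabilizing chain of subobjects of the image of $\bigoplus_{i\in\NN}\ZZ$ in the quotient, since the successive quotients are not finitely generated and hence nonzero there. So the inductive class ``bounded $t$-structure with noetherian heart'' is not closed under $E\mapsto\hat E/E$, and your induction stalls: you get $\K_{-1}(E)=0$ and $\K_{-2}(E)=\K_{-1}(\hat E/E)=0$ (the latter from Theorem~\ref{thm:introkm1}, which needs no noetherianity), but $\K_{-3}(E)=\K_{-2}(\hat E/E)$ requires vanishing of $\K_{-2}$ for a bounded $t$-structure with non-noetherian heart, which is exactly the open Conjecture~\hyperlink{conj:b}{B}. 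Indeed, your strategy is essentially the one the paper isolates in Section~\ref{sub:serrecones} as a conditional route to the general conjecture, not a proof of the noetherian case.

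The paper closes the induction by a different device, for which noetherianity \emph{is} stable: instead of a suspension-type quotient, it forms the automorphism (Laurent) category $\Dscr(\Gm,\Cscr)^\omega$ with $\Cscr=\Ind(E)$, shows via a Hilbert-basis-theorem argument and Proposition~\ref{prop:tlocalization} that it again carries a bounded $t$-structure with noetherian heart (Corollary~\ref{cor:tstructuregm}), and shows that $\K_{-n-1}(E)$ is a subquotient of $\K_{-n}(\Dscr(\Gm,\Cscr)^\omega)$, so the induction runs entirely inside the noetherian class with base case Theorem~\ref{thm:introkm1}. To repair your approach you would have to either replace $\hat E/E$ by such a noetherian-heart-preserving construction, or prove vanishing for the much larger class of hearts produced by your quotients, which is the open conjecture itself.
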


\begin{theorem}[Nonconnective theorem of the heart]\label{thm:heartintro}
    If $E$ is a small stable $\infty$-category with a bounded $t$-structure
    such that $E^\heartsuit$ is noetherian,
    then the natural map $$\K(E^\heartsuit)\xrightarrow{\we}\K(E)$$ is an equivalence.
\end{theorem}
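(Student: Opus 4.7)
The plan is to decompose the claimed equivalence degree by degree and reduce to three inputs: Barwick's theorem of the heart for connective $K$-theory, Theorem~\ref{thm:mainintro} above, and the observation that the bounded derived $\infty$-category $\D^b(E^\heartsuit)$ is itself a small stable $\infty$-category carrying a canonical bounded $t$-structure with heart $E^\heartsuit$.

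In non-negative degrees the nonconnective and connective $K$-groups agree, so the map $\K(E^\heartsuit)\to\K(E)$ restricted to $\pi_{\geq 0}$ coincides with the map of connective $K$-theories induced by the inclusion of the heart. By Barwick's theorem of the heart, this map is an equivalence. Consequently it remains only to verify that $\K_{-n}(E)=0$ and $\K_{-n}(E^\heartsuit)=0$ for every $n\geq 1$.

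The vanishing of $\K_{-n}(E)$ for $n\geq 1$ is exactly Theorem~\ref{thm:mainintro}. For the source, I would identify $\K(E^\heartsuit)$ with the nonconnective $K$-theory of $\D^b(E^\heartsuit)$ (equivalently, with Schlichting's nonconnective extension of Quillen's $K$-theory of the exact category $E^\heartsuit$). Since $\D^b(E^\heartsuit)$ is small stable with a bounded $t$-structure whose heart $E^\heartsuit$ is noetherian by hypothesis, a second application of Theorem~\ref{thm:mainintro} yields $\K_{-n}(\D^b(E^\heartsuit))=0$ for $n\geq 1$, hence $\K_{-n}(E^\heartsuit)=0$ as required. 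One could alternatively invoke Schlichting's theorem (cited in the abstract) that negative $K$-groups of noetherian abelian categories vanish.

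The substantive work lies entirely in Theorem~\ref{thm:mainintro}; once that result is in hand, the nonconnective theorem of the heart is essentially formal. The main bookkeeping point is the identification $\K(E^\heartsuit)\simeq\K(\D^b(E^\heartsuit))$ as nonconnective spectra and the compatibility of the comparison map with $\K(E)$, which one handles via Schlichting's framework or, equivalently, via the universal property of nonconnective $K$-theory as the initial localizing invariant extending connective $K$-theory.
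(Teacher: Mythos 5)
Your proposal is correct and follows essentially the same route as the paper: Barwick's theorem of the heart handles the connective part, and Theorem~\ref{thm:mainintro} applied to both $E$ and $\Dscr^b(E^\heartsuit)$ (the latter carrying a bounded $t$-structure with noetherian heart $E^\heartsuit$) kills the negative $K$-groups on both sides. The paper's only additional content is the careful construction of $\Dscr^b(E^\heartsuit)$ and of the natural $t$-exact comparison functor $\Dscr^b(E^\heartsuit)\rightarrow E$, together with the bookkeeping comparing Barwick's Waldhausen $K$-theory with the connective cover of the nonconnective spectrum — exactly the points you flag as the remaining bookkeeping.
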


The first two theorems generalize results of Schlichting
from~\cite{schlichting}, who proved the theorems in the special case where
$E\we\Dscr^b(A)$, the bounded derived $\infty$-category of a small abelian
category $A$. Note that our theorems are much more general than Schlichting's
results, as stable $\infty$-categories with bounded $t$-structures are
typically not bounded derived $\infty$-categories. The third result follows from the first two and Barwick's theorem
of the heart for connective $K$-theory~\cite{barwick}.

The proof of Theorem~\ref{thm:mainintro} is based on induction, with the base
case provided by Theorem~\ref{thm:introkm1}. The proof of Schlichting's result that $\K_{-1}(A)=0$
for general abelian categories $A$ is not hard, but the proof of
Theorem~\ref{thm:introkm1} is
more difficult as it is necessary to find an excisive square
playing the same role for $E$ that the square
\begin{equation*}
    \xymatrix{
        \Dscr^b(A)\ar[r]\ar[d]  &   \Dscr^+(A)\ar[d]\\
        \Dscr^-(A)\ar[r]        &   \Dscr(A)
    }
\end{equation*}
plays for $\Dscr^b(A)$.

In the inductive step, we use stable $\infty$-categories
of endomorphisms and automorphisms of $E$. We construct an exact sequence
$$\Dscr_{\{0\}}(\AA^1,\Cscr)^\omega\rightarrow\Dscr(\AA^1,\Cscr)^\omega\rightarrow\Dscr(\Gm,\Cscr)^\omega$$
of small idempotent complete stable $\infty$-categories, where $\Cscr=\Ind(E)$
is the ind-completion of $E$, the superscript $\omega$ denotes the
subcategory of compact objects, and
$\Dscr(\AA^1,\Cscr)\we\Mod_{\SS[s]}\otimes\Cscr$, and similarly for
$\Dscr(\Gm,\Cscr)$. The subscript $\{0\}$ denotes the full subcategory
$\Dscr_{\{0\}}(\AA^1,\Cscr)\subseteq\Dscr(\AA^1,\Cscr)$ of objects killed by
inverting the endomorphism $s$. Note that the $\infty$-category
$\Dscr(\AA^1,\Cscr)^\omega$ differs from the $\infty$-category used
in~\cite{bgt3} to define the \emph{$K$-theory of endomorphisms}. Indeed, the
$K$-theory of endomorphisms of $E$ takes as input the $\infty$-category of
endomorphisms of objects of $E$. But, these need not be compact in
$\Dscr(\AA^1,\Cscr)$. Conversely, the underlying object of a compact object of $\Dscr(\AA^1,\Cscr)$ need not be compact in $\Cscr$.

The technical input for the inductive step, proven in
Corollary~\ref{cor:tstructuregm}, is
that if $E$ is a small stable $\infty$-category with a bounded $t$-structure
such that $E^\heartsuit$ is noetherian, then the same is true for
$\Dscr(\Gm,\Cscr)^\omega$. This allows an inductive argument because $\K(E)$ is
a summand of $\K(\Dscr_{\{0\}}(\AA^1,\Cscr)^\omega)$ and this summand maps
trivially to $\K(\Dscr(\AA^1,\Cscr)^\omega)$.

Theorem~\ref{thm:mainintro} can be extended to the case
where $E^\heart$ is merely stably coherent; we do so in Section~\ref{sub:fj}.
We discuss in Sections~\ref{sub:counterexamples} and~\ref{sub:serrecones}
a counterexample to our approach when $E^\heartsuit$ is not noetherian as well
as several possible approaches for circumventing this problem. We hope that
these more speculative sections will serve to pique the interest of readers
thinking about related problems.

\paragraph{Conjectures.}
Schlichting made the following conjecture in~\cite{schlichting}.

\begin{namedconjecture}[A]\hypertarget{conj:a}
    If $A$ is a small abelian category, then $\K_{-n}(A)=0$ for $n\geq 1$.
\end{namedconjecture}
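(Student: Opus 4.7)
The statement is Schlichting's Conjecture~A, which remains open at the time of the paper, so any proposal here is necessarily speculative. The plan is to bootstrap from Theorem~\ref{thm:mainintro}, which settles the noetherian case, via some form of noetherian approximation applied to $\Dscr^b(A)$.

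First, the case $n=1$ is already settled by applying Theorem~\ref{thm:introkm1} to $E=\Dscr^b(A)$, so I would focus on $n\geq 2$. The first approach I would try is to exhibit every small abelian category $A$ as a filtered colimit $A=\colim_i A_i$ of small noetherian abelian subcategories $A_i\subseteq A$ (closed under kernels, cokernels, and extensions inside $A$), in such a way that the induced map $\colim_i\Dscr^b(A_i)\to\Dscr^b(A)$ is an equivalence of small stable $\infty$-categories. Since nonconnective $K$-theory commutes with filtered colimits of small stable $\infty$-categories, and $\K_{-n}(\Dscr^b(A_i))=0$ for each $i$ by Theorem~\ref{thm:mainintro}, this would give $\K_{-n}(A)=\K_{-n}(\Dscr^b(A))=0$ for all $n\geq 1$.

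The hard part will be producing such a filtration. When $A$ has a generator $G$, one can try filtering by the abelian subcategories generated by finitely presented quotients of finite coproducts of $G$, but even in this situation the filtration terms need not be noetherian nor closed under kernels in $A$; for a general small abelian category without a generator even the existence of useful noetherian subcategories is unclear. A less ambitious first target would be the stably coherent case, which Section~\ref{sub:fj} indicates is already within reach by adapting the paper's machinery; this suggests one could try a two-step reduction, first passing from arbitrary $A$ to a stably coherent variant and then invoking the stably coherent analogue of Theorem~\ref{thm:mainintro}.

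The main obstacle is that for genuinely non-coherent hearts the paper's inductive engine fails: the category $\Dscr(\Gm,\Cscr)^\omega$ need not carry a bounded $t$-structure whose heart lies in the class for which one already knows vanishing, as flagged in Section~\ref{sub:counterexamples}. So a direct extension of the $\AA^1/\Gm$ fibration sequence argument is not available. To make unconditional progress one would need to replace this fibration sequence by a Serre-cone style construction along the lines envisioned in Section~\ref{sub:serrecones}, tailored so that the resulting pieces remain controllable without any coherence or noetherian hypothesis on $E^\heart$. I expect this construction to be the genuine heart of any honest proof of Conjecture~A.
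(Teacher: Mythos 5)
The statement you are addressing is Conjecture~\hyperlink{conj:a}{A}, which the paper explicitly leaves open: it records Schlichting's results that $\K_{-1}(A)=0$ for all small abelian $A$ and that $\K_{-n}(A)=0$ for all $n\geq 1$ when $A$ is noetherian, and the paper's own contribution is the generalization to stable $\infty$-categories with bounded $t$-structures (noetherian or stably coherent heart). There is no proof of the conjecture in the paper to compare against, and your proposal, as you acknowledge, is not a proof either; it is a research sketch. Your identification of the known partial results (Theorem~\ref{thm:introkm1} for $n=1$, Theorem~\ref{thm:mainintro} for noetherian hearts, Theorem~\ref{thm:stablycoherenta} for the stably coherent case) is accurate.

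The concrete gap in your primary strategy is that the noetherian filtration you need does not exist in general, and this is not merely a technical difficulty. Take $A=\coh_R$ for $R=k[x_1,x_2,\ldots]$, the paper's own running counterexample in Section~\ref{sub:serrecones}. Any full abelian subcategory $A_i\subseteq A$ containing $R$ and closed under images of maps in $A$ contains every finitely generated ideal $(x_1,\ldots,x_k)$, hence contains the strictly increasing chain $(x_1)\subsetneq(x_1,x_2)\subsetneq\cdots$ of subobjects of $R$; so no such $A_i$ is noetherian, and $A$ is not a filtered union of noetherian abelian subcategories in the sense you require. (The step ``$\K$ commutes with filtered colimits in $\Cat_\infty^{\perf}$'' is fine, and $\K_{-n}(A_i)=0$ would follow from Theorem~\ref{thm:mainintro}; it is the existence of the filtration that fails.) Your fallback via Serre cones is indeed the direction the paper itself proposes in Section~\ref{sub:serrecones}, but note that the Remark there (attributed to Lurie) shows the most natural candidate for the ambient category $B$ --- one closed under countable coproducts --- cannot exist for this same $A$, because Serre subcategories of such $B$ have subobject lattices closed under countable joins while the lattice of coherent subobjects of $R$ is not. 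So Question~\ref{q:serrecone} must be answered with a $B$ whose $K$-acyclicity comes from some mechanism other than infinite coproducts, and your proposal does not indicate what that mechanism could be. Your sketch correctly locates the open problem but does not close it.
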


Motivated by this, we pose the next two conjectures.

\begin{namedconjecture}[B]\hypertarget{conj:b}
    If $E$ is a small stable $\infty$-category
    with a bounded $t$-structure, then $\K_{-n}(E)=0$ for $n\geq 1$.
\end{namedconjecture}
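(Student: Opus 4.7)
The plan is to run the same induction on $n$ that drives Theorem~\ref{thm:mainintro}, taking Theorem~\ref{thm:introkm1} as the base case $n=1$. Every ingredient extends without change except one: the localization sequence
$$\Dscr_{\{0\}}(\AA^1,\Cscr)^\omega\rightarrow\Dscr(\AA^1,\Cscr)^\omega\rightarrow\Dscr(\Gm,\Cscr)^\omega$$
and the fact that $\K(E)$ appears as a summand of $\K(\Dscr_{\{0\}}(\AA^1,\Cscr)^\omega)$ that maps trivially to $\K(\Dscr(\AA^1,\Cscr)^\omega)$ are both available for any small stable $\infty$-category $E$ with $\Cscr=\Ind(E)$. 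What fails outside the noetherian setting is the existence of a bounded $t$-structure on $\Dscr(\Gm,\Cscr)^\omega$, and producing such a $t$-structure is the heart of the problem.

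The first step is to isolate the weakest hypothesis on $E^\heartsuit$ that still allows the argument of Corollary~\ref{cor:tstructuregm} to run. Section~\ref{sub:fj} already relaxes ``noetherian'' to ``stably coherent'', so the goal is to push this further, ideally to a condition automatically enjoyed whenever $E$ carries any bounded $t$-structure. One promising direction is to show that the heart of the induced $t$-structure on $\Dscr(\Gm,\Cscr)^\omega$ is itself stably coherent even when $E^\heartsuit$ is not, perhaps by exploiting the skew-Laurent-type algebraic structure carried by modules over $\Cscr[s^{\pm 1}]$ when rewriting compact objects in terms of twisted complexes built from compact generators of $\Cscr$.

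If no intrinsic finiteness of this sort can be established, I would fall back to an approximation argument. One writes $E\we\colim_\alpha E_\alpha$ as a filtered colimit of small stable $\infty$-subcategories $E_\alpha$ that inherit bounded $t$-structures with noetherian hearts, for instance by closing up finitely many objects under shifts, cofibers, and truncations inside a carefully chosen noetherian subheart of $E^\heartsuit$. Applying Theorem~\ref{thm:mainintro} to each $E_\alpha$ and invoking continuity of nonconnective $K$-theory, $\K(E)\we\colim_\alpha\K(E_\alpha)$, would then yield the desired vanishing. The subtlety is arranging that every object of $E$ lies in some $E_\alpha$ while keeping each $E_\alpha^\heartsuit$ noetherian; this is nontrivial when $E^\heartsuit$ contains large coproducts of simple objects or when the bounded $t$-structure is not countably generated in any reasonable sense.

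The main obstacle, as the paper itself flags in Section~\ref{sub:counterexamples}, is that there exist bounded $t$-structures for which $\Dscr(\Gm,\Cscr)^\omega$ genuinely has no bounded $t$-structure at all: compact objects on the $\Gm$-side need not be cohomologically bounded with respect to any candidate $t$-structure. Neither the weakening-of-hypotheses strategy nor the approximation strategy circumvents this blockage on its own. A genuine resolution seems to require either the Serre cone construction sketched in Section~\ref{sub:serrecones}, used to replace $\Dscr(\Gm,\Cscr)^\omega$ with a better-behaved auxiliary category, or an entirely different localization sequence attached to $E$ that avoids passage through $\Gm$ altogether; supplying such a construction is, in my view, the real unsolved problem behind Conjecture~B.
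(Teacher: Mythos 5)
You have not given a proof, and rightly so: Conjecture~B is stated in the paper as an open conjecture, and the paper itself proves it only in special cases --- the base case $\K_{-1}(E)=0$ for arbitrary bounded $t$-structures (Theorem~\ref{thm:minus1}) and the full vanishing under the noetherian (or, in Section~\ref{sub:fj}, stably coherent) hypothesis on the heart. Your analysis of where the induction breaks matches the paper's own: the localization sequence through $\Dscr(\Gm,\Cscr)^\omega$ and the splitting of $\K(E)$ off of $\K_{\{0\}}(\AA^1,\Cscr)$ are unconditional, and the only missing ingredient is a bounded $t$-structure (with a heart to which the inductive hypothesis applies) on $\Dscr(\Gm,\Cscr)^\omega$. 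Your two fallback strategies --- weakening the finiteness hypothesis on the heart, and writing $E$ as a filtered colimit of subcategories with noetherian hearts --- are reasonable, and you correctly identify why neither closes the gap; the second in particular founders because a subcategory of a non-noetherian abelian category generated by finitely many objects need not have noetherian (or even well-behaved) closure under subobjects.

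One inaccuracy worth flagging: Soublin's example ($R=\prod_m\QQ[\![x,y]\!]$) does \emph{not} show that $\Dscr(\Gm,\Cscr)^\omega$ admits no bounded $t$-structure at all. It shows only that the \emph{particular} $t$-structure induced from $\Mod_R$ via Lemma~\ref{lem:tstructureendomorphisms} fails to restrict to compact objects, because the truncation $\H_1$ of a perfect complex over $R[s^{\pm 1}]$ need not be finitely presented when $R[s^{\pm 1}]$ is not coherent. An exotic bounded $t$-structure on $\Dscr(\Gm,\Cscr)^\omega$ is not excluded, and indeed excluding it would itself require something like the conjecture. With that correction, your assessment --- that resolving Conjecture~B requires either a positive answer to the Serre cone question of Section~\ref{sub:serrecones} or a genuinely new localization sequence --- is exactly the paper's own conclusion.
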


\begin{namedconjecture}[C]\hypertarget{conj:c}
    If $E$ is a small stable $\infty$-category with a bounded $t$-structure,
    then the natural map $\K(E^\heartsuit)\rightarrow\K(E)$ is an equivalence of nonconnective
    $K$-theory spectra.
\end{namedconjecture}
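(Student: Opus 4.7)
The plan is to derive Conjecture C from Conjectures A and B by combining them with Barwick's theorem of the heart for connective $K$-theory. Barwick's theorem gives an equivalence $\K^{\cn}(E^{\heart}) \wkeq \K^{\cn}(E)$, so the nonconnective comparison map $\K(E^{\heart}) \to \K(E)$ already induces isomorphisms on $\pi_n$ for all $n \geq 0$. What remains is the vanishing of the negative homotopy groups on both sides: Conjecture A yields this on the source and Conjecture B on the target. Since any small abelian category $A$ admitting a model can be realized as the heart of the tautological bounded $t$-structure on $\Dscr^b(A)$, and in that case $\K_{-n}(A) \iso \K_{-n}(\Dscr^b(A))$, Conjecture B in fact implies Conjecture A. The whole problem thus reduces to Conjecture B.

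The base case $\K_{-1}(E) = 0$ of Conjecture B is Theorem~\ref{thm:introkm1}, so what is left is to show $\K_{-n}(E) = 0$ for $n \geq 2$ without the noetherian hypothesis on $E^{\heart}$. A natural first step is to relax noetherian to stably coherent, which is essentially carried out in Section~\ref{sub:fj}. The serious difficulty begins beyond the stably coherent case.

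The inductive argument underlying Theorem~\ref{thm:mainintro} pivots on Corollary~\ref{cor:tstructuregm}, which transfers a bounded $t$-structure with noetherian heart from $E$ to $\Dscr(\Gm, \Cscr)^\omega$. This transfer genuinely breaks down in the non-noetherian setting, as exhibited by the counterexample in Section~\ref{sub:counterexamples}, so one cannot simply iterate the same induction. The promising alternative, and the main obstacle one must overcome, is to construct for each $n \geq 2$ an excisive square directly on $E$ -- in the spirit of the square relating $\Dscr^b(A)$, $\Dscr^{\pm}(A)$, and $\Dscr(A)$ used by Schlichting and the analogous excisive square that powers the proof of Theorem~\ref{thm:introkm1} -- that detects $\K_{-n}(E)$ without passing through $\Gm$-module categories. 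I expect identifying such higher-degree excisive patterns, uniformly in $n$ and without relying on noetherian or coherence hypotheses on the heart, to be the true crux of Conjecture C.
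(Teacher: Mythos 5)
What you have written is not a proof of Conjecture~C, and it could not be one: the statement you were assigned is an open conjecture in the paper, proved there only under the additional hypothesis that $E^\heartsuit$ is noetherian (Theorem~\ref{thm:heartintro}). Your reduction is nonetheless correct and coincides exactly with the paper's own analysis: Barwick's theorem of the heart identifies the connective covers, so Conjecture~C is equivalent to the vanishing of $\K_{-n}(E^\heartsuit)$ and $\K_{-n}(E)$ for $n\geq 1$; Conjecture~A is the special case $E=\Dscr^b(A)$ of Conjecture~B (indeed $\K(A)$ is \emph{defined} as $\K(\Dscr^b(A))$, and $A$ is the heart of the canonical bounded $t$-structure there), so Conjecture~B alone implies Conjecture~C. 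One point you elide, which the paper spends Section~\ref{sub:abelian} on, is that to invoke Barwick's theorem for the map $\K(E^\heartsuit)\rightarrow\K(E)$ one must first construct the $t$-exact functor $\Dscr^b(E^\heartsuit)\rightarrow E$ and match Barwick's Waldhausen-$\infty$-categorical $K$-theory with the nonconnective $K$-theory of~\cite{bgt1}; this is routine but not free.

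The genuine gap is the one you name yourself: everything reduces to Conjecture~B beyond the stably coherent case, and neither you nor the paper proves that. Your diagnosis of why the paper's induction fails is accurate (Soublin's example shows the $t$-structure on $\Dscr(\Gm,\Cscr)$ need not restrict to compact objects when the heart is not noetherian, so Corollary~\ref{cor:tstructuregm} is unavailable), and your proposed remedy --- a family of excisive squares detecting $\K_{-n}(E)$ directly, generalizing the one used for Theorem~\ref{thm:introkm1} --- is plausible but is not constructed; the paper's own speculative alternatives (replacing $\Dscr(\AA^1,\Cscr)^\omega$ by $\Dscr^b_{\widetilde{\coh}}(\AA^1,\Cscr)$ in Section~\ref{sub:counterexamples}, or Serre cones in Section~\ref{sub:serrecones}) run into the same unresolved issues. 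So your submission should be read as a correct restatement of the reduction and of the obstruction, not as progress on the conjecture.
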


Conjecture~\hyperlink{conj:a}{A} is a special case of the second
conjecture by setting $E=\Dscr^b(A)$, the bounded derived $\infty$-category of
$A$, and we will therefore refer to the second as the generalized Schlichting conjecture. 
The connective part of Conjecture~\hyperlink{conj:c}{C} is Barwick's theorem~\cite{barwick} for connective
$K$-theory: $\K^{\cn}(E^\heartsuit)\we\K^{\cn}(E)$, which generalizes the
Gillet-Waldhausen theorem~\cite{thomason-trobaugh}*{Theorem~1.11.7} in the case
that $E=\Dscr^b(A)$. So, the open part of that conjecture may be rephrased as
saying that
$\K_{-n}(E^\heartsuit)\rightarrow\K_{-n}(E)$ is an isomorphism for all $n\geq
1$. Of course, this would follow from Conjecture~\hyperlink{conj:b}{B} together with Conjecture~\hyperlink{conj:a}{A}.
In fact, Conjecture~\hyperlink{conj:b}{B} holds if and only if
Conjectures~\hyperlink{conj:a}{A} and~\hyperlink{conj:c}{C} hold.

Note that there are examples of stable $\infty$-categories $E$ with two
different bounded $t$-structures, one having a noetherian heart and the other having a
non-noetherian heart. The standard example, due to Thomas and written down
in~\cite{abramovich-polishchuk}, is $\Dscr^b(\PP^1)$, and was pointed out to us by Calabrese.

\paragraph{Applications.}
There are three major areas of application of the work in this paper:
obstructions to the existence of $t$-structures (and hence to stability
conditions) on $\Perf(X)$ when $X$ is a singular scheme, possible obstructions
to the existence of the conjectural motivic $t$-structure, and vanishing results
for the negative $K$-theory of nonconnective dg algebras and ring spectra.
We describe the first two areas briefly below and leave the extensive vanishing
results for ring spectra to Section~\ref{sec:negative}.

\paragraph{Stability conditions.}
Bridgeland introduced in~\cite{bridgeland} the notion of stability conditions on abelian and triangulated categories.
Moreover, he proved in~\cite{bridgeland}*{Proposition~5.3} that giving a stability condition on a
triangulated category $\Tscr$ is equivalent to giving a bounded $t$-structure
on $\Tscr$ together with a stability condition on $\Tscr^\heart$. A crucial and
open problem in the theory of stability conditions is when $\Perf(X)$ admits
any stability conditions at all for $X$ a smooth scheme over $\CC$.
This is open even for general smooth proper threefolds (see for example~\cite{bayer-macri-toda}).

Our methods give $K$-theoretic obstructions to the existence of bounded $t$-structures
and hence to stability conditions. As far as we are aware these are the first
obstructions of any kind to the existence of bounded $t$-structures.

\begin{corollary}\label{cor:stability}
    Let $X$ be a scheme such that $\K_{-1}(X)\neq 0$. Then, there exists no
    bounded $t$-structure (and hence no stability condition) on $\Perf(X)$. If $\K_{-n}(X)\neq 0$ for some $n\geq
    2$, then there exists no bounded $t$-structure on $\Perf(X)$ with
    noetherian heart.
\end{corollary}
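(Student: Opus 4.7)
The plan is to deduce both assertions by contrapositive from Theorems~\ref{thm:introkm1} and~\ref{thm:mainintro}, after identifying the $K$-theory of the scheme $X$ with the $K$-theory of the small stable $\infty$-category $\Perf(X)$. The first step is to recall that by Thomason--Trobaugh, the nonconnective $K$-theory spectrum of a quasi-compact quasi-separated scheme $X$ is equivalent to the nonconnective $K$-theory spectrum of the small stable $\infty$-category $\Perf(X)$; in particular $\K_{-n}(X)=\K_{-n}(\Perf(X))$ for every $n\geq 1$. With this identification in hand, Theorems~\ref{thm:introkm1} and~\ref{thm:mainintro} become statements directly about the negative $K$-groups of schemes whose category of perfect complexes admits a bounded $t$-structure (resp.\ one with noetherian heart).

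For the first claim, suppose for contradiction that $\Perf(X)$ admits a bounded $t$-structure. Then Theorem~\ref{thm:introkm1}, applied to the small stable $\infty$-category $\Perf(X)$, yields $\K_{-1}(\Perf(X))=0$, contradicting the hypothesis $\K_{-1}(X)\neq 0$. For the second claim, suppose additionally that the heart of this $t$-structure is noetherian. Then Theorem~\ref{thm:mainintro} gives $\K_{-n}(\Perf(X))=0$ for every $n\geq 1$, contradicting the hypothesis $\K_{-n}(X)\neq 0$ for some $n\geq 2$.

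The parenthetical statement about stability conditions follows at once from Bridgeland's Proposition~5.3 of~\cite{bridgeland}, which is recalled in the discussion preceding the corollary: a stability condition on a triangulated category $\Tscr$ is equivalent to the data of a bounded $t$-structure on $\Tscr$ together with a stability condition on $\Tscr^\heart$. In particular, any stability condition on $\Perf(X)$ would produce a bounded $t$-structure on $\Perf(X)$, which the previous paragraph has ruled out.

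The argument is essentially formal once the main theorems are available, so there is no genuine obstacle; the only point requiring care is ensuring that the nonconnective $K$-theory of the scheme agrees with the $K$-theory of the $\infty$-category $\Perf(X)$ that features in the hypotheses of Theorems~\ref{thm:introkm1} and~\ref{thm:mainintro}, which is precisely the content of Thomason--Trobaugh.
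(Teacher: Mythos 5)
Your proof is correct and is exactly the argument the paper intends (the corollary is stated in the introduction as an immediate consequence of Theorems~\ref{thm:introkm1} and~\ref{thm:mainintro} together with Bridgeland's correspondence, with the identification $\K_{-n}(X)\iso\K_{-n}(\Perf(X))$ left implicit). The only remark is that in the paper's framework the nonconnective $K$-theory of $X$ is essentially defined as $\K(\Perf(X))$, so the appeal to Thomason--Trobaugh is a matter of consistency of conventions rather than an extra hypothesis to verify.
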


The corollary applies to a wide variety of singular schemes, even such simple
examples as nodal cubic curves, where $\K_{-1}(X)\iso\ZZ$. Note that when $X$ is noetherian and singular, it is
easy to see that the canonical bounded $t$-structure on $\Dscr^b(X)$ does not restrict
to one on $\Perf(X)\subseteq\Dscr^b(X)$. A priori there could be other,
exotic $t$-structures. We propose the following conjecture, which generalizes
Corollary~\ref{cor:stability}.

\begin{conjecture}
    Let $X$ be a noetherian scheme of finite Krull dimension. If $X$ is not
    regular, then $\Perf(X)$ admits no bounded $t$-structure.
\end{conjecture}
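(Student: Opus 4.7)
The plan is to reduce the conjecture, via Corollary~\ref{cor:stability}, to showing that $\K_{-1}(X)\neq 0$ whenever $X$ is a noetherian scheme of finite Krull dimension that is not regular. The first step is a local reduction: regularity is a Zariski-local property, and nonconnective $K$-theory satisfies Zariski descent in the sense of Thomason-Trobaugh, so vanishing of $\K_{-1}(X)$ on $X$ forces vanishing on any affine cover. Completion arguments (Artin approximation) would further reduce to the case where $A$ is a complete noetherian local ring that is not regular, and the goal becomes to exhibit a nontrivial class in $\K_{-1}(A)$.

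To carry this out one would invoke, or prove, a structural characterization of when $\K_{-1}(A)=0$ for a noetherian local ring. By classical results of Weibel and their extensions, $\K_{-1}(A)=0$ iff $A_{\red}$ is seminormal in the appropriate sense, so in dimension one the implication is immediate for any non-seminormal singularity. In higher dimensions, one would invoke the cdh-descent formulas of Cortin\~{a}s-Haesemeyer-Schlichting-Weibel (available in characteristic zero through resolution of singularities) to compute negative $K$-theory from the geometry of a desingularization $\widetilde{X}\to X$, or use trace methods through the cyclotomic trace $\K\to\mathrm{TC}$ in positive and mixed characteristic, producing a nontrivial negative $K$-class that measures the failure of regularity.

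The main obstacle is that this $K$-theoretic strategy cannot resolve the conjecture in full generality. For instance, $A=k[x,y]/(xy)$ is a one-dimensional noetherian ring that is seminormal but not regular, so $\K_{-1}(A)=0$ by Weibel, and $\K_{-n}(A)=0$ for $n\geq 2$ because $\dim A=1$ (by the Weibel conjecture, proven by Kerz-Strunk-Tamme). Every negative $K$-group of $A$ vanishes, so the obstructions developed in this paper give no information for $\Perf(A)$. A proof of the conjecture in full generality therefore appears to require genuinely new tools beyond the present techniques: perhaps finer invariants coming from $\THH$ or topological cyclic homology, structural constraints on the heart of a hypothetical bounded $t$-structure extracted at the level of $\K_0$ and its filtration by dimension of support, or an obstruction of a qualitatively different character.
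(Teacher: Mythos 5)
This statement is posed in the paper as a conjecture and is not proved there: the authors' only evidence for it is Corollary~\ref{cor:stability}, which handles exactly those $X$ with some nonvanishing negative $K$-group, and they explicitly present the conjecture as a strict generalization of that corollary. So there is no proof in the paper to compare against, and your proposal—rightly—does not claim to supply one. Your diagnosis of why the $K$-theoretic route cannot succeed in general is correct and is the essential point: for $A=k[x,y]/(xy)$ the ring is seminormal, so $\K_{-1}(A)=0$ (e.g.\ by the conductor square for the normalization $k[x]\times k[y]$), and $\K_{-n}(A)=0$ for $n\geq 2$ since $\dim A=1$; thus every obstruction produced in this paper vanishes while $\Spec A$ is singular, and whether $\Perf(k[x,y]/(xy))$ admits a bounded $t$-structure is precisely the kind of case the conjecture isolates and leaves open.

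Two small cautions on the first paragraph of your proposal. First, Zariski descent computes $\K(X)$ from a cover; it does not let you deduce vanishing of $\K_{-1}$ on the pieces from vanishing on $X$, so the localization step is stated backwards (though this is moot, since the strategy fails for other reasons). Second, even where negative $K$-theory does not vanish one should cite Corollary~\ref{cor:stability} directly rather than re-derive it; the content of the conjecture is entirely in the residual case where all $\K_{-n}(X)$ vanish, and there, as you say, a qualitatively different obstruction—not supplied by this paper—would be needed.
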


Based on Corollary~\ref{cor:stability}, when $X$ is
singular, $\Dscr^b(X)$ appears more natural from the point of view of stability
conditions.

\newcommand{\gm}{\mathrm{gm}}
\newcommand{\eff}{\mathrm{eff}}

\paragraph{Motivic $t$-structures.}
One of the major open problems in motives
(see~\cite{kahn-conjectures}*{Section~4.4.3}) is to construct a bounded
$t$-structure on Voevodsky's triangulated category
$\mathrm{DM}_{\gm}^{\eff}(k)_\QQ$ of
rational effective geometric motives over a field $k$. The heart of this
$t$-structure would be the abelian category of mixed motives. Voevodsky observed
in~\cite{voevodsky-triangulated} that there can be no integral motivic
$t$-structure when there are smooth projective conic curves over $k$ with no
rational points (thus for example when $k=\QQ$), although potentially there
could be other bounded $t$-structures that do not satisfy all of the expected
properties. Our next corollary implies a possible approach to proving
non-existence of any motivic $t$-structure. Note that the heart of the motivic
$t$-structure is expected to be noetherian.

\begin{corollary}
    If $\K_{-n}(\mathrm{DM}_{\gm}^{\eff}(k)_\QQ)\neq 0$ for some $n\geq 1$, then there is no
    motivic $t$-structure.
\end{corollary}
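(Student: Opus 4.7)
The plan is to prove this by contrapositive, applying Theorem~\ref{thm:mainintro} directly to the hypothetical motivic $t$-structure. Suppose a motivic $t$-structure on $\mathrm{DM}_{\gm}^{\eff}(k)_\QQ$ exists. By the expectation recalled just before the corollary statement, such a $t$-structure is bounded and its heart (the abelian category of mixed motives) is noetherian. The stable $\infty$-category (or dg) enhancement of $\mathrm{DM}_{\gm}^{\eff}(k)_\QQ$ is standard — it arises from Voevodsky's construction as a localization of a model category of presheaves with transfers — so that Theorem~\ref{thm:mainintro} applies to the enhancement $E$. That theorem then forces $\K_{-n}(E)=\K_{-n}(\mathrm{DM}_{\gm}^{\eff}(k)_\QQ)=0$ for all $n\geq 1$, contradicting the hypothesis.

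The only non-formal input is the model-theoretic enhancement, which as noted in the footnote to Theorem~\ref{thm:introkm1} exists for essentially all triangulated categories appearing in the literature, including motivic ones. The argument is otherwise a direct instance of the general $K$-theoretic obstruction principle: negative $K$-groups are obstructions to the existence of bounded $t$-structures (with noetherian heart, in degrees $\leq -2$).

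If one only assumed $\K_{-1}(\mathrm{DM}_{\gm}^{\eff}(k)_\QQ)\neq 0$, Theorem~\ref{thm:introkm1} alone would already rule out any bounded $t$-structure — noetherian or not — so the noetherian hypothesis on the motivic heart is only needed to exploit the vanishing in degrees $n\geq 2$. No genuine obstacle arises in the proof; the content of the corollary is entirely in Theorems~\ref{thm:introkm1} and~\ref{thm:mainintro}.
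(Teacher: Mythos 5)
Your proof is correct and is exactly the argument the paper intends: the corollary is the contrapositive of Theorem~\ref{thm:mainintro} applied to (an enhancement of) $\mathrm{DM}_{\gm}^{\eff}(k)_\QQ$, using that a motivic $t$-structure would be bounded with noetherian heart, as the paper notes immediately before the statement. Your additional remarks about the enhancement and the $n=1$ case via Theorem~\ref{thm:introkm1} are accurate but not needed beyond what the paper already supplies.
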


Using our work, Sosnilo has proved in~\cite{sosnilo} that in fact a
different conjecture of Voevodsky, the nilpotence conjecture
of~\cite{voevodsky-nilpotence}, would imply
$\K_{-n}(\mathrm{DM}_{\gm}^{\eff}(k)_\QQ)=0$ for all $n\geq 1$.
Put another way, if $\K_{-n}(\mathrm{DM}_{\gm}^{\eff}(k)_\QQ)\neq 0$ for some
$n\geq 1$, then the nilpotence conjecture would also be false.

\paragraph{Outline.}
Section~\ref{sec:t} is dedicated to background on $t$-structures, proving
several new inheritance results about $t$-structures, $K$-theoretic
excisive squares, and the proof of Theorem~\ref{thm:introkm1}.
Section~\ref{sec:induction} contains the proofs of Theorem~\ref{thm:mainintro}
and~\ref{thm:heartintro}
as well as our thoughts of how one might attempt to prove
Conjecture~\hyperlink{conj:b}{B} in general. Section~\ref{sec:negative}
contains our applications to the negative $K$-theory of ring spectra.
In Appendix~\ref{sec:frobenius}, we construct a functorial $\infty$-categorical model of
the stable category of a Frobenius category. This is needed to check that the
definition of negative K-theory we use agrees with Schlichting's.

\paragraph{Notation.}
Throughout, unless otherwise stated, we use homological indexing for chain complexes and objects in
stable $\infty$-categories. The $\infty$-category of small stable
$\infty$-categories and exact functors is written $\Cat_{\infty}^{\ex}$, while the full subcategory of
small \emph{idempotent complete} stable $\infty$-categories is written
$\Cat_{\infty}^\perf$. Given a small stable $\infty$-category $E$, we denote by
$\widetilde{E}$ or $E^\sim$ the idempotent completion of $E$. If $E\subseteq F$
is a fully faithful inclusion such that $E$ is idempotent complete in $F$, then
$F/E$ denotes the Verdier quotient (the cofiber in $\Cat_{\infty}^\ex$).

If $\Cscr$ is an $\infty$-category, $\Map_{\Cscr}(M,N)$
is the mapping space of morphisms from $M$ to $N$ in $\Cscr$. Given an
idempotent complete stable $\infty$-category $E$, $\K(E)$ always
denotes the \emph{nonconnective} $K$-theory spectrum of $E$, as defined
in~\cite{bgt1}. We use $\K^{\cn}(E)$ for the connective cover of $\K(E)$, the
connective $K$-theory spectrum of $E$. Finally, if $R$ is a ring spectrum,
$\Mod_R$, $\Alg_R$, and $\CAlg_R$ denote the $\infty$-categories of $R$-module
spectra, $\EE_1$-$R$-algebra spectra (if $R$ is commutative), and $\EE_\infty$-$R$-algebra spectra (if
$R$ is commutative), respectively (even if $R$ is discrete). If $R$ is
discrete, we let $\Mod_R^\heartsuit$,
$\Alg_R^{\heartsuit}$, and $\CAlg_R^{\heartsuit}$ denote the ordinary
categories of discrete right $R$-modules, discrete associative $R$-algebras (if
$R$ is commutative),
and discrete commutative $R$-algebras (if $R$ is commutative), respectively;
this notation reflects the fact that the abelian category of discrete right
$R$-modules is equivalent to the heart of the standard $t$-structure on the
stable $\infty$-category $\Mod_R$.

\paragraph{Acknowledgments.}
BA and JH thank the Hausdorff Institute
for Mathematics in Bonn and DG thanks the Max Planck Institute for Mathematics:
these were our hosts during the summer of 2015, when this
project was conceived. BA thanks Akhil Mathew for several
conversations that summer at HIM, especially about bounded $t$-structures for
compact modules over cochain algebras.

BA thanks John Calabrese, Denis-Charles Cisinski, Michael Gr\"ochenig, Jacob Lurie, Matthew
Morrow, Marco Schlichting, Jesse Wolfson, and Matthew Woolf for conversations
and emails about material
related to this paper. DG thanks Andrew Blumberg and Markus Spitzweck for
conversations about material related to this paper. Both BA and DG would
especially like to thank Benjamin Hennion for explaining Tate objects and the
subtleties behind excisive squares.
We all are very grateful for detailed, helpful comments from an anonymous
referee.

We also thank the UIC Visitors' Fund, Purdue University, UIUC, and Lars Hesselholt
for supporting collaborative visits in 2016.

\section{$t$-structures}\label{sec:t}

We give some background on stable $\infty$-categories in Section~\ref{sec:stable}.
After recalling $t$-structures in
Section~\ref{sub:recollections}, we study induced $t$-structures
on ind-completions and localizations in
Section~\ref{sub:induced}. In some cases,
our results extend results in~\cite{bbd} beyond
the setting in which all functors admit left and right adjoints that preserve
compact objects (the main assumption in~\cite{bbd}). The ability to construct a $t$-structure on
a localization in certain circumstances will be used
later in the paper when we perform the inductive step in our generalization of
Schlichting's theorem.

In  Section~\ref{sub:mv}, we study excisive squares in algebraic
$K$-theory and their connection to adjointability.
We prove that $\K_{-1}(E)=0$ when $E$ is a small stable $\infty$-category with
a bounded $t$-structure in Section~\ref{sub:km1}.

\subsection{Stable $\infty$-categories}\label{sec:stable}

For the purposes of studying $K$-theory, it has been known for some time that
triangulated categories are not sufficient. This was the result of work of
Schlichting~\cite{schlichting-triangulated}, which gave an example of two stable
model categories with triangulated equivalent homotopy categories but different
$K$-theories. On the other hand, To\"en and Vezzosi~\cite{toen-vezzosi} showed that $K$-theory is a
good invariant of simplicial localizations of Waldhausen categories in the
following sense. If $C$ and $D$ are good Waldhausen categories and if the
simplicial localizations $L^HC$ and $L^HD$
are equivalent simplicial categories, then $\K(C)\we\K(D)$. Thus, the
simplicial localization loses some information, like passing to the homotopy
category, but not so much that $K$-theory is inaccessible. These simplicial
localizations are a kind of enhancement of the triangulated homotopy
categories, and it is now well-understood that $K$-theory requires some kind of
enhancement.

Unfortunately, computations are difficult in the model categories of simplicial
categories and dg categories, and it is much easier to work in the setting of
$\infty$-categories. The $K$-theory of $\infty$-categories is studied
in~\cite{bgt1} and~\cite{barwick-ktheory} and it agrees in all cases with Waldhausen
$K$-theory when both are defined. So, this setting provides a
best-of-both-worlds approach to $K$-theory, where we can not only compute
$K$-theory correctly but we can also compute maps between the inputs.
The theory of $\infty$-categories is not the only way of doing this, but it is
by now the most well-developed and it is the most well-suited for the problems
we study. 

A pointed $\infty$-category is an $\infty$-category $E$ with an object $0$ that is
both initial and final. It is called a zero object of $E$.
A cofiber sequence in a pointed $\infty$-category is a commutative
diagram\begin{equation*}\xymatrix{a\ar[r]\ar[d]&b\ar[d]\\0\ar[r]&d}\end{equation*}
which is a pushout diagram in the sense of colimits in
$\infty$-categories as developed in~\cite{htt}. It is standard
practice to abbreviate and write $a\rightarrow b\rightarrow c$
for a cofiber sequence. If $f:a\rightarrow b$ is a morphism in $E$, then a
cofiber for $f$ is a cofiber sequence $a\rightarrow b\rightarrow c$. Cofibers
for $f$ are unique up to homotopy. Fiber sequences and fibers are defined
similarly.

By definition,
a pointed $\infty$-category is {\bf stable} if it has all cofibers and fibers
and if a triangle in $E$ is a fiber sequence if and only if it is a cofiber
sequence. It turns out that this definition is equivalent to asking for a
pointed $\infty$-category to have all finite colimits and for
the suspension functor $\Sigma:E\rightarrow E$ to be an
equivalence (see~\cite{ha}*{Corollary~1.4.2.27}).

Unlike the case of triangulated categories in which the
triangulation is extra \emph{structure} which must be specified, stable
$\infty$-categories are $\infty$-categories with certain \emph{properties}, and
the homotopy category $\Ho(E)$ of a stable $\infty$-categories is an ordinary
category equipped with a \emph{canonical} triangulation.
If $\Cscr$ is stable, \cite{ha}*{Theorem~1.1.2.15} says that a sequence
$a\rightarrow b\rightarrow c$ determines a cofiber sequence if
and only $a\rightarrow b\rightarrow c$ is a distinguished
triangle in the triangulated homotopy category $\Ho(E)$.
For additional details and background about stable $\infty$-categories, see~\cite{ha}*{Chapter~1}.

\subsection{Definitions and first properties}\label{sub:recollections}

The notion of a $t$-structure appears in
Be\u{\i}linson-Bernstein-Deligne~\cite{bbd}*{Definition~1.3.1}. However, as we will
work with homological indexing, Lurie's treatment
in~\cite{ha}*{Definition~1.2.1.1} is more a
convenient reference.
If $E$ is a stable $\infty$-category and $x\in E$, we will typically write
$x[n]$ for the $n$-fold suspension $\Sigma^n x$ of $x$. If $F\subseteq E$ is a
full subcategory, we will also write $F[n]\subseteq E$ for the full subcategory
spanned by the objects of the form $x[n]$, where $x$ is an object of $F$.

\begin{definition}\label{def:t}
    A \df{$\mathbf{t}$-structure} on a stable $\infty$-category $E$ consists of a pair of full
    subcategories $E_{\geq 0}\subseteq E$ and $E_{\leq 0}\subseteq E$ satisfying the following
    conditions:
    \begin{enumerate}
        \item[(1)]   $E_{\geq 0}[1]\subseteq E_{\geq 0}$ and $E_{\leq 0}\subseteq E_{\leq 0}[1]$;
        \item[(2)]   if $x\in E_{\geq 0}$ and $y\in E_{\leq 0}$, then
            $\Hom_E(x,y[-1])=0$;
        \item[(3)]   every $x\in E$ fits into a cofiber sequence $\tau_{\geq
                0}x\rightarrow x\rightarrow\tau_{\leq -1}x$ where $\tau_{\geq 0}x\in
                E_{\geq 0}$ and $\tau_{\leq -1}x\in E_{\leq 0}[-1]$.
    \end{enumerate}
    An exact functor $E\rightarrow F$ between stable $\infty$-categories
    equipped with $t$-structures is \df{left $\mathbf{t}$-exact} (resp.
    {\bf right $\mathbf{t}$-exact}) if it sends $E_{\leq 0}$ to $F_{\leq 0}$
    (resp. $E_{\geq 0}$ to $F_{\geq 0}$). An exact functor is $t$-exact if is
    both left and right $t$-exact. We set $E_{\geq n}=E_{\geq 0}[n]$ and $E_{\leq
    n}=E_{\leq 0}[n]$.
\end{definition}

\begin{example}\label{ex:t}
    \begin{enumerate}
        \item[(a)]   If $A$ is a small abelian category, then the bounded derived
            $\infty$-category $\Dscr^b(A)$ (see Definition~\ref{def:bounded}) admits a
            canonical $t$-structure, where $\Dscr^b(A)_{\geq n}$ consists of the
            complexes $x$ such that $\H_i(x)=0$ for $i<n$, and similarly for
            $\Dscr^b(A)_{\leq n}$.
        \item[(b)]   If $A$ is a Grothendieck abelian category, then the
            derived $\infty$-category $\Dscr(A)$ admits
            a $t$-structure with the same description as the previous example.
            This stable $\infty$-category and its $t$-structure are studied
            in~\cite{ha}*{Section~1.3.5}.
        \item[(c)]   If $R$ is a connective $\EE_1$-ring spectrum, then the stable
            presentable $\infty$-category $\Mod_R$ of right $R$-module spectra admits
            a $t$-structure with $\left(\Mod_R\right)_{\geq 0}\we\Mod_R^{\cn}$,
            the $\infty$-category of connective $R$-module spectra. See for
            example~\cite{ha}*{Proposition~1.4.3.6}. We call this the {\bf
            Postnikov $t$-structure}.
    \end{enumerate}
\end{example}

Condition (2) implies in fact that the mapping spaces $\Map_E(x,y[-1])$ are
contractible for $x\in E_{\geq 0}$ and $y\in E_{\leq 0}$. This is not generally
the case for the mapping spectra. Indeed, if $A$ is a
Grothendieck abelian category, then
$\pi_{0}\Map_{\Dscr(A)}(x[-n],y[-1])\iso\Ext^{n-1}_A(x,y)$ for $x,y\in A$.
(See~\cite{ha}*{Proposition~1.3.5.6}.)

\begin{lemma}
    The intersection $E_{\geq 0}\cap E_{\leq 0}$ is the full subcategory of
    $E_{\geq 0}$ consisting of discrete objects. Moreover, the intersection is
    an abelian category.
\end{lemma}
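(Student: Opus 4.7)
The plan is to prove the two assertions in turn.

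I first unwind what it means for $x\in E_{\geq 0}$ to be discrete: the mapping space $\Map_E(y,x)$ must be $0$-truncated for every $y\in E_{\geq 0}$, which by stability of $E$ translates to $\Hom_E(y,x[-n])=\pi_n\Map_E(y,x)=0$ for all $y\in E_{\geq 0}$ and all $n\geq 1$. If $x\in E^\heart$, then $x[-n]\in E_{\leq -n}\subseteq E_{\leq -1}$ for every $n\geq 1$, so the strengthening of axiom (2) quoted just before the lemma makes $\Map_E(y,x[-n])$ contractible for every $y\in E_{\geq 0}$. Hence $x$ is discrete.

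For the converse, suppose $x\in E_{\geq 0}$ is discrete and apply the truncation cofiber sequence $\tau_{\geq 1}x\to x\to\tau_{\leq 0}x$ of axiom (3). Writing $\tau_{\geq 1}x=w[1]$ with $w\in E_{\geq 0}$, one has $\Map_E(\tau_{\geq 1}x,\tau_{\leq 0}x)\we\Map_E(w,\tau_{\leq 0}x[-1])$, which is contractible since $\tau_{\leq 0}x[-1]\in E_{\leq -1}$. Therefore the restriction map $\Map_E(\tau_{\geq 1}x,\tau_{\geq 1}x)\to\Map_E(\tau_{\geq 1}x,x)$ is an equivalence; but on $\pi_0$ the target equals $\Hom_E(w,x[-1])$, which vanishes by discreteness. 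Hence $\id_{\tau_{\geq 1}x}=0$, forcing $\tau_{\geq 1}x\we 0$ and thus $x\we\tau_{\leq 0}x\in E_{\leq 0}$. I expect this converse to be the most subtle point, as one must couple the contractibility of $\Map_E(\tau_{\geq 1}x,\tau_{\leq 0}x)$ with the vanishing of $\Hom_E(\tau_{\geq 1}x,x)$ to collapse $\tau_{\geq 1}x$ itself.

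For the abelianness claim I would follow the classical argument of~\cite{bbd}*{1.3.6}. The subcategory $E^\heart$ inherits additivity and finite biproducts from the stable $\infty$-category $E$. Given $f\colon x\to y$ in $E^\heart$, form the fiber--cofiber sequence $k\to x\to y\to c$ in $E$; a direct check using that $E_{\geq 0}$ and $E_{\leq 0}$ are closed under extensions (an immediate consequence of axioms (1)--(3)) shows $c\in E_{\geq 0}\cap E_{\leq 1}$ and $k=c[-1]\in E_{\geq -1}\cap E_{\leq 0}$. Set $\coker(f):=\H_0(c)=\tau_{\leq 0}c$ and $\ker(f):=\H_0(k)=\tau_{\geq 0}k$, both in $E^\heart$; their universal properties follow from the adjunctions between the truncation functors and the inclusions of the aisles. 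The abelian axiom then reduces to showing that the canonical map $\mathrm{coim}(f)\to\mathrm{im}(f)$ is an isomorphism, which is obtained via the octahedral axiom applied to the factorization of $f$ through $\mathrm{im}(f):=\ker(y\to\coker(f))$ and its identification with the cokernel of $\ker(f)\hookrightarrow x$, exactly as in the BBD treatment. This final step, while standard, is the only one requiring care in how the truncation functors interact with the triangulated structure on $E$.
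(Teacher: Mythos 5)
Your proof is correct, and it fills in exactly the content of the two citations the paper uses in place of a proof (\cite{ha}*{Warning~1.2.1.9} for the identification of the heart with the discrete objects of $E_{\geq 0}$, and \cite{bbd}*{Th\'eor\`eme~1.3.6} for abelianness). Both halves follow the standard route — the converse direction via the vanishing of $\pi_0\Map_E(\tau_{\geq 1}x,\tau_{\geq 1}x)$ forcing $\tau_{\geq 1}x\we 0$ is a clean way to package Lurie's argument, and the kernel/cokernel construction with the octahedral comparison of image and coimage is precisely the BBD proof — so there is nothing further to add.
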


\begin{proof}
    See~\cite{ha}*{Warning~1.2.1.9} for the first statement
    and~\cite{bbd}*{Th\'eor\`eme~1.3.6} for the second.
\end{proof}

\begin{definition}
    The abelian category $E_{\geq 0}\cap E_{\leq 0}$ is called the \df{heart}
    of the $t$-structure $(E_{\geq 0},E_{\leq 0})$ on $E$, and is denoted
    $E^{\heartsuit}$.
\end{definition}

\begin{example}
    The hearts of the $t$-structures in Example~\ref{ex:t} are $A$ in (a), $A$
    in (b), and
    $\Mod_{\pi_0R}^{\heartsuit}$, the abelian category of right
    $\pi_0R$-modules, in (c).
\end{example}

The truncations $\tau_{\geq n}x$ and $\tau_{\leq n}x$ are functorial in the
sense that the inclusions $E_{\geq n}\rightarrow E$ and $E_{\leq n}\rightarrow
E$ admit right and left adjoints, respectively, by~\cite{ha}*{Corollary~1.2.1.6}. Let $\pi_nx=\tau_{\geq
n}\tau_{\leq n}x[-n]\in E^{\heartsuit}$. This functor is homological
by~\cite{bbd}*{Th\'eor\`eme~1.3.6}, meaning that there are long exact sequences
$$\cdots\rightarrow\pi_{n+1}z\rightarrow\pi_nx\rightarrow\pi_ny\rightarrow\pi_nz\rightarrow\pi_{n-1}x\rightarrow\cdots$$
in $E^\heartsuit$ whenever $x\rightarrow y\rightarrow z$ is a cofiber sequence in $E$.

\begin{definition}
    A $t$-structure $(E_{\geq 0},E_{\leq 0})$ on a stable $\infty$-category
    is \df{right separated} if $$\bigcap_{n\in\ZZ}E_{\leq n}=0.$$ \df{Left
    separated} $t$-structures are defined similarly. Left and right separated
    $t$-structures are called \df{non-degenerate} in~\cite{bbd}.
\end{definition}

\begin{definition}
    If $E$ is a stable $\infty$-category with a $t$-structure $(E_{\geq
    0},E_{\leq 0})$, we say that the $t$-structure is \df{bounded} if the inclusion
    $$E^b=\bigcup_{n\rightarrow\infty}E_{\geq -n}\cap E_{\leq n}\rightarrow E$$
    is an equivalence. Bounded $t$-structures are left and right separated.
\end{definition}

For example, the $t$-structure in Example~\ref{ex:t}(1) is bounded.

\begin{lemma}\label{lem:boundedsub}
    If $E$ is a stable $\infty$-category equipped with a
    $t$-structure $(E_{\geq 0},E_{\leq 0})$, then the full subcategory
    $E^b\subseteq E$ is stable and the $t$-structure on $E$ restricts to a bounded
    $t$-structure on $E^b$.
\end{lemma}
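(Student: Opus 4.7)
The plan is to verify the two claims in order: first that $E^b$ is a stable subcategory of $E$, and then that $(E^b\cap E_{\geq 0},\, E^b\cap E_{\leq 0})$ defines a bounded $t$-structure on $E^b$.

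For stability, the subcategory $E^b$ clearly contains a zero object and is closed under suspension and desuspension, since $x\in E_{\geq -n}\cap E_{\leq n}$ implies $x[k]\in E_{\geq -n-k}\cap E_{\leq n+k}$. It therefore suffices to show $E^b$ is closed under cofibers (equivalently, fibers) of morphisms. For this I would invoke the standard consequence of axiom~(2) that $E_{\geq 0}$ is closed under cofibers and $E_{\leq 0}$ is closed under fibers: given a cofiber sequence $x\to y\to c$ with $x,y\in E_{\geq 0}$, mapping to any $z\in E_{\leq -1}$ produces a fiber sequence of mapping spaces whose first two terms are contractible (by the mapping-space strengthening of axiom~(2) noted just after Example~\ref{ex:t}), forcing $\Map_E(c,z)$ contractible and hence $c\in E_{\geq 0}$. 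The fiber statement is dual. Now if $f\colon x\to y$ is a morphism in $E^b$ with $x,y\in E_{\geq -k}\cap E_{\leq k}$ for a common $k$, then its cofiber $c$ lies in $E_{\geq -k}$ by the above, and rotating the cofiber sequence to $c[-1]\to x\to y$ exhibits $c[-1]$ as a fiber of a morphism in $E_{\leq k}$, whence $c\in E_{\leq k+1}$. Thus $c\in E^b$, and symmetrically the fiber lies in $E^b$. Since finite limits and colimits in $E^b$ are computed in $E$ and fiber/cofiber sequences already agree in $E$, this makes $E^b$ stable.

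For the restricted $t$-structure, set $(E^b)_{\geq 0}=E^b\cap E_{\geq 0}$ and $(E^b)_{\leq 0}=E^b\cap E_{\leq 0}$. Axioms~(1) and~(2) of Definition~\ref{def:t} are immediate by intersecting the corresponding properties on $E$ with $E^b$. For axiom~(3), given $x\in E^b$ with $x\in E_{\geq -n}\cap E_{\leq n}$, take the cofiber sequence $\tau_{\geq 0}x\to x\to\tau_{\leq -1}x$ from the $t$-structure on $E$. Then $\tau_{\leq -1}x\in E_{\leq -1}\subseteq E_{\leq n}$, and as a cofiber of a morphism from $x\in E_{\geq -n}$ it lies in $E_{\geq -n}$ by the closure proved above; dually $\tau_{\geq 0}x$ is a fiber of a morphism in $E_{\leq n}$ and obviously lies in $E_{\geq 0}$, so both truncations belong to $E^b$. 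Finally, boundedness of the restricted $t$-structure is automatic: by definition every $x\in E^b$ lies in $E_{\geq -n}\cap E_{\leq n}=(E^b)_{\geq -n}\cap(E^b)_{\leq n}$ for some $n$.

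The only nontrivial point is the closure of $E_{\geq 0}$ under cofibers and $E_{\leq 0}$ under fibers, which is the step that actually uses the interplay between the two halves of the $t$-structure through axiom~(2); everything else is a direct bookkeeping of the bounds. I do not expect any further obstacles.
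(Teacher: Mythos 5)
Your proof is correct and follows essentially the same route as the paper's: first closure of $E^b$ under cofibers, deduced from the closure of $E_{\geq -k}$ under cofibers and of $E_{\leq k}$ under fibers within a common window, and then closure of $E^b$ under the truncation functors. The only cosmetic difference is that you justify the closure of the aisles via the orthogonality characterization $E_{\geq 0}={}^{\perp}E_{\leq -1}$ (which tacitly also uses axiom (3)), whereas the paper cites the fact that the inclusions $E_{\geq 0}\subseteq E$ and $E_{\leq n}\subseteq E$ are colimit- and limit-preserving adjoints; both are standard.
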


\begin{proof}
    Since $E^b\subseteq E$ is closed under translations (by part (1) of the
    definition of a $t$-structure), it is enough to show that it is closed
    under cofibers in $E$. Let $x\rightarrow y$ be a map in $E^b$ with cofiber
    $z$.  We must show that $z$ is bounded. We can assume first that $x$ and
    $y$ are in $E_{\geq 0}\cap E_{\leq n}$ for some $n>0$,
    in which case $z\in E_{\geq 0}$ since the inclusion $E_{\geq 0}\subseteq E$
    preserves and creates colimits. Moreover,
    $z\rightarrow x[1]\rightarrow y[1]$ is a fiber sequence in $E$ and
    $x[1]$ and $y[1]$ are in $E_{\leq n+1}$. Since the adjoint $E_{\leq
    n+1}\rightarrow E$ preserves limits, it follows that $z\in E_{\leq n+1}$.
    Hence, $z$ is bounded. To conclude, we must show that $E^b$ is closed under
    truncations in $E$, which will show that the $t$-structure on $E$ restricts
    to a $t$-structure on $E^b$. So, suppose that $w\in E^b$, and consider
    $\tau_{\geq 0}w$ in $E$. We have only to show that $\tau_{\geq 0}w$ is
    bounded above. Choose $m>0$ such that $\tau_{\leq m}w\we 0$. Such an $m$
    exists because $w$ is bounded. But, we now have $$\tau_{\leq m}\tau_{\geq
    0}w\we\tau_{\geq 0}\tau_{\leq m}w\we 0,$$ since the truncation functors
    commute by~\cite{ha}*{Proposition~1.2.1.10}
    or~\cite{bbd}*{Proposition~1.3.5}.
\end{proof}

\begin{lemma}\label{lem:exact}
    Suppose that $A=E^\heartsuit$ is the heart of a $t$-structure on a
    stable $\infty$-category $E$. If $0\rightarrow x\rightarrow y\rightarrow
    z\rightarrow 0$ is
    an exact sequence in $A$, then $x\rightarrow y\rightarrow z$ is a cofiber
    sequence in $E$.
\end{lemma}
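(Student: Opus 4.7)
}
The plan is to form the cofiber $c=\mathrm{cofib}(x\rightarrow y)$ of the given map in $E$, and then to show that $c$ lies in the heart $E^{\heartsuit}$ and is canonically identified with $z$ so that the cofiber sequence $x\rightarrow y\rightarrow c$ recovers the original short exact sequence.

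First I would locate $c$ in the $t$-structure. Since $x,y\in E^\heartsuit\subseteq E_{\geq 0}$ and the inclusion $E_{\geq 0}\hookrightarrow E$ preserves colimits (it is a left adjoint), we get $c\in E_{\geq 0}$. For the upper bound, rotate the cofiber sequence to a fiber sequence $y\rightarrow c\rightarrow x[1]$; since $y\in E_{\leq 0}\subseteq E_{\leq 1}$ and $x[1]\in E_{\leq 1}$, and $E_{\leq 1}$ is closed under extensions (it is the right-orthogonal of $E_{\geq 2}$), we deduce $c\in E_{\leq 1}$. Thus $c$ is bounded with $c\in E_{\geq 0}\cap E_{\leq 1}$.

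Next I would read off the homotopy objects of $c$ from the long exact sequence of the cofiber sequence $x\rightarrow y\rightarrow c$ in $E^\heartsuit$. Since $\pi_n x=\pi_n y=0$ for $n\neq 0$, the long exact sequence collapses to
\begin{equation*}
    0\rightarrow\pi_1 c\rightarrow\pi_0 x\xrightarrow{f}\pi_0 y\rightarrow\pi_0 c\rightarrow 0,
\end{equation*}
where $f$ is identified with the given map $x\rightarrow y$ in $A$. By hypothesis $f$ is injective with cokernel $z$, so $\pi_1 c=0$ and $\pi_0 c\cong z$, while $\pi_n c=0$ for all other $n$ either by the collapsing or by the bounds $c\in E_{\geq 0}\cap E_{\leq 1}$.

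Finally I would use commuting truncations (Proposition~1.2.1.10 of~\cite{ha}) to promote these vanishings to an actual equivalence. Consider the cofiber sequence $\tau_{\geq 1}c\rightarrow c\rightarrow\tau_{\leq 0}c$. Since $c\in E_{\leq 1}$, the object $\tau_{\geq 1}c$ sits in $E_{\geq 1}\cap E_{\leq 1}=E^\heartsuit[1]$ and is identified with $\pi_1 c[1]=0$. Hence $c\simeq\tau_{\leq 0}c\in E_{\leq 0}$, so $c\in E^\heartsuit$ and, via the canonical identification of objects of $E^\heartsuit$ with their $\pi_0$, the equivalence $c\simeq z$ is induced by $\pi_0$ applied to the map $y\rightarrow c$; this is exactly the surjection $y\twoheadrightarrow z$ from the given short exact sequence, so $x\rightarrow y\rightarrow c\simeq z$ is the cofiber sequence we wanted. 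The only point requiring any care is the last step — checking that $\tau_{\geq 1}c$ actually vanishes rather than just having trivial homotopy objects — and this is handled by the boundedness of $c$ together with the commutation of truncation functors, so there is no real obstacle.
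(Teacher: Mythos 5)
Your proof is correct, and its core is the same as the paper's: form the cofiber $c$ of $x\rightarrow y$ in $E$ and identify it with $z$ by computing homotopy objects. The two arguments diverge only in how the identification is completed. The paper first restricts to $E^b$ via Lemma~\ref{lem:boundedsub} so that it can invoke non-degeneracy of bounded $t$-structures: it shows that the cofiber of the natural map $w\rightarrow z$ has vanishing homotopy objects in all degrees and concludes that it is zero. You instead trap $c$ in $E_{\geq 0}\cap E_{\leq 1}$ directly (closure of $E_{\geq 0}$ under cofibers, closure of $E_{\leq 1}$ under extensions applied to the rotated sequence $y\rightarrow c\rightarrow x[1]$), read off $\pi_1c=0$ and $\pi_0c\iso z$ from the long exact sequence, and then kill $\tau_{\geq 1}c$ by noting it lies in $E^{\heartsuit}[1]$ and equals $\pi_1c[1]=0$. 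This sidesteps both the reduction to the bounded subcategory and the appeal to non-degeneracy, so your version is marginally more self-contained and applies verbatim to an arbitrary $t$-structure; otherwise the two proofs are of essentially the same length and difficulty.
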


\begin{proof}
    Note that using Lemma~\ref{lem:boundedsub} we can assume that $E$ is
    bounded. Let $w$ be the cofiber of $x\rightarrow y$ in $E$. Because $E_{\geq
    0}\subseteq E$ is a left adjoint, we can identify $w$ with the cofiber of
    $x\rightarrow y$ in $E_{\geq 0}$. As $E_{\geq
    0}\xrightarrow{\pi_0}E^{\heartsuit}\we A$ is a left adjoint, the sequence
    $x\rightarrow y\rightarrow\pi_0w\rightarrow 0$ is exact. But, it is also
    exact on the left by hypothesis, so that the cofiber $c$ of the natural map
    $w\rightarrow z$ has the property that $\pi_nc=0$ for all $n\in\ZZ$. Since
    bounded $t$-structures are non-degenerate, this implies $c\we 0$ and hence
    that $w\we z$, as desired.
\end{proof}

We leave the proof of the next lemma to the reader.

\begin{lemma}\label{lem:exactness}
    Let $E$ and $F$ be stable $\infty$-categories with $t$-structures.
    If $\varphi:E\rightarrow F$ is a right (resp. left) $t$-exact functor, then
    $\varphi$ induces a right (resp. left) exact functor $\pi_0\varphi:E^{\heartsuit}\rightarrow
    F^{\heartsuit}$.
\end{lemma}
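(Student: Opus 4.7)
The plan is to reduce everything to the translation of short exact sequences into cofiber sequences provided by Lemma~\ref{lem:exact}, apply the exact functor $\varphi$, and then extract right/left exactness from the structure of the resulting cofiber sequence in $F$.

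First I would handle the right $t$-exact case. Starting from a short exact sequence $0\to x\to y\to z\to 0$ in $E^\heartsuit$, Lemma~\ref{lem:exact} gives a cofiber sequence $x\to y\to z$ in $E$. Since $\varphi$ is exact, this pushes forward to a cofiber sequence $\varphi(x)\to\varphi(y)\to\varphi(z)$ in $F$, and right $t$-exactness places all three objects in $F_{\ge 0}$. The inclusion $F_{\ge 0}\hookrightarrow F$ preserves cofibers (it is a left adjoint to $\tau_{\ge 0}$), so this is also a cofiber sequence in $F_{\ge 0}$. Now $\pi_0\colon F_{\ge 0}\to F^\heartsuit$ is itself a left adjoint to the inclusion of the heart, hence it preserves pushouts; applying it yields a cofiber sequence $\pi_0\varphi(x)\to\pi_0\varphi(y)\to\pi_0\varphi(z)$ in $F^\heartsuit$. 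Unwinding the definition, a cofiber sequence in an abelian category is exactly a right exact sequence $\pi_0\varphi(x)\to\pi_0\varphi(y)\to\pi_0\varphi(z)\to 0$, which is the desired right exactness of $\pi_0\varphi$.

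For the left $t$-exact case I would instead use the long exact homology sequence recalled just after Definition~\ref{def:t}. Again, a short exact sequence $0\to x\to y\to z\to 0$ in $E^\heartsuit$ becomes a cofiber sequence $\varphi(x)\to\varphi(y)\to\varphi(z)$ in $F$, and by left $t$-exactness each term lies in $F_{\le 0}$. The associated long exact sequence in $F^\heartsuit$ reads
\[
\cdots\to\pi_{1}\varphi(z)\to\pi_{0}\varphi(x)\to\pi_{0}\varphi(y)\to\pi_{0}\varphi(z)\to\pi_{-1}\varphi(x)\to\cdots,
\]
and because $\varphi(z)\in F_{\le 0}$ we have $\pi_{1}\varphi(z)=0$. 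The exactness at $\pi_0\varphi(x)$ and $\pi_0\varphi(y)$ then gives the short exact sequence $0\to\pi_0\varphi(x)\to\pi_0\varphi(y)\to\pi_0\varphi(z)$, which is left exactness of $\pi_0\varphi$.

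Neither step is truly an obstacle, but the one that requires a little care is verifying that cofibers interact correctly with the inclusion $F^\heartsuit\subseteq F_{\ge 0}\subseteq F$ — specifically, the identification of cofibers in $F_{\ge 0}$ with cofibers in $F$ and of cofibers in $F^\heartsuit$ (as an abelian category) with cokernels. Both facts come from the adjointness properties of these inclusions and truncations already invoked in the excerpt, so the lemma should follow essentially formally.
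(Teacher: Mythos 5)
Your argument is correct; the paper leaves this lemma to the reader, and what you write is the intended standard argument (combine Lemma~\ref{lem:exact} with the colimit-preservation of the left adjoints $F_{\geq 0}\hookrightarrow F$ and $\pi_0\colon F_{\geq 0}\to F^\heartsuit$ in one case, and the long exact sequence of homotopy objects in the other). The only remark worth making is that the long exact sequence handles both cases uniformly: for right $t$-exactness one instead notes $\pi_{-1}\varphi(x)=0$ because $\varphi(x)\in F_{\geq 0}$, which gives exactness of $\pi_0\varphi(x)\to\pi_0\varphi(y)\to\pi_0\varphi(z)\to 0$ directly, but your adjunction argument is equally valid.
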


%

Recall that if $A$ is an abelian category, then $\K_0(A)$ is the
Grothendieck group of $A$, which has generators $[x]$ for $x\in A$
and relations $[y]=[x]+[z]$ whenever $x,y,z$ fit into an exact sequence
$0\rightarrow x\rightarrow y\rightarrow z\rightarrow 0$.
Similarly, if $E$ is a small stable $\infty$-category, then $\K_0(E)$ is the
free abelian group on symbols $[x]$ for $x\in E$ modulo the relation
$[y]=[x]+[z]$ whenever $x\rightarrow y\rightarrow z$ is a cofiber sequence in
$E$.

It follows from Lemma~\ref{lem:exact} that there is a natural
map $\K_0(E^\heartsuit)\rightarrow\K_0(E)$ when $E$ is equipped with a
$t$-structure.

\begin{lemma}\label{lem:k0}
    If $E$ is a small stable $\infty$-category
    equipped with a bounded $t$-structure, then the natural map
    $\K_0(E^\heartsuit)\rightarrow\K_0(E)$ is an isomorphism.
\end{lemma}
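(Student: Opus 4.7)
The plan is to construct an explicit inverse map using Euler characteristics of homology objects, which is well-defined precisely because the $t$-structure is bounded. Concretely, I would define
$$\chi : \K_0(E) \longrightarrow \K_0(E^\heartsuit), \qquad [x] \longmapsto \sum_{k\in\ZZ} (-1)^k [\pi_k x],$$
and show that $\chi$ is a two-sided inverse to the natural map from Lemma~\ref{lem:exact}.

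First I would check that $\chi$ is well-defined on the free abelian group on objects: by boundedness of the $t$-structure, every $x\in E$ lies in some $E_{\geq -n}\cap E_{\leq n}$, so Lemma~\ref{lem:exactness} applied to the truncation functors (or a direct check using the cofiber sequences $\tau_{\geq k+1}x\to\tau_{\geq k}x\to(\pi_k x)[k]$) implies $\pi_k x = 0$ for all but finitely many $k$; hence the sum is finite. Then I would verify $\chi$ respects cofiber sequences. Given a cofiber sequence $x\to y\to z$ in $E$, the homological functors $\pi_k$ produce a long exact sequence in $E^\heartsuit$, which has only finitely many nonzero terms. Splicing such a bounded long exact sequence into short exact sequences and taking the alternating sum of classes in $\K_0(E^\heartsuit)$ yields $\sum (-1)^k [\pi_k x] - \sum (-1)^k [\pi_k y] + \sum (-1)^k [\pi_k z] = 0$, so $\chi$ descends to $\K_0(E)$.

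To see that $\chi$ inverts the natural map $\iota:\K_0(E^\heartsuit)\to\K_0(E)$ in one direction, note that if $M\in E^\heartsuit$ then $\pi_0 M = M$ and $\pi_k M = 0$ for $k\ne 0$, so $\chi(\iota[M]) = [M]$. For the other direction, it suffices to show $\iota(\chi[x]) = [x]$ in $\K_0(E)$ for every $x\in E$. Here I would induct on the length $\ell(x)$, the smallest $\ell\geq 0$ with $x\in E_{\geq -n}\cap E_{\leq -n+\ell}$ for some $n$; the base case $\ell=0$ amounts to $x\in E^\heartsuit[n]$, which is immediate since $[x[n]] = (-1)^n[x]$ in $\K_0(E)$ (use the cofiber sequence $x\to 0\to x[1]$ repeatedly). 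For the inductive step, the cofiber sequence $\tau_{\geq m+1}x\to x\to (\pi_m x)[m]$ for $m$ the lowest nontrivial degree reduces the length, and additivity of both $[\cdot]$ and $\chi$ on cofiber sequences finishes the argument.

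No single step is genuinely hard; the only subtlety to keep in mind is that the alternating-sum identity for long exact sequences in an abelian category really does require boundedness, which is guaranteed by the hypothesis and by Lemma~\ref{lem:boundedsub} applied to the truncation argument. Everything else is formal manipulation of $\K_0$.
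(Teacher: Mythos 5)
Your proposal is correct and uses the same key idea as the paper: the Euler characteristic map $x\mapsto\sum_n(-1)^n[\pi_nx]$. The paper phrases it slightly more economically (surjectivity of $\K_0(E^\heartsuit)\to\K_0(E)$ because every object of $E$ is a finite iterated extension of heart objects, plus the observation that the Euler characteristic splits this surjection), but your two-sided-inverse verification is the same argument spelled out in full.
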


\begin{proof}
    Using the boundedness of the $t$-structure, it is immediate that
    $\K_0(E^\heartsuit)\rightarrow\K_0(E)$ is surjective because every object
    of $E$ is a finite iterated extension of objects in $E^\heartsuit$.
    On the other hand, by assigning to $x\in E$ the sum
    $$\sum_{n\in\ZZ}(-1)^n[\pi_nx],$$ we obtain a map
    $\K_0(E)\rightarrow\K_0(E^\heartsuit)$, which splits the surjection.
\end{proof}

\subsection{Induced $t$-structures on ind-completions and localizations}\label{sub:induced}

We give several results about $t$-structures on stable $\infty$-categories.
Some of these, especially the equivalence of conditions (i) through (iv) in Proposition~\ref{prop:tlocalization},
have not, as far as we are aware, been proved before either for
$\infty$-categories or for triangulated categories, so we treat the subject
in greater detail than is strictly needed for the rest of the paper.
However, there is some overlap between this section and~\cite{sag}*{Appendix~C}
and~\cite{hennion-porta-vezzosi}.

A $t$-structure $(E_{\geq 0},E_{\leq 0})$ on a stable $\infty$-category $E$ is
\df{bounded below} if the natural map $$E^-=\bigcup_{n\in\ZZ}E_{\geq n}\rightarrow
E$$ is an equivalence and
\df{right complete} if the natural map
$$E\rightarrow\lim\left(\cdots\rightarrow E_{\geq m}\xrightarrow{\tau_{\geq m+1}} E_{\geq
m+1}\rightarrow\cdots\right)$$ is an equivalence.
{\bf Bounded above} and {\bf left complete} $t$-structures
are defined similarly. 
A bounded below $t$-structure is right separated as is a right complete
$t$-structure. Neither converse is true in general.

The following definitions were introduced in~\cite{ha}*{Section 1}.
A $t$-structure on a stable presentable $\infty$-category $E$ is
\df{accessible} if $E_{\geq 0}$ is presentable. A $t$-structure on a stable
presentable $\infty$-category $E$ is
\df{compatible with filtered colimits} if $E_{\leq 0}$ is closed under
filtered colimits in $E$.

\begin{example}
    Example~\ref{ex:t}(a) is bounded (above and below). It is neither left or
    right complete, nor is it accessible or compatible with filtered colimits,
    as these notions are reserved for presentable $\infty$-categories.
    Examples~\ref{ex:t}(b) and (c) are right complete, accessible, and compatible
    with filtered colimits. 
\end{example}

The following proposition also appears in~\cite{sag}*{Lemma C.2.4.3}.

\begin{proposition}\label{lem:textension}
    Suppose that $E$ is a small stable $\infty$-category
    with a $t$-structure. Then, $\Ind(E_{\geq 0})\subseteq\Ind(E)$ determines
    the non-negative part of an accessible $t$-structure on $\Ind(E)$ which is
    is compatible with filtered colimits and such that the
    inclusion functor $E\rightarrow\Ind(E)$ is $t$-exact. Moreover, if the $t$-structure on $E$ is bounded
    below, then $\Ind(E)$ is right complete.
\end{proposition}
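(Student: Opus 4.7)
My plan is to apply \cite{ha}*{Proposition 1.4.4.11} to the stable presentable $\infty$-category $\Ind(E)$ with the small collection $E_{\geq 0} \subseteq \Ind(E)$, producing an accessible $t$-structure on $\Ind(E)$ whose nonnegative part $\mathcal{D}$ is the smallest full subcategory containing $E_{\geq 0}$ and closed under colimits and extensions. The central task is then to identify $\mathcal{D}$ with $\Ind(E_{\geq 0})$, regarded as a full subcategory of $\Ind(E)$ via the ind-extension of the inclusion $E_{\geq 0} \hookrightarrow E$. This ind-extension is fully faithful because the original inclusion is, and it is colimit-preserving because $E_{\geq 0}$ is closed under finite colimits in $E$ (extensions of connective objects are connective by the long exact sequence of $\pi_*$ in $E^{\heartsuit}$).

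The containment $\Ind(E_{\geq 0}) \subseteq \mathcal{D}$ is immediate from colimit closure of $\mathcal{D}$. For the reverse, it suffices by minimality to show that $\Ind(E_{\geq 0})$ is closed under extensions in $\Ind(E)$. Given a cofiber sequence $a \to b \to c$ in $\Ind(E)$ with $a, c \in \Ind(E_{\geq 0})$, I would write $a \simeq \colim_k a_k$ and $c \simeq \colim_k c_k$ over a common filtered diagram $K$ with $a_k, c_k \in E_{\geq 0}$. Using compactness of each $c_k$ in $\Ind(E)$, the boundary map $c \to a[1]$ refines, after further reindexing, to a compatible family of maps $c_k \to a_k[1]$ in $E$. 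Since filtered colimits commute with finite limits in $\Ind(E)$, the fiber of the boundary map is the filtered colimit of the individual fibers, so $b$ appears as $\colim_k b_k$ with each $a_k \to b_k \to c_k$ a cofiber sequence in $E$. Extension closure of $E_{\geq 0}$ in $E$ then places each $b_k$, hence $b$, in $\Ind(E_{\geq 0})$.

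For the remaining assertions: since $\mathcal{D}$ is generated under colimits by the compact objects $E_{\geq 0} \subseteq \Ind(E)$, the subcategory $\Ind(E)_{\leq -1}$ is characterized by contractibility of $\Map_{\Ind(E)}(x, -)$ for $x \in E_{\geq 0}$, and compactness immediately yields closure under filtered colimits. The inclusion $E \hookrightarrow \Ind(E)$ is right $t$-exact by construction, and left $t$-exact because the $t$-structure axioms on $E$ yield contractible mapping spaces (not merely $\pi_0$-vanishing) between the $\geq 0$ and $\leq -1$ parts. Finally, if the $t$-structure on $E$ is bounded below, then any $y \in \bigcap_n \Ind(E)_{\leq -n}$ satisfies $\Map_{\Ind(E)}(x, y) \simeq \ast$ for every $x \in \bigcup_n E_{\geq -n} = E$; since $E$ generates $\Ind(E)$ under filtered colimits, this forces $y \simeq 0$. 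Combined with compatibility with filtered colimits, this vanishing yields right completeness by the standard criterion for accessible $t$-structures in presentable $\infty$-categories. The main technical obstacle is the extension-closure step: carefully reindexing $a$ and $c$ over a common filtered diagram and producing compatible boundary maps in $E$ itself is where the argument requires the most care.
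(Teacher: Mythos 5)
Your strategy is sound but genuinely different from the paper's. The paper does not invoke \cite{ha}*{Proposition~1.4.4.11} at all: it declares \emph{both} halves of the $t$-structure at once, setting $\Ind(E)_{\geq 0}=\Ind(E_{\geq 0})$ and $\Ind(E)_{\leq -1}=\Ind(E_{\leq -1})$, and verifies the three axioms of Definition~\ref{def:t} directly. Orthogonality follows from the formula $\Map_{\Ind(E)}(x,y)\we\lim_i\colim_j\Map_E(x_i,y_j)$, and---this is the decisive simplification---the truncation triangles are obtained for free by taking the filtered colimit of the functorial triangles $\tau_{\geq 0}x_i\to x_i\to\tau_{\leq -1}x_i$ in $E$. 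So the paper never needs to prove that $\Ind(E_{\geq 0})$ is closed under extensions; that is a consequence of the axioms rather than an input. Your route buys you the ability to quote a general existence theorem and only analyze the connective half, but it costs you exactly the extension-closure argument, and it also leaves the coconnective part characterized only by orthogonality rather than identified with $\Ind(E_{\leq -1})$ (an identification the paper uses later, e.g.\ in the proof of Theorem~\ref{thm:minus1}).

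I want to flag the soft spot in your extension-closure step more precisely than you do. To conclude that each levelwise fiber $b_k=\mathrm{fib}(c_k\to a_k[1])$ lies in $E_{\geq 0}$, you need the targets of your approximating maps to lie in $E_{\geq 1}$, not merely in $E_{\geq 0}$: if you only arrange $\delta=\colim_k(c_k\to d_k)$ with $c_k,d_k\in E_{\geq 0}$ (which is what a naive reindexing inside $\Ind(E_{\geq 0})$ produces), then $\mathrm{fib}(c_k\to d_k)$ is an extension of $c_k$ by $d_k[-1]\in E_{\geq -1}$ and you only get $b\in\Ind(E_{\geq -1})$. Compactness of the $c_k$ alone does not hand you a \emph{coherent} family of factorizations $c_k\to a_k[1]$ with $a_k\in E_{\geq 0}$; you must represent the single morphism $\delta\colon c\to a[1]$ as an ind-object of the full subcategory of $\Fun(\Delta^1,E)$ spanned by arrows from $E_{\geq 0}$ to $E_{\geq 1}$, which requires an identification of the form $\Ind\bigl(E_{\geq 0}\times_E\Fun(\Delta^1,E)\times_E E_{\geq 1}\bigr)\we\Ind(E_{\geq 0})\times_{\Ind(E)}\Fun(\Delta^1,\Ind(E))\times_{\Ind(E)}\Ind(E_{\geq 1})$, together with a cofinality argument. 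This is true and provable, but it is the real content of your proof and should not be dispatched with ``after further reindexing.'' If you prefer to keep your architecture, the cheapest repair is to borrow the paper's observation: take the colimit of the truncation triangles of any ind-presentation of $b$ to produce the required cofiber sequence directly, at which point extension-closure is no longer needed.
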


\begin{proof}
    The functor $\Ind(E_{\geq 0})\rightarrow\Ind(E)$ is fully faithful
    by~\cite{htt}*{Proposition~5.3.5.11}, and we let $\Ind(E)_{\geq 0}$ denote the
    essential image. Similarly, let $\Ind(E)_{\leq -1}$ denote the essential
    image of the fully faithful functor $\Ind(E_{\leq -1})\rightarrow\Ind(E)$.
    We claim that this pair of subcategories defines a $t$-structure on $\Ind(E)$.
    Condition (1) of Definition~\ref{def:t} is immediate. Suppose that
    $x\we\colim_{i\in I} x_i$ is in $\Ind(E)_{\geq 0}$, where each $x_i$ is in
    $E_{\geq 0}$, and let $y\we\colim_{j\in J} y_j$ be in $\Ind(E)_{\leq -1}$, with each $y_j\in E_{\leq
    -1}$. Then, by definition of the ind-completion of $E$,
    $$\Map_{\Ind(E)}(x,y)\we\lim_i\colim_j\Map_E(x_i,y_j),$$ which is
    contractible since each $\Map_E(x_i,y_j)$ is contractible. Hence, (2)
    holds. To verify condition (3), note that if $x\we\colim_{i\in I}x_i$ is a
    filtered colimit of objects $x_i\in E$, then $$\colim_i\tau_{\geq
    0}x_i\rightarrow x\rightarrow\colim_i\tau_{\leq -1}x_i$$ is a cofiber
    sequence since cofiber sequences commute with colimits. Hence, (3) holds.

    To see that the $t$-structure is compatible with filtered colimits, note
    that $y\in\Ind(E)_{\leq -1}$ if and only if $\Map_{\Ind(E)}(x,y)\we 0$ for
    all $x\in\Ind(E)_{\geq 0}\we\Ind(E_{\geq 0})$.
    However, this latter condition holds if and only if
    $\Map_{\Ind(E)}(x,y)\we 0$ for all $x\in E_{\geq 0}$ since $\Ind(E_{\geq 0})$ is
    generated by $E_{\geq 0}$ under filtered colimits. Since the objects $x\in E_{\geq 0}\subseteq E$
    are compact, this condition is closed under filtered colimits in $y$, as
    desired.

    By construction, the functor $E\rightarrow\Ind(E)$ is $t$-exact, and the
    $t$-structure on $\Ind(E)$ is accessible as $\Ind(E)_{\geq
    0}\we\Ind(E_{\geq 0})$ is presentable.

    To finish the proof, we first show right separatedness.
    Suppose that $y$ is an object of $\bigcap_{n\in\ZZ}\Ind(E)_{\leq n}.$
    Since the objects of $E$ are compact generators for
    $\Ind(E)$, it is enough to show that the mapping spaces $\Map_{\Ind(E)}(x,y)\we 0$
    for all $x\in E$. Fix $x\in E$. We have for all $n$ a natural equivalence
    $$\Map_{\Ind(E)}(x,y)\we\Map_{\Ind(E)}(\tau_{\leq n}x,y).$$
    However, since the $t$-structure on $E$ is bounded below, $\tau_{\leq
    n}x\we 0$ for $n$ sufficiently small. Therefore, $\Map_{\Ind(E)}(x,y)\we
    0$. Hence, $y\we 0$.

    Since $\Ind(E)_{\leq 0}\rightarrow\Ind(E)$ is closed under finite
    coproducts and filtered colimits it is closed under countable coproducts.
    Therefore, it follows by
    the right complete version of~\cite{ha}*{Proposition~1.2.1.19} that
    $\Ind(E)$ is right separated if and only if it is right complete. This
    completes the proof.
\end{proof}

We will call the $t$-structure on $\Ind(E)$ constructed in
Proposition~\ref{lem:textension} the \df{induced} $t$-structure.
The proof of the proposition does not extend to show that bounded above $t$-structures on $E$
induce left complete $t$-structures on $\Ind(E)$. The obstruction is that the
inclusion of $\Ind(E)_{\geq n}$ is a left adjoint rather than a right
adjoint.

\begin{corollary}\label{cor:idemcomplete}
    Let $E$ be a small stable $\infty$-category with a bounded $t$-structure.
    Then, $E$ is idempotent complete.
\end{corollary}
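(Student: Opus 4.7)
The plan is to apply the induced $t$-structure on $\Ind(E)$ supplied by Proposition~\ref{lem:textension}. Recall that the idempotent completion $\widetilde{E}$ of $E$ may be identified with the full subcategory of $\Ind(E)$ spanned by compact objects, so it suffices to show that every compact $x\in\Ind(E)$ already lies in the essential image of $E$.

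Such an $x$ is a retract of some $y\in E$. Since the $t$-structure on $E$ is bounded, $y$ sits in $E_{\ge n}\cap E_{\le m}$ for some $n\le m$, and by functoriality of truncation the same bounds hold for $x$ and each $\pi_i x$ is a retract of $\pi_i y\in E^\heartsuit$. In particular only finitely many $\pi_i x$ are non-zero. First I would argue that every $\pi_i x$ lies in $E^\heartsuit$: the witnessing idempotent on $\pi_i y$ is a morphism in $\Ind(E)^\heartsuit$ between objects of $E^\heartsuit$, hence lies in $E^\heartsuit$ by full faithfulness of $E^\heartsuit\hookrightarrow\Ind(E)^\heartsuit$. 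Since any abelian category has kernels and cokernels and is therefore idempotent complete, this idempotent splits in $E^\heartsuit$, and by uniqueness of idempotent splittings its splitting agrees with $\pi_i x$.

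Next I would induct on the number of non-zero homotopy objects of $x$. The base cases of zero or one non-zero $\pi_i x$ are immediate: in the latter, $x\simeq\pi_k x[k]$ with $\pi_k x\in E^\heartsuit\subseteq E$. For the inductive step, let $i_0=\min\{i:\pi_i x\ne 0\}$. The cofiber sequence
$$\tau_{\ge i_0+1}x\longrightarrow x\longrightarrow \pi_{i_0}x[i_0]$$
in $\Ind(E)$ exhibits $x$ as the cofiber of a morphism $\pi_{i_0}x[i_0-1]\to\tau_{\ge i_0+1}x$. The source lies in $E$ by the base case. For the target, $\tau_{\ge i_0+1}x$ is a retract of $\tau_{\ge i_0+1}y$; since $E\hookrightarrow\Ind(E)$ is $t$-exact, truncation in $\Ind(E)$ and in $E$ agree on objects of $E$, so $\tau_{\ge i_0+1}y\in E$. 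Moreover $\tau_{\ge i_0+1}x$ has strictly fewer non-zero homotopy objects than $x$, so it lies in $E$ by the inductive hypothesis. Full faithfulness of $E\hookrightarrow\Ind(E)$ then lifts the connecting map to a morphism in $E$, and its cofiber computed in $E$ coincides with $x$, so $x\in E$.

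The only technical subtlety is the agreement of truncations in $E$ and $\Ind(E)$ on objects of $E$, which is part of Proposition~\ref{lem:textension}. There is no serious obstacle: once the induced $t$-structure is in place, the result reduces to repeated application of the idempotent completeness of the abelian heart, which is automatic.
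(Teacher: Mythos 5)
Your argument is correct, but it concludes by a genuinely different route than the paper. Both proofs begin the same way: identify $\widetilde{E}$ with $\Ind(E)^\omega$, observe that a compact object $x$ is a retract of some bounded $y\in E$, and use the idempotent completeness of the abelian category $E^\heartsuit$ (together with the full faithfulness of $E^\heartsuit\hookrightarrow\Ind(E)^\heartsuit$ coming from the $t$-exact embedding of Proposition~\ref{lem:textension}) to see that every $\pi_i x$ already lies in $E^\heartsuit$. At that point the paper takes a $K$-theoretic shortcut: it notes that the $t$-structure restricts to a bounded $t$-structure on $F=\widetilde{E}$ with the \emph{same} heart, applies Lemma~\ref{lem:k0} to both $E$ and $F$ to get $\K_0(E)\xrightarrow{\,\sim\,}\K_0(F)$, and invokes Thomason's classification of dense subcategories to conclude $E\simeq F$. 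You instead reassemble $x$ directly by induction on the length of its Postnikov tower, writing $x$ as the cofiber of a map $\pi_{i_0}x[i_0-1]\to\tau_{\geq i_0+1}x$ between objects already known to lie in $E$, and using that the fully faithful exact embedding $E\hookrightarrow\Ind(E)$ creates cofibers. Your version is more elementary and self-contained (no appeal to $\K_0$ or to Thomason's theorem), at the cost of carrying out the Postnikov induction by hand; the paper's version is shorter given that Lemma~\ref{lem:k0} and Thomason's theorem are already available. The technical points you flag --- agreement of truncations on objects of $E$, and retract-closure of $\Ind(E)_{\geq n}$ and $\Ind(E)_{\leq m}$ --- are exactly the ones that need checking, and both follow from the $t$-exactness statement in Proposition~\ref{lem:textension}.
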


\begin{proof}
    Let $F$ be the idempotent completion of $E$. Equivalently,
    $F\we\Ind(E)^\omega$, the full subcategory of compact objects of $\Ind(E)$.
    We claim that the $t$-structure on $E$ extends to a bounded $t$-structure
    on $F$. It is enough to check that the truncation functors
    $\tau_{\leq 0}$ and $\tau_{\geq 0}$ on $\Ind(E)$ preserve compact objects. But,
    if $x\in F$ is a summand of $y\in E$, it follows that $\tau_{\leq 0}x$ is a
    summand of $\tau_{\leq 0}y$, and similarly for $\tau_{\geq 0}x$. This
    proves that the $t$-structure on $\Ind(E)$ restricts to a bounded $t$-structure on
    $F$. The heart $F^{\heartsuit}$ must be the
    idempotent completion of $E^{\heartsuit}$. But, since abelian categories
    are idempotent complete,
    $E^\heartsuit\rightarrow F^{\heartsuit}$ is an equivalence. Hence, by
    Lemma~\ref{lem:k0}, $\K_0(E)\rightarrow\K_0(F)$ is an isomorphism. It
    follows from Thomason's classification of dense subcategories of
    triangulated categories that $E\we F$. See~\cite{thomason-classification}*{Theorem~2.1}.

    We can also avoid appealing to Thomason's result as follows. Given an
    object $x\in F$ and an integer $n\geq 0$, we say that $x$ has amplitude at most $n$ if there is an
    interval $[a,b]$ with $b-a\leq n$ and such that $\pi_ix=0$ for
    $i\notin[a,b]$. As the $t$-structure on $F$ is bounded, every object has
    amplitude at most $n$ for some integer $n\geq 0$. Since
    $E^\heartsuit\we F^\heartsuit$, if $x$ has amplitude at most $0$, then
    $x\in E$. We proceed by induction on the amplitude.
    Assume that for every object $y$ of $F$ of amplitude at most $n-1$, where
    $n\geq 1$, we have that $y$ is in the subcategory $E$. Fix $x\in F$ an
    object of amplitude at most $n$ and assume, possibly by suspending, that $\pi_ix=0$ for $i\notin[0,n]$. Consider the
    fiber sequence $\tau_{\geq 1}x\rightarrow x\rightarrow\pi_0x$. The objects
    $\tau_{\geq 1}x$ and $\pi_0x$ have amplitude at most $n-1$ and hence they
    are in $E$. But, $x$ is the fiber of $\pi_0x\rightarrow\tau_{\geq 1}x[1]$
    and $E\subseteq F$ is full. Since $E$ is stable, $x$ is in $E$, as desired.
\end{proof}

In the rest of this section, we establish an important device for checking
when a $t$-exact fully faithful functor $i:E\rightarrow F$ of small stable
$\infty$-categories induces a $t$-structure on the cofiber $G=\widetilde{F/E}$ in
$\Cat_{\infty}^{\perf}$, the $\infty$-category of small idempotent complete
stable $\infty$-categories and exact functors. Recall that $G$
is equivalent to the idempotent completion of the Verdier localization of $F$
by $E$ (see~\cite{bgt1}*{Proposition~5.13}).
We begin with a couple of easy lemmas.

\begin{lemma}\label{lem:inducedexact}
    If $i:E\rightarrow F$ is a $t$-exact (resp. right $t$-exact, resp. left
    $t$-exact) functor of stable $\infty$-categories equipped with
    $t$-structures, then the
    induced functor $i^*:\Ind(E)\rightarrow\Ind(F)$ is $t$-exact (resp. right
    $t$-exact, resp. left $t$-exact) with
    respect to the induced $t$-structures on $\Ind(E)$ and $\Ind(F)$.
\end{lemma}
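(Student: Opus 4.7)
The plan is to unwind the construction of the induced $t$-structure from Proposition~\ref{lem:textension} and use the fact that $i^* = \Ind(i) : \Ind(E) \to \Ind(F)$ is characterized as the unique colimit-preserving extension of the composite $E \xrightarrow{i} F \hookrightarrow \Ind(F)$, so in particular $i^*$ preserves filtered colimits. Since the $t$-exact case is the conjunction of the right and left $t$-exact cases, it suffices to treat those two separately.

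For right $t$-exactness, suppose $i$ sends $E_{\geq 0}$ to $F_{\geq 0}$, and let $x \in \Ind(E)_{\geq 0}$. By the construction of the induced $t$-structure, $\Ind(E)_{\geq 0}$ is the essential image of $\Ind(E_{\geq 0}) \to \Ind(E)$, so we may write $x \simeq \colim_\alpha x_\alpha$ as a filtered colimit of objects $x_\alpha \in E_{\geq 0}$. Then $i^*(x) \simeq \colim_\alpha i(x_\alpha)$, and each $i(x_\alpha) \in F_{\geq 0}$ by hypothesis. This exhibits $i^*(x)$ as lying in the essential image of $\Ind(F_{\geq 0}) \to \Ind(F)$, namely $\Ind(F)_{\geq 0}$.

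For left $t$-exactness, suppose $i$ sends $E_{\leq 0}$ to $F_{\leq 0}$, and let $y \in \Ind(E)_{\leq 0}$. Writing $y \simeq \colim_\beta y_\beta$ with $y_\beta \in E_{\leq 0}$, we obtain $i^*(y) \simeq \colim_\beta i(y_\beta)$. Here each $i(y_\beta)$ lies in $F_{\leq 0} \subseteq \Ind(F)_{\leq 0}$, and by Proposition~\ref{lem:textension} the induced $t$-structure on $\Ind(F)$ is compatible with filtered colimits, so $\Ind(F)_{\leq 0}$ is closed under filtered colimits in $\Ind(F)$. Hence $i^*(y) \in \Ind(F)_{\leq 0}$, as required.

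There is no real obstacle; the only point that needs any attention is that for the left $t$-exact case one must invoke the compatibility with filtered colimits in $\Ind(F)$, whereas the right $t$-exact case is automatic from the definition of $\Ind(F)_{\geq 0}$ as generated under filtered colimits by $F_{\geq 0}$.
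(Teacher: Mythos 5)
Your proof is correct and follows essentially the same route as the paper: both arguments reduce to the fact that $\Ind(E)_{\geq 0}\simeq\Ind(E_{\geq 0})$ and $\Ind(E)_{\leq 0}\simeq\Ind(E_{\leq 0})$ by construction, so that $i^*$ carries filtered colimits of objects of $E_{\geq 0}$ (resp.\ $E_{\leq 0}$) into $\Ind(F)_{\geq 0}$ (resp.\ $\Ind(F)_{\leq 0}$). The only cosmetic difference is that in the left $t$-exact case you invoke compatibility with filtered colimits, whereas the paper observes directly that $\Ind(F)_{\leq 0}$ is the essential image of $\Ind(F_{\leq 0})$; both facts come from Proposition~\ref{lem:textension} and either suffices.
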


\begin{proof}
    The exactness of $i^*$ is immediate as it preserves all small colimits and
    hence finite limits since $\Ind(E)$ and $\Ind(F)$ are stable.
    Because $\Ind(E)_{\geq 0}\we\Ind(E_{\geq 0})$ and
    $\Ind(F)_{\geq 0}\we\Ind(F_{\geq 0})$, it is immediate that $i^*:\Ind(E)\rightarrow\Ind(F)$ is
    right $t$-exact if $i$ is. The same holds for left $t$-exactness.
\end{proof}

\begin{lemma}\label{lem:heartclosure}
    Let $i:E\rightarrow F$ be a $t$-exact fully faithful functor of
    stable $\infty$-categories equipped with $t$-structures. Then, the natural map
    $$\Ind(E)^{\heart}\rightarrow\Ind(F)^\heart\cap\Ind(E)$$ is an exact equivalence
    of abelian categories.
\end{lemma}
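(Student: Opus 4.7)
The plan is to exhibit the natural functor, show it is fully faithful and essentially surjective, and then observe that exactness is automatic. The two key prior inputs are Lemma \ref{lem:inducedexact} (the induced functor on ind-completions inherits $t$-exactness) and the standard fact that the ind-extension of a fully faithful functor between small $\infty$-categories is again fully faithful (computed via $\Map_{\Ind(E)}(\colim_i x_i, \colim_j y_j)\simeq\lim_i\colim_j\Map_E(x_i,y_j)$ and the corresponding formula for $\Ind(F)$). Together these show that $i^*:\Ind(E)\to\Ind(F)$ is a $t$-exact fully faithful functor, so it carries $\Ind(E)^\heart$ into $\Ind(F)^\heart$, and, after identifying $\Ind(E)$ with its essential image in $\Ind(F)$, into the intersection $\Ind(F)^\heart\cap\Ind(E)$.

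The main content is essential surjectivity. Given $x\in\Ind(E)$ with $i^*x\in\Ind(F)^\heart$, I would use the standard fact that any $t$-exact functor commutes with truncations: applying $i^*$ to the cofiber sequence $\tau^E_{\geq 0}x\to x\to\tau^E_{\leq -1}x$ yields a cofiber sequence whose first term lies in $\Ind(F)_{\geq 0}$ and last term in $\Ind(F)_{\leq -1}$, so it must agree with the analogous sequence for $i^*x$. Hence $i^*\tau^E_{\geq 0}x\simeq\tau^F_{\geq 0}i^*x\simeq i^*x$, and symmetrically $i^*\tau^E_{\leq 0}x\simeq i^*x$. Full faithfulness of $i^*$ then forces the natural maps $\tau^E_{\geq 0}x\to x$ and $x\to\tau^E_{\leq 0}x$ to be equivalences, placing $x$ in $\Ind(E)_{\geq 0}\cap\Ind(E)_{\leq 0}=\Ind(E)^\heart$. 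Full faithfulness of the induced functor on hearts is immediate from full faithfulness of $i^*$.

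For exactness, once essential surjectivity is established the abelian structure on $\Ind(F)^\heart\cap\Ind(E)$ is necessarily the one transported from $\Ind(E)^\heart$, and Lemma \ref{lem:exactness} shows that the $t$-exact functor $i^*$ induces an exact functor on hearts. Alternatively, kernels and cokernels in $\Ind(F)^\heart$ of morphisms between objects of the intersection can be computed via $\pi_0^F$ applied in $\Ind(F)$, and since $i^*$ commutes with truncations and is fully faithful, these are computed already in $\Ind(E)^\heart$ and lie in the intersection. I do not anticipate any serious obstacle here; the entire argument is a short formal consequence of $t$-exactness plus full faithfulness of $i^*$, and the only genuine step is the truncation argument for essential surjectivity.
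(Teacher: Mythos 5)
Your proposal is correct and follows essentially the same route as the paper: identify $\Ind(E)$ with its essential image under the fully faithful $t$-exact functor $i^*$, and for essential surjectivity use that $t$-exactness forces the truncations of a preimage $y$ of an object of the intersection to vanish outside degree zero, with full faithfulness/conservativity of $i^*$ transporting the vanishing of $i^*\tau_{\leq -1}y$ and $i^*\tau_{\geq 1}y$ back to $\Ind(E)$. The paper states this more tersely but the argument is the same, including deducing exactness from Lemma~\ref{lem:inducedexact}.
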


\begin{proof}
    Let $x\in\Ind(F)$ be an object of the intersection. Write $x=i^*y$ for some
    $y\in\Ind(E)$ (which is unique up to equivalence). The fact that $i^*$ is
    $t$-exact and fully
    faithful implies that $\tau_{\geq 1}y\we 0$ and $\tau_{\leq -1}y\we 0$. In particular, $y$ is
    contained in $\Ind(E)^\heart$. It follows that the map in the lemma is
    essentially surjective. That the map is fully faithful follows from the fact that
    $\Ind(E)\rightarrow\Ind(F)$ is fully faithful, while exactness again
    follows from Lemma~\ref{lem:inducedexact}.
\end{proof}

Recall from~\cite{ha}*{Proposition~1.4.4.11} that if $\Cscr$ is a stable
presentable $\infty$-category and $\Cscr'\subseteq\Cscr$ is a full presentable
subcategory closed under colimits and extensions in $\Cscr$, then
$\Cscr'\we\Cscr_{\geq 0}$ for some accessible $t$-structure on $\Cscr$. We will
say that the $t$-structure $(\Cscr_{\geq 0},\Cscr_{\leq 0})$ on $\Cscr$ is the
$t$-structure \df{generated} by $\Cscr'\subseteq\Cscr$. This
provides a way for defining many $t$-structures on stable presentable
$\infty$-categories. Note that if $\Cscr\we\Ind(E)$, where $E$ is equipped with
a $t$-structure $(E_{\geq 0},E_{\leq 0})$, then the induced $t$-structure on
$\Ind(E)$ is a special case of this phenomenon: it is generated by $\Ind(E_{\geq 0})$.

\begin{definition}
    Let $A\subseteq B$ be an exact fully faithful functor of abelian
    categories. We identify $A$ with its essential image in $B$.
    Say that $A$ is a \df{weak Serre subcategory} of $B$ if $A$ is closed under
    extensions in $B$. We say that $A$ is a \df{Serre subcategory} of $B$ (or a
    \df{localizing subcategory} of $B$) if
    $A$ is a weak Serre subcategory and $A$ is additionally closed under taking
    subobjects and quotient objects in $B$.
\end{definition}

\begin{example}
    Let $R$ be a right coherent ring. Then, the category
    $\Mod_R^{\heartsuit,\omega}$ of finitely presented (discrete) right
    $R$-modules is an abelian subcategory of $\Mod_R^\heartsuit$. It is always
    weak Serre, but it is Serre if and only if $R$ is right noetherian.
\end{example}

\begin{lemma}\label{lem:wserre}
    Let $E\rightarrow F$ be a $t$-exact fully faithful functor of stable
    $\infty$-categories equipped with $t$-structures. Then, the induced map
    $E^{\heartsuit}\rightarrow F^{\heartsuit}$ exhibits $E^{\heartsuit}$ as a
    weak Serre subcategory of $F^{\heartsuit}$.
\end{lemma}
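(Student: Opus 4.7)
The plan is to verify the two requirements in the definition of a weak Serre subcategory: (a) the induced functor $E^{\heartsuit}\to F^{\heartsuit}$ is exact and fully faithful, and (b) its essential image is closed under extensions in $F^{\heartsuit}$.

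For (a), full faithfulness at the level of hearts is inherited from full faithfulness of $i:E\to F$, since both hearts are full subcategories. Exactness follows from Lemma~\ref{lem:exactness}: being $t$-exact, the functor $i$ induces a functor on hearts which is both left and right exact, hence exact. Alternatively, apply Lemma~\ref{lem:exact} to convert a short exact sequence in $E^{\heartsuit}$ into a cofiber sequence in $E$, transport it via $i$ to a cofiber sequence in $F$ with all terms in $F^{\heartsuit}$, and apply the homological long exact sequence to conclude.

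For (b), the key move is to lift an extension in $F^{\heartsuit}$ through the connecting map. Suppose $0\to i(x)\to w\to i(z)\to 0$ is exact in $F^{\heartsuit}$ with $x,z\in E^{\heartsuit}$. By Lemma~\ref{lem:exact}, this corresponds to a cofiber sequence $i(x)\to w\to i(z)$ in $F$, determined up to equivalence by its boundary map $\delta:i(z)\to i(x)[1]$ in $F$. Full faithfulness of $i$ gives a canonical isomorphism $\Hom_F(i(z),i(x)[1])\cong\Hom_E(z,x[1])$, so $\delta$ is the image of a unique map $\widetilde{\delta}:z\to x[1]$ in $E$. Let $w'\to z\xrightarrow{\widetilde{\delta}}x[1]$ be a fiber sequence in $E$; equivalently, there is a cofiber sequence $x\to w'\to z$ in $E$. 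Applying the exact functor $i$ produces a cofiber sequence $i(x)\to i(w')\to i(z)$ in $F$ whose boundary map is $\delta$, so $i(w')\simeq w$ in $F$.

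It remains to show that $w'$ lies in $E^{\heartsuit}$. The homological long exact sequence from $x\to w'\to z$ reads
\begin{equation*}
\cdots\to\pi_{n+1}z\to\pi_n x\to\pi_n w'\to\pi_n z\to\pi_{n-1}x\to\cdots
\end{equation*}
in $E^{\heartsuit}$. Since $x,z\in E^{\heartsuit}$, all groups $\pi_n x$ and $\pi_n z$ vanish outside of $n=0$, forcing $\pi_n w'=0$ for $n\neq 0$. By bounded $t$-structure considerations (or just that the $t$-structure is left and right separated in the range $[-1,1]$, which is all that is needed since $w'\in E_{\geq 0}\cap E_{\leq 1}$ and a simple inspection of $\pi_0,\pi_1$ suffices), we conclude $w'\in E^{\heartsuit}$. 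Thus $w\simeq i(w')$ is in the essential image, completing the proof. The only subtle step is the lifting of the connecting map $\delta$; this is immediate from full faithfulness applied to the mapping spectra (not just mapping spaces), and is where the hypothesis that $i$ is fully faithful as a functor of stable $\infty$-categories is used essentially.
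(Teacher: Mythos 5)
Your proof is correct and follows the same route as the paper's: Lemma~\ref{lem:exact} turns the extension into a cofiber sequence in $F$, the connecting map $i(z)\to i(x)[1]$ is lifted to $E$ by full faithfulness, and the fiber taken in $E$ maps to $w$. For the final step you could avoid the slightly awkward separatedness discussion (the lemma does not assume boundedness) by simply noting that $E_{\geq 0}$ and $E_{\leq 0}$ are each closed under extensions, so the fiber $w'$ of $\widetilde{\delta}$ lies in $E_{\geq 0}\cap E_{\leq 0}=E^{\heartsuit}$ directly.
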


\begin{proof}
    The fact that $E^\heartsuit\rightarrow F^{\heartsuit}$ is exact and fully
    faithful follows from Lemma~\ref{lem:exactness} and the fully faithfulness of
    $E\rightarrow F$. To check that $E^\heartsuit$ is closed under extensions
    in $F^\heartsuit$, consider an exact sequence $0\rightarrow x\rightarrow
    y\rightarrow z\rightarrow 0$ where $x,z\in E^{\heartsuit}$ and $y\in
    F^{\heartsuit}$. Then, by Lemma~\ref{lem:exact}, $x\rightarrow y\rightarrow
    z$ is a cofiber sequence in $F$. Hence, we can rewrite $y$ as the fiber of
    $z\rightarrow x[1]$. Since $E\rightarrow F$ is fully faithful and preserves
    fibers, it follows that $y$ is in the essential image of $E\rightarrow F$,
    as desired. We conclude by using Lemma~\ref{lem:heartclosure}.
\end{proof}

The first draft of this paper contained conditions (i) through (iv) of the next
proposition. Benjamin Hennion pointed out another condition, (v) below, which is
shown to be equivalent to condition (iii)
in~\cite{hennion-porta-vezzosi}*{Proposition~A.5}.

\begin{proposition}\label{prop:tlocalization}
    Let $i:E\rightarrow F$ be a $t$-exact fully faithful functor of
    stable
    $\infty$-categories equipped with bounded
    $t$-structures, and let $j:F\rightarrow G$ be the cofiber in $\Cat_{\infty}^{\perf}$.
    Provide $\Ind(G)$ with the accessible
    $t$-structure generated by the smallest extension-closed cocomplete
    subcategory of $\Ind(G)$ containing the image of $F_{\geq 0}$, and equip
    $\Ind(E)$ and $\Ind(F)$ with the induced $t$-structures of
    Proposition~\ref{lem:textension}. The
    following are equivalent:
    \begin{enumerate}
        \item[{\rm (i)}]   the essential image of the embedding
            $i^\heart:E^\heart\rightarrow F^\heart$ is a Serre subcategory of
            $F^\heart$;
        \item[{\rm (ii)}] the $t$-structure on $\Ind(G)$ restricts to a
            $t$-structure on $G$ such that $j:F\rightarrow G$ is $t$-exact;
        \item[{\rm (iii)}]   the induced functor $j^*:\Ind(F)\rightarrow\Ind(G)$ is $t$-exact;
        \item[{\rm (iv)}]   the essential image of the embedding
            $\Ind(E)^\heart\rightarrow\Ind(F)^\heart$ is a Serre subcategory of
            $\Ind(F)^\heart$;
        \item[{\rm (v)}] the counit map $i^*i_*x\rightarrow x$ induces a
            monomorphism $\pi_0(i^*i_*x)\rightarrow\pi_0(x)$ in
            $\Ind(F)^\heartsuit$ for every object $x$ of $\Ind(F)$, where $i_*$
            is the right adjoint of $i^*:\Ind(E)\rightarrow\Ind(F)$.
    \end{enumerate}
    If these conditions hold, then the $t$-structure on $G$ in (ii) is bounded.
\end{proposition}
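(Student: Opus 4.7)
The plan is to work through the equivalences in the cycle (iii) $\Leftrightarrow$ (iv) $\Leftrightarrow$ (i), deduce (iii) $\Leftrightarrow$ (v) directly from \cite{hennion-porta-vezzosi}*{Proposition~A.5}, and handle (ii) $\Leftrightarrow$ (iii) separately by passage to compact objects. The first step is to set up some preliminaries. Since $i^*\colon\Ind(E)\rightarrow\Ind(F)$ is a colimit-preserving fully faithful functor between presentable stable $\infty$-categories, it admits a right adjoint $i_*$. The cofiber in $\Cat_\infty^\perf$ corresponds after ind-completion to a Bousfield localization $j^*\colon\Ind(F)\rightarrow\Ind(G)$ with fully faithful right adjoint $j_*$ and $\ker(j^*)=\Ind(E)$. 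Because $\Ind(G)_{\geq 0}$ is generated under colimits and extensions by $j^*(F_{\geq 0})$, and $\Ind(F)_{\geq 0}=\Ind(F_{\geq 0})$ is generated under colimits by $F_{\geq 0}$, the functor $j^*$ is automatically right $t$-exact.

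For (iii) $\Rightarrow$ (iv): if $j^*$ is $t$-exact, the induced functor $j^*\colon\Ind(F)^\heart\rightarrow\Ind(G)^\heart$ is an exact functor of abelian categories, hence has Serre kernel; by Lemma~\ref{lem:heartclosure} this kernel coincides with $\Ind(E)^\heart$. For the harder direction (iv) $\Rightarrow$ (iii), given $x\in\Ind(F)_{\leq-1}$, consider the Bousfield fiber sequence $b\rightarrow x\rightarrow j_*j^*x$ with $b\in\Ind(E)$. The long exact sequence of homotopy objects in $\Ind(F)^\heart$ yields a monomorphism $\pi_0(j_*j^*x)\hookrightarrow\pi_{-1}(b)$ and isomorphisms $\pi_n(j_*j^*x)\cong\pi_{n-1}(b)$ for $n\geq 1$. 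Since each $\pi_k(b)$ lies in $\Ind(E)^\heart$, the Serre hypothesis on $\Ind(E)^\heart$ in $\Ind(F)^\heart$ implies $\pi_n(j_*j^*x)\in\Ind(E)^\heart$ for all $n\geq 0$. One then argues that $\tau_{\geq 0}^F(j_*j^*x)\in\Ind(E)$; but $j_*j^*x$ is in the essential image of $j_*$, which is right orthogonal to $\Ind(E)$, forcing this truncation to vanish and hence $j^*x\in\Ind(G)_{\leq-1}$.

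The equivalence (i) $\Leftrightarrow$ (iv) follows by showing $\Ind(E)^\heart=\Ind(E^\heart)$, which uses the boundedness of the $t$-structure on $E$ together with compatibility of the induced $t$-structure with filtered colimits (and similarly for $F$). Then Serre-ness at the small level propagates to the ind-completion because subobjects and quotients in $\Ind(F^\heart)$ of an object of $\Ind(E^\heart)$ can be realized as filtered colimits of subobjects and quotients of $F^\heart$-approximants, using that filtered colimits of short exact sequences are exact in Grothendieck abelian categories. Conversely, if $\Ind(E^\heart)\subseteq\Ind(F^\heart)$ is Serre, restricting to compact objects (which coincide with $E^\heart$ and $F^\heart$ respectively) recovers Serre-ness at the small level.

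For (iii) $\Leftrightarrow$ (ii): assuming (iii), the $t$-structure on $\Ind(G)$ restricts to $G=\Ind(G)^\omega$ exactly as in the proof of Corollary~\ref{cor:idemcomplete}, since every compact object of $\Ind(G)$ is a retract of $j^*z$ for some $z\in F^b$, and the truncations of such an object are retracts of $j^*\tau_{\geq 0}z$ and $j^*\tau_{\leq 0}z$, which are compact; this simultaneously shows that the induced $t$-structure on $G$ is bounded. The reverse implication (ii) $\Rightarrow$ (iii) follows by ind-completing and using compatibility with filtered colimits. The equivalence (iii) $\Leftrightarrow$ (v) is quoted from \cite{hennion-porta-vezzosi}*{Proposition~A.5}. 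I expect the main technical obstacle to be the implication (iv) $\Rightarrow$ (iii): one must carefully leverage the Serre property, the fiber sequence of the Bousfield unit, and the orthogonality of $j_*$-local objects to $\Ind(E)$ in order to extract $t$-exactness of $j^*$.
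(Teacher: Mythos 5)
Your overall architecture is reasonable, and several of the individual links are fine — indeed, your (iii) $\Rightarrow$ (iv) via ``the kernel of an exact functor of abelian categories is Serre, and by Lemma~\ref{lem:heartclosure} that kernel is $\Ind(E)^\heart$'' is arguably cleaner than what one might first write down, and your treatment of (i) $\Leftrightarrow$ (iv), (iii) $\Leftrightarrow$ (ii), and the boundedness of the $t$-structure on $G$ all check out. The problem is that in your organization the entire weight of passing from the Serre condition to $t$-exactness rests on (iv) $\Rightarrow$ (iii), and that implication contains a genuine gap at the step ``one then argues that $\tau_{\geq 0}(j_*j^*x)\in\Ind(E)$.'' What you have actually established at that point is that $y=j_*j^*x$ satisfies $\pi_n y\in\Ind(E)^\heartsuit$ for all $n\geq 0$. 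To conclude that the connective object $\tau_{\geq 0}y$ lies in $\Ind(E)$ you would need to know that a connective object of $\Ind(F)$ all of whose homotopy objects lie in $\Ind(E)^\heartsuit$ belongs to $\Ind(E)$. This is not automatic: $\Ind(E)$ is closed under colimits and extensions and contains $\Ind(E)^\heartsuit$, but reconstructing an object from its homotopy objects requires a limit along the Postnikov tower, and the induced $t$-structure on $\Ind(F)$ is only known to be right complete (Proposition~\ref{lem:textension}); left completeness can fail, and $\Ind(E)\subseteq\Ind(F)$ need not be closed under the relevant limits in any case. So the step is unjustified as stated, and I do not see how to fill it with the tools you invoke. (The rest of that paragraph is fine: once $\tau_{\geq 0}y\in\Ind(E)$ is granted, orthogonality of the $j_*$-local object $y$ to $\Ind(E)$ does force $\tau_{\geq 0}y\we 0$ and hence $j^*x\in\Ind(G)_{\leq -1}$.)

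The paper avoids this issue by never proving (iv) $\Rightarrow$ (iii) directly: it runs the cycle (i) $\Rightarrow$ (ii) $\Rightarrow$ (iii) $\Rightarrow$ (iv) $\Rightarrow$ (i), so that the substantive input of the Serre hypothesis is consumed in (i) $\Rightarrow$ (ii). There one works entirely with the small Verdier quotient $F/E$, represents a map $Lx\rightarrow Ly[-1]$ (with $x\in F_{\geq 0}$, $y\in F_{\leq 0}$) by a roof $x\leftarrow z\rightarrow y[-1]$ whose cofiber $c$ lies in $E$, and uses the Serre condition only to see that the quotient $\pi_{-1}z$ of $\pi_0 c$ lies in $E^\heartsuit$; boundedness of the $t$-structure on $F$ keeps every object involved in the small categories, so no completeness issue ever arises. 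If you want to keep your route, you would need either to prove (iv) $\Rightarrow$ (v) and then appeal to the quoted equivalence (v) $\Leftrightarrow$ (iii), or to supply an argument replacing the Postnikov-tower reconstruction; as written, the proof is incomplete.
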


\begin{proof}
    Assume (i). Write $G'=F/E$ for the Verdier quotient of $F$ by $E$. In particular, $G$ is
    the idempotent completion of $G'$. We will construct a bounded $t$-structure on
    $G'$ such that the functors $F\rightarrow G'$ and $G'\subseteq
    G\subseteq\Ind(G)$ are $t$-exact.
    By Corollary~\ref{cor:idemcomplete}, $G'$ will be idempotent complete. This
    will establish (ii).

    Let $L:F\rightarrow G'$ denote the
    quotient functor. We define $\tau_{\geq 0}Lx=L\tau_{\geq 0}x$, and
    similarly $\tau_{\leq 0}Lx=L\tau_{\leq 0}$. It follows that (1) and (3) from
    Definition~\ref{def:t} hold trivially. Now, consider $\Hom_{G'}(Lx,Ly[-1])$, where
    $x\in F_{\geq 0}$ and $y\in F_{\leq 0}$. Pick $f\in\Hom_{G'}(Lx,Ly[-1])$.
    We can represent $f$ by a zig-zag $x\leftarrow z\rightarrow y[-1]$, where
    the cofiber $c$ of $x\leftarrow z$ is in $E$. Now, consider the following
    diagram
    \begin{equation*}
        \xymatrix{
            \tau_{\geq 0}z\ar[r]\ar[d]  &   z\ar[r]\ar[d]   &   \tau_{\leq -1}z\ar[d]\\
            \tau_{\geq 0}x\ar[r]\ar[d]  &   x\ar[r]\ar[d]   &   \tau_{\leq -1}x\ar[d]\\
            \tau_{\geq 0}c\ar[r] &   c\ar[r]  &   \tau_{\leq -1}c
        }
    \end{equation*}
    of truncation sequences. (Warning: while the horizontal sequences are always
    cofiber sequences, only the central vertical sequence is a cofiber sequence
    in general.) The fact that $y\in F_{\leq 0}$ means that the map
    $z\rightarrow y[-1]$ factors through $\tau_{\leq -1}z$. Now, the fact that
    $x$ is connective means that $\pi_{-n}z\in E$ for all $n\geq 1$. This is
    where we use the fact that $E^{\heartsuit}$ is a Serre subcategory of
    $F^{\heartsuit}$, to ensure that the quotient $\pi_{-1}z$ of $\pi_0c$ is
    also in $E$. In particular, $\tau_{\geq 0}z\rightarrow\tau_{\geq 0}x\we x$
    has cofiber in $E$ (though it is not in general $\tau_{\geq 0}c$).
    The commutative diagram
    $$\xymatrix{
        &z\ar[dl]\ar[dr]&\\
        x&\tau_{\geq 0}z\ar[u]\ar[r]^0\ar[l]\ar[d]_=&y[-1]\\
        &\tau_{\geq 0}z\ar[ul]\ar[ur]_0&
    }$$
    shows that $f$ is nullhomotopic, which completes the construction of a
    bounded $t$-structure on $G'$, which after the fact is idempotent complete,
    so $G'\we G$.

    The inclusion
    $G\rightarrow\Ind(G)$ is evidently right $t$-exact with respect to the
    $t$-structure defined above on $G'\we G$ and the given $t$-structure on $\Ind(G)$.
    Let $x\in F_{\leq
    -1}$. To see left $t$-exactness, it suffices to check that $\Map_{\Ind(G)}(y,Lx)\we 0$ for all
    $y\in\Ind(G)_{\geq 0}$. But, since $\Ind(G)_{\geq 0}$ is generated under
    filtered colimits and extensions by
    images of the objects $z\in F_{\geq 0}$, this result follows from the
    computation above.
    Finally, by construction, $F\rightarrow G'\we G$ is $t$-exact.
    This completes the proof that (i) implies (ii).

    Assume (ii). By definition of the $t$-structure on $\Ind(G)$, the
    localization functor $L:\Ind(F)\rightarrow\Ind(G)$ is right $t$-exact. Let
    $x\in\Ind(F)_{\leq -1}$. We must check that $\Map_{\Ind(G)}(y,Lx)\we 0$ for
    all $y\in\Ind(G)_{\geq 0}$. To do so, it is enough to check this for $y$ of
    the form $Lz$ for some $z\in F_{\geq 0}$. However, we can write
    $x\we\colim_I x_i$ for a filtered $\infty$-category $I$ and some $x_i\in
    F_{\leq -1}$ since we use the $t$-structure on $\Ind(F)$ induced by
    $\Ind(F_{\geq 0})$. Hence,
    $$\Map_{\Ind(G)}(Lz,Lx)\we\colim_I\Map_{\Ind(G)}(Lz,Lx_i)$$ since $L$
    commutes with colimits and $Lz$ is compact in $\Ind(G)$. As $Lz\in
    G_{\geq 0}$ and $Lx_i\in G_{\leq -1}$, (ii) shows that each mapping
    space in the colimit on the right is contractible, as desired.
    Hence, (ii) implies (iii).


    To see that (iii) implies (iv), note first that the $t$-structures on $\Ind(E)$
    and $\Ind(F)$ are right complete and hence right separated by
    Proposition~\ref{lem:textension}. 
    It follows from Lemma~\ref{lem:wserre} that
    $\Ind(E)^{\heartsuit}\subseteq\Ind(F)^{\heartsuit}$ is weak Serre.
    Denote by $i^*:\Ind(E)\rightarrow\Ind(F)$
    the induced functor, and let $x\subseteq i^*y$ be a subobject, where
    $x\in\Ind(F)^{\heartsuit}$ and $y\in\Ind(E)^{\heartsuit}$. Then, by
    $t$-exactness, $j^*x\subseteq j^*i^*y=0$, so $j^*x=0$ in
    $\Ind(G)^\heartsuit$. It
    follows that $j^*x$ is in $\Ind(F)^\heartsuit$ and in $\Ind(E)$. Hence, by
    Lemma~\ref{lem:heartclosure}, $x=i^*z$ for some $z\in\Ind(E)^{\heartsuit}$.
    Thus, (iv) holds.

    Now, suppose that (iv) holds, and let $x\subseteq iy$ for some $y\in
    E^{\heartsuit}$ and $x\in F^{\heartsuit}$. Then, $x\we i^*z$ for some
    $z\in\Ind(E)^\heartsuit$ by hypothesis (iv). However, as an object $z$ of
    $\Ind(E)$ is compact if and only if $i^*z$ is compact, it follows that in
    fact $z\in E$. Hence, (iv) implies (i).

    The equivalence of (iii) and (v)
    is~\cite{hennion-porta-vezzosi}*{Proposition~A.5}.

    Finally, the boundedness of the $t$-structure on $G$ assuming that (ii)
    holds follows from the boundedness of the $t$-structure on $F$, the
    essential surjectivity of $j$ up to retracts, and the $t$-exactness of $j$.
\end{proof}

\subsection{Excisive squares and adjointability}\label{sub:mv}

Consider a commutative square
\begin{equation}\label{eq:mv1}
    \xymatrix{
        E\ar[r]\ar[d]   &   F\ar[d]\\
        G\ar[r]         &   H
    }
\end{equation}
of small idempotent complete stable $\infty$-categories and fully faithful
functors. In this section, we
establish general conditions (Lemma~\ref{lem:adjff},
Proposition~\ref{prop:tatelike}, and
Theorem~\ref{thm:mv}) which guarantee that the induced map
\begin{equation}\label{eq:mv2}
    \xymatrix{
        \K(E)\ar[r]\ar[d]   &   \K(F)\ar[d]\\
        \K(G)\ar[r]         &   \K(H)
    }
\end{equation}
is a pushout square of spectra and hence gives a long exact
sequence
$$\cdots\rightarrow\K_n(E)\rightarrow\K_n(F)\oplus\K_n(G)\rightarrow\K_n(H)\rightarrow\K_{n-1}(E)\rightarrow\cdots$$
of $K$-groups. We check these conditions in two situations: for Tate
objects (as studied in~\cite{hennion}) later in this section and for
$t$-structures in the proof of Theorem~\ref{thm:minus1}. We include the
former for completeness, while the latter is what we need later in the paper. We begin with a
standard lemma about pushouts and cofibers.

\begin{lemma}\label{lem:cofiberspushout}
    Suppose that
    $$\xymatrix{
        M\ar[r]^f\ar[d]   &   N\ar[d]\\
        P\ar[r]^g         &   Q
    }$$ is a commutative diagram in a stable $\infty$-category. Then, the induced map
    $\mathrm{cofib}(f)\rightarrow\mathrm{cofib}(g)$ is an equivalence if and
    only if the square is a pushout square.
\end{lemma}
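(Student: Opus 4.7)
The plan is to show that both conditions are equivalent to the vanishing of the cofiber of a single comparison map, and then to invoke stability. Let $X = N\sqcup_M P$ denote the pushout of the upper-left span and let $\phi: X\to Q$ be the canonical map induced by $g$ and by the composite $N\to Q$. By the universal property of pushouts, the given square is a pushout if and only if $\phi$ is an equivalence, and in a stable $\infty$-category this is in turn equivalent to $\mathrm{cofib}(\phi)\simeq 0$.

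Now consider the factorization $P\xrightarrow{i} X\xrightarrow{\phi} Q$ of $g$. Applying the pasting lemma (\cite{htt}*{Lemma~4.4.2.1}) to the defining pushout for $X$ stacked with the cofiber square of $i: P\to X$, one identifies $\mathrm{cofib}(i) \simeq \mathrm{cofib}(f)$, and the induced map $\mathrm{cofib}(i) \to \mathrm{cofib}(g)$ agrees with the natural map $\mathrm{cofib}(f) \to \mathrm{cofib}(g)$ of the lemma statement. A further application of the pasting lemma to a suitable $3\times 2$ diagram built from the cofiber squares of $i$, $\phi$, and $g=\phi\circ i$ yields the cofiber-of-a-composite sequence
$$\mathrm{cofib}(f) \to \mathrm{cofib}(g) \to \mathrm{cofib}(\phi),$$
which is the $\infty$-categorical manifestation of the octahedral axiom (cf.~\cite{ha}*{Section~1.1}).

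Combining the two steps: since we are in a stable $\infty$-category, the map $\mathrm{cofib}(f)\to\mathrm{cofib}(g)$ is an equivalence if and only if its cofiber $\mathrm{cofib}(\phi)$ is a zero object, which is precisely the condition that the original square be a pushout. The only real obstacle is the diagrammatic bookkeeping needed to extract the cofiber-of-a-composite sequence from the pasting lemma, but this is standard and requires no new ideas. An alternative, arguably slicker, route is to use that in a stable $\infty$-category a commutative square is a pushout iff it is a pullback iff the induced map on horizontal fibers (equivalently, cofibers) is an equivalence, which is part of the general discussion of stable $\infty$-categories in \cite{ha}*{Section~1.1}; either approach yields the claim.
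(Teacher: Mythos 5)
Your argument is correct, but it runs along a genuinely different track from the one in the paper. You reduce both conditions to the vanishing of $\mathrm{cofib}(\phi)$ for the comparison map $\phi:N\sqcup_M P\to Q$, using the pasting lemma to identify $\mathrm{cofib}(f)$ with $\mathrm{cofib}(P\to N\sqcup_M P)$ and then the cofiber-of-a-composite (octahedral) sequence $\mathrm{cofib}(f)\to\mathrm{cofib}(g)\to\mathrm{cofib}(\phi)$; since a map in a stable $\infty$-category is an equivalence precisely when its cofiber vanishes, the equivalence of the two conditions drops out. The paper instead proves the easy direction by the pasting lemma alone (valid in any $\infty$-category with pushouts and a terminal object) and, for the converse, maps the pushout comparison into an arbitrary spectrum $T$ and runs a five-lemma argument on the long exact sequences of the resulting fiber sequences of mapping spaces, using crucially that these are infinite loop spaces so the exact sequences are unbounded. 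Your route is more intrinsic and avoids the detour through mapping spectra, at the cost of the diagrammatic bookkeeping needed to set up the octahedral sequence; the paper's route is closer to a bare-hands Yoneda argument. One small caution: your closing remark that ``a square is a pushout iff the induced map on cofibers is an equivalence'' is not something you may cite as already known --- it \emph{is} the lemma --- so it should be offered only as a pointer to where such facts live in the literature, not as an alternative proof; your main argument, however, stands on its own.
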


\begin{proof}
    If the square is a pushout square, then the horizontal cofibers are
    equivalent (see~\cite{htt}*{Lemma~4.4.2.1}). This is true in any
    $\infty$-category with pushouts and a terminal object. So, assume that
    $\mathrm{cofib}(f)\rightarrow\mathrm{cofib}(g)$ is an equivalence.
    Let $S$ be the pushout of $P$ and $N$ over $M$, and let $T$ be an arbitrary
    spectrum. Consider the commutative diagram
    $$\xymatrix{
        \Map(\mathrm{cofib}(g),T)\ar[r]\ar[d]   &
        \Map(Q,T)\ar[r]\ar[d]   & \Map(P,T)\ar[d]\\
        \Map(\mathrm{cofib}(f),T)\ar[r]   &   \Map(S,T)\ar[r]   &
        \Map(P,T)
    }$$
    of fiber sequences of mapping spaces. The outer vertical arrows are equivalences by
    hypothesis. In general, this does not in general let us conclude that the
    middle vertical arrow is an equivalence. However, because these are fiber
    sequences of infinite loop spaces, the
    long exact sequence in homotopy groups shows that
    $\Map_{\Sp}(Q,T)\rightarrow\Map_{\Sp}(S,T)$ is an equivalence for all $T$. Hence,
    $S\rightarrow Q$ is an equivalence.
\end{proof}

Let $\widetilde{F/E}$ and $\widetilde{H/G}$ denote the cofibers in
$\Cat_{\infty}^{\perf}$ of the horizontal maps
in~\eqref{eq:mv1}. Then, by localization in $K$-theory, there is a commutative diagram
\begin{equation}\label{eq:mv3}
    \xymatrix{
    \K(E)\ar[r]\ar[d]   &   \K(F)\ar[d]\ar[r]   &   \K(\widetilde{F/E})\ar[d]\\
    \K(G)\ar[r]         &   \K(H)\ar[r] &   \K(\widetilde{H/G})
    }
\end{equation}
in which the horizontal sequences are cofiber sequences. Hence, using
Lemma~\ref{lem:cofiberspushout}, in order to check that~\eqref{eq:mv2} is a
pushout square it suffices (and is necessary) to see that
$\K(\widetilde{F/E})\rightarrow\K(\widetilde{H/G})$ is an equivalence.
This occurs in particular when $F/E\rightarrow H/G$ is an equivalence after
idempotent completion.

\begin{definition}
    Say that a square as in~\eqref{eq:mv1} is an \df{excisive square} if
    $\widetilde{F/E}\rightarrow \widetilde{H/G}$ is an equivalence.
\end{definition}

\begin{remark}
    It is easy to check using the full faithfulness of $F/E\rightarrow H/G$
    that an excisive square is cartesian, so that
    $E\rightarrow F\cap G$ is an equivalence.
\end{remark}

\begin{example}
    \begin{enumerate}
        \item[(a)]   If~\eqref{eq:mv1} is a pushout square, then it is an
            excisive square.
        \item[(b)]   Suppose that $E=0$ and that $H=\langle F,G\rangle$ is a
            \df{semiorthogonal decomposition} of $H$. Recall that this means
            that $F$ and $G$ are full stable subcategories of $H$ such that
            \begin{enumerate}
                \item[(i)]   $F\cap G=0$,
                \item[(ii)]   every object $x\in H$ can be written in a cofiber
                    sequence $y\rightarrow x\rightarrow z$ where $y\in G$ and
                    $z\in F$, and
                \item[(iii)]  the mapping spaces $\Map_H(y,z)$ vanish for all $y\in G$ and all $z\in F$.
            \end{enumerate}
            Under these conditions, it is easy to check by hand that the induced map
            $F\rightarrow H/G$ is an equivalence, which induces a (split) localization
            sequence $\K(G)\rightarrow\K(H)\rightarrow\K(F)$. For more details,
            see~\cite{bgt1}.
    \end{enumerate}
\end{example}

\begin{remark}
    Note that despite conditions (i) and (ii), $H$ is not generally the
    coproduct in $\Cat_{\infty}^{\perf}$ of $F$ and $G$. The coproduct is $F\oplus
    G$, and in that category one has the additional criterion that $\Map_H(z,y)=0$
    for $y\in G$ and $z\in F$. That is, one has an {\bf orthogonal decomposition}.
    This is a much stronger hypothesis, but it is rarely satisfied in situations of
    interest. For example, Be\u{\i}linson's decomposition of
    $\Dscr^b(\PP^1_k)\we\langle \Oscr,\Oscr(1)\rangle$ gives a semiorthogonal
    decomposition of $\Dscr^b(\PP^1_k)$ which is not orthogonal
    (see~\cite{huybrechts}*{Corollary~8.29}).
\end{remark}

In Proposition~\ref{prop:tatelike} below,
we give a criterion for checking that certain squares~\eqref{eq:mv1} are
excisive squares. Our arguments are based on those of Benjamin
Hennion~\cite{hennion}*{Proposition~4.2}, which in turn are based on those of
Sho Saito~\cite{saito}. We need some preliminaries first.

\begin{definition}[See~\cite{ha}*{Definition~4.7.5.13}]\label{def:adjointable}
        Consider a commutative diagram
        $$\xymatrix{
            \Escr\ar[r]^{i^*}\ar[d]_{p^*}   &   \Fscr\ar[d]^{q^*}\\
            \Gscr\ar[r]^{j^*}               &   \Hscr
        }$$ of $\infty$-categories such that $i^*$ and $j^*$ admit right
        adjoints $i_*$ and $j_*$, respectively. Fix a natural equivalence
        $\alpha:j^*p^*\we q^*i^*$ (necessarily unique up to homotopy). The diagram is \df{right
        adjointable} if the natural map $p^* i_*\rightarrow
        j_*j^*p^*i_*\xrightarrow{\alpha} j_*q^*i^*i_*\rightarrow j_* q^*$ is an
        equivalence.
\end{definition}

\begin{remark}
    In general, the right adjointability of a diagram as in
    Definition~\ref{def:adjointable} is not equivalent to
    the adjointability of the transpose diagram.
\end{remark}

\begin{proposition}\label{prop:adjointability}
    Consider a commutative diagram
    \begin{equation*}
        \xymatrix{
            E\ar[r]^i\ar[d]_p   &   F\ar[d]^q\\
            G\ar[r]^j           &   H
        }
    \end{equation*}
    of fully faithful exact functors of stable idempotent complete $\infty$-categories. The following
    conditions are equivalent:
    \begin{enumerate}
        \item[\emph{(1)}]   the induced commutative diagram
            \begin{equation*}
                \xymatrix{
                    \Ind(F)\ar[r]^{f^*}\ar[d]_{q^*} &   \Ind(F/E)\ar[d]^{r^*}\\
                    \Ind(H)\ar[r]^{g^*}             &   \Ind(H/G)
                }
            \end{equation*}
            of stable presentable $\infty$-categories is right adjointable,
            where $f:F\rightarrow F/E$ and $g:H\rightarrow H/G$ are the quotient
            maps and $r:F/E\rightarrow H/G$ is the induced map on the quotients;
        \item[\emph{(2)}]  for any $x\in\Ind(F)$, if $i_*x\we 0$ in $\Ind(E)$, then
            $j_*q^*x\we 0$ in $\Ind(G)$, where $i_*$ and $j_*$ are right
            adjoint to $i^*$ and $j^*$, respectively.
    \end{enumerate}
\end{proposition}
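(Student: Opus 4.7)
My plan is to reduce both conditions to statements about essential images of right adjoints in the Bousfield localization sequences
\begin{equation*}
\Ind(E) \xrightarrow{i^*} \Ind(F) \xrightarrow{f^*} \Ind(F/E), \qquad \Ind(G) \xrightarrow{j^*} \Ind(H) \xrightarrow{g^*} \Ind(H/G),
\end{equation*}
which arise because $\Ind$ sends Verdier cofiber sequences in $\Cat_{\infty}^{\perf}$ to localization sequences of presentable stable $\infty$-categories, as used in the discussion of localization for nonconnective $K$-theory in~\cite{bgt1}. In particular, $i^*$ and $j^*$ are fully faithful with right adjoints $i_*$ and $j_*$, while $f^*$ and $g^*$ admit fully faithful right adjoints $f_*$ and $g_*$, and for each $x \in \Ind(F)$ there is a natural cofiber sequence $i^* i_* x \to x \to f_* f^* x$ (and similarly in $\Ind(H)$). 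Because $i^*$ is fully faithful one has $i^* i_* x \simeq 0$ if and only if $i_* x \simeq 0$, so $x$ lies in the essential image of $f_*$ if and only if $i_* x \simeq 0$, and analogously $y \in \Ind(H)$ lies in the essential image of $g_*$ if and only if $j_* y \simeq 0$.

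With this dictionary, the mate expressing right adjointability in the sense of Definition~\ref{def:adjointable} is the natural transformation
\begin{equation*}
q^* f_* \xrightarrow{\eta} g_* g^* q^* f_* \xrightarrow{\alpha} g_* r^* f^* f_* \xrightarrow{\simeq} g_* r^*,
\end{equation*}
where $\eta$ is the unit of $(g^*, g_*)$, the middle arrow uses the canonical equivalence $g^* q^* \simeq r^* f^*$, and the last arrow uses $f^* f_* \simeq \id$ (since $f_*$ is fully faithful). For $(1) \Rightarrow (2)$, given $x \in \Ind(F)$ with $i_* x \simeq 0$, I can write $x \simeq f_* f^* x$ and apply the mate to obtain $q^* x \simeq g_* r^* f^* x$, which lies in the essential image of $g_*$; hence $j_* q^* x \simeq 0$. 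For $(2) \Rightarrow (1)$, I would evaluate the mate at an arbitrary $y \in \Ind(F/E)$ by setting $x = f_* y$. Then $i_* x \simeq 0$, so by $(2)$ the object $q^* x$ is in the essential image of $g_*$, meaning $\eta$ at $q^* f_* y$ is an equivalence; combining this with the $\alpha$ step and the identification $f^* f_* \simeq \id$ exhibits the mate at $y$ as an equivalence.

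The main item requiring care, though I do not expect it to pose a serious obstacle, is justifying the Bousfield localization picture for $\Ind$ applied to a Verdier cofiber sequence, which rests on the fact that $\Ind: \Cat_{\infty}^{\perf} \to \mathrm{Cat}_\infty$ sends cofiber sequences to smashing localizations, together with the standard characterization of such localizations via the cofiber sequence $i^* i_* \to \id \to f_* f^*$. Once this background is in place, the equivalence of $(1)$ and $(2)$ becomes a formal consequence of the interpretation of both conditions as saying that $q^*$ carries the essential image of $f_*$ into the essential image of $g_*$.
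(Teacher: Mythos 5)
Your proposal is correct and follows essentially the same route as the paper's proof: both arguments rest on the cofiber sequences $i^*i_*x\rightarrow x\rightarrow f_*f^*x$ and $j^*j_*y\rightarrow y\rightarrow g_*g^*y$ coming from the localization sequences on $\Ind$, together with the observation that the mate $q^*f_*\rightarrow g_*r^*$ is an equivalence exactly when $q^*$ carries the essential image of $f_*$ into that of $g_*$. The only quibble is terminological: what is needed from~\cite{bgt1} is just that $\Ind$ sends Verdier quotients to localization sequences of presentable stable $\infty$-categories (not that these localizations are ``smashing''), which is the fact the paper records before the proof.
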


The functors $f^*,g^*,i^*,\ldots$ all preserve colimits and hence admit right
adjoints which we will denote by $f_*,g_*,i_*,\ldots$
For the proof and the remainder of the section, we will make use of the
cofiber sequences $i^*i_*x\rightarrow x\rightarrow f_*f^*x$ in $\Ind(F)$ when
$x\in\Ind(F)$ and $\Ind(E)\xrightarrow{i^*}\Ind(F)\xrightarrow{f^*}\Ind(F/E)$
is a localization sequence.

\begin{proof}
    Assume (1). Choose $x\in\Ind(F)$ such that $i_*x\we 0$. Then, $x\we
    f_*f^*x$. Now, consider the cofiber sequence
    $$j^*j_*q^*f_*f^*x\rightarrow q^*f_*f^*x\rightarrow g_*g^*q^*f_*f^*x\we
    g_*r^*f^*f_*f^*x\we g_*r^*f^*x.$$ Adjointability means that the map
    $q^*f_*f^*x\rightarrow g_*r^*f^*x$ is an equivalence so that
    $j^*j_*q^*f_*f^*x\we 0$. Since $j^*$ is fully faithful, this means that
    $j_*q^*f_*f^*x\we j_*q^*x\we 0$, as desired.
    
    We prove (2) implies (1).
    Let $y\in\Ind(F/E)$. Then, the counit map $f^*f_*y\rightarrow y$ is an
    equivalence. Set $x=f_*y$. Consider the commutative diagram
    \begin{equation*}
        \xymatrix{
            q^*i^*i_*x\ar[r]\ar[d]   &   q^*x\ar[r]\ar[d]    & q^*f_*f^*x\ar[d]\\
            j^*j_*q^*x\ar[r]        &   q^*x\ar[r]          &   g_*g^*q^*x
        }
    \end{equation*}
    of cofiber sequences in $\Ind(G)$. Since $i_*x\we i_*f_*y\we 0$, we have
    that $j_*q^*x\we 0$ by hypothesis (3). Hence, both terms on the left vanish,
    so the map $q^*f_*f^*x\rightarrow g_*g^*q^*x$ is an equivalence. But,
    $g_*g^*q^*x\we g_*r^*f^*x$. In particular, $g_*r^*y\we q^*f_* y$ for all
    $y\in\Ind(F/E)$.
\end{proof}

\begin{lemma}\label{lem:adjff}
    Suppose that a commutative diagram
    \begin{equation*}
        \xymatrix{
            E\ar[r]^i\ar[d]^p   &   F\ar[d]^q\\
            G\ar[r]^j           &   H
        }
    \end{equation*}
    of fully faithful functors of stable idempotent complete
    $\infty$-categories satisfies the equivalent conditions of
    Proposition~\ref{prop:adjointability}. Then, $F/E\rightarrow H/G$ is fully
    faithful.
\end{lemma}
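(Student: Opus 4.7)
The plan is to reduce the full faithfulness of $r : F/E \to H/G$ to the full faithfulness of the induced functor on ind-completions $r^* : \Ind(F/E) \to \Ind(H/G)$. This reduction is harmless: the Yoneda embeddings $F/E \hookrightarrow \Ind(F/E)$ and $H/G \hookrightarrow \Ind(H/G)$ are fully faithful, and by construction $r^*$ restricted along the first embedding agrees (up to the second) with $r$.

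Three pieces of fully faithfulness then drive the argument. First, the right adjoints $f_* : \Ind(F/E) \to \Ind(F)$ and $g_* : \Ind(H/G) \to \Ind(H)$ are fully faithful. This is the standard fact that ind-completing a Verdier sequence produces a Bousfield localization, signalled by the cofiber sequences $i^*i_*x \to x \to f_*f^*x$ invoked in the discussion preceding Proposition~\ref{prop:adjointability}. Second, $q^* : \Ind(F) \to \Ind(H)$ is fully faithful because $q$ is, by~\cite{htt}*{Proposition~5.3.5.11}. Third, the hypothesis that the induced square on ind-completions is right adjointable unwinds to a natural equivalence $q^* f_* \simeq g_* r^*$ of functors $\Ind(F/E) \to \Ind(H)$.

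Combining these, for $y_1, y_2 \in \Ind(F/E)$ I would read off the chain of equivalences
\begin{align*}
\Map_{\Ind(H/G)}(r^* y_1, r^* y_2)
&\simeq \Map_{\Ind(H)}(g_* r^* y_1, g_* r^* y_2) \\
&\simeq \Map_{\Ind(H)}(q^* f_* y_1, q^* f_* y_2) \\
&\simeq \Map_{\Ind(F)}(f_* y_1, f_* y_2) \\
&\simeq \Map_{\Ind(F/E)}(y_1, y_2),
\end{align*}
using in order fully faithfulness of $g_*$, the adjointability equivalence, fully faithfulness of $q^*$, and fully faithfulness of $f_*$. Each step is natural in $(y_1, y_2)$, and the composite identifies (via postcomposition with the fully faithful $g_*$) with the map induced by $r^*$; so the map $\Map_{\Ind(F/E)}(y_1, y_2) \to \Map_{\Ind(H/G)}(r^* y_1, r^* y_2)$ is itself an equivalence. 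This proves $r^*$ fully faithful, hence $r$ fully faithful. I expect no serious obstacle: the content is entirely packaged into unpacking the adjointability hypothesis in the form $g_* r^* \simeq q^* f_*$ and assembling these standard facts about ind-completions of Verdier sequences.
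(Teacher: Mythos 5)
Your argument is correct and is essentially the paper's own proof: both hinge on unpacking right adjointability as the equivalence $g_*r^*\simeq q^*f_*$ and then chaining mapping-space equivalences using full faithfulness of $q^*$ and the Bousfield-localization structure of $\Ind(F)\to\Ind(F/E)$ and $\Ind(H)\to\Ind(H/G)$. The only (harmless) difference is that you phrase the endpoints via full faithfulness of the right adjoints $f_*$ and $g_*$ applied to arbitrary objects of $\Ind(F/E)$, whereas the paper works with objects $f^*x$ for $x\in F$ and uses the $(f^*,f_*)$ and $(g^*,g_*)$ adjunctions directly.
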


\begin{proof}
    We adopt the notation of the proof of the previous proposition.
    We show that the natural map
    $\Map_{\Ind(F/E)}(f^*x,f^*y)\rightarrow\Map_{\Ind(H/G)}(r^*f^*x,r^*f^*y)$
    is an equivalence for all $x,y\in F/E$. There are natural equivalences,
    \begin{align*}
        \Map_{\Ind(H/G)}(r^*f^*x,r^*f^*y)&\we\Map_{\Ind(H/G)}(g^*q^*x,r^*f^*y)\\
            &\we\Map_{\Ind(H)}(q^*x,g_*r^*f^*y)\\
            &\we\Map_{\Ind(H)}(q^*x,q^*f_*f^*y)\\
            &\we\Map_{\Ind(F)}(x,f_*f^*y)\\
            &\we\Map_{\Ind(F/E)}(f^*x,f^*y),
    \end{align*}
    where the third equivalence is via right adjointability and the fourth
    follows from the fact that $q$ is fully faithful.
\end{proof}

Now, we come to an important test for adjointability. We include it for
completeness, as it will not be used in the rest of the paper. Rather, when
needed, we will
check that the equivalent conditions of Proposition~\ref{prop:adjointability} are satisfied.
However, the proof is similar to one step in the proof of
Theorem~\ref{thm:minus1}.


\begin{proposition}\label{prop:tatelike}
    Let
    \begin{equation*}
        \xymatrix{
            E\ar[r]^i\ar[d]^p   &   F\ar[d]^q\\
            G\ar[r]^j         &   H
        }
    \end{equation*}
    be a commutative square of fully faithful functors in
    $\Cat_{\infty}^{\perf}$ such that
    \begin{enumerate}
        \item[\emph{(a)}]   every object $y$ of $G$ is a cofiltered limit
            $y\we\lim_B p(z_\beta)$ such that $jy\we\lim_B jpz_\beta$, and
        \item[\emph{(b)}]   the essential image of $q$ consists of $j$-cocompact objects of
            $H$, meaning that the natural map
            $$\colim_{B^{\op}}\Map_H(jy_\beta,qx)\rightarrow\Map_H(\lim_B
            jy_\beta,qx)$$ is an equivalence for all $x\in F$ whenever the limit $\lim_B
            y_\beta$ exists in $G$ and $j$ preserves the limit.
    \end{enumerate}
    Then, the induced map $F/E\rightarrow H/G$ is fully faithful.
\end{proposition}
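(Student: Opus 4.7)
The plan is to verify condition (2) of Proposition~\ref{prop:adjointability} for the induced square of Ind-completions; once this is established, Lemma~\ref{lem:adjff} immediately gives the fully faithfulness of $F/E\rightarrow H/G$. Thus, given $x\in\Ind(F)$ with $i_*x\simeq 0$ in $\Ind(E)$, I must show that $j_*q^*x\simeq 0$ in $\Ind(G)$. Since $G$ generates $\Ind(G)$ under filtered colimits, this reduces to checking that $\Map_{\Ind(G)}(y,j_*q^*x)\simeq\Map_{\Ind(H)}(jy,q^*x)$ vanishes for every $y\in G$.

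Given such a $y$, I would invoke hypothesis (a) to write $y\simeq\lim_B pz_\beta$ with $jy\simeq\lim_B jpz_\beta=\lim_B qiz_\beta$, these equalities holding in $H$ and therefore in $\Ind(H)$, because any limit that already exists in $H$ is automatically preserved by the fully faithful inclusion $H\subseteq\Ind(H)$. Next, write $x\simeq\colim_A x_\alpha$ as a filtered colimit with $x_\alpha\in F$, so that $q^*x\simeq\colim_A qx_\alpha$. Since $jy\in H$ is compact in $\Ind(H)$, mapping out commutes with the filtered colimit, yielding $\Map_{\Ind(H)}(jy,q^*x)\simeq\colim_A\Map_H(jy,qx_\alpha)$. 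Now hypothesis (b) applies to convert each of these mapping spaces out of a cofiltered limit into a filtered colimit:
$$\Map_H(jy,qx_\alpha)\simeq\colim_{B^{\op}}\Map_H(qiz_\beta,qx_\alpha)\simeq\colim_{B^{\op}}\Map_{\Ind(E)}(z_\beta,i_*x_\alpha),$$
where the second step uses the full faithfulness of $q$ together with the adjunction between $i^*$ and $i_*$.

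Swapping the order of the two colimits and using the compactness of $z_\beta$ in $\Ind(E)$ together with the fact that $i_*$ preserves filtered colimits (since $i^*$ is the Ind-extension of $i$ and hence preserves compact objects), the expression becomes $\colim_{B^{\op}}\Map_{\Ind(E)}(z_\beta,i_*x)$, which vanishes because $i_*x\simeq 0$. The main obstacle is coordinating the two different presentations at play: $y$ appears as a cofiltered limit in $G$ (preserved by $j$), whereas $x$ appears as a filtered colimit in $\Ind(F)$; compactness of $jy$ in $\Ind(H)$ is precisely what permits the filtered colimit to be pulled out of the mapping space, after which hypothesis (b) turns the cofiltered limit into a colimit, and the assumption $i_*x\simeq 0$ delivers the desired vanishing.
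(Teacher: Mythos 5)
Your proposal is correct and follows essentially the same route as the paper: reduce via Proposition~\ref{prop:adjointability}(2) and Lemma~\ref{lem:adjff} to showing $j_*q^*x\we 0$, test against $y\in G$, write $y$ as a $j$-preserved cofiltered limit via (a) and $x$ as a filtered colimit of objects of $F$, use compactness of $jy$ and condition (b) to turn both into colimits, and conclude from full faithfulness of $q$, the $(i^*,i_*)$ adjunction, and $i_*x\we 0$. The only (harmless) difference is the order of the final steps: the paper commutes the colimit over $A$ past $\Map_F(iz_\beta,-)$ before applying the adjunction, whereas you apply the adjunction first and then use that $i_*$ preserves filtered colimits together with compactness of $z_\beta$ in $\Ind(E)$ — both are valid.
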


\begin{proof}
    By Proposition~\ref{prop:adjointability} and Lemma~\ref{lem:adjff},
    it suffices to prove that $j_*q^*x\we 0$ for all
    $x\in\Ind(F)$ such that $i_*x\we 0$.

    So, assume that $i_*x\we 0$ for some $x\in\Ind(F)$.
    Note that $j_*q^*x\we 0$ if and only if
    $\Map_{\Ind(G)}(y,j_*q^*x)\we 0$ for all $y\in G$. Note also that $q^*$
    preserves filtered colimits. Pick one $y\in G$, and use condition (a) to write
    $y\we\lim_Bpz_\beta$ where $j$ preserves this limit. If we write $\colim_A x_\alpha\we x$ for
    some filtered $\infty$-category $A$ with $x_\alpha\in F$, then, using the
    compactness of $j^*y$, there is a chain of equivalences
    \begin{align*}
        \Map_{\Ind(G)}(y,j_*q^*x)&\we\Map_{\Ind(H)}(j^*y,q^*x)\\
            &\we\colim_A\Map_H(jy,qx_\alpha)\\
            &\we\colim_A\Map_H(j\lim_B pz_\beta,qx_\alpha)\\
            &\we\colim_A\Map_H(\lim_B jpz_\beta,qx_\alpha)\\
            &\we\colim_A\colim_{B^{\op}}\Map_H(jpz_\beta,qx_\alpha)\\
            &\we\colim_{B^{\op}}\colim_A\Map_F(iz_\beta,x_\alpha)\\
            &\we\colim_{B^{\op}}\Map_{\Ind(F)}(i^*z_\beta,x)\\
            &\we\colim_{B^{\op}}\Map_{\Ind(E)}(z_\beta,i_*x)\\
            &\we 0,
    \end{align*}
    where we use condition (b) to justify the fifth equivalence.
    This completes the proof.
%
\end{proof}

\begin{theorem}\label{thm:mv}
    Let
    \begin{equation*}
        \xymatrix{
            E\ar[r]^i\ar[d]^p   &   F\ar[d]^q\\
            G\ar[r]^j         &   H
        }
    \end{equation*}
    be a commutative square of fully faithful functors in
    $\Cat_{\infty}^{\perf}$ such that $F/E\rightarrow H/G$ is fully faithful
    and such that
    \begin{enumerate}
        \item[\emph{(c)}] every object $x$ of $H$ is a retract of an object
            $x'$ such that $x'$ fits in to a cofiber sequence $jy\rightarrow
            x'\rightarrow qz$ for some $y$ in $G$ and some $z$ in $F$.
    \end{enumerate}
    Then, the induced square
    \begin{equation*}
        \xymatrix{
            \K(E)\ar[r]\ar[d]   &   \K(F)\ar[d]\\
            \K(G)\ar[r]               &   \K(H)
        }
    \end{equation*}
    is a pushout square of spectra.
\end{theorem}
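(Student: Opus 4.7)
The plan is to reduce to showing that the induced map $\widetilde{F/E}\to\widetilde{H/G}$ is an equivalence in $\Cat_{\infty}^{\perf}$, and to deduce this from the fully faithfulness hypothesis together with condition~(c). Concretely, the localization property of nonconnective $K$-theory (as in~\cite{bgt1}) applied to the two horizontal cofiber sequences in $\Cat_{\infty}^{\perf}$ gives a commutative diagram of spectra
\begin{equation*}
    \xymatrix{
    \K(E)\ar[r]\ar[d]   &   \K(F)\ar[d]\ar[r]   &   \K(\widetilde{F/E})\ar[d]\\
    \K(G)\ar[r]         &   \K(H)\ar[r]         &   \K(\widetilde{H/G})
    }
\end{equation*}
in which the two rows are cofiber sequences. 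By Lemma~\ref{lem:cofiberspushout}, the left-hand square is cocartesian if and only if the right-hand vertical map is an equivalence, and for this it suffices to prove that $\widetilde{F/E}\to\widetilde{H/G}$ is itself an equivalence.

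For the equivalence, I would first note that since $F/E\to H/G$ is fully faithful by hypothesis, so is the induced functor on idempotent completions $\widetilde{F/E}\to\widetilde{H/G}$. It therefore remains to prove essential surjectivity. Let $w\in\widetilde{H/G}$; then $w$ is a retract of an object in $H/G$, which in turn is represented by some $x\in H$. By hypothesis (c), $x$ is a retract of an object $x'$ that fits into a cofiber sequence
\[
    jy\longrightarrow x'\longrightarrow qz
\]
for some $y\in G$ and $z\in F$. In the Verdier quotient $H/G$ the object $jy$ is zero, so the cofiber sequence shows that the image of $x'$ is equivalent to the image of $qz$; but the composition $F\xrightarrow{q}H\to H/G$ factors through $F/E\to H/G$, so the image of $x'$ lies in the essential image of $F/E\to H/G$. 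Consequently $w$ is a retract of an object in the essential image of $\widetilde{F/E}\to\widetilde{H/G}$, and by idempotent completeness of $\widetilde{F/E}$ the corresponding retract already lives there, proving $w$ is in the essential image.

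The argument is essentially formal once the reduction is made, so the main (and only) conceptual point is the interplay between hypothesis (c) and the hypothesis that $F/E\to H/G$ is fully faithful: fully faithfulness would be nothing without some control over the objects, and (c) is precisely what is needed to express an arbitrary object of $H$ as a retract of one obtained from the images of $G$ and $F$ by a single extension. No delicate analysis of the Verdier quotient, nor any appeal to Proposition~\ref{prop:adjointability} or Proposition~\ref{prop:tatelike}, is required; those are ingredients one would use in specific applications to verify the hypothesis that $F/E\to H/G$ is fully faithful.
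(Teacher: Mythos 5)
Your proposal is correct and follows essentially the same route as the paper's proof: reduce via Lemma~\ref{lem:cofiberspushout} to showing $\widetilde{F/E}\to\widetilde{H/G}$ is an equivalence, get fully faithfulness from the hypothesis, and deduce essential surjectivity from condition (c) together with the fact that $jy$ dies in $H/G$ and $\widetilde{F/E}$ is idempotent complete. No substantive differences.
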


\begin{proof}
    By Lemma~\ref{lem:cofiberspushout}, it is enough to show that
    $\K(\widetilde{F/E})\rightarrow\K(\widetilde{H/G})$ is an equivalence, so it is enough to show that
    $\widetilde{F/E}\rightarrow\widetilde{H/G}$ is an equivalence.
    By hypothesis this functor is fully faithful, so it is enough to check essential surjectivity.
    Every object of $\widetilde{H/G}$ is a retract of the image of an object $x$ of $H$,
    which is in turn a
    retract of the image of an object $x'$ of $H$ fitting into a cofiber sequence
    as in (c). Since $H\rightarrow H/G$ kills $jy$, it follows that every object
    of $\widetilde{H/G}$ is a retract of the image of an object of $F$. Since
    $\widetilde{F/E}$ is idempotent complete by definition,
    $\widetilde{F/E}\rightarrow\widetilde{H/G}$ is essentially surjective.
\end{proof}

\begin{remark}
    Note that conditions (a) and (b) in Proposition~\ref{prop:tatelike} can be
    used to check fully faithfulness of $F/E\rightarrow H/G$.
\end{remark}

\begin{example}\label{ex:tate}
    Conditions (a) through (c) of Proposition~\ref{prop:tatelike} and
    Theorem~\ref{thm:mv} are meant to abstract the basic
    property of {\bf Tate objects.} Given a stable idempotent complete
    $\infty$-category $E$, the $\infty$-category $\mathrm{Tate}(E)$ of Tate objects
    in $E$ fits into a commutative square
    \begin{equation*}
        \xymatrix{
            E\ar[r]\ar[d]   &   \Ind(E)\ar[d]\\
            \mathrm{Pro}(E)\ar[r]   &   \mathrm{Tate}(E),
        }
    \end{equation*}
    and this square satisfies the properties of the theorem (for ease of exposition, we suppress set-theoretic
    issues and refer the reader to~\cite{hennion} for a careful treatment). To
    check condition (a), note that \emph{every} object of $\mathrm{Pro}(E)$ can
    be written as a cofiltered limit, and
    $\mathrm{Pro}(E)\rightarrow\mathrm{Tate}(E)$ preserves cofiltered limits
    by the universal property of Tate objects
    (see~\cite{hennion}*{Theorem~2.6}). Condition (b) follows from the fact
    that there is a natural embedding
    $\mathrm{Tate}(E)\rightarrow\mathrm{Pro}\,\Ind(E)$ which preserves
    cofiltered limits by definition of the mapping spaces in a
    pro-category. Condition (c) follows for example
    from~\cite{hennion}*{Corollary~3.4}.

    The key point about the $\infty$-category of Tate objects is that, by the
    theorem,
    $\K(\mathrm{Tate}(E))\we\Sigma\K(E)$. Indeed, $\K(\Ind(E))\we
    0\we\K(\mathrm{Pro}(E))$ because of the existence of countable (co)products, which means that
    \begin{equation*}
        \xymatrix{
            \K(E)\ar[r]\ar[d]   &   0\ar[d]\\
    0\ar[r]               &   \K(\mathrm{Tate}(E))
        }
    \end{equation*}
    is a pushout square.
\end{example}

\begin{remark}
    It is possible to build an $\infty$-category $\mathrm{Tate}^\kappa(E)$ of
    $\kappa$-Tate objects out of $\Ind(E)^\kappa$ and $^\kappa\mathrm{Pro}(E)$,
    the full subcategory of $\kappa$-cocompact objects in $\mathrm{Pro}(E)$.
    This construction has the same properties as $\mathrm{Tate}(E)$ but has the advantage
    that it is small and hence does not require working in a larger universe.
    Such an approach is closer to the spirit of this paper and is done for exact categories
    in~\cite{braunling-groechenig-wolfson}.
\end{remark}

\subsection{Vanishing of $\K_{-1}$}\label{sub:km1}

We prove our analogue of Schlichting's
theorem~\cite{schlichting}*{Theorem~6} in the case of a stable
$\infty$-category admitting a bounded $t$-structure. The proof
differs substantially from that of Schlichting.

\begin{theorem}\label{thm:minus1}
    If $E$ is a small stable $\infty$-category
    with a bounded $t$-structure, then $\K_{-1}(E)=0$.
\end{theorem}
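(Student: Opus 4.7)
The plan, modeled on Schlichting's proof in the abelian setting and on the Tate-object framework of Example~\ref{ex:tate}, is to construct an excisive square in $\Cat_\infty^\perf$
\begin{equation*}
    \xymatrix{
        E \ar[r] \ar[d] & E^{+} \ar[d] \\
        E^{-} \ar[r] & E^{\pm}
    }
\end{equation*}
playing the role that the square $\Dscr^b(A) \to \Dscr^{\pm}(A) \to \Dscr(A)$ plays in Schlichting's argument, with the three outer categories $E^+$, $E^-$, $E^\pm$ flasque enough that an Eilenberg-swindle argument forces $\K^{\cn}(E^+) = \K^{\cn}(E^-) = \K_0(E^\pm) = 0$ and $\K_{-1}(E^+) = \K_{-1}(E^-) = 0$. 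Granting this, Theorem~\ref{thm:mv} promotes the square to a pushout of $K$-theory spectra, and the associated long exact sequence in homotopy contains the segment
\begin{equation*}
    \K_0(E^+) \oplus \K_0(E^-) \to \K_0(E^{\pm}) \to \K_{-1}(E) \to \K_{-1}(E^+) \oplus \K_{-1}(E^-),
\end{equation*}
which, together with the vanishing above, yields $\K_{-1}(E) = 0$.

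To construct these categories, I would work inside $\Cscr = \Ind(E)$ with its induced right-complete $t$-structure (Proposition~\ref{lem:textension}). The intended $E^+$ is the full subcategory of bounded-below objects of $\Cscr$ admitting a presentation $x \we \colim_n x_n$ with $x_n \in E$ and each transition $x_n \to x_{n+1}$ an equivalence in degrees $\leq n$; such objects are built from $E$ by sequential colimits controlled by the $t$-structure and are closed under countable coproducts of uniformly bounded-below families. Symmetrically, $E^-$ is defined via sequential cofiltered towers of bounded-above objects using right-completeness, and $E^\pm$ combines both constructions. Each should be a small stable idempotent complete $\infty$-category carrying a shifted countable (co)product endofunctor furnishing the Eilenberg swindle.

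The main obstacle is verifying excisiveness: one must show the induced functor $E^+/E \to E^\pm/E^-$ becomes an equivalence after idempotent completion. By Lemma~\ref{lem:adjff} this reduces to the adjointability criterion of Proposition~\ref{prop:adjointability} — namely, that if $x \in \Ind(E^+)$ is annihilated by the right adjoint to $\Ind(E) \hookrightarrow \Ind(E^+)$, then its image in $\Ind(E^\pm)$ is annihilated by the right adjoint to $\Ind(E^-) \hookrightarrow \Ind(E^\pm)$. This is a Tate-type compatibility assertion between $t$-truncation functors and the sequential (co)limit presentations defining $E^+$ and $E^-$, and will be where the bulk of the work lies. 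A secondary set-theoretic obstacle is keeping the categories essentially small as objects of $\Cat_\infty^\perf$, as in Example~\ref{ex:tate}; passing to $\kappa$-compact variants for some uncountable regular cardinal $\kappa$ should suffice. The remaining flasqueness statements about $K_0$, $K_{-1}$, and $\K^{\cn}$ of the three outer categories should then be routine consequences of the swindle endofunctors.
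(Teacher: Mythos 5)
Your plan is essentially the paper's proof: work in $\Ind(E)$ with the induced $t$-structure, form the square with the two half-bounded subcategories (bounded above, resp.\ $\kappa$-compact bounded below, with homotopy objects in $A=E^\heartsuit$) and the two-sided category of objects all of whose truncations land in these, verify condition (c) of Theorem~\ref{thm:mv} via the truncation cofiber sequence and fully faithfulness of the quotient functor via Proposition~\ref{prop:adjointability}(2) and Lemma~\ref{lem:adjff}, and kill the half-bounded corners by Eilenberg swindles. You have also correctly located both the main obstacle (the adjointability computation) and the set-theoretic fix ($\kappa$-compact variants).

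One correction: your claim that a swindle forces $\K_0(E^\pm)=0$ cannot work as stated, since the shifted-coproduct endofunctor $\bigoplus_{n\geq 0}\id[2n]$ does not preserve a category of objects bounded in both directions, so $E^\pm$ is not flasque. The conclusion $\K_0(E^\pm)=0$ is nevertheless what you need (the cofiber sequence gives $\K_0(E^\pm)\iso\K_{-1}(E)$ once the two half-bounded corners have vanishing $K$-theory), and the paper obtains it by a different, elementary argument: every object $x$ of $E^\pm$ sits in the truncation triangle $\tau_{\geq 0}x\to x\to\tau_{\leq -1}x$ with the outer terms in $E^-$ and $E^+$ respectively, so $[x]$ is a sum of classes pulled back from the two flasque halves and hence vanishes. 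Also, your $E^-$ need not involve cofiltered towers or pro-objects at all; taking the $\kappa$-compact bounded-below objects of $\Ind(E)$ with homotopy in the heart suffices, which keeps everything inside $\Ind(E)$ and makes the adjointability computation tractable.
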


\begin{proof}
    The $t$-structure on $E$ extends to a $t$-structure on $\Ind(E)$ with
    nonnegative objects $\Ind(E_{\geq 0})\we\Ind(E)_{\geq 0}$ by
    Proposition~\ref{lem:textension}.
%
%
    Let $A=E^\heartsuit$ denote the heart of $E$, and fix $\kappa$ an uncountable regular
    cardinal such that $E$ is essentially $\kappa$-small. Consider
    the commutative diagram of fully faithful functors
    \begin{equation}\label{eq:schlichtinglike}
        \xymatrix{
            E\ar[r]^i\ar[d]_p   &   \Ind^+_{A}(E)\ar[d]^q\\
            \Ind^-_{A}(E)^\kappa\ar[r]^j    &   \Ind_{A}(E)^\kappa,
        }
    \end{equation}
    where
    \begin{itemize}
        \item   $\Ind_A^+(E)\subseteq\bigcup_{n}\Ind(E)_{\leq
            n}\we\bigcup_n\Ind(E)_{\leq n}$ is the full
            subcategory of bounded above objects $x$ with $\pi_nx\in A$ for all
            $n$,
        \item   $\Ind_A^-(E)^\kappa\subseteq\bigcup_n\Ind(E_{\geq n})^\kappa$ is the full
            subcategory of the $\kappa$-compact bounded below
            objects with $\pi_n(x)\in A$ for all $n$, and
        \item   $\Ind_A(E)^\kappa$ is the full subcategory of $\Ind(E)$ of
            objects $x\in\Ind(E)$ such that $\tau_{\leq n}x\in\Ind_A^+(E)$ and
            $\tau_{\geq n}x\in\Ind_A^-(E)^\kappa$ for all $n$.
    \end{itemize}
    Note that the inclusion $\Ind^+_A(E)\rightarrow\Ind(E)$
    factors through $\Ind_A(E)^\kappa$. Indeed, for $x\in\Ind^+_A(E)$, the
    truncations $\tau_{\leq n}x$ are in $\Ind^+_A(E)$ for all $n$.
    Moreover, $\tau_{\geq n}x$ is bounded and has homotopy objects all in $A$,
    so that $\tau_{\geq n}x$ is in fact in $E$; it follows that
    $\tau_{\geq n}x\in\Ind^-_A(E)^\kappa$.

    The objects of $\Ind_A^-(E)^\kappa$ are in fact $\kappa$-compact in
    $\Ind(E)$ because $\Ind(E_{\geq 0})\rightarrow\Ind(E)$ preserves
    $\kappa$-compact objects as the right adjoint preserves ($\omega$-)filtered colimits, and hence all
    $\kappa$-filtered colimits. For the same reason,
    $\Ind(E_{\geq n})\rightarrow\Ind(E_{\geq n-1})$ preserves $\kappa$-compact objects.
    Clearly, if $x\in\Ind_A(E)^\kappa$, then every truncation
    $\tau_{\geq n}x$ is $\kappa$-compact, however we do not claim that every
    $\kappa$-compact object $x$ in $\Ind(E)$ with $\pi_n(x)\in A$ is contained in
    $\Ind_A(E)^\kappa$.


    We do claim that
    \begin{enumerate}
        \item[(1)] $\Ind_A(E)^\kappa$, $\Ind_A^+(E)$, and $\Ind_A^-(E)^\kappa$
            are essentially small idempotent complete stable subcategories of
            $\Ind(E)$ and
        \item[(2)]  that the $t$-structure on $\Ind(E)$ restricts to a $t$-structure on $\Ind_A(E)^\kappa$.
    \end{enumerate}
    After establishing these facts,
    we prove that we can apply Theorem~\ref{thm:mv} to the
    square~\eqref{eq:schlichtinglike}. This gives a pushout square of
    $K$-theory spectra which lets us prove in the end that $\K_{-1}(E)=0$.

    In fact $\Ind_A(E)^\kappa$ is contained in $\Ind(E)^\kappa$.
    Since the bounded below objects of $\Ind_A(E)^\kappa$ are $\kappa$-compact,
    it is enough to check that $\Ind_A^+(E)\subseteq\Ind(E)^\kappa$.
    Let $x\in\Ind_A^+(E)$, so that in particular $x\in\Ind(E)_{\leq
    n}$ for some $n$. There are maps \begin{equation}\label{eq:w}\pi_nx[n]\we\tau_{\geq
    n}x\rightarrow\tau_{\geq n-1}x\rightarrow\tau_{\geq
    n-2}x\rightarrow\cdots\rightarrow x.\end{equation} Since the induced
    $t$-structure on $\Ind(E)$ is right complete by Proposition~\ref{lem:textension},
    the colimit of the sequence is equivalent to $x$. To see this, note that it
    is enough to prove that $\colim_i\Map(y,\tau_{\geq n-i}x)\we\Map(y,x)$ for
    all $y\in E$. However, since the $t$-structure on $E$ is bounded, any such
    $y$ is contained in $E_{\geq n-i}$ for some $i$. Using the two cofiber
    sequences $$\tau_{\geq n-i}x\rightarrow\tau_{\geq
    n-j}x\rightarrow\tau_{\leq n-i-1}\tau_{\geq n-j}x$$ and $$\tau_{\geq n-i}x\rightarrow
    x\rightarrow\tau_{\leq n-i-1}x,$$ for $j\geq i$, we see that
    $$\Map(y,\tau_{\geq n-i}x)\we\Map(y,\tau_{\geq n-j}x)$$ and
    $$\Map(y,\tau_{\geq n-i}x)\we\Map(y,x)$$ for $j\geq i$. This proves that
    the colimit of~\eqref{eq:w} is indeed $x$.
    However, each object $\tau_{\geq m}x$ is actually in $E$, so this is a $\kappa$-small
    colimit of compact objects and hence of $\kappa$-compact objects in $\Ind(E)$. Thus,
    $x$ is $\kappa$-compact by~\cite{htt}*{Corollary~5.3.4.15}.

    That these three $\infty$-categories are essentially small follows from the
    fact that $\Ind_A(E)^\kappa\subseteq\Ind(E)^\kappa$ and the fact that
    $\Ind(E)^\kappa$ is essentially small because every object is the colimit
    in presheaves on $E$ of a $\kappa$-small
    diagram~\cite{htt}*{Proposition~5.3.4.17}. Moreover, $\Ind_A^+(E)$ is
    idempotent complete
    because $\Ind(E)_{\leq 0}$ and $A$ are idempotent complete, while each $\Ind(E_{\geq n})^\kappa$ is
    idempotent complete since it is closed under $\kappa$-small colimits, and
    in particular it is closed under idempotent completion because $\kappa$ is
    uncountable. It follows that
    $\Ind_A(E)^\kappa$ is idempotent complete as well since inclusion in
    $\Ind_A(E)^\kappa$ is given by a condition on the truncations.

    These three $\infty$-categories are closed under suspension and
    desuspension in $\Ind(E)$, so to see that they are stable, it is enough to
    show that they are closed under either taking fibers or cofibers in
    $\Ind(E)$ by~\cite{ha}*{Lemma~1.1.3.3} and its opposite version.
    We first note that if $z$ is the cofiber of a map $f:x\rightarrow y$ in
    $\Ind(E)$ between two objects such that $\pi_nx$ and $\pi_ny$ are in
    $A$ for all $n$, 
    then $\pi_nz$ is an extension of objects of $A$, namely of
    $\ker(\pi_{n+1}x\rightarrow\pi_{n+1}y)$ by $\coker(\pi_nx\rightarrow\pi_ny)$.
    Since $A$ is closed under extension in $\Ind(E)^{\heartsuit}$,
    by Lemma~\ref{lem:wserre}, we see that $\pi_nz\in A$.


    Stability of $\Ind_A^+(E)$ follows from the fact the cofiber of a map
    of bounded above objects is bounded above;
    stability of $\Ind_A^-(E)^\kappa$ follows from the fact that $\Ind(E_{\geq
    n})^\kappa$ is closed under cofibers in $\Ind(E)$.

    We show that $\Ind_A(E)^\kappa$ is stable, by showing that
    it is closed under taking fibers in $\Ind(E)$. If $z\rightarrow x\rightarrow
    y$ is a fiber sequence in $\Ind(E)$ where $x\rightarrow y$
    is in $\Ind_A(E)^\kappa$, then $\pi_nz\in A$ for all $n$, so that
    $\tau_{\leq n}z\in\Ind_A^+(E)$ for all $n$. Hence, it is enough to show
    that $\tau_{\geq n}z\in\Ind(E_{\geq n})^\kappa$. However, $\tau_{\geq
    n}:\Ind(E)\rightarrow\Ind(E_{\geq n})$ preserves limits as it is a right
    adjoint. Hence, $\tau_{\geq n}z$ is the fiber of $\tau_{\geq
    n}x\rightarrow\tau_{\geq n}y$ in $\Ind(E_{\geq 0})$. Now, as we have chosen
    $\kappa$ to be uncountable and such that $E$ is essentially $\kappa$-small,
    we see by~\cite{htt}*{Proposition~5.4.7.4} that the inclusion $\Ind(E_{\geq n})^\kappa\rightarrow\Ind(E_{\geq
    n})$ is closed under all finite limits in $\Ind(E_{\geq
    n})$, and in particular under fibers. Hence, $z\in\Ind_A(E)^\kappa$, which
    completes the proof of claim (1).

    Suppose that $x\in\Ind_A(E)^\kappa$. To show that the $t$-structure on
    $\Ind(E)$ restricts to $\Ind_A(E)^\kappa$, we show that $\tau_{\geq
    n}x$ and $\tau_{\leq n}x$ are in $\Ind_A(E)^\kappa$. In fact, by stability, it is
    sufficient to check only one of these. Moreover, $\tau_{\geq
    m}\tau_{\geq n}x\we\tau_{\geq m}x$ for $m\geq n$, so that if
    $x\in\Ind_A^-(E)^\kappa$, then so is $\tau_{\geq m}x$ for all $m$.
    Similarly, $\tau_{\leq m}\tau_{\geq n}x\in E\subseteq\Ind_A^+(E)$. So,
    $\tau_{\geq m}x\in\Ind_A^-(E)$, which proves that $\Ind_A(E)^\kappa$ inherits the induced $t$-structure from $\Ind(E)$.
    This proves (2).
    Note that by construction the truncation functors on $\Ind_A(E)^\kappa$
    preserve $\Ind_A^+(E)$ and $\Ind_A^-(E)^\kappa$, which therefore inherit
    compatible $t$-structures.

    To complete the proof, we will show that the
    square~\eqref{eq:schlichtinglike} satisfies the hypotheses of
    Theorem~\ref{thm:mv}. The validity of condition (c) in the theorem is due to the
    $t$-structure, which gives cofiber sequence $\tau_{\geq 0}x\rightarrow
    x\rightarrow\tau_{\leq -1}x$ for every $x\in\Ind_A(E)^\kappa$, where
    $\tau_{\geq 0}x\in\Ind_A^-(E)^{\kappa}$ and $\tau_{\leq
    -1}x\in\Ind_A^+(E)$. To prove that the induced functor
    $$\Ind_A^+(E)/E\rightarrow\Ind_A(E)^\kappa/\Ind_A^-(E)^\kappa$$ is fully
    faithful, we will prove directly that the square satisfies
    Proposition~\ref{prop:adjointability}(2) and
    invoke Lemma~\ref{lem:adjff}.

    Let $x\in\Ind(\Ind_A^+(E))$. We need to show that if $i_*x\we 0$ then
    $j_*q^*x\we 0$. It is enough to prove that
    $\Map_{\Ind(\Ind^-_A(E)^\kappa)}(y,j_*q^*x)\we 0$ for
    $y\in\Ind^-_A(E)^\kappa$. Choose a filtered $\infty$-category $B$ and an
    equivalence $\colim_B x_\beta\we x$ where $x_\beta\in\Ind_A^+(E)$ for all
    $\beta$ in $B$. Using adjunctions and
    compactness, we get a chain of equivalences
    \begin{align*}
            \Map_{\Ind(\Ind^-_A(E)^\kappa)}(y,j_*q^*x)&\we\Map_{\Ind(\Ind_A(E)^\kappa)}(j^*y,q^*x)\\
            &\we\colim_B\Map_{\Ind(E)^\kappa}(jy,qx_\beta)\\
            &\we\colim_B\Map_{\Ind(E)}(y,x_\beta)\\
            &\we\colim_B\colim_n\Map_{\Ind(E)}(\tau_{\leq n}y,x_\beta)\\
            &\we\colim_B\colim_n\Map_{\Ind_A^+(E)}(\tau_{\leq n}y,x_\beta)\\
            &\we\colim_n\colim_B\Map_{\Ind_A^+(E)}(\tau_{\leq n}y,x_\beta)\\
            &\we\colim_n\Map_{\Ind(\Ind_A^+(E))}(i^*\tau_{\leq n}y,x)\\
            &\we\colim_n\Map_{\Ind(E)}(\tau_{\leq n}y,i_*x)\\
            &\we 0,
    \end{align*}
    where we use (i) the crucial fact that $x_\beta$ is bounded above as well as the
    $t$-structure to observe that the
    colimit $\colim_{n\rightarrow\infty}\Map_{\Ind(E)}(\tau_{\leq n}y,x_\beta)$
    stabilizes at $\Map_{\Ind(E)}(y,x_\beta)$ and (ii) that
    $\tau_{\leq n}jy$ is in
    $E\subseteq\Ind_A^+(E)$ and
    hence $i^*\tau_{\leq n}jy$ is compact in $\Ind(\Ind_A^+(E))$ to prove the eighth
    equivalence.

    It follows that there is a cofiber sequence
    $$\K(E)\rightarrow\K(\Ind_{A}^-(E)^\kappa)\oplus\K(\Ind_{A}^+(E))\rightarrow\K(\Ind_{A}(E)^\kappa).$$
    of $K$-theory spectra.
    It is easy to see that the $K$-theory spectra of the idempotent complete stable
    $\infty$-categories $\Ind_{A}^{+}(E)$ and $\Ind_A^-(E)^\kappa$ are zero. Indeed, there is an
    endofunctor $T:\Ind_A^-(E)^\kappa\rightarrow\Ind_A^-(E)^\kappa$ given by
    $T=\bigoplus_{n\geq 0}\id[2n]$ since $\Ind(E)^\kappa$ is closed under
    countable coproducts. We have an equivalence of endofunctors $T\we \id\oplus T[2]$. Hence, the
    identity map on $\K(\Ind_A^-(E)^\kappa)$ is nullhomotopic. The same argument but
    with desuspensions shows that $\K(\Ind_A^+(E))\we 0$. Hence,
    $$\K_0(\Ind_{A}(E)^\kappa)\iso\K_{-1}(E).$$
    Given an object $x$ of $\Ind_{A}(E)^\kappa$, we have a canonical triangle
    $$\tau_{\geq 0}x\rightarrow x\rightarrow\tau_{\leq -1}x$$ coming from the
    $t$-structure, where $\tau_{\geq 0}x$ is in $\Ind_{A}^-(E)^\kappa$ and
    $\tau_{\leq -1}x$ is in $\Ind_{A}^+(E)$. But, since $\K_0$ of each of the
    half-bounded categories is zero, it follows that the class of $x$ in
    $\K_{-1}(E)$ is also zero.
\end{proof}

\section{Induction}\label{sec:induction}

This section contains the proofs of the inductive step of our main theorem and
the nonconnective theorem of the heart in the noetherian case,
their relation to the Farrell-Jones conjecture in negative $K$-theory
for group rings, and a discussion of the major impediments to proving the
conjecture in general.

\subsection{Dualizability of compactly generated stable $\infty$-categories}\label{sub:prelims}

We discuss in this section some technical preliminaries about dualizability we
will need later.
The material here is basically well-known, but we include it for the sake of completeness.

Recall that an object $x$ in a symmetric monoidal $\infty$-category $\Pscr$ is
\df{dualizable} if there is another object, $\D x$ together with an evaluation
map $\ev:x\otimes \D x\rightarrow\mathds{1}$ and a coevaluation map
$\coev:\mathds{1}\rightarrow \D x\otimes x$ such that the composites
$$x\xrightarrow{\id_x\otimes\coev}x\otimes\D x\otimes
x\xrightarrow{\ev\otimes\id_x}x$$ and
$$\D x\xrightarrow{\coev\otimes\id_{\D x}}\D x\otimes
x\otimes\D x\xrightarrow{\id_{\D x}\otimes\ev}\D x$$ are equivalent
to the identities on $x$ and $\D x$, respectively.

In a closed symmetric monoidal $\infty$-category $\Pscr$, the endofunctor induced by
tensoring with a fixed object $x$ has a right adjoint taking $y$ to $y^x$ by definition.
Tensoring with $x$ has a left adjoint if and only if
$x$ is dualizable, in which case the unit and counit maps of the adjunction are given by tensoring
with $\coev$ and $\ev$, respectively. Moreover, when $x$ is a dualizable object
in the closed symmetric monoidal $\infty$-category $\Pscr$, there is a natural
equivalence $y\otimes x\we y^{\D x}$ for $y\in\Pscr$.

\begin{proposition}\label{prop:dual}
    If $\Cscr$ is a compactly generated stable $\infty$-category, then $\Cscr$
    is dualizable in $\Pr^\L_\st$ with dual $\Fun^\L(\Cscr,\Sp)$.
\end{proposition}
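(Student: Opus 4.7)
The plan is to reduce the statement to the easier fact that every small idempotent complete stable $\infty$-category is self-dual (up to opposites) in $\Cat_\infty^{\mathrm{ex},\perf}$, and then transport this dualizability through the symmetric monoidal functor $\Ind$. Concretely, since $\Cscr$ is compactly generated, I would first write $\Cscr\simeq\Ind(\Cscr_0)$ with $\Cscr_0=\Cscr^\omega$, a small idempotent complete stable $\infty$-category. The $\infty$-category $\Cat_\infty^{\mathrm{ex},\perf}$ carries a symmetric monoidal structure with unit $\Sp^\omega$, and $\Ind\colon\Cat_\infty^{\mathrm{ex},\perf}\to\Pr^\L_\st$ is symmetric monoidal (sending $\Sp^\omega$ to $\Sp$); this is the content of Lurie's comparison between compactly generated presentable stable $\infty$-categories and small idempotent complete stable $\infty$-categories. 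Since symmetric monoidal functors preserve dualizable objects together with their evaluation and coevaluation maps, it suffices to prove that $\Cscr_0$ is dualizable in $\Cat_\infty^{\mathrm{ex},\perf}$ with dual $\Cscr_0^{\op}$.

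For the small-categorical dualizability, I would use the universal property of $\otimes$ on $\Cat_\infty^{\mathrm{ex},\perf}$: an exact functor out of $\Cscr_0\otimes\Dscr_0$ is the same as an exact functor in each variable separately. The evaluation $\ev\colon\Cscr_0\otimes\Cscr_0^{\op}\to\Sp^\omega$ is the mapping spectrum functor $(x,y)\mapsto\mathrm{map}_{\Cscr_0}(y,x)$ (which lands in compact spectra up to idempotent completion after restricting to finite diagrams built from a single object, or, more cleanly, one uses the formulation via dualizability of the Yoneda embedding in small stable $\infty$-categories from Higher Algebra). The coevaluation $\coev\colon\Sp^\omega\to\Cscr_0^{\op}\otimes\Cscr_0$ is specified by sending the sphere to the ``identity element,'' namely the image of $\id_{\Cscr_0}\in\Fun^{\ex}(\Cscr_0,\Cscr_0)\simeq\Fun^{\ex}(\Cscr_0^{\op}\otimes\Cscr_0,\Sp^\omega)$ under the standard identification. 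The triangle identities then hold by a direct Yoneda computation.

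After applying $\Ind$, we obtain that $\Cscr=\Ind(\Cscr_0)$ is dualizable in $\Pr^\L_\st$ with dual $\Ind(\Cscr_0^{\op})$. To finish, I would identify this dual with $\Fun^\L(\Cscr,\Sp)$ via the chain of equivalences
\[
\Fun^\L(\Cscr,\Sp)\;=\;\Fun^\L(\Ind(\Cscr_0),\Sp)\;\simeq\;\Fun^{\ex}(\Cscr_0,\Sp)\;\simeq\;\Ind(\Cscr_0^{\op}),
\]
where the first equivalence is the universal property of Ind-completion (colimit-preserving functors out of $\Ind(\Cscr_0)$ are exact functors out of $\Cscr_0$, using that $\Sp$ has all colimits and is stable), and the second is the stable Yoneda embedding exhibiting $\Ind(\Cscr_0^{\op})$ as the free filtered-cocompletion of $\Cscr_0^{\op}$ inside $\Fun^{\ex}(\Cscr_0,\Sp)$.

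The only real obstacle is checking that the coevaluation is well-defined and that the triangle identities hold, but both reduce, via the universal property of the symmetric monoidal structures on $\Cat_\infty^{\mathrm{ex},\perf}$ and $\Pr^\L_\st$, to formal Yoneda-style computations; alternatively one can simply quote the result for small categories from Higher Algebra and rely on the fact that $\Ind$ is symmetric monoidal to transport it.
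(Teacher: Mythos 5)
Your reduction fails at its very first step: it is \emph{not} true that every small idempotent complete stable $\infty$-category is dualizable in $\Cat_\infty^{\perf}$ with dual its opposite. The unit of the symmetric monoidal structure on $\Cat_\infty^{\perf}$ is $\Sp^\omega$, the category of finite spectra, and the dualizable objects there are precisely the \emph{smooth and proper} stable $\infty$-categories (Blumberg--Gepner--Tabuada; To\"en--Vaqui\'e in the dg setting). Your own candidate evaluation $(x,y)\mapsto\mathrm{map}_{\Cscr_0}(y,x)$ already exhibits the obstruction that your parenthetical tries to wave away: mapping spectra between compact objects are almost never finite spectra. For $\Cscr=\Mod_\ZZ$ one has $\Cscr_0=\Perf(\ZZ)$ and $\mathrm{map}(\ZZ,\ZZ)=H\ZZ$, which is not compact in $\Sp$, so the evaluation does not factor through $\Sp^\omega$ and $\Perf(\ZZ)$ is not dualizable in $\Cat_\infty^{\perf}$ --- even though $\Mod_\ZZ$ is compactly generated and hence must be dualizable in $\Pr^\L_\st$. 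Dually, your coevaluation requires the identity bimodule to be a \emph{compact} object of $\Ind(\Cscr_0^{\op}\otimes\Cscr_0)$, which is exactly the smoothness condition. The object you want to transport along $\Ind$ is therefore not dualizable upstairs in general, and no symmetric monoidal functor can create duality data that does not exist in the source.

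The fix --- and what the paper actually does --- is to build the duality data directly in $\Pr^\L_\st$, where both finiteness constraints evaporate: the evaluation only has to preserve colimits separately in each variable, so $(x,F)\mapsto F(x)$ works with no compactness of values required, and the coevaluation is merely an object of $\Fun^\L(\Cscr,\Sp)\otimes\Cscr\simeq\Fun^{\lim}(\Cscr^{\op},\Fun^\L(\Cscr,\Sp))$, supplied by the spectral co-Yoneda embedding $\Cscr^{\op}\to\Fun^{\ex}(\Cscr^\omega,\Sp)\simeq\Fun^\L(\Cscr,\Sp)$, with no requirement that it land in a compact object. Your closing chain of identifications $\Fun^\L(\Cscr,\Sp)\simeq\Fun^{\ex}(\Cscr^\omega,\Sp)\simeq\Ind((\Cscr^\omega)^{\op})$ is correct and is where compact generation genuinely enters; the error lies only in the category in which you attempt to verify the triangle identities.
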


\begin{proof}
    We refer to~\cite{ha}*{Section~4.8} for information about the tensor product of
    stable presentable $\infty$-categories.
    Because colimits in $\Fun^{\L}(\Cscr,\Sp)$ are computed pointwise, the
    evaluation bifunctor $\Cscr\times\Fun^\L(\Cscr,\Sp)\to\Sp$ preserves
    colimits separately in each variable, so we obtain an evaluation map
    $\Cscr\otimes\Fun^\L(\Cscr,\Sp)\to\Sp$.
    We must define a coevaluation map $\Sp\to\Fun^\L(\Cscr,\Sp)\otimes\Cscr$, which is to say an object of
    \[
    \Fun^\L(\Cscr,\Sp)\otimes\Cscr\simeq\Fun^{\lim}(\Cscr^{\op},\Fun^\L(\Cscr,\Sp)),
    \]
    where $\Fun^{\lim}$ denotes the $\infty$-category of limit-preserving functors.
    Using the fact that $\Cscr$ is stable and compactly generated (i.e.
    $\Cscr\simeq\Ind(\Cscr^\omega)$) we have an equivalence
    $\Fun^\L(\Cscr,\Sp)\simeq\Fun^{\ex}(\Cscr^\omega,\Sp)$.
    Moreover, the (restricted) spectral co-Yoneda embedding
    $h:\Cscr^{\op}\to\Fun(\Cscr^\omega,\Sp)$ preserves limits and factors through
    the full subcategory $\Fun^{\ex}(\Cscr^\omega,\Sp)\subseteq\Fun(\Cscr^\omega,\Sp)$.
    This gives the desired limit-preserving functor $\Cscr^{\op}\to\Fun^\L(\Cscr,\Sp)$.
    It is then routine to verify the triangle identities, so that $\Cscr$ is dualizable with dual $\Fun^\L(\Cscr,\Sp)$.
    For example, consider the composition
    $$\Cscr\xrightarrow{\id_\Cscr\otimes\mathrm{coev}}\Cscr\otimes\Fun^{\L}(\Cscr,\Sp)\otimes\Cscr\xrightarrow{\mathrm{ev}\otimes\id_\Cscr}\Cscr,$$
    which we can write as the composition
    $$\Cscr\rightarrow\Fun^{\lim}(\Cscr^{\op},\Cscr\otimes\Fun^{\L}(\Cscr,\Sp))\xrightarrow{\mathrm{ev}}\Fun^{\lim}(\Cscr^{\op},\Sp).$$
    By definition of the coevaluation map, the composition is the Yoneda
    embedding $\Cscr\rightarrow\Fun^{\lim}(\Cscr^{\op},\Sp)$. Since the natural equivalence
    $\Fun^{\lim}(\Cscr^{\op},\Sp)\we\Cscr$ takes the representable functor
    $h(x)$ to $x$, we see that the composition is equivalent to the
    identity. The argument for the dual is similar.
\end{proof}

\begin{lemma}\label{lem:cg}
    If $\Cscr$ and $\Dscr$ are compactly generated stable
    $\infty$-categories, then $\Cscr\otimes\Dscr$ is compactly generated by
    objects the $x\otimes y$, where $x$ and $y$ range over compact generators of
    $\Cscr$ and $\Dscr$, respectively.
\end{lemma}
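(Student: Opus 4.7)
The plan is to reduce the claim to the well-known identification of the tensor product of compactly generated stable $\infty$-categories with the ind-completion of the tensor product of their compact parts. Set $A=\Cscr^\omega$ and $B=\Dscr^\omega$, so $\Cscr\simeq\Ind(A)$ and $\Dscr\simeq\Ind(B)$ with $A,B\in\Cat_\infty^{\perf}$. The tensor product in $\Pr^{\L}_{\st}$ satisfies
\[
\Cscr\otimes\Dscr\simeq\Ind(A)\otimes\Ind(B)\simeq\Ind(A\otimes B),
\]
where the final tensor product is taken in $\Cat_\infty^{\perf}$ (see \cite{ha}*{Lemma~5.3.2.11 and Proposition~4.8.1.17}). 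In particular $\Cscr\otimes\Dscr$ is compactly generated, with $(\Cscr\otimes\Dscr)^\omega$ identified with the idempotent completion of $A\otimes B$.

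Next, I will argue that $\{x\otimes y:x\in A,\,y\in B\}$ is already a set of compact generators. Compactness is automatic: the universal bilinear map $A\times B\to A\otimes B$ has image in $A\otimes B$, whose objects map to compact objects of $\Ind(A\otimes B)=\Cscr\otimes\Dscr$. For generation, the construction of $A\otimes B$ as an object of $\Cat_\infty^{\perf}$ guarantees that its objects of the form $x\otimes y$ generate $A\otimes B$ under finite colimits, suspensions, desuspensions, and retracts; hence they generate $\Ind(A\otimes B)$ under all small colimits.

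Finally, to pass from arbitrary objects $x\in A,y\in B$ to a chosen set of compact generators $\{x_\alpha\}$ of $\Cscr$ and $\{y_\beta\}$ of $\Dscr$, observe that such $\{x_\alpha\}$ generate $A$ under finite colimits and retracts (and similarly for $\{y_\beta\}$ in $B$), since every compact object of $\Cscr$ is a retract of a finite colimit of compact generators. Because the functor $-\otimes-:A\times B\to A\otimes B$ is exact separately in each variable, any $x\otimes y$ with $x\in A$, $y\in B$ is a retract of a finite iterated colimit of objects of the form $x_\alpha\otimes y_\beta$. Combined with the previous paragraph, this shows that the $x_\alpha\otimes y_\beta$ generate $\Cscr\otimes\Dscr$ under colimits, completing the proof.

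The only nontrivial input is the identification $\Ind(A)\otimes\Ind(B)\simeq\Ind(A\otimes B)$, which I do not expect to reprove and will cite from \cite{ha}; everything else is formal bilinearity together with the definition of the tensor product in $\Cat_\infty^{\perf}$.
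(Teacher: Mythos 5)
Your proof is correct, but it takes a genuinely different route from the paper's. The paper works directly with the universal property of the tensor product in $\Pr^{\L}_{\st}$: compactness of $x\otimes y$ is extracted from the mapping-spectrum formula $(\Cscr\otimes\Dscr)(x\otimes y,x'\otimes y')\we\Cscr(x,x')\otimes_{\SS}\Dscr(y,y')$ together with the adjoint functor theorem, and generation is proved by showing that a functor preserving colimits in each variable which kills all pairs of compact objects must vanish. You instead reduce everything to the symmetric monoidal compatibility $\Ind(A)\otimes\Ind(B)\we\Ind(A\otimes B)$ for $A=\Cscr^\omega$, $B=\Dscr^\omega$, after which compactness and generation are formal. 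This is a clean reduction, with one caveat: the argument is only non-circular if you take the universal-property definition of $A\otimes B$ in $\Cat_{\infty}^{\perf}$ (initial among idempotent complete stable $\infty$-categories receiving a functor from $A\times B$ exact in each variable), so that generation of $A\otimes B$ by the $x\otimes y$ under finite colimits and retracts really is ``by construction'' (via a minimality/splitting argument) and the cited equivalence carries the substantive content. If one instead \emph{defines} $A\otimes B$ as $(\Ind(A)\otimes\Ind(B))^\omega$, your second paragraph is exactly the statement being proved. Your final step, reducing from all compact objects to a chosen set of compact generators via thick-subcategory generation and bi-exactness, is a genuine strengthening that the paper's proof leaves implicit, and it is the form actually used later (e.g.\ the generation of $\Dscr(\AA^1,\Cscr)$ by the objects $x[s]$ with $x\in E$), so it is worth spelling out as you do.
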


\begin{proof}
    We show first that these objects are compact.
    The mapping spectrum functor
    $$(\Cscr\otimes\Dscr)(x\otimes
    y,-):\Cscr\otimes\Dscr\rightarrow\Mod_{\SS}$$ preserves
    filtered colimits if and only if it admits a right adjoint by the adjoint
    functor theorem, and when this is the case it
    represents an object in $\Fun^{\L}(\Cscr\otimes\Dscr,\Mod_{\SS})$. By the
    universal property of $\Cscr\otimes\Dscr$ this occurs if and only if
    $\Cscr(x,-)\otimes_\SS\Dscr(y,-):\Cscr\times\Dscr\rightarrow\Mod_{\SS}$
    preserves filtered colimits in each variable, using the fact that if
    if $x,x'\in\Cscr^\omega$ and $y,y'\in\Dscr^\omega$,
    then $$(\Cscr\otimes\Dscr)(x\otimes y,x'\otimes
    y')\we\Cscr(x,x')\otimes_{\SS}\Dscr(y,y').$$
    This happens if and only if $x$ and $y$ are both compact.

    By definition, $\Cscr\otimes\Dscr$ is the universal stable presentable
    $\infty$-category equipped with a functor
    $\Cscr\times\Dscr\rightarrow\Cscr\otimes\Dscr$ preserving small colimits in
    each variable (see~\cite{htt}*{Remark~5.5.3.9}). Suppose that $F:\Cscr\times\Dscr\rightarrow\Escr$ is a functor
    preserving colimits in each variable such that $F(x,y)=0$ for all
    $x\in\Cscr^\omega$ and $y\in\Cscr^{\omega}$. By definition of $F$ and the
    fact that every object of $\Cscr$ (resp. $\Dscr$) can be written as a small
    colimit of objects of $\Cscr^\omega$ (resp. $\Dscr^\omega$), it follows
    that $F$ vanishes. It follows that the objects
    $$\{x\otimes y:x\in\Cscr^\omega,y\in\Dscr^\omega\}$$ generate
    $\Cscr\otimes\Dscr$.
\end{proof}

\begin{proposition}
    Let $\Cscr$ be dualizable object of $\Pr^\L_\st$.
    Then for any fully faithful functor $\Ascr\to\Bscr$ in $\Pr^\L$, the
    induced functor $\Ascr\otimes\Cscr\to\Bscr\otimes\Cscr$ is fully faithful.
\end{proposition}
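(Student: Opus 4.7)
The idea is to use the dualizability of $\Cscr$ to re-express $\Ascr\otimes\Cscr$ as a functor $\infty$-category, under which the morphism $F\otimes\id_\Cscr$ becomes postcomposition with $F$, and then to argue that postcomposition with a fully faithful map is fully faithful on functor $\infty$-categories by the end-formula for mapping spaces.

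For the first step, I would use that in the closed symmetric monoidal $\infty$-category $\Pr^\L$ the internal Hom $[\D\Cscr,-]$ is identified with $\Fun^\L(\D\Cscr,-)$, and that the general dualizability formula $[\D\Cscr,\Dscr]\simeq \D(\D\Cscr)\otimes\Dscr$ combined with biduality $\D\D\Cscr\simeq\Cscr$ yields a natural (in $\Dscr\in\Pr^\L$) equivalence
\[
\Dscr\otimes\Cscr\;\simeq\;\Fun^\L(\D\Cscr,\Dscr).
\]
Naturality in $\Dscr$ is the point: applying this at $\Dscr=\Ascr$ and $\Dscr=\Bscr$ identifies $F\otimes\id_\Cscr$ with postcomposition $F_*\colon\Fun^\L(\D\Cscr,\Ascr)\to\Fun^\L(\D\Cscr,\Bscr)$.

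For the second step, I reduce the problem to the following assertion: if $F\colon\Ascr\to\Bscr$ is any fully faithful functor between presentable $\infty$-categories, then $F_*\colon\Fun^\L(\D\Cscr,\Ascr)\to\Fun^\L(\D\Cscr,\Bscr)$ is fully faithful. Because $\Fun^\L(\D\Cscr,-)$ is a full subcategory of $\Fun(\D\Cscr,-)$, mapping spaces are the same as in the ambient functor $\infty$-category; and in $\Fun(\D\Cscr,-)$ mapping spaces are computed as ends $\int_{x\in\D\Cscr}\Map(\varphi(x),\psi(x))$. Since $F$ is fully faithful, each pointwise comparison $\Map_\Ascr(\varphi(x),\psi(x))\to\Map_\Bscr(F\varphi(x),F\psi(x))$ is an equivalence, and ends are limits, so the induced map on ends is an equivalence. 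This shows $F_*$ is fully faithful, whence $F\otimes\id_\Cscr$ is fully faithful.

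The main technical point to verify is that the equivalence $\Dscr\otimes\Cscr\simeq\Fun^\L(\D\Cscr,\Dscr)$ really is natural in $\Dscr$ as a morphism of functors $\Pr^\L\to\Pr^\L$, so that $F\otimes\id_\Cscr$ actually corresponds to $F_*$ and not merely to some functor in its equivalence class. This follows formally from the construction of the equivalence out of the naturally defined evaluation and coevaluation maps for the dualizable object $\Cscr$; in practice it amounts to chasing the triangle identities for the duality data $(\ev,\coev)$ and the observation that both $\Dscr\mapsto\Dscr\otimes\Cscr$ and $\Dscr\mapsto\Fun^\L(\D\Cscr,\Dscr)$ preserve colimits in $\Dscr$ (for the latter because colimits in functor $\infty$-categories are pointwise and $F_*$ preserves them). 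Alternatively, one can argue directly: since $\Cscr$ is dualizable, $-\otimes\Cscr$ is simultaneously left and right adjoint to $-\otimes\D\Cscr$, so it commutes with the formation of right adjoints; writing $G$ for the right adjoint of $F$, this gives a right adjoint $G\otimes\id_\Cscr$ to $F\otimes\id_\Cscr$ with unit obtained by tensoring the unit $\eta\colon\id_\Ascr\to GF$ of $(F\dashv G)$ with $\id_\Cscr$, and fully faithfulness of $F$ (i.e.\ $\eta$ being an equivalence) then transfers directly to fully faithfulness of $F\otimes\id_\Cscr$.
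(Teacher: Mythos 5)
Your proposal is correct and follows essentially the same route as the paper: identify $\Dscr\otimes\Cscr$ with $\Fun^\L(\D\Cscr,\Dscr)$ via the duality data, embed this fully faithfully into the ambient functor $\infty$-category, and observe that postcomposition with the fully faithful $F$ is fully faithful there (the paper asserts this last point where you justify it by the end formula, and it also flags the universe issue for $\Fun(\D\Cscr,-)$ by restricting to $\kappa$-continuous functors). Your alternative closing argument via the unit of the induced adjunction is a nice additional perspective but not needed.
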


\begin{proof}
Let $\Dscr$ be a dual of $\Cscr$. We have a commutative diagram
\[
\xymatrix{\Ascr\otimes\Cscr\ar[r]\ar[d] & \Fun^\L(\Dscr,\Ascr)\ar[r]\ar[d] & \Fun(\Dscr,\Ascr)\ar[d]\\
\Bscr\otimes\Cscr\ar[r] & \Fun^\L(\Dscr,\Bscr)\ar[r] & \Fun(\Dscr,\Bscr),}
\]
in which the left hand horizontal maps are equivalences and the right hand
horizontal maps are fully faithful. (Technically, $\Fun(\Dscr,-)$ lands in a
higher universe, but we can restrict to the $\kappa$-continuous functors for
any $\kappa$ such that $\Dscr$ is $\kappa$-compactly generated.)
Moreover, the right hand vertical map is fully faithful since $\Ascr\to\Bscr$
is, by hypothesis, so it follows that each of the other vertical maps is fully faithful.
\end{proof}

Note that we did not actually use the fact that $\Cscr$ was stable in the proof
of the above proposition; the same argument works for $\Cscr$ dualizable in
$\Pr^\L$.
Unfortunately, there are not so many dualizable objects of $\Pr^\L$, but as
soon as we pass to $\Pr^\L_\st$, we obtain a vast supply by
Proposition~\ref{prop:dual}.

A \df{localization sequence} in $\Pr^{\L}_{\st}$ is a cofiber sequence
$\Bscr\rightarrow\Cscr\rightarrow\Dscr$ such that $\Bscr\rightarrow\Cscr$ is
fully faithful. The stable presentable $\infty$-category $\Dscr$ in this case is
equivalent to the usual Bousfield localization of $\Cscr$ at the arrows with
cofiber in $\Bscr$ by~\cite{bgt1}*{Proposition~5.6}.

\begin{example}
    The prototypical localization sequence arises from a quasi-compact and
    quasi-separated scheme $X$ together with a quasi-compact open subscheme
    $U\subseteq X$ with complement $Z$. In this case, the functor $\Dscr(X)\rightarrow\Dscr(U)$ is
    a localization with kernel $\Dscr_Z(X)$, the $\infty$-category of complexes
    of $\Oscr_X$-modules with quasi-coherent cohomology sheaves supported
    set-theoretically on $Z$. Hence,
    $\Dscr_Z(X)\rightarrow\Dscr(X)\rightarrow\Dscr(U)$ is a localization
    sequence.
\end{example}

Since localization sequences in $\Pr^{\L}_{\st}$ are cofiber sequences and as
the tensor product on $\Pr^{\L}_{\st}$ preserves small colimits in each
variable by~\cite{ha}*{Remark~4.8.1.23}, the previous lemma shows that
localization sequences of stable presentable
$\infty$-categories are preserved by tensoring with a given compactly generated
stable $\infty$-category $\Escr$. Thus, we have proved the
following.

\begin{corollary}\label{cor:53}
    Let $\Bscr\rightarrow\Cscr\rightarrow\Dscr$ be a localization sequence of
    stable presentable $\infty$-categories. Then,
    $$\Bscr\otimes\Escr\rightarrow\Cscr\otimes\Escr\rightarrow\Dscr\otimes\Escr$$
    is a localization sequence for any compactly generated stable
    $\infty$-category $\Escr$.
\end{corollary}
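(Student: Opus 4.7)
The plan is to unwind the definition of a localization sequence and then apply two facts already established in this section: that the Lurie tensor product on $\Pr^{\L}_{\st}$ preserves cofibers in each variable, and that tensoring with a dualizable object preserves fully faithful functors.

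First I would recall that a localization sequence $\Bscr\to\Cscr\to\Dscr$ in $\Pr^{\L}_{\st}$ is by definition a cofiber sequence in which the first map is fully faithful. Since $\Escr$ is compactly generated and stable, Proposition~\ref{prop:dual} shows that $\Escr$ is dualizable in $\Pr^{\L}_{\st}$. Consequently the functor $-\otimes\Escr\colon\Pr^{\L}_{\st}\to\Pr^{\L}_{\st}$ admits both adjoints (the right adjoint being given by $\Fun^{\L}(\Escr,-)\simeq\Fun^{\L}(\Dscr_\Escr,-)\otimes\text{-related}$ constructions, or more elementarily by $-\otimes\D\Escr$), so in particular it preserves colimits. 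Combined with the fact that $\otimes$ on $\Pr^{\L}_{\st}$ preserves small colimits separately in each variable (\cite{ha}*{Remark~4.8.1.23}), this already implies that
\[
\Bscr\otimes\Escr\to\Cscr\otimes\Escr\to\Dscr\otimes\Escr
\]
is a cofiber sequence in $\Pr^{\L}_{\st}$.

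It then remains only to check that the induced functor $\Bscr\otimes\Escr\to\Cscr\otimes\Escr$ is still fully faithful. But this is precisely the content of the Proposition immediately preceding the corollary, applied to the fully faithful functor $\Bscr\to\Cscr$ and the dualizable object $\Escr$. Combining these two observations yields the conclusion.

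I do not anticipate a substantive obstacle here; the corollary is essentially a formal consequence of the preceding Proposition together with the colimit-preservation property of the tensor product on $\Pr^{\L}_{\st}$. The only point to be careful about is to verify that the right notion of cofiber in $\Pr^{\L}_{\st}$ is being used, so that appealing to~\cite{bgt1}*{Proposition~5.6} is legitimate in identifying cofibers along fully faithful maps with Bousfield localizations.
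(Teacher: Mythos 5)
Your proof is correct and follows essentially the same route as the paper: identify a localization sequence as a cofiber sequence whose first map is fully faithful, use that the tensor product on $\Pr^{\L}_{\st}$ preserves colimits in each variable to preserve the cofiber sequence, and invoke the preceding proposition (via dualizability of the compactly generated $\Escr$ from Proposition~\ref{prop:dual}) to preserve fully faithfulness. The aside about $-\otimes\Escr$ admitting both adjoints is unnecessary but harmless.
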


\subsection{Negative $K$-theory via $\infty$-categories of automorphisms}\label{sub:universal}

In this section, we prove the following theorem, which verifies
Conjecture~\hyperlink{conj:b}{B} in many cases.

\begin{theorem}\label{thm:gscnh}
    If $E$ is a small stable $\infty$-category equipped with a bounded
    $t$-structure such that $E^{\heartsuit}$ is noetherian, then $\K_{-n}(E)=0$
    for $n\geq 1$.
\end{theorem}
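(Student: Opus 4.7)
The plan is to argue by induction on $n$, with the base case $n=1$ supplied by Theorem~\ref{thm:minus1} (which requires only the bounded $t$-structure, not noetherianness). For the inductive step, suppose $n \geq 2$ and that $\K_{-m}(F) = 0$ for all $1 \leq m < n$ and every small stable $\infty$-category $F$ with a bounded $t$-structure whose heart is noetherian. Fix such an $E$ and set $\Cscr = \Ind(E)$.

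The core construction is an exact sequence
\[
    \Dscr_{\{0\}}(\AA^1,\Cscr)^\omega \longrightarrow \Dscr(\AA^1,\Cscr)^\omega \longrightarrow \Dscr(\Gm,\Cscr)^\omega
\]
in $\Cat_\infty^{\perf}$, where the first term denotes compact objects killed by inverting the canonical endomorphism $s$. I would first verify that this is indeed a Verdier localization sequence of idempotent complete stable $\infty$-categories, using Corollary~\ref{cor:53} applied to the localization sequence $\Dscr_{\{0\}}(\AA^1) \to \Dscr(\AA^1) \to \Dscr(\Gm)$ on the $\infty$-category side, then passing to compact objects. Nonconnective $K$-theory sends this to a fiber sequence
\[
    \K(\Dscr_{\{0\}}(\AA^1,\Cscr)^\omega) \longrightarrow \K(\Dscr(\AA^1,\Cscr)^\omega) \longrightarrow \K(\Dscr(\Gm,\Cscr)^\omega).
\]

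Next, I would exhibit $\K(E)$ as a direct summand of $\K(\Dscr_{\{0\}}(\AA^1,\Cscr)^\omega)$ whose image in $\K(\Dscr(\AA^1,\Cscr)^\omega)$ is null. Morally, $\Dscr_{\{0\}}(\AA^1,\Cscr)^\omega$ packages compact $s$-nilpotent objects, and $E$ embeds via $x \mapsto x$ with $s$ acting by zero, whereas $\Dscr(\AA^1, \Cscr)^\omega$ admits a nilpotence-filtration argument (or an $s\mapsto 0$ retraction combined with an additive decomposition in the spirit of the fundamental theorem) that forces the composition through the middle to vanish. From the splitting plus the null composition, the connecting map identifies $\K_{-n}(E)$ with a subgroup of $\K_{-n+1}(\Dscr(\Gm,\Cscr)^\omega)$.

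To close the induction, I would invoke Corollary~\ref{cor:tstructuregm} (the key technical input of Section~\ref{sub:induced}), which endows $\Dscr(\Gm,\Cscr)^\omega$ with a bounded $t$-structure whose heart is again noetherian, inherited from $E^\heartsuit$. The inductive hypothesis then yields $\K_{-(n-1)}(\Dscr(\Gm,\Cscr)^\omega) = 0$ when $n \geq 2$, forcing $\K_{-n}(E) = 0$ as desired. The main obstacle, and the real work, is the summand-plus-null-composition claim for $\K(E) \hookrightarrow \K(\Dscr_{\{0\}}(\AA^1,\Cscr)^\omega)$: one needs the correct compact models in $\Dscr(\AA^1, \Cscr)$ (as flagged in the introduction, these differ subtly from the endomorphism $K$-theory of \cite{bgt3}, since compactness in $\Dscr(\AA^1, \Cscr)$ neither implies nor is implied by compactness of the underlying object in $\Cscr$), and to set up the retraction carefully enough that the noetherian hypothesis on $E^\heartsuit$ propagates to the intermediate categories used in the inductive machine.
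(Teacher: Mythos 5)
Your proposal follows the paper's proof essentially verbatim: induction with base case Theorem~\ref{thm:minus1}, the localization sequence $\Dscr_{\{0\}}(\AA^1,\Cscr)^\omega\rightarrow\Dscr(\AA^1,\Cscr)^\omega\rightarrow\Dscr(\Gm,\Cscr)^\omega$, the splitting of $\K(E)$ off $\K(\Dscr_{\{0\}}(\AA^1,\Cscr)^\omega)$ with null composite into $\K(\Dscr(\AA^1,\Cscr)^\omega)$, and Corollary~\ref{cor:tstructuregm} to close the induction. The step you flag as the main obstacle is handled in the paper exactly as you suggest: embed $E$ by $x\mapsto(x,0)$, note $(x[s],s)\xrightarrow{s}(x[s],s)\rightarrow(x,0)$ is a cofiber sequence of compact objects so additivity kills the composite, and retract via the underlying-object functor $(x,e)\mapsto x$.
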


Many of our arguments in the proof work in greater generality,
and we take care to isolate those parts that are truly special to
the situation of a noetherian heart.

\begin{definition}
    Throughout this section, $\SS[s]=\Sigma^\infty_+\NN$ denotes the free
    commutative $\SS$-algebra on the commutative monoid $\NN$. Note that
    $\SS[s]$ equivalent to the free $\EE_1$-ring spectrum on the sphere
    spectrum $\SS$.
    Similarly, $\SS[s^{\pm 1}]=\Sigma^\infty_+\ZZ$ is the free commutative
    $\SS$-algebra on the commutative monoid $\ZZ$, or, equivalently, the
    localization of $\SS[s]$ obtained by
    inverting $s\in\pi_0\SS[s]$. These are
    each flat over $\SS$ and have the expected ring of components; that is,
    $\pi_0(\SS[s])\iso\ZZ[s]$ and $\pi_0(\SS[s^{\pm 1}])\iso\ZZ[s^{\pm 1}]$,
    while $\pi_*(\SS[s])\iso(\pi_*\SS)[s]$ and $\pi_*(\SS[s^{\pm
    1}])\iso(\pi_*\SS)[s^{\pm 1}]$. {\bfseries Warning:} the commutative
    $\SS$-algebra $\SS[s]$ is not the free commutative (or $\EE_\infty$)
    algebra on a single element in degree $0$.
\end{definition}

\begin{notation}
    In general, we will use either the notation $\Mod_R$ or $\Dscr(R)$ for
    the stable presentable $\infty$-category of right $R$-modules for an
    $\EE_{\infty}$-ring spectrum $R$. Moreover, in the special cases of $\SS[s]$ and
    $\SS[s^{\pm 1}]$, we will use the suggestive notation $\Dscr(\AA^1)=\Mod_{\SS[s]}$ and
    $\Dscr(\Gm)=\Mod_{\SS[s^{\pm 1}]}$. Given a stable presentable
    $\infty$-category $\Cscr$, we write
    $$\Dscr(\AA^1,\Cscr)=\Dscr(\AA^1)\otimes\Cscr=\Mod_{\SS[s]}\otimes\Cscr,$$ and similarly for
    $\Dscr(\Gm,\Cscr)$.
\end{notation}


To begin, we show that $\Dscr(\AA^1,\Cscr)$ can be identified with the
$\infty$-category of endomorphisms in $\Cscr$, and that $\Dscr(\Gm,\Cscr)$ is
equivalent to the $\infty$-category of automorphisms in $\Cscr$.

\begin{definition}
    Given an $\infty$-category $\Cscr$, the functor category
    $$\Fun(\Delta^1/\partial\Delta^1,\Cscr)$$ is the \df{$\infty$-category of
    endomorphisms in $\Cscr$}. An object of the $\infty$-category of
    endomorphisms consists of a pair $(x,e)$ where $x$ is an object of $\Cscr$
    and $e:x\rightarrow x$ is an endomorphism. For example, if $\Cscr$ is
    additive, then $(x,0)$ (the object $x$ equipped with the zero endomorphism)
    and $(x,\id_x)$ are functorial sections of the
    forgetful functor $\Fun(\Delta^1/\partial\Delta^1,\Cscr)\rightarrow\Cscr$.
    The \df{$\infty$-category of
    automorphisms in $\Cscr$} is $$\Fun(S^1,\Cscr),$$ where $S^1\simeq B\ZZ$ is a Kan complex weakly equivalent to $\Delta^1/\partial\Delta^1$. The map of
    $\infty$-categories
    $\Delta^1/\partial\Delta^1\rightarrow S^1$ induces a fully faithful
    embedding
    $$\Fun(S^1,\Cscr)\rightarrow\Fun(\Delta^1/\partial\Delta^1,\Cscr)$$
    with essential image those endomorphisms $(x,e)$ such that $e:x\rightarrow
    x$ is an equivalence.
\end{definition}

\begin{proposition}
    If $\Cscr$ is a stable presentable $\infty$-category, then
    \begin{enumerate}
        \item[\emph{(i)}]
            $\Dscr(\AA^1,\Cscr)\we\Fun(\Delta^1/\partial\Delta^1,\Cscr)$, and
        \item[\emph{(ii)}] $\Dscr(\Gm,\Cscr)\we\Fun(S^1,\Cscr)$.
    \end{enumerate}
\end{proposition}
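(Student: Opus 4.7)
The plan is to reduce both claims to Lurie's identification of modules over a free associative $\SS$-algebra with objects equipped with an endomorphism (or automorphism). The key observation is that $\SS[s] = \Sigma^\infty_+\NN$ is the free $\EE_1$-$\SS$-algebra on the unit in degree $0$, because the discrete commutative monoid $\NN$ is the free $\EE_1$-space on a point; similarly, $\SS[s^{\pm 1}] = \Sigma^\infty_+\ZZ$ is the free $\EE_1$-$\SS$-algebra invertibly generated by such a class, because $\ZZ$ is the free $\EE_1$-group on a point.

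For any $\EE_1$-space $M$ and any stable presentable $\infty$-category $\Cscr$, standard results from Higher Algebra (in particular the base-change formula $\Mod_R \otimes \Cscr \we \Mod_R(\Cscr)$ together with the $\Sigma^\infty_+ \dashv \Omega^\infty$ adjunction) produce natural equivalences
$$\Mod_{\Sigma^\infty_+ M} \otimes \Cscr \we \Mod_{\Sigma^\infty_+ M}(\Cscr) \we \Fun(BM, \Cscr),$$
where $\Mod_{\Sigma^\infty_+ M}(\Cscr)$ is the $\infty$-category of internal $\Sigma^\infty_+ M$-modules in $\Cscr$, and the second equivalence records that such a module is exactly the datum of an object of $\Cscr$ equipped with a coherent $M$-action, i.e., a functor out of the classifying $\infty$-category $BM$.

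I apply this with $M = \NN$ and $M = \ZZ$. The classifying $\infty$-category $B\NN$ is the walking-endomorphism $\infty$-category, which is a Joyal-fibrant replacement of the simplicial set $\Delta^1/\partial\Delta^1$, so that
$$\Dscr(\AA^1, \Cscr) \we \Fun(B\NN, \Cscr) \we \Fun(\Delta^1/\partial\Delta^1, \Cscr).$$
Similarly, $B\ZZ \we S^1$ as $\infty$-groupoids, giving $\Dscr(\Gm, \Cscr) \we \Fun(S^1, \Cscr)$. The fully faithful embedding $\Fun(S^1, \Cscr) \hookrightarrow \Fun(\Delta^1/\partial\Delta^1, \Cscr)$ from the definition then corresponds under these equivalences to restriction along the monoid map $\NN \to \ZZ$; its essential image is spanned by the invertible endomorphisms because $\ZZ$ is the group completion of $\NN$.

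The principal technical input, which I would simply cite, is the base-change formula for module $\infty$-categories. Everything else is a chain of formal identifications among free monoid algebras, their modules, and diagrams indexed on classifying $\infty$-categories; the only subtlety worth flagging is the need to distinguish $B\NN$ (a Joyal-fibrant replacement of $\Delta^1/\partial\Delta^1$, giving endomorphisms) from $B\ZZ \we S^1$ (a Kan-fibrant replacement, giving automorphisms).
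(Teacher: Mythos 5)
Your argument is correct, and the overall strategy is the one the paper uses: both proofs rest on identifying $\SS[s]$ as the free algebra on the discrete monoid $\NN$, identifying modules over such a monoid algebra with coherent $\NN$-actions, and observing that the nerve of $B\NN$ is a Joyal-fibrant replacement of $\Delta^1/\partial\Delta^1$ (with $B\ZZ\we S^1$ for part (ii)). The one genuine divergence is the step identifying $\Mod_{\SS[s]}\otimes\Cscr$ with a diagram category. You invoke the base-change formula $\Mod_R\otimes\Cscr\we\Mod_R(\Cscr)$ and then unwind an internal $\Sigma^\infty_+M$-module in $\Cscr$ as a functor out of $BM$. The paper instead first proves $\Fun(\Delta^1/\partial\Delta^1,\Cscr)\we\Fun^{\L}(\Mod_{\SS[s]},\Cscr)$ from the universal property of $\Mod_{\SS[s]}$ as the free stable presentable $\infty$-category on $\Delta^1/\partial\Delta^1$ (via the $(\Mod_\ast,\End)$ adjunction), and then converts $\Fun^{\L}(\Mod_{\SS[s]},\Cscr)$ into $\Mod_{\SS[s]}\otimes\Cscr$ using dualizability of compactly generated stable $\infty$-categories (Proposition~\ref{prop:dual}, with dual $\Mod_{R^{\op}}$ and $R\we R^{\op}$ since $\SS[s]$ is $\EE_\infty$). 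Your route is slightly more direct and sidesteps the detour through opposite algebras and left-adjoint functor categories; the paper's route has the mild advantage of reusing the dualizability machinery it has already set up in Section~\ref{sub:prelims} for Corollary~\ref{cor:53}. The technical inputs you would need to cite (the base-change equivalence and the identification of modules over $\Sigma^\infty_+M$ with $M$-actions) are standard and no harder than the paper's citations, so the proposal stands as a complete alternative proof.
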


\begin{proof}
    We prove (i), the proof of (ii) being similar. We claim that there is a natural equivalence
    $$\Fun(\Delta^1/\partial\Delta^1,\Cscr)\we\Fun^{\L}(\Mod_{\SS[s]},\Cscr).$$
    It suffices to show that $\Mod_{\SS[s]}$ is the free stable presentable
    $\infty$-category generated by $\Delta^1/\partial\Delta^1$.
    This follows from the $(\Mod_\ast,\End)$ adjunction \cite{ag}*{Section 3.1}
    together with the fact that $\SS[s]\simeq\SS[\NN]$ is the free
    $\SS$-algebra on the monoid $\NN$, and that the nerve of $\NN$ (viewed as a
    category with one object) is a fibrant replacement for
    $\Delta^1/\partial\Delta^1$ in the Joyal model structure.
    For any $\SS$-algebra $R$ and any stable presentable $\infty$-category
    $\Cscr$, there is a natural equivalence
    $\Mod_{R^{\op}}\otimes\Cscr\we\Fun^{\L}(\Mod_R,\Cscr)$.
    Indeed, $\Mod_R$ is compactly generated, and hence dualizable by
    Proposition~\ref{prop:dual} with dual $\Mod_{R^{\op}}$.
    In particular, since $\SS[s]$ is an $\EE_\infty$-ring spectrum, the $\infty$-category of
    endomorphisms in a stable presentable $\infty$-category $\Cscr$ is equivalent
    to $\Mod_{\SS[s]}\otimes\Cscr$.
\end{proof}



We focus now on the case where $\Cscr\we\Ind(E)$ is compactly generated by a
small stable $\infty$-category $E$.

\begin{lemma}
    If $\Cscr\we\Ind(E)$ is compactly generated, then $\Dscr(\AA^1,\Cscr)$ is
    compactly generated by the objects $\SS[s]\otimes x=:x[s]$ and
    $\Dscr(\Gm,\Cscr)$ is compactly generated by the objects
    $\SS[s^{\pm 1}]\otimes x=:x[s^{\pm 1}]$
    as $x$ ranges over the objects of $E$.
\end{lemma}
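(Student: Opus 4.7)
The plan is to reduce this directly to Lemma~\ref{lem:cg}, once we identify compact generators of the relevant module $\infty$-categories. Recall that $\Dscr(\AA^1,\Cscr) = \Mod_{\SS[s]} \otimes \Cscr$ and $\Dscr(\Gm,\Cscr) = \Mod_{\SS[s^{\pm 1}]} \otimes \Cscr$ by definition. So the statement we want follows from the tensor-product criterion, provided $\Mod_{\SS[s]}$ and $\Mod_{\SS[s^{\pm 1}]}$ are compactly generated by single objects.

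First I would record that for any $\EE_1$-ring spectrum $R$, the stable presentable $\infty$-category $\Mod_R$ is compactly generated by the free module $R$ of rank one. Indeed, $R$ is compact in $\Mod_R$ because $\Map_{\Mod_R}(R,-) \simeq \Omega^\infty(-)$ as functors $\Mod_R \to \Spc$ and the underlying spectrum functor preserves filtered colimits; and $R$ generates $\Mod_R$ because any $M \in \Mod_R$ with $\pi_n M \simeq \pi_0 \Map_{\Mod_R}(R, M[-n]) = 0$ for all $n$ must be zero. Applied to $R = \SS[s]$ and $R = \SS[s^{\pm 1}]$, this gives that $\SS[s]$ and $\SS[s^{\pm 1}]$ are compact generators of $\Dscr(\AA^1)$ and $\Dscr(\Gm)$ respectively.

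Next, by definition of the ind-completion, $\Cscr \simeq \Ind(E)$ is compactly generated by the essential image of $E$, and each object of $E$ is compact in $\Cscr$. With these two ingredients, Lemma~\ref{lem:cg} applies to the pair $(\Mod_{\SS[s]}, \Ind(E))$ and to the pair $(\Mod_{\SS[s^{\pm 1}]}, \Ind(E))$, yielding compact generators $\SS[s] \otimes x = x[s]$ and $\SS[s^{\pm 1}] \otimes x = x[s^{\pm 1}]$, respectively, as $x$ ranges over objects of $E$.

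There is no real obstacle here; the only thing to be careful about is confirming that the tensor product in $\Pr^{\L}_{\st}$ used to define $\Dscr(\AA^1,\Cscr)$ and $\Dscr(\Gm,\Cscr)$ is the one to which Lemma~\ref{lem:cg} applies, and that the generator statement in that lemma gives generators by the objects $\SS[s] \otimes x$ rather than some derived variant. Both are immediate from the setup, so the proof is a short invocation of the previous lemma.
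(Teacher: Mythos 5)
Your proposal is correct and matches the paper's argument, which simply observes that the statement is a special case of Lemma~\ref{lem:cg}; you have merely spelled out the (standard) facts that $\Mod_R$ is compactly generated by $R$ and $\Ind(E)$ by the objects of $E$. No issues.
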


\begin{proof}
    This is a special case of Lemma~\ref{lem:cg}.
\end{proof}

\begin{lemma}
    If $\Cscr\we\Ind(E)$ is compactly generated, then
    the natural functor $\Dscr(\AA^1,\Cscr)\rightarrow\Dscr(\Gm,\Cscr)$ is a
    localization with kernel a compactly generated stable presentable
    $\infty$-category which we will denote $\Dscr_{\{0\}}(\AA^1,\Cscr)$. Moreover,
    $\Dscr_{\{0\}}(\AA^1,\Cscr)$ is compactly generated by the compact objects
    $(x,0)$ in $\Dscr(\AA^1,\Cscr)$ as $x$ ranges over the objects of $E$.
\end{lemma}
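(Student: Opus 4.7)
The plan is to reduce everything to the module-theoretic statement that $\SS[s]\to\SS[s^{\pm 1}]$ is a smashing localization of $\mathbb{E}_\infty$-ring spectra, then tensor with $\Cscr$ using Corollary~\ref{cor:53} and identify generators via Lemma~\ref{lem:cg}.

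First I would establish the absolute case. The map $\SS[s]\to\SS[s^{\pm 1}]$ inverts $s\in\pi_0\SS[s]$, and since $\SS[s^{\pm 1}]\otimes_{\SS[s]}\SS[s^{\pm 1}]\simeq\SS[s^{\pm 1}]$ (inverting $s$ twice is the same as inverting it once), the restriction of scalars $\Mod_{\SS[s^{\pm 1}]}\hookrightarrow\Mod_{\SS[s]}$ is fully faithful with left adjoint $M\mapsto M[s^{-1}]=M\otimes_{\SS[s]}\SS[s^{\pm 1}]$. Thus we have a localization sequence
\begin{equation*}
\Dscr_{\{0\}}(\AA^1)\longrightarrow\Dscr(\AA^1)\longrightarrow\Dscr(\Gm)
\end{equation*}
in $\Pr^\L_\st$, where by definition $\Dscr_{\{0\}}(\AA^1)$ is the kernel of $s$-inversion, i.e.\ the full subcategory of $s$-nilpotent $\SS[s]$-modules. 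The cofiber sequence $\SS[s]\xrightarrow{s}\SS[s]\to\SS[s]/s$ shows that $\SS[s]/s$ is compact in $\Mod_{\SS[s]}$ and lies in $\Dscr_{\{0\}}(\AA^1)$; moreover, since the class of $s$-nilpotent modules is generated under colimits by $\SS[s]/s$ (an object $M$ lies in $\Dscr_{\{0\}}(\AA^1)$ iff the mapping spectrum from $\SS[s][s^{-1}]$ is zero, iff every map $\SS[s]\to M$ factors through some $\SS[s]/s^n$, and the $\SS[s]/s^n$ are iterated extensions of $\SS[s]/s$), $\SS[s]/s$ is a compact generator of $\Dscr_{\{0\}}(\AA^1)$.

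Next, tensor with $\Cscr$. By Corollary~\ref{cor:53}, tensoring the above localization sequence with $\Cscr=\Ind(E)$ yields a localization sequence
\begin{equation*}
\Dscr_{\{0\}}(\AA^1)\otimes\Cscr\longrightarrow\Dscr(\AA^1,\Cscr)\longrightarrow\Dscr(\Gm,\Cscr),
\end{equation*}
which gives the first claim with $\Dscr_{\{0\}}(\AA^1,\Cscr):=\Dscr_{\{0\}}(\AA^1)\otimes\Cscr$. By Lemma~\ref{lem:cg}, this tensor product is compactly generated by objects of the form $(\SS[s]/s)\otimes x$ where $x$ runs over the compact generators of $\Cscr$, i.e.\ the objects of $E$. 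Under the equivalence $\Dscr(\AA^1,\Cscr)\we\Fun(\Delta^1/\partial\Delta^1,\Cscr)$, the object $(\SS[s]/s)\otimes x$ corresponds to $x$ equipped with the zero endomorphism, namely $(x,0)$, proving the second claim.

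The only genuine content is the absolute case, and the one delicate point there is verifying that $\SS[s]/s$ compactly generates the kernel, rather than merely lying inside it. This is a standard argument for a smashing localization along a single element, but it must be done carefully in the spectral setting: one observes that an $\SS[s]$-module $M$ is $s$-nilpotent precisely when $\Map_{\SS[s]}(\SS[s][s^{-1}],M)\simeq 0$, equivalently $\Map_{\SS[s]}(\SS[s]/s,M[n])$ detects triviality of $M$ after passing to colimits over the tower $\SS[s]\xrightarrow{s}\SS[s]\xrightarrow{s}\cdots$. Everything else is formal from Corollary~\ref{cor:53} and Lemma~\ref{lem:cg}.
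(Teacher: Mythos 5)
Your overall strategy is exactly the paper's: establish the localization sequence $\Dscr_{\{0\}}(\AA^1)\to\Dscr(\AA^1)\to\Dscr(\Gm)$ in the absolute case, tensor with $\Cscr$ via Corollary~\ref{cor:53}, and identify the generators $(\SS[s]/s)\otimes x\we(x,0)$ via Lemma~\ref{lem:cg}. The only difference is that the paper simply cites \cite{ha}*{Proposition~7.2.4.17} or \cite{ag}*{Proposition~6.9} for the absolute step, whereas you argue it directly.

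In that direct argument there is one genuinely wrong assertion. You claim that $M$ lies in the kernel if and only if $\Map_{\SS[s]}(\SS[s^{\pm 1}],M)\simeq 0$. The kernel of $M\mapsto M\otimes_{\SS[s]}\SS[s^{\pm 1}]$ is the \emph{tensor}-vanishing condition $M\otimes_{\SS[s]}\SS[s^{\pm1}]\simeq\colim(M\xrightarrow{s}M\to\cdots)\simeq 0$, i.e.\ every homotopy class of $M$ is $s$-power torsion; the condition $\Map(\SS[s^{\pm1}],M)\simeq 0$ is a completeness-type condition and is not equivalent (for instance, $\mathrm{cofib}(\SS[s]\to\SS[s^{\pm1}])$ lies in the kernel but admits nonzero maps from $\SS[s^{\pm1}]$). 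The correct route to compact generation by $\SS[s]/s$ is the standard detection argument: $\Map(\SS[s]/s[n],M)\simeq 0$ for all $n$ forces $s$ to act invertibly on $M$, hence $M\simeq M[s^{-1}]$, which vanishes if $M$ is in the kernel; combined with compactness of $\SS[s]/s$ and closure of the kernel under colimits, this gives compact generation. Your last sentence gestures at this but routes it through the false equivalence. With that step repaired (or simply replaced by the citation the paper uses), the proof is complete and matches the paper's.
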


The fact that $\Dscr_{\{0\}}(\AA^1,\Cscr)$ is generated by compact objects that
are compact in $\Dscr(\AA^1,\Cscr)$ implies that the right adjoint
$\Dscr(\Gm,\Cscr)\rightarrow\Dscr(\AA^1,\Cscr)$ to the localization preserves
filtered colimits.

\begin{proof}
    By~\cite{ha}*{Proposition~7.2.4.17} or~\cite{ag}*{Proposition~6.9},
    we have a localization sequence
    $$\Mod_{\AA^1,\{0\}}\rightarrow\Mod_{\AA^1}\rightarrow\Mod_{\Gm}$$ of stable
    presentable $\infty$-categories, where $\Mod_{\AA^1,\{0\}}$ is the full
    subcategory of $\Mod_{\AA^1}$ consisting of $\SS[s]$-modules $M$ such that
    for every $x\in\pi_m(M)$ there exists a positive integer $N$ such that
    $s^N\cdot x=0$. Moreover, by~\cite{ag}*{Proposition~6.9}, $\Mod_{\AA^1,\{0\}}$ is compactly generated by
    the object $\SS$ when viewed as an $\SS[s]$-module; in particular, since
    $\SS\we\mathrm{cofib}\left(\SS[s]\xrightarrow{t}\SS[s]\right)$ is compact as an $\SS[s]$-module, $\Mod_{\AA^1,\{0\}}$ is compactly
    generated by compact objects of $\Mod_{\AA^1}$. By tensoring with
    $\Ind(E)$, we obtain the localization sequence we want by
    Corollary~\ref{cor:53}. The object $\SS\otimes x$ is by definition $x$ with
    the zero endomorphism.
\end{proof}

%

We turn to the problem of constructing $t$-structures on $\infty$-categories of
endomorphisms and automorphisms.

\begin{lemma}\label{lem:tstructureendomorphisms}
    Let $\Cscr=\Ind(E)$ be a compactly generated stable presentable $\infty$-category with a $t$-structure
    $(\Cscr_{\geq 0},\Cscr_{\leq 0})$. The full subcategory $\Dscr(\AA^1,\Cscr)_{\geq
    0}\subseteq\Dscr(\AA^1,\Cscr)$ of endomorphisms
    $(x,e)$ where $x\in\Cscr_{\geq 0}\subseteq\Cscr$ defines the
    non-negative part of a $t$-structure on $\Dscr(\AA^1,\Cscr)$, where
    $(y,f)\in\Dscr(\AA^1,\Cscr)_{\leq 0}$ if and only if
    $y\in\Cscr_{\leq 0}$. Moreover, the truncation functors induced by the
    $t$-structure on $\Dscr(\AA^1,\Cscr)$ preserve the full subcategory
    $\Dscr(\AA^1,\Cscr)\supseteq\Dscr(\Gm,\Cscr)$.
\end{lemma}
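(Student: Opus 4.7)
The approach is to work through the identification $\Dscr(\AA^1,\Cscr) \simeq \Fun(\Delta^1/\partial\Delta^1,\Cscr)$ and treat objects as pairs $(x,e)$ with $x \in \Cscr$ and $e \colon x \to x$ an endomorphism. The forgetful functor to $\Cscr$ has both a left and a right adjoint (free and cofree $\SS[s]$-module), so preserves all limits and colimits; in particular, $(x,e)[n] = (x[n], e[n])$, and the cofiber of a morphism of endomorphism pairs is computed by applying cofiber pointwise in $\Cscr$ and transporting the endomorphism data by functoriality. Given this, I verify the three conditions of Definition~\ref{def:t} directly, with each of them reducing to a pointwise statement plus functoriality.

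Conditions (1) and (3) are essentially formal. For (1), the inclusions $\Cscr_{\geq 0}[1] \subseteq \Cscr_{\geq 0}$ and $\Cscr_{\leq 0} \subseteq \Cscr_{\leq 0}[1]$ transfer directly, since shifts are computed on underlying objects. For (3), given $(x,e)$, the functoriality of the truncation functors on $\Cscr$ produces endomorphisms $\tau_{\geq 0} e$ of $\tau_{\geq 0}x$ and $\tau_{\leq -1} e$ of $\tau_{\leq -1}x$ making the obvious squares commute, and the standard truncation cofiber sequence in $\Cscr$ lifts pointwise to a cofiber sequence $(\tau_{\geq 0}x, \tau_{\geq 0}e) \to (x,e) \to (\tau_{\leq -1}x, \tau_{\leq -1}e)$ in $\Dscr(\AA^1,\Cscr)$, whose endpoints lie in the proposed non-negative and strictly negative pieces.

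For condition (2), the key input is the mapping space fiber sequence
\[
\Map_{\Dscr(\AA^1,\Cscr)}((x,e),(y,f)) \to \Map_\Cscr(x,y) \xrightarrow{f_\ast - e^\ast} \Map_\Cscr(x,y),
\]
which records that a morphism of endomorphism pairs is a map of underlying objects intertwining $e$ and $f$. If $(x,e) \in \Dscr(\AA^1,\Cscr)_{\geq 0}$ and $(y,f) \in \Dscr(\AA^1,\Cscr)_{\leq 0}$, then the analogous fiber sequence for $(y,f)[-1]$ has base $\Map_\Cscr(x,y[-1])$, which is already contractible as a mapping space by the remark following Definition~\ref{def:t} (all its homotopy groups $\pi_k \cong \Hom_\Cscr(x[k], y[-1])$ with $k \geq 0$ vanish by the $t$-structure on $\Cscr$). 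Hence the fiber, $\Map_{\Dscr(\AA^1,\Cscr)}((x,e),(y,f)[-1])$, is contractible, so in particular has trivial $\pi_0$.

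Finally, the assertion that the truncation functors preserve $\Dscr(\Gm,\Cscr) \subseteq \Dscr(\AA^1,\Cscr)$ is immediate: an object $(x,e)$ lies in $\Dscr(\Gm,\Cscr)$ precisely when $e$ is an equivalence, and any functor, in particular $\tau_{\geq 0}$ and $\tau_{\leq -1}$, preserves equivalences. There is no real obstacle in the argument; everything reduces to pointwise verifications using the $t$-structure on $\Cscr$ together with the functoriality of truncation.
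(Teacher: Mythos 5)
Your proof is correct and follows essentially the same route as the paper: conditions (1) and (3) by pointwise functoriality of the truncations, and the preservation of $\Dscr(\Gm,\Cscr)$ because a functor applied to an equivalence is an equivalence. For condition (2), your fiber sequence $\Map_{\Dscr(\AA^1,\Cscr)}((x,e),(y,f))\to\Map_\Cscr(x,y)\to\Map_\Cscr(x,y)$ is just a more explicit form of the paper's observation that the forgetful functor to $\Cscr$ detects nullhomotopic maps, so nothing differs in substance.
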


\begin{proof}
    Requirement (1) of Definition~\ref{def:t} is inherited from $\Cscr$.
    Since the truncations $\tau_{\geq 0}x$ and $\tau_{\leq 0}x$ are functorial,
    there is a cofiber sequence $$(\tau_{\geq 0}x,\tau_{\geq
    0}(e))\rightarrow(x,e)\rightarrow(\tau_{\leq -1}x,\tau_{\leq -1}(e)).$$
    This verifies requirement (3).
    As for (2), note that the forgetful
    functor $\Dscr(\AA^1,\Cscr)\rightarrow\Cscr$
    detects nullhomotopic maps. This means that if
    $x\in\Cscr_{\geq 0}$ and $y\in\Cscr_{\leq -1}$, then
    $$\Map_{\Dscr(\AA^1,\Cscr)}((x,e),(y,f))\we 0$$ for any endomorphisms $e$ of $x$ and $f$ of $y$.
    The first claim follows.

    If $(x,e)$ is an object of $\Dscr(\Gm,\Cscr)$, then $e$ is an automorphism
    of $x$, and hence $\tau_{\geq 0}(e)$ is an automorphism of $\tau_{\geq
    0}x$. So, the truncation functors preserve
    $\Dscr(\Gm,\Cscr)\subseteq\Dscr(\AA^1,\Cscr)$. This proves the second
    claim.
\end{proof}

\begin{proposition}\label{prop:tstructurea1}
    Let $\Cscr=\Ind(E)$ be a compactly generated stable presentable
    $\infty$-category with the $t$-structure induced (in the sense of
    Proposition~\ref{lem:textension}) by a bounded $t$-structure
    on $E$ such that $E^{\heartsuit}$ is noetherian.
    The $t$-structure on $\Dscr(\AA^1,\Cscr)$ of the previous lemma restricts to a
    bounded $t$-structure with noetherian heart on the full subcategory $\Dscr(\AA^1,\Cscr)^\omega$ of
    compact objects.
\end{proposition}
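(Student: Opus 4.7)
The plan is to establish three claims: (a) the truncation functors of the $t$-structure on $\Dscr(\AA^1,\Cscr)$ preserve compact objects; (b) every compact object is bounded; and (c) the heart of the resulting restricted bounded $t$-structure on $\Dscr(\AA^1,\Cscr)^\omega$ is noetherian. Claims (a) and (b) follow together from a thick-subcategory argument, while (c) is the main content and requires an abstract Hilbert basis theorem.

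For (a) and (b), let $\Tscr\subseteq\Dscr(\AA^1,\Cscr)$ be the full subcategory of bounded objects whose truncations $\tau_{\geq n}x$ and $\tau_{\leq n}x$ are compact for every $n$. I would show $\Tscr$ is a stable thick subcategory containing the compact generators $y[s]=\SS[s]\otimes y$ for $y\in E$. Closure under shifts and retracts is formal, the latter using idempotent completeness of compact objects. For closure under cofibers, given $a\to b\to c$ with $a,b\in\Tscr$, the $3\times 3$ lemma applied to the truncation triangles of $a$ and $b$ yields a cofiber sequence $C_1\to c\to C_2$ with $C_1\in\Dscr(\AA^1,\Cscr)_{\geq n}$ and $C_2\in\Dscr(\AA^1,\Cscr)_{\leq n-1}$, so $C_1\simeq\tau_{\geq n}c$, which is compact as a cofiber of compact objects. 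For the generators, tensoring the truncation triangle of $y$ in $E$ with $\SS[s]$ gives a cofiber sequence $(\tau_{\geq n}y)[s]\to y[s]\to(\tau_{\leq n-1}y)[s]$. The outer terms lie in $\Dscr(\AA^1,\Cscr)_{\geq n}$ and $\Dscr(\AA^1,\Cscr)_{\leq n-1}$ respectively, where the compatibility of the induced $t$-structure on $\Cscr$ with filtered colimits is used to show that the underlying coproduct $\bigoplus_k\tau_{\leq n-1}y$ remains in $\Cscr_{\leq n-1}$. Thus $\tau_{\geq n}(y[s])\simeq(\tau_{\geq n}y)[s]$ is compact since $\tau_{\geq n}y\in E$. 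Since $\Tscr$ is thick and contains the compact generators, $\Dscr(\AA^1,\Cscr)^\omega\subseteq\Tscr$, yielding (a) and (b).

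For (c), I would first identify $\Cscr^\heart\simeq\Ind(E^\heart)$ using the compatibility of the induced $t$-structure on $\Cscr$ with filtered colimits together with the observation that $\pi_0:\Cscr_{\geq 0}\to\Cscr^\heart$ preserves filtered colimits. Let $B:=\Dscr(\AA^1,\Cscr)^{\omega,\heart}$, consisting of compact objects $(x,e)$ with $x\in\Ind(E^\heart)$. I would next argue that each $(x,e)\in B$ is ``finitely generated over $E^\heart[s]$'': there exists $y\in E^\heart$ and a morphism $y\to x$ in $\Cscr^\heart$ such that $x$ equals the filtered union $\sum_{k\geq 0}e^k(y)=\bigcup_n F_n(x)$, where $F_n(x):=y+e(y)+\cdots+e^n(y)$. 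This follows from compactness of $(x,e)$: a filtered colimit presentation of $x$ in $\Ind(E^\heart)$ and the compactness of $(x,e)$ let us extract a finite generating subfamily, whose direct sum we may take as $y$. Each $F_n(x)$ is then a quotient of $y^{n+1}\in E^\heart$, so lies in $E^\heart$.

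The noetherianness of $B$ will then follow from an abstract Hilbert basis theorem. Given $(x,e)\in B$ finitely generated as above and an ascending chain $z_0\subseteq z_1\subseteq\cdots\subseteq x$ of $e$-stable subobjects, set $F_n(z_k):=z_k\cap F_n(x)$ and $L_n(z_k):=F_n(z_k)/F_{n-1}(z_k)\subseteq L_n(x):=F_n(x)/F_{n-1}(x)$. Each $L_n(x)\in E^\heart$ as a quotient of $y$. Multiplication by $e$ yields surjections $L_n(x)\twoheadrightarrow L_{n+1}(x)$ compatible with the $L_n(z_k)$, and the ascending chain of kernels $\mathrm{ker}(y\twoheadrightarrow L_n(x))\subseteq y\in E^\heart$ stabilizes at some $N$ by noetherianness of $y$. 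For each fixed $n\leq N$ the chain $L_n(z_0)\subseteq L_n(z_1)\subseteq\cdots$ stabilizes by noetherianness of $L_n(x)$, and the usual leading-coefficient argument in the Hilbert basis theorem then forces $z_k$ itself to stabilize. The main obstacle is this last Hilbert basis step, which requires carefully translating the classical polynomial-ring argument into the setting of an abstract noetherian abelian category with an endomorphism.
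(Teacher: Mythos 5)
Your step (a)/(b) contains the central error. In the $3\times 3$ diagram built from the truncation triangles of $a$ and $b$, the cofiber $C_2=\mathrm{cofib}\bigl(\tau_{\leq n-1}a\to\tau_{\leq n-1}b\bigr)$ does \emph{not} lie in $\Dscr(\AA^1,\Cscr)_{\leq n-1}$: the subcategory $\Dscr(\AA^1,\Cscr)_{\leq n-1}$ is closed under fibers and extensions but not under cofibers, and the long exact sequence gives $\pi_nC_2\cong\ker(\pi_{n-1}a\to\pi_{n-1}b)$, which is generally nonzero. Hence $(C_1,C_2)$ is not the truncation decomposition of $c$ and $C_1\not\simeq\tau_{\geq n}c$; rather there is a further cofiber sequence $C_1\to\tau_{\geq n}c\to\pi[n]$ with $\pi=\mathrm{im}(\pi_nc\to\pi_{n-1}a)$, and the whole difficulty of the proposition is to show that this object $\pi$ is compact. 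Your argument never uses the noetherian hypothesis in (a)/(b), and it cannot be repaired without it: the paper's Section~\ref{sub:counterexamples} (Soublin's example) exhibits a regular coherent ring $R$ for which the standard $t$-structure fails to restrict to compact $R[s^{\pm1}]$-modules precisely because such a kernel fails to be finitely presented. So the claimed thickness of $\Tscr$ is false in general.

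The paper's proof fills exactly this gap in two stages. First, noetherianity of $A=E^\heartsuit$ plus the abstract Hilbert basis theorem (Swan) shows that $A[s]$, the category of finitely presented objects of $\Dscr(\AA^1,\Cscr)^\heartsuit$, is an abelian \emph{noetherian}, hence Serre, subcategory of the big heart; therefore $\pi\subseteq\pi_{n-1}a$ again lies in $A[s]$. Second, one must show every object of $A[s]$ is compact in $\Dscr(\AA^1,\Cscr)$, which the paper does by filtering $\pi$ by the kernels of $s^i$ (this filtration stabilizes by noetherianity), reducing to the case where $s$ acts injectively, and then exhibiting the kernel of a surjection $a[s]\twoheadrightarrow\tau$ as a free module $\sigma_0[s]$. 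Your part (c) does contain a Hilbert-basis-type argument, so you have the right tool in hand, but you deploy it only to prove the heart is noetherian; you never use it where it is actually indispensable, namely to control the truncations of cofibers and thereby prove that the $t$-structure restricts to compact objects at all.
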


\begin{proof}
    Let $F\subseteq\Dscr(\AA^1,\Cscr)^\omega$ be the full subcategory of
    objects $x$ such that $\tau_{\geq n}x$ is compact for all $n$.
    It follows immediately that $F$ is idempotent complete. Moreover, $F$
    contains all objects of the form $(x[s],s)$ for $x\in E$ since $\tau_{\geq
    n}(x[s],s)\we(\tau_{\geq n}x,\tau_{\geq n}(s))$ and since $\tau_{\geq n}x$
    is in $E$ if $x$ is in $E$. Therefore, if $F$ is stable, the inclusion
    $F\rightarrow\Dscr(\AA^1,\Cscr)^\omega$ is an equivalence. By definition,
    $F$ is closed under suspension and desuspension. Hence, by~\cite{ha}*{Lemma~1.1.3.3},
    it is enough to show that $F$ is closed under taking cofibers.

    Hence, given a cofiber sequence $x\rightarrow y\rightarrow c$ in
    $\Dscr(\AA^1,\Cscr)^\omega$ with $x,y\in F$, we must show that $\tau_{\geq
    0}c$ is compact. Let $d$ be the cofiber of
    $\tau_{\geq 0}y\rightarrow\tau_{\geq 0}y$, so that $d$ fits into a second
    cofiber sequence $d\rightarrow\tau_{\geq 0}c\rightarrow \pi$, where
    $\pi\in\Dscr(\AA^1,\Cscr)^{\heartsuit}$ is the image of
    $\pi_0c\rightarrow\pi_{-1}x$. As $d$ is compact by the hypothesis on $x$
    and $y$, it is enough to show that $\pi$ is compact in
    $\Dscr(\AA^1,\Cscr)$.

    Let $A=E^{\heartsuit}$, and let $A[s]$ be the full subcategory of compact
    objects in $\Dscr(\AA^1,\Cscr)^\heartsuit$. Note that this is well-defined
    because $\left(\Mod_{\SS[s]}^\cn\otimes\Cscr_{\geq
    0}\right)^\heartsuit\we\Mod_{\SS[s]}^\heartsuit\otimes\Cscr^\heartsuit$
    by definition of the tensor product of Grothendieck abelian categories
    in~\cite{sag}. In general, there is no reason
    for $A[s]$ to be an abelian category. However, this is implied by the fact
    that $A$ is noetherian in our case, as this ensures that the kernel of a
    map between finitely presented objects is again finitely presented.
    Moreover, it is a consequence that
    $\pi_nx\in A[s]$ whenever $x\in\Dscr(\AA^1,\Cscr)^\omega$.
    Now, we claim that $A[s]$ is noetherian. To see this, note that
    $\Dscr(\AA^1,\Cscr)$ is compactly generated by the objects $(y[s],s)$ where
    $y\in E$. It follows that every object of $A[s]$ is obtained in finitely
    many steps (consisting of taking kernels, cokernels, extensions, sums, and
    summands) from objects of the form $(x[s],s)$, where $x\in A$. Since $A$ is
    noetherian, every such $x$ is noetherian and~\cite{swan}*{Theorem~3.5} implies that
    $(x[s],s)$ is noetherian. This is effectively an easy generalization of the Hilbert basis
    theorem in algebra. Noetherianity implies that $A[s]$ is a Serre subcategory of
    $\Dscr(\AA^1,\Cscr)^\heartsuit$, so that $\pi\subseteq\pi_{-1}x$ is in
    $A[s]$. We are reduced to proving that if $\pi$ is an object of $A[s]$,
    then $\pi$ is compact as an object of $\Dscr(\AA^1,\Cscr)$.

    It is convenient for the rest of the proof to write $s$ for the
    endomorphism of any object of $\Dscr(\AA^1,\Cscr)$.
    Let $F_i\pi=\ker(s^i:\pi\rightarrow\pi)$ for $i\geq 0$. This is an increasing filtration
    on $\pi$, which stabilizes at some $F_N$ for $N\geq 0$ since $A[s]$ is
    noetherian. Each $F_i\pi/F_{i-1}\pi$ is in fact an object of $A$ as it is a
    finitely presented object of $\Dscr(\AA^1,\Cscr)^\heartsuit$ such that $s$
    acts as zero. So, inductively, $F_N\pi$ is compact. Let $\tau$ be the
    quotient $\pi/\F_N\pi$. The endomorphism $s$ acts injectively on $\tau$ by
    construction. To see that $\tau$ is compact, choose a surjection
    $a[s]\rightarrow\tau$ such that $a\in A$. Let $\sigma$ be the kernel, and
    let $\sigma_i\subseteq a\cdot s^i$ be the intersection of the kernel and
    $a\cdot s^i\subseteq a[s]$ (viewed as an object of $\Ind(A)$). Then,
    $s:\sigma_i\rightarrow\sigma_{i+1}$ and $\sigma\iso\bigoplus_{i\geq
    0}\sigma_i$. Moreover, $s:\sigma_i\rightarrow\sigma_{i+1}$ is an
    isomorphism for all $i$. The injectivity follows from the fact that $s$ acts injectively on
    $a[s]$, while the surjectivity follows (via the snake lemma) from the fact that $s$ acts
    injectively on $\tau$ and $s:a\cdot s^i\to a\cdot s^{i+1}$ is surjective. It follows that $\sigma\iso\sigma_0[s]$. But, since
    $\sigma_0\subseteq a$, it follows that $\sigma$ is compact. Therefore,
    $\tau$ is compact.

    Now, to see that the $t$-structure is bounded, it is enough to see that
    each object $(x[s],s)$ is bounded for $x\in E$. This is the case by
    construction. Finally, we have already mentioned that
    $\Dscr(\AA^1,\Cscr)^{\omega,\heartsuit}\we A[s]$ is noetherian.
\end{proof}

\begin{remark}
    Noetherianity is used in a couple primary locations in the proof. The first
    is to check that $\pi_{-1}x$ is finitely presented and that $\pi$ is
    therefore itself in $A[s]$. The second is to guarantee that the filtration
    $F_\bullet\pi$ stabilizes. We return in the next sections to the problem of
    weakening the noetherian hypothesis.
\end{remark}

\begin{lemma}
    Let $\Cscr=\Ind(E)$ be a compactly generated stable presentable
    $\infty$-category with a $t$-structure induced (in the sense of
    Proposition~\ref{lem:textension}) by a bounded $t$-structure
    on $E$ such that $E^{\heartsuit}$ is noetherian.
    Then, the $t$-structure on $\Dscr(\AA^1,\Cscr)^\omega$ respects
    $\Dscr_{\{0\}}(\AA^1,\Cscr)^\omega$.
\end{lemma}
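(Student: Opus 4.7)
The plan is to use the $t$-structure inherited by $\Dscr(\Gm,\Cscr)$ from $\Dscr(\AA^1,\Cscr)$ via Lemma~\ref{lem:tstructureendomorphisms} to show that the localization $L\colon\Dscr(\AA^1,\Cscr)\to\Dscr(\Gm,\Cscr)$ is $t$-exact; the lemma will then fall out of a short computation with the truncation triangle.

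The $t$-structure of Lemma~\ref{lem:tstructureendomorphisms} restricts to a $t$-structure on $\Dscr(\Gm,\Cscr)$, viewed as the reflective subcategory of $(x,e)\in\Dscr(\AA^1,\Cscr)$ with $e$ an automorphism, by the last assertion of that lemma. The localization functor $L$, which on underlying objects in $\Cscr$ is the filtered colimit $(x,e)\mapsto\colim\bigl(x\xrightarrow{e} x\xrightarrow{e}\cdots\bigr)$, is then $t$-exact: right $t$-exactness is automatic because $\Cscr_{\geq 0}\subseteq\Cscr$ is closed under all colimits in $\Cscr$, and left $t$-exactness uses that the induced $t$-structure on $\Cscr=\Ind(E)$ is compatible with filtered colimits by Proposition~\ref{lem:textension}, so $\Cscr_{\leq 0}$ is also closed under the filtered colimit defining $L$.

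Now let $X\in\Dscr_{\{0\}}(\AA^1,\Cscr)^\omega$, so that $L(X)\simeq 0$. Applying the exact functor $L$ to the truncation cofiber sequence $\tau_{\geq 0}X\to X\to\tau_{\leq -1}X$ produces an equivalence $L(\tau_{\leq -1}X)\simeq L(\tau_{\geq 0}X)[1]$ in $\Dscr(\Gm,\Cscr)$. By $t$-exactness of $L$ the left side lies in $\Dscr(\Gm,\Cscr)_{\leq -1}$ while the right side lies in $\Dscr(\Gm,\Cscr)_{\geq 1}\subseteq\Dscr(\Gm,\Cscr)_{\geq 0}$. Since $\Dscr(\Gm,\Cscr)_{\geq 0}\cap\Dscr(\Gm,\Cscr)_{\leq -1}=0$ in any $t$-structure, both sides vanish, so $\tau_{\geq 0}X$ and $\tau_{\leq -1}X$ belong to $\Dscr_{\{0\}}(\AA^1,\Cscr)$.

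Since Proposition~\ref{prop:tstructurea1} also ensures that $\tau_{\geq 0}X$ and $\tau_{\leq -1}X$ remain compact in $\Dscr(\AA^1,\Cscr)$, they both lie in $\Dscr_{\{0\}}(\AA^1,\Cscr)^\omega$. This proves the lemma. The only nonformal step is the $t$-exactness of $L$, which is powered by the compatibility of the induced $t$-structure on $\Cscr$ with filtered colimits.
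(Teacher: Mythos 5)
Your proof is correct, but it takes a genuinely different route from the paper's. The paper's argument is a two-line nilpotence observation: a compact object $x$ of $\Dscr(\AA^1,\Cscr)$ lies in $\Dscr_{\{0\}}(\AA^1,\Cscr)^\omega$ if and only if $s^N$ acts nullhomotopically on $x$ for some $N$ (this is where compactness is used), and since truncation is functorial, $s^N\simeq 0$ on $x$ forces $s^N\simeq 0$ on $\tau_{\geq 0}x$. You instead prove that the localization $L\colon\Dscr(\AA^1,\Cscr)\to\Dscr(\Gm,\Cscr)$ is $t$-exact for the $t$-structure that $\Dscr(\Gm,\Cscr)$ inherits via Lemma~\ref{lem:tstructureendomorphisms}, using the telescope description of $L$ together with compatibility of the induced $t$-structure on $\Ind(E)$ with filtered colimits, and then conclude by the orthogonality $\Dscr(\Gm,\Cscr)_{\geq 1}\cap\Dscr(\Gm,\Cscr)_{\leq -1}=0$. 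Both arguments rely on Proposition~\ref{prop:tstructurea1} (hence on noetherianity) only to know that the truncations of a compact object stay compact. Your approach is slightly more work but buys more: since it never uses compactness of $X$, it shows that the $t$-structure on the big category $\Dscr(\AA^1,\Cscr)$ restricts to $\Dscr_{\{0\}}(\AA^1,\Cscr)$ without any noetherian hypothesis (a statement the paper explicitly declines to prove), and the $t$-exactness of $L$ you establish is closely related to what Corollary~\ref{cor:tstructuregm} later extracts from Proposition~\ref{prop:tlocalization}. The paper's nilpotence argument is shorter and makes transparent exactly where compactness is needed.
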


\begin{proof}
    Note that we did not prove in general that the $t$-structure on
    $\Dscr(\AA^1,\Cscr)$ restricts to a $t$-structure on
    $\Dscr_{\{0\}}(\AA^1,\Cscr)$. But, this is true under the noetherianity
    condition for the compact objects. Indeed, an object $x$ of
    $\Dscr(\AA^1,\Cscr)^\omega$ is contained in the subcategory
    $\Dscr_{\{0\}}(\AA^1,\Cscr)^\omega$ if and only if $s^N$ acts
    nullhomotopically on $x$ for some $N\geq 0$. If $s^N$ does act
    nullhomotopically on $x$, then it does so on $\tau_{\geq 0}x$ as well,
    which shows that if $x\in\Dscr_{\{0\}}(\AA^1,\Cscr)^\omega$, then so is
    $\tau_{\geq 0}x$.
\end{proof}

\begin{corollary}\label{cor:tstructuregm}
    Let $\Cscr=\Ind(E)$ be a compactly generated stable presentable
    $\infty$-category with a $t$-structure induced (in the sense of
    Proposition~\ref{lem:textension}) by a bounded $t$-structure
    on $E$ such that $E^{\heartsuit}$ is noetherian. Then, there is a bounded
    $t$-structure on $\Dscr(\Gm,\Cscr)^\omega$ with noetherian heart.
\end{corollary}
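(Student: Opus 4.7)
The plan is to apply Proposition~\ref{prop:tlocalization} to the $t$-exact fully faithful inclusion
\[
i:\Dscr_{\{0\}}(\AA^1,\Cscr)^\omega\hookrightarrow\Dscr(\AA^1,\Cscr)^\omega.
\]
By Proposition~\ref{prop:tstructurea1} the target carries a bounded $t$-structure with noetherian heart $A[s]$, where $A=E^\heartsuit$, and by the lemma immediately preceding the corollary the inclusion $i$ is $t$-exact for the restricted $t$-structure on its source. The cofiber of $i$ in $\Cat_\infty^\perf$ is precisely $\Dscr(\Gm,\Cscr)^\omega$: indeed, the localization sequence $\Dscr_{\{0\}}(\AA^1,\Cscr)\to\Dscr(\AA^1,\Cscr)\to\Dscr(\Gm,\Cscr)$ of stable presentable $\infty$-categories restricts (in the sense of~\cite{bgt1}*{Proposition~5.13}) to an exact sequence of compact objects whose cofiber after idempotent completion is $\Dscr(\Gm,\Cscr)^\omega$, the latter being idempotent complete as a subcategory of compact objects in a presentable $\infty$-category. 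Hence once we verify one of the equivalent conditions (i)--(v) of Proposition~\ref{prop:tlocalization} we obtain the desired $t$-structure on $\Dscr(\Gm,\Cscr)^\omega$, automatically bounded by the final assertion of that proposition.

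I will verify condition (i): the heart $\Dscr_{\{0\}}(\AA^1,\Cscr)^{\omega,\heartsuit}$ sits inside $A[s]$ as a Serre subcategory. The heart of the source consists, by the proof of the previous lemma, of objects $x\in A[s]$ on which $s$ acts nilpotently, i.e.\ $s^N\cdot x=0$ for some $N\geq 0$. This subcategory is clearly closed under subobjects and quotients; closure under extensions in $A[s]$ follows because if $0\to x'\to x\to x''\to 0$ is exact in $A[s]$ with $s^{N'}x'=0$ and $s^{N''}x''=0$, then $s^{N'+N''}x=0$. Thus the condition (i) of Proposition~\ref{prop:tlocalization} holds.

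It remains to show that the heart of the resulting $t$-structure on $\Dscr(\Gm,\Cscr)^\omega$ is noetherian. By construction (using the $t$-exactness of the quotient), this heart is the Serre quotient abelian category $A[s]/A[s]^{s\text{-}\nil}$. The ambient category $A[s]$ is noetherian: as recalled in the proof of Proposition~\ref{prop:tstructurea1}, every object of $A[s]$ is built from generators $(x[s],s)$ with $x\in A$, each of which is a noetherian object by~\cite{swan}*{Theorem~3.5} (the categorified Hilbert basis theorem), and noetherianity is preserved by taking finite limits, colimits, and extensions in an abelian category. A Serre quotient of a noetherian abelian category is noetherian, so the heart of $\Dscr(\Gm,\Cscr)^\omega$ is noetherian as claimed.

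The only subtle step in this plan is the identification of the cofiber of $i$ with $\Dscr(\Gm,\Cscr)^\omega$, which is what forces us to work with the idempotent-completed Verdier quotient; everything else is a direct application of the structural results already established in Section~\ref{sec:t} combined with the technical content of Proposition~\ref{prop:tstructurea1}.
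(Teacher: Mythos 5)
Your proposal is correct and follows essentially the same route as the paper: verify that the nilpotent heart is a Serre subcategory of $A[s]$, invoke the implication (i)$\Rightarrow$(ii) of Proposition~\ref{prop:tlocalization} to induce a bounded $t$-structure on the idempotent-completed quotient $\Dscr(\Gm,\Cscr)^\omega$, and identify its heart with the Serre quotient of the noetherian category $A[s]$, which is again noetherian. The only cosmetic difference is that you spell out the identification of the cofiber with $\Dscr(\Gm,\Cscr)^\omega$ and re-derive the noetherianity of $A[s]$, both of which the paper takes as already established.
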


\begin{proof}
    The subcategory
    $$\Dscr_{\{0\}}(\AA^1,\Cscr)^{\omega,\heartsuit}\subseteq\Dscr(\AA^1,\Cscr)^{\omega,\heartsuit}$$
    is in fact a Serre subcategory. Indeed, if $\tau\subseteq\sigma$ is a
    subobject where $s$ acts nilpotently on $\sigma$, then $s$ acts nilpotently
    on $\tau$ as well. Using (i) implies (ii) in
    Proposition~\ref{prop:tlocalization}, we see that there is an induced
    $t$-structure on $\Dscr(\Gm,\Cscr)^\omega$ and that the functor
    $\Dscr(\AA^1,\Cscr)^\omega\rightarrow\Dscr(\Gm,\Cscr)^\omega$ is $t$-exact. 
    Since every object of $\Dscr(\Gm,\Cscr)^\omega$ is a retract of an object
    in the image of the localization functor and since the $t$-structure on
    $\Dscr(\AA^1,\Cscr)^\omega$ is bounded, it follows that the $t$-structure
    on $\Dscr(\Gm,\Cscr)^\omega$ is bounded too. The abelian category
    $\Dscr(\Gm,\Cscr)^{\omega,\heartsuit}$ is noetherian because it is
    equivalent to the localization of the noetherian abelian category
    $\Dscr(\AA^1,\Cscr)^{\omega,\heartsuit}$ by the Serre subcategory
    $\Dscr_{\{0\}}(\AA^1,\Cscr)^{\omega,\heartsuit}$.
\end{proof}

%

%
%
%
%
%

\begin{proof}[Proof of Theorem~\ref{thm:gscnh}]
    Let $\Cscr=\Ind(E)$.
    Applying $K$-theory to the exact sequence
    $$\Dscr_{\{0\}}(\AA^1,\Cscr)^\omega\rightarrow\Dscr(\AA^1,\Cscr)^\omega\rightarrow\Dscr(\Gm,\Cscr)^\omega,$$
    we obtain a cofiber sequence $$\K_{\{0\}}(\AA^1,\Cscr)\rightarrow\K(\AA^1,\Cscr)\rightarrow\K(\Gm,\Cscr)$$ of
    nonconnective $K$-theory spectra.

    Consider the exact functor $i:\Cscr\rightarrow\Dscr_{\{0\}}(\AA^1,\Cscr)$ given by
    $i(x)\we(x,0)$. Since $(x[s],s)\xrightarrow{s}(x[s],s)\rightarrow(x,0)$ is
    a cofiber sequence in $\Dscr(\AA^1,\Cscr)$, we see that $(x,0)$ is compact
    if $x$ is. Hence, $i$ restricts to a functor
    $E\rightarrow\Dscr_{\{0\}}(\AA^1,\Cscr)^\omega$, also denoted $i$. Moreover, the additivity theorem, applied to this same cofiber sequence, viewed as a cofiber sequence of functors $E\to\Dscr(\AA^1,\Cscr)^\omega$,
    induces a nullhomotopic map $\K(E)\rightarrow\K(\AA^1,\Cscr)$. The
    underlying object functor
    $u:\Dscr_{\{0\}}(\AA^1,\Cscr)^\omega\rightarrow E$ sending $(x,e)$ to $x$
    gives $u\circ i\we\id_{E}$.
    It follows that $\K(E)$ is a summand of $\K_{\{0\}}(\AA^1,\Cscr)$ and that this summand maps trivially to
    $\K(\AA^1,\Cscr)$. 

    Now, suppose that $\K_{-m}(F)=0$ for all $1\leq m\leq n$ and all stable
    $\infty$-categories $F$ which admit bounded $t$-structures with noetherian
    hearts. The remarks above prove that $\K_{-n-1}(E)$ is a subquotient
    of $\K_{-n}(\Gm,\Cscr)$. By Corollary~\ref{cor:tstructuregm}, there is a
    bounded $t$-structure on $\Dscr(\Gm,\Cscr)^\omega$ with noetherian heart.
    Hence, $\K_{-n}(\Gm,\Cscr)=0$ by the inductive hypothesis and so $\K_{-n-1}(E)=0$ as well.
\end{proof}

\subsection{The nonconnective theorem of the heart}\label{sub:abelian}

In this section we prove Conjecture~\hyperlink{conj:c}{C} in the case of a
noetherian heart.

\begin{theorem}[Nonconnective theorem of the heart]\label{thm:heart}
    If $E$ is a small stable $\infty$-category with a bounded $t$-structure
    such that $E^\heartsuit$ is noetherian,
    then the natural map $$\K(E^\heartsuit)\xrightarrow{\we}\K(E)$$ is an equivalence.
\end{theorem}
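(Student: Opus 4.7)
The plan is to reduce the statement to the combination of two facts already at our disposal: Barwick's connective theorem of the heart and the vanishing of negative $K$-groups from Theorem~\ref{thm:mainintro} (proved as Theorem~\ref{thm:gscnh} in the preceding subsection). The nonconnective $K$-theory $\K(E^\heartsuit)$ must first be interpreted; following the convention set in the introduction, we take it to mean $\K(\Dscr^b(E^\heartsuit))$, the nonconnective $K$-theory of the bounded derived $\infty$-category of the heart. The canonical functor $\Dscr^b(E^\heartsuit) \to E$, arising from the universal property of the bounded derived $\infty$-category applied to the $t$-exact fully faithful inclusion $E^\heartsuit \hookrightarrow E$, need not be an equivalence, but we will show that it induces an equivalence on $K$-theory.

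First I would handle the connective part. The bounded $t$-structure on $E$ restricts along $\Dscr^b(E^\heartsuit) \to E$ to the canonical bounded $t$-structure on the source, both having heart $E^\heartsuit$. Barwick's theorem of the heart \cite{barwick} for connective $K$-theory, applied to both stable $\infty$-categories, yields a commutative triangle of equivalences
\[
\K^{\cn}(E^\heartsuit) \xrightarrow{\we} \K^{\cn}(\Dscr^b(E^\heartsuit)) \xrightarrow{\we} \K^{\cn}(E),
\]
so that the map of connective covers $\K^{\cn}(\Dscr^b(E^\heartsuit)) \to \K^{\cn}(E)$ is an equivalence.

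Next I would handle the negative part. Both $E$ and $\Dscr^b(E^\heartsuit)$ are small stable $\infty$-categories equipped with bounded $t$-structures whose hearts are equivalent to $E^\heartsuit$, which is noetherian by hypothesis. Therefore Theorem~\ref{thm:gscnh} applies to both, giving $\K_{-n}(E) = 0$ and $\K_{-n}(\Dscr^b(E^\heartsuit)) = 0$ for all $n \geq 1$. Consequently, each of the nonconnective spectra $\K(E)$ and $\K(\Dscr^b(E^\heartsuit))$ coincides with its connective cover.

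Combining the two inputs, the natural map $\K(E^\heartsuit) = \K(\Dscr^b(E^\heartsuit)) \to \K(E)$ sits in a commutative square with the equivalence $\K^{\cn}(\Dscr^b(E^\heartsuit)) \xrightarrow{\we} \K^{\cn}(E)$, in which the vertical comparison maps from the connective cover to the full spectrum are equivalences on both sides (by the vanishing of negative groups). The three-out-of-four lemma then forces the map on nonconnective spectra to be an equivalence as well. There is no substantive obstacle here once Theorem~\ref{thm:gscnh} is in hand; the entire content of the nonconnective theorem of the heart in the noetherian case is the conjunction of Barwick's connective theorem with the vanishing of negative $K$-theory, and the present argument simply records this.
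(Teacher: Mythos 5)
Your proposal is correct and follows essentially the same route as the paper: Barwick's connective theorem of the heart applied through the triangle $E^\heartsuit\to\Dscr^b(E^\heartsuit)\to E$, combined with the vanishing of negative $K$-groups from Theorem~\ref{thm:gscnh} applied to both $\Dscr^b(E^\heartsuit)$ and $E$. The paper spends a bit more care on two technical points you compress — constructing the $t$-exact functor $\Dscr^b(E^\heartsuit)\to E$ via the universal property of the unseparated derived $\infty$-category, and matching Barwick's Waldhausen-$\infty$-categorical $K$-theory with the connective $K$-theory of \cite{bgt1} — but the argument is the same.
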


To give the theorem content, we must define $\K(A)$ when $A$ is an abelian
category, show that this agrees with other definitions in the literature, and
define the map $\K(E^\heartsuit)\rightarrow\K(E)$.
We will use the terminology and results about prestable $\infty$-categories
of~\cite{sag}*{Appendix~C},
which in turn follows work of Krause~\cite{krause} on homotopy categories of
injective complexes.

\begin{lemma}
    If $A$ is a small abelian category, then
    $\Ind(A)$ is a Grothendieck abelian category, the Yoneda embedding
    $A\rightarrow\Ind(A)$ is exact, and the natural map
    $A\rightarrow\Ind(A)^\omega$ is an equivalence.
\end{lemma}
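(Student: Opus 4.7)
The plan is to use the classical identification of $\Ind(A)$ with the category $\mathrm{Lex}(A^{\op}, \Ab)$ of left exact additive functors from $A^{\op}$ to abelian groups. Since $A$ is a small $1$-category, the $\infty$-categorical Ind-completion agrees with the classical one, so $\Ind(A)$ is a $1$-category; this identification realizes $\Ind(A)$ as the full subcategory of $\Fun(A^{\op}, \Ab)$ consisting of additive presheaves that preserve finite limits (equivalently, that are filtered colimits of representables). With this in hand, each of the three assertions can be verified in turn.

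For the Grothendieck abelian structure, I would exhibit $\Ind(A) \simeq \mathrm{Lex}(A^{\op}, \Ab)$ as a reflective subcategory of the Grothendieck abelian category $\Fun(A^{\op}, \Ab)$ with an exact reflector $L$. Kernels in $\Ind(A)$ then agree with pointwise kernels, since the pointwise kernel of a map between left exact functors is again left exact; cokernels are obtained by applying $L$ to the pointwise cokernel. Filtered colimits of left exact functors remain left exact because filtered colimits of abelian groups commute with finite limits, so filtered colimits are exact in $\Ind(A)$. The representables provide a set of generators because every object of $\Ind(A)$ is, by definition, a filtered colimit of representables.

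For exactness of the Yoneda embedding $A \to \Ind(A)$, it suffices to check preservation of finite limits and finite colimits. Finite limits are preserved because the full Yoneda embedding $A \to \Fun(A^{\op}, \Ab)$ preserves them and the inclusion $\Ind(A) \hookrightarrow \Fun(A^{\op}, \Ab)$ reflects them. Finite colimits are preserved because $\Ind(A)$ is the free cocompletion of $A$ under filtered colimits in a way that is compatible with the finite colimits already present in $A$; equivalently, one computes the colimit in $\Ind(A)$ of a finite diagram of representables by applying $L$ to the presheaf colimit and then identifying the result with $h_{\mathrm{colim}}$ via Yoneda. A short exact sequence in an abelian category is simultaneously a kernel sequence and a cokernel sequence, so Yoneda sends it to one in $\Ind(A)$.

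For the final assertion, $\Ind(A)^{\omega}$ is in general the idempotent completion of $A$. An abelian category is automatically idempotent complete: any idempotent $e : x \to x$ has a kernel, and the decomposition $x \cong \ker(1-e) \oplus \ker(e)$ provides the splitting. Hence $A \to \Ind(A)^{\omega}$ is an equivalence. The main technical point throughout is ensuring compatibility between the $\infty$-categorical Ind-completion and the classical $1$-categorical one; once the identification $\Ind(A) \simeq \mathrm{Lex}(A^{\op}, \Ab)$ is established, all three assertions are essentially classical results of Gabriel-Ulmer type.
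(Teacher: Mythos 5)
Your proposal is correct and, modulo level of detail, follows the same path as the paper's (very terse) proof: both rest on the facts that filtered colimits commute with finite limits (giving exactness of filtered colimits), that the Yoneda embedding preserves finite limits and the finite colimits existing in $A$ (giving exactness), and that abelian categories are idempotent complete (giving $A\we\Ind(A)^\omega$, since compact objects of $\Ind(A)$ are retracts of objects of $A$). Your detour through the classical identification $\Ind(A)\we\mathrm{Lex}(A^{\op},\Ab)$ with its exact reflector is just a standard way of filling in the paper's ``it is not difficult to see that $\Ind(A)$ is abelian,'' and your remark on the agreement of the $\infty$-categorical and $1$-categorical Ind-completions is a point the paper uses implicitly.
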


\begin{proof}
    Since $A$ has finite colimits, $\Ind(A)$ is presentable. Moreover, it is
    not difficult to see that $\Ind(A)$ is abelian. To see that filtered
    colimits preserve monomorphisms, use that filtered colimits preserve finite
    limits. Yoneda is always right exact,
    and it preserves finite colimits that exist in $A$. This proves exactness.
    The last claim follows because $A$ is idempotent complete.
\end{proof}

\begin{definition}
    Let $\check{\Dscr}(\Ind(A))$ denote the unseparated derived
    $\infty$-category of $\Ind(A)$ as defined in~\cite{sag}*{Section~C.5.8}. It
    is the dg nerve of the dg category of complexes of injective objects in
    $\Ind(A)$. In particular, by~\cite{ha}*{Remark~1.3.2.3}, $\Ho(\check{\Dscr}(\Ind(A)))$ is the homotopy
    category of injectives as studied by Krause~\cite{krause}.
    There is a right complete $t$-structure on $\check{\Dscr}(\Ind(A))$, and
    $\check{\Dscr}(\Ind(A))_{\geq 0}$ is anticomplete
    (see~\cite{sag}*{Section~C.5.5}) with an important universal property: it
    is initial among Grothendieck prestable $\infty$-categories
    $\Cscr$ with $\Cscr^\heart\we\Ind(A)$ (see~\cite{sag}*{Corollary~C.5.8.9}).
\end{definition}

\begin{definition}\label{def:bounded}
    Let $A$ be a small abelian category. We define the bounded derived
    $\infty$-category of $A$ to be
    $\Dscr^b(A)=\check{\Dscr}(\Ind(A))^\omega$. In particular, $\Dscr^b(A)$
    is a small idempotent complete stable $\infty$-category.
\end{definition}

\begin{lemma}
    The $\infty$-category $\check{\Dscr}(\Ind(A))$ is compactly generated by
    $\Dscr^b(A)$.
\end{lemma}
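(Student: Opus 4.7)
The plan is to exhibit a compact generating set for $\check{\Dscr}(\Ind(A))$ and then observe that, since $\Dscr^b(A)$ is by definition the full subcategory of compact objects, this automatically makes $\Dscr^b(A)$ a compact generating set. The natural candidate generators are the objects of $A$, viewed inside $\check{\Dscr}(\Ind(A))$ via the Yoneda embedding $A\hookrightarrow\Ind(A)\simeq\check{\Dscr}(\Ind(A))^{\heart}$.

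The first step is to show that every $x\in A$ is compact in $\check{\Dscr}(\Ind(A))$. I would use the universal property recalled just above: $\check{\Dscr}(\Ind(A))_{\geq 0}$ is initial among Grothendieck prestable $\infty$-categories with heart $\Ind(A)$. Since Grothendieck prestable $\infty$-categories have exact filtered colimits, filtered colimits are $t$-exact in $\check{\Dscr}(\Ind(A))_{\geq 0}$, so $\pi_n$ commutes with them. Combined with the previous lemma (which says $A\simeq\Ind(A)^{\omega}$), the functor $\Map_{\check{\Dscr}(\Ind(A))_{\geq 0}}(x,-)$ commutes with filtered colimits on truncated objects, and by the description of $\check{\Dscr}(\Ind(A))$ as the dg nerve of injective complexes, the mapping spectra into any object can be computed via injective resolutions so that the hyper-Ext spectral sequence reduces compactness to compactness of $\mathrm{Ext}^n_{\Ind(A)}(x,-)$, which holds for $x\in A=\Ind(A)^{\omega}$.

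The second step is generation: if $y\in\check{\Dscr}(\Ind(A))$ satisfies $\Map(x[n],y)\simeq 0$ for all $x\in A$ and $n\in\ZZ$, then $y\simeq 0$. Taking $\pi_0$ of these mapping spectra gives $\Hom_{\Ind(A)}(x,\pi_{-n}y)=0$ for every $x\in A$, and since $A$ generates $\Ind(A)$ as a Grothendieck abelian category this forces each $\pi_{-n}y=0$. For the unseparated derived $\infty$-category one needs more than vanishing of homotopy objects: here I would invoke the right completeness of the $t$-structure on $\check{\Dscr}(\Ind(A))_{\geq 0}$ and its universal (initial) property among Grothendieck prestable categories with heart $\Ind(A)$, which forces an object with vanishing $\pi_n$ for all $n$ to be zero on the connective side, and then stabilize.

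The third step packages this up: stabilization $\check{\Dscr}(\Ind(A))\simeq\Sp(\check{\Dscr}(\Ind(A))_{\geq 0})$ preserves compact generators (the generators $\{x[n]:x\in A,\,n\in\ZZ\}$ are compact by the first step and generate by the second), so $\check{\Dscr}(\Ind(A))$ is compactly generated by $A$. Since $\Dscr^b(A)=\check{\Dscr}(\Ind(A))^{\omega}$ is closed under finite (co)limits, retracts and shifts and contains $A$, it contains any compact generating set built out of $A$, and so $\Dscr^b(A)$ itself is a compact generating family. The main obstacle is the compactness verification in step one — in particular handling the unseparated nature of $\check{\Dscr}$ so that mapping spectra out of a discrete compact object of the heart really do commute with filtered colimits — and for this the anticompleteness of $\check{\Dscr}(\Ind(A))_{\geq 0}$ together with the universal property from~\cite{sag}*{Section~C.5.8} is what makes the argument go through.
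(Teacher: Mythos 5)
There is a genuine gap, and it sits exactly at the point where the \emph{unseparated} derived $\infty$-category differs from the ordinary one. In your generation step you deduce from $\Map(x[n],y)\we 0$ for all $x\in A$ and all $n$ that every $\pi_ny$ vanishes, and then assert that right completeness together with the universal property forces such a $y$ to be zero. That last assertion is false: $\check{\Dscr}(\Ind(A))$ is right complete but not \emph{left} separated, and nonzero objects with vanishing homotopy objects exist in general --- for $A=\Mod_{k[\epsilon]/(\epsilon^2)}^{\heartsuit,\omega}$ the $2$-periodic acyclic complex $\cdots\xrightarrow{\epsilon}k[\epsilon]\xrightarrow{\epsilon}k[\epsilon]\xrightarrow{\epsilon}\cdots$ of injectives is a nonzero object of $\check{\Dscr}(\Ind(A))$ all of whose homotopy objects vanish. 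Right completeness controls coconnectivity, not connectivity, and the anticompleteness you invoke is precisely the statement that left separation \emph{fails}, not a tool for establishing it. The detection argument has to run through the other side of the $t$-structure, as in the paper: if $\Map(x,y)\we 0$ for all compact $x$ in $\check{\Dscr}(\Ind(A))_{\geq n}$ and all $n$, then (using that each $\check{\Dscr}(\Ind(A))_{\geq n}$ is generated under colimits by its compacts) $y$ is infinitely coconnective, and right \emph{separatedness} then gives $y\we 0$.

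The same confusion undermines your compactness step. For $x\in A$ and $y$ represented by a complex of injectives $I^\bullet$, the mapping spectrum $\Map(x,y)$ is computed termwise against $I^\bullet$ and is \emph{not} determined by the homotopy objects of $y$ (again the acyclic complex above receives nonzero maps from $k$), so there is no convergent hyper-$\Ext$ spectral sequence reducing compactness of $x$ to compactness of $\Ext^n_{\Ind(A)}(x,-)$; moreover, filtered colimits in $\check{\Dscr}(\Ind(A))$ are not computed termwise on complexes of injectives unless $\Ind(A)$ is locally noetherian. The statement you are after --- that objects of $A$ are compact and that the connective part is compactly generated --- is true, but it is exactly the content of Krause's Theorem~4.9, or of~\cite{sag}*{Theorem~C.6.7.1} applied to the coherent, anticomplete Grothendieck prestable $\infty$-category $\check{\Dscr}(\Ind(A))_{\geq 0}$. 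The paper cites these results rather than rederiving them, and then only has to add two easy observations: compact objects of $\check{\Dscr}(\Ind(A))_{\geq 0}$ remain compact after stabilization because the $t$-structure is right complete, and the coconnectivity argument above finishes the generation statement.
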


\begin{proof}
    This is the content of~\cite{krause}*{Theorem~4.9}. In the setting of~\cite{sag}*{Appendix~C},
    we invoke the fact that $\check{\Dscr}(\Ind(A))_{\geq 0}$ is coherent
    (by~\cite{sag}*{Corollary~C.6.5.9}) and anticomplete to conclude that
    $\check{\Dscr}(\Ind(A))_{\geq 0}$ is compactly generated
    by~\cite{sag}*{Theorem~C.6.7.1}. Since $\check{\Dscr}(\Ind(A))$ is right
    complete, a compact object of $\check{\Dscr}(\Ind(A))_{\geq 0}$ is compact
    when viewed in $\check{\Dscr}(\Ind(A))\we\Sp(\check{\Dscr}(\Ind(A))_{\geq
    0})$. Let $y\in\check{\Dscr}(\Ind(A))$. We have to show that if
    $\Map_{\check{\Dscr}(\Ind(A))}(x,y)$ for all $x$ in
    $\check{\Dscr}(\Ind(A))_{\geq n}^\omega$ and all $n$, then $y\we 0$. But, if this
    condition is satisfied, then $y\in\check{\Dscr}(\Ind(A))_{\leq n}$ for all
    $n$. Since $\check{\Dscr}(\Ind(A))$ is right separated, $y\we 0$.
\end{proof}

\begin{lemma}
    The canonical $t$-structure on $\check{\Dscr}(\Ind(A))$ restricts to a bounded
    $t$-structure on $\Dscr^b(A)$ with heart equivalent to $A$.
\end{lemma}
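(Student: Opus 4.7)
The plan is to verify four things in turn: (i) the Yoneda image of $A$ in $\check{\Dscr}(\Ind(A))$ is contained in $\Dscr^b(A)$, (ii) the truncation functors $\tau_{\geq n}, \tau_{\leq n}$ preserve compact objects, (iii) every compact object is bounded in the $t$-structure, and (iv) the heart of the restricted $t$-structure is precisely $A$. Items (i)--(iii) together show the $t$-structure restricts to a bounded one on $\Dscr^b(A)$; (iv) identifies its heart.

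For (i), I would combine coherence of $\check{\Dscr}(\Ind(A))_{\geq 0}$ (already invoked in the proof of the preceding lemma via \cite{sag}*{Corollary~C.6.5.9}) with right completeness of $\check{\Dscr}(\Ind(A))$ to see that compact objects of the heart $\Ind(A)$ remain compact in the ambient stable $\infty$-category. Since $\Ind(A)^\omega = A$ (by the first lemma of this section), this gives $A \subseteq \Dscr^b(A)$, with $A$ landing in the heart of the ambient $t$-structure.

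For (ii), coherence also yields that the compact objects of $\check{\Dscr}(\Ind(A))_{\geq 0}$ are closed under the truncations $\tau_{\leq n}$; combined with right completeness and the defining cofiber sequences $\tau_{\geq n+1}x \to x \to \tau_{\leq n}x$, both $\tau_{\geq n}$ and $\tau_{\leq n}$ preserve compactness in $\check{\Dscr}(\Ind(A))$. Hence the $t$-structure restricts to $\Dscr^b(A)$.

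For (iii), let $F \subseteq \Dscr^b(A)$ be the full subcategory of objects $x$ for which $\tau_{\geq -N}x \simeq x \simeq \tau_{\leq N}x$ for some $N \geq 0$. Since $A$ lies in the heart, $A \subseteq F$. Using the long exact sequence of homotopy objects in $\Ind(A)$, $F$ is closed under cofibers in $\check{\Dscr}(\Ind(A))$; it is plainly closed under shifts and under retracts, so it is a thick subcategory. Now the previous lemma gives that $\check{\Dscr}(\Ind(A))$ is compactly generated by $\Dscr^b(A)$, while the universal property of the anticomplete $\check{\Dscr}(\Ind(A))_{\geq 0}$ implies that $A$ generates $\check{\Dscr}(\Ind(A))$ under small colimits. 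Passing to compact objects, $\Dscr^b(A)$ is the smallest thick subcategory of $\check{\Dscr}(\Ind(A))$ containing $A$, so $F = \Dscr^b(A)$. Finally, for (iv), $\Dscr^b(A)^\heart = \Dscr^b(A) \cap \check{\Dscr}(\Ind(A))^\heart = \Ind(A)^\omega = A$, using idempotent completeness of $A$.

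I expect the main obstacle to be the bookkeeping in step (ii): the precise passage from coherence of the prestable part $\check{\Dscr}(\Ind(A))_{\geq 0}$ to preservation of compactness under truncation in the stable $\check{\Dscr}(\Ind(A))$ is where all the hypotheses on $\Ind(A)$ are used. A secondary subtlety is the identification of $\Dscr^b(A)$ with the thick closure of $A$ in step (iii), which relies on compact generation together with $A$ being a set of generators for $\check{\Dscr}(\Ind(A))$ under small colimits.
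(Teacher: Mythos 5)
Your argument is correct in outline, but it takes a genuinely different route from the paper: the paper's entire proof is the citation of \cite{sag}*{Theorem~C.6.7.1}, which already asserts that an object of $\check{\Dscr}(\Ind(A))_{\geq 0}$ is compact if and only if it is truncated with all homotopy objects in $A$, so the lemma is immediate from that characterization. What you propose is essentially an unfolding of (part of the proof of) that theorem from the other ingredients of the same appendix, and two of your steps carry almost all the weight. First, in (i), ``coherence plus right completeness'' does not by itself yield $A\subseteq\Dscr^b(A)$: coherence gives that the objects $\pi_0 C$ for $C$ ranging over a set of compact generators of $\check{\Dscr}(\Ind(A))_{\geq 0}$ are compact, and one must then bootstrap to all of $A$ using that $A$ is the closure of these objects under finite colimits and retracts in the heart, invoking closure of compacts under $\tau_{\leq 0}$ at each cokernel. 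Second, in (iii), the claim that $A$ generates $\check{\Dscr}(\Ind(A))$ under colimits and shifts is exactly the statement that the heart generates the anticomplete prestable part under colimits and extensions; it does follow from the initiality statement of \cite{sag}*{Corollary~C.5.8.9} by comparing $\check{\Dscr}(\Ind(A))_{\geq 0}$ with the smallest subcategory closed under colimits and extensions containing $\Ind(A)$, but that comparison is a genuine argument rather than a formal consequence, and you should write it out. You do flag both of these as the delicate points, and with them supplied the identification of $\Dscr^b(A)$ with the idempotent-completed thick closure of $A$, the boundedness of compact objects, and the computation of the heart all go through as you describe. The trade-off is clear: the paper's proof is shorter and defers everything to Lurie, while yours is self-contained relative to the inputs already used in the neighboring lemmas but reproves a chunk of the cited theorem.
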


\begin{proof}
    This follows from~\cite{sag}*{Theorem~C.6.7.1}.
\end{proof}

\begin{lemma}\label{lem:defbounded}
    Let $\Dscr'\subseteq\Dscr(\Ind(A))$ denote the full subcategory of objects
    $x$ such that $\H_n(x)\in A\subseteq\Ind(A)$ for all $n$ and such that
    $\H_n(x)$ is non-zero for at most finitely many $n\in\ZZ$. Then, the map
    $\check{\Dscr}(\Ind(A))\rightarrow\Dscr(\Ind(A))$ restricts to an
    equivalence $\Dscr^b(A)\rightarrow\Dscr'$.
\end{lemma}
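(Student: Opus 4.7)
The plan is to verify the three standard conditions: (1) the functor lands in $\Dscr'$, (2) it is fully faithful, and (3) it is essentially surjective onto $\Dscr'$.

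First, I would check that the natural map $\Phi\colon\check{\Dscr}(\Ind(A))\rightarrow\Dscr(\Ind(A))$ is $t$-exact, where both sides carry their canonical $t$-structures with heart $\Ind(A)$. Combined with the previous lemma---which shows that the $t$-structure on $\check{\Dscr}(\Ind(A))$ restricts to a bounded $t$-structure on $\Dscr^b(A)=\check{\Dscr}(\Ind(A))^\omega$ with heart $A\subseteq\Ind(A)$---this implies that $\Phi(x)$ has homology concentrated in $A$ in only finitely many degrees for every $x\in\Dscr^b(A)$. Hence $\Phi$ sends $\Dscr^b(A)$ into $\Dscr'$.

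Next, for fully faithfulness, I would use the universal property of $\check{\Dscr}(\Ind(A))_{\geq 0}$ as the initial Grothendieck prestable $\infty$-category with heart $\Ind(A)$, together with the fact that $\Dscr(\Ind(A))_{\geq 0}$ is its left completion. These two $\infty$-categories agree on bounded-below objects with bounded homology because the relevant Postnikov towers are eventually constant there. Concretely, for $x,y$ in the heart we have $\pi_n\Map_{\check{\Dscr}(\Ind(A))}(x,y)\iso\Ext^n_{\Ind(A)}(x,y)\iso\pi_n\Map_{\Dscr(\Ind(A))}(x,y)$, and an induction on the length of the bounded $t$-structure filtration, together with the five lemma on long exact sequences of mapping spectra, extends this to all of $\Dscr^b(A)$.

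Finally, for essential surjectivity onto $\Dscr'$, note that every object $z$ of $\Dscr'$ is bounded with homology in $A$, hence can be built from finitely many shifts of objects of $A\subseteq\Ind(A)^\heart$ via iterated cofiber sequences (using the bounded $t$-structure on $\Dscr(\Ind(A))$ restricted to $\Dscr'$). Each object of $A$ lies in the essential image of $\Phi$ because it appears in $\Dscr^b(A)^\heart\we A$ by the previous lemma. Since the essential image of $\Phi\vert_{\Dscr^b(A)}$ is closed under cofibers and shifts (being a stable subcategory), essential surjectivity follows.

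I expect the main obstacle to be the fully faithfulness step, specifically the careful handling of the distinction between $\check{\Dscr}(\Ind(A))$ and $\Dscr(\Ind(A))$; these differ in general precisely by unbounded (Postnikov-incomplete) phenomena, and one must leverage the boundedness of objects in $\Dscr^b(A)$ to rule this out. The cleanest route is to reduce via the $t$-structure to the heart, where the $\Ext$-group comparison is standard, but one should double-check that the relevant mapping spectra (not just their homotopy groups) are identified, which is where the universal property in~\cite{sag}*{Corollary~C.5.8.9} does the real work.
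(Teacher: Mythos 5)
Your proof is correct, but it takes a genuinely different route from the paper's. The paper disposes of this lemma in one line: since an exact functor of stable $\infty$-categories is fully faithful, resp.\ essentially surjective, if and only if it is so on homotopy categories, the claim reduces to a purely triangulated statement, which is precisely the remaining content of Krause's Theorem~4.9 (the identification of the compact objects of the homotopy category of injectives in $\Ind(A)$ with the complexes having bounded homology lying in $A$). You instead argue directly: $t$-exactness together with the preceding lemma shows the functor lands in $\Dscr'$; reduction to the heart plus the $\Ext$-comparison gives full faithfulness; and the bounded $t$-structure on $\Dscr'$ with heart $A$ gives essential surjectivity. Both work. Your version has the merit of isolating the one non-formal input --- that $\check{\Dscr}(\Ind(A))$ and $\Dscr(\Ind(A))$ have equivalent mapping spectra between bounded objects, which comes down to the classical equivalence between the homotopy category of left-bounded complexes of injectives and the corresponding half-bounded derived category (equivalently, the fact that the comparison functor restricts to an equivalence on coconnective parts) --- while the paper buys brevity by outsourcing exactly this to Krause. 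Two small points: the indexing should read $\pi_{-n}\Map\iso\Ext^n$, and in the base case of your induction one needs not merely that both sides compute $\Ext^n_{\Ind(A)}(x,y)$ but that the map induced by the comparison functor realizes this isomorphism; this holds because the functor is $t$-exact, restricts to the identity on the heart, and both identifications arise from the same injective resolution of $y$, so your parenthetical caveat at the end is the right thing to check and is indeed where the universal property (or the coconnective comparison) does the work.
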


\begin{proof}
    The claim can be checked at the level of homotopy categories, which
    is the other part of~\cite{krause}*{Theorem~4.9}.
\end{proof}

\begin{definition}
    If $A$ is a small abelian category, then we define $\K(A)=\K(\Dscr^b(A))$,
    the nonconnective $K$-theory spectrum of the idempotent complete stable
    $\infty$-category $\Dscr^b(A)$ (defined in~\cite{bgt1}).
\end{definition}

We want to construct ``the natural map $\K(E^\heartsuit)\rightarrow\K(E)$'' of
the statement of Theorem~\ref{thm:heart}.

\begin{proposition}
    Let $E$ be a small stable $\infty$-category with a bounded $t$-structure.
    Then, there is a natural $t$-exact functor
    $\Dscr^b(E^\heartsuit)\rightarrow E$ inducing an equivalence on hearts.
\end{proposition}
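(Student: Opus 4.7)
The plan is to build the functor via the universal property of the unseparated derived $\infty$-category applied to the ind-completion of $E$. Write $A = E^\heart$. By Proposition~\ref{lem:textension}, the $t$-structure on $E$ induces an accessible, right complete, filtered-colimit-compatible $t$-structure on $\Ind(E)$; in particular $\Ind(E)_{\geq 0}$ is a Grothendieck prestable $\infty$-category. The first step is to identify its heart with $\Ind(A)$. The inclusion $\Ind(A) \subseteq \Ind(E)^\heart$ is clear, since $A \subseteq \Ind(E)^\heart$ and the heart is closed under filtered colimits. Conversely, any $x \in \Ind(E)^\heart$ can be written as $x \simeq \colim_i x_i$ with $x_i \in E$; compatibility of the $t$-structure with filtered colimits (together with $\Ind(E)_{\geq 0} \simeq \Ind(E_{\geq 0})$) ensures that both $\tau_{\geq 0}$ and $\tau_{\leq 0}$ preserve this colimit, so $x \simeq \pi_0 x \simeq \colim_i \pi_0 x_i \in \Ind(A)$.

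With the identification $\Ind(E)^\heart \simeq \Ind(A)$ in hand, the universal property of $\check{\Dscr}(\Ind(A))_{\geq 0}$ recalled after Definition~\ref{def:bounded} produces an essentially unique colimit-preserving functor $\check{\Dscr}(\Ind(A))_{\geq 0} \to \Ind(E)_{\geq 0}$ that is the identity on hearts. Stabilizing yields a colimit-preserving $t$-exact functor
$$
F \colon \check{\Dscr}(\Ind(A)) \longrightarrow \Ind(E).
$$
The next step is to show that $F$ carries $\Dscr^b(A) = \check{\Dscr}(\Ind(A))^\omega$ into the subcategory $E \subseteq \Ind(E)$. By Lemma~\ref{lem:defbounded}, an object $y \in \Dscr^b(A)$ has $\pi_n y \in A$ for all $n$ and $\pi_n y = 0$ for $|n|$ large. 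Since $F$ is $t$-exact and the identity on hearts, $Fy$ is bounded in $\Ind(E)$ with $\pi_n Fy \in A \subseteq E$. Induction on the number of non-zero homotopy objects via the truncation cofiber sequences, combined with the boundedness hypothesis $E = E^b$ and the closure of $E$ under cofibers in $\Ind(E)$, then gives $Fy \in E$.

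The restricted functor $\Dscr^b(A) \to E$ is automatically $t$-exact and induces the identity on hearts by construction. Naturality in $E$ is inherited from the naturality of $\Ind(-)$ and of the universal property: a $t$-exact functor $E \to E'$ induces a colimit-preserving $t$-exact functor $\Ind(E) \to \Ind(E')$ inducing $\Ind(A) \to \Ind(A')$ on hearts, and functoriality of the universal property yields compatibility of the resulting functors $\Dscr^b(A) \to E$ and $\Dscr^b(A') \to E'$. The main obstacle I anticipate is the heart identification in the first step, which relies on both truncations preserving filtered colimits in $\Ind(E)$; once that is in hand, the remainder is formal bookkeeping with the universal property.
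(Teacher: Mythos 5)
Your proposal is correct and follows essentially the same route as the paper's proof: invoke the universal property of $\check{\Dscr}(\Ind(A))_{\geq 0}$ (i.e.\ \cite{sag}*{Corollary~C.5.8.9}) to get a left exact functor to $\Ind(E)_{\geq 0}$ inducing the equivalence $\Ind(A)\we\Ind(E)^\heartsuit$, stabilize to a $t$-exact functor $F\colon\check{\Dscr}(\Ind(A))\rightarrow\Ind(E)$, and check that compact objects land in $E$ using boundedness and the fact that objects of $\Dscr^b(A)$ are finite iterated extensions of shifts of objects of $A$. The only difference is that you spell out the identification $\Ind(E)^\heartsuit\we\Ind(A)$ and the closing induction in more detail than the paper does, which is harmless.
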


\begin{proof}
    To define the natural map in the statement of Theorem~\ref{thm:heart}, let $E$
    be a small stable $\infty$-category with a bounded $t$-structure, and let
    $A=E^\heartsuit$.
    By~\cite{sag}*{Corollary~C.5.8.9}, there is a left exact functor
    $\check{\Dscr}(\Ind(A))_{\geq 0}\rightarrow\Ind(E)_{\geq 0}$ inducing the
    equivalence $\Ind(A)\we\Ind(E)^\heartsuit$.


    By~\cite{sag}*{Proposition~C.3.2.1}, the induced functor
    $F:\check{\Dscr}(\Ind(A))\rightarrow\Ind(E)$ is $t$-exact and induces an
    equivalence on hearts. It suffices to
    check that $F$ preserves compact objects. By Lemma~\ref{lem:defbounded}, every compact object of
    $\check{\Dscr}(\Ind(E))$ is a finite iterated fiber of maps between shifts of
    objects in $A\subseteq\check{\Dscr}(\Ind(E))^\heartsuit$. Thus, it suffices to
    show that $F(x)\in E$ when $x\in A$. But, this follows from hypothesis.
%
\end{proof}

\begin{corollary}
    Let $E$ be a small stable $\infty$-category with a bounded $t$-structure.
    Then, there is a natural map $\K(E^\heartsuit)\rightarrow\K(E)$ of
    nonconnective $K$-theory spectra.
\end{corollary}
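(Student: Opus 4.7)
The plan is to obtain the desired map by applying nonconnective $K$-theory to the $t$-exact comparison functor produced by the preceding proposition. By Definition, $\K(E^\heartsuit)$ is defined to be $\K(\Dscr^b(E^\heartsuit))$, the nonconnective $K$-theory spectrum of the small idempotent complete stable $\infty$-category $\Dscr^b(E^\heartsuit)$. On the other hand, the target $E$ is a small stable $\infty$-category with a bounded $t$-structure, hence idempotent complete by Corollary~\ref{cor:idemcomplete}, so $\K(E)$ is defined in~\cite{bgt1}.

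From the preceding proposition we have a $t$-exact functor $F:\Dscr^b(E^\heartsuit)\rightarrow E$ in $\Cat_{\infty}^{\perf}$. Since nonconnective $K$-theory $\K:\Cat_{\infty}^{\perf}\rightarrow\Sp$ is a functor (see~\cite{bgt1}), applying $\K$ to $F$ produces the required map
\[
\K(E^\heartsuit)=\K(\Dscr^b(E^\heartsuit))\xrightarrow{\K(F)}\K(E).
\]

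For naturality, I would observe that the construction of $F$ in the preceding proposition is itself functorial in $E$ with respect to $t$-exact functors. Indeed, given a $t$-exact functor $G:E\rightarrow E'$ between small stable $\infty$-categories with bounded $t$-structures, $G$ induces a left exact functor $\Ind(E)_{\geq 0}\rightarrow\Ind(E')_{\geq 0}$ compatible with the equivalence on hearts $E^{\heart}\we (E')^{\heart}$ induced by $G$; passing through the universal property of $\check{\Dscr}(\Ind(E^\heart))_{\geq 0}$ in~\cite{sag}*{Corollary~C.5.8.9} and stabilizing via~\cite{sag}*{Proposition~C.3.2.1} yields a commutative square of $t$-exact functors. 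Applying $\K$ then produces the naturality square, and there is essentially nothing else to check. The entire argument is routine functoriality, with the substantive content having already been absorbed into the preceding proposition.
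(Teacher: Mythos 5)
Your proof is correct and is essentially identical to the paper's, which simply applies $\K$ to the exact functor $\Dscr^b(E^\heartsuit)\rightarrow E$ constructed in the preceding proposition. The additional remarks on naturality are fine but not required by the paper's own (one-line) argument.
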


\begin{proof}
    Apply $\K$ to the exact functor $\Dscr^b(E^\heartsuit)\rightarrow E$.
\end{proof}

With this in mind, we turn to the proof of the theorem.

\begin{proof}[Proof of Theorem~\ref{thm:heart}]
    The first step is to use Barwick's theorem of
    the heart to prove that the induced map
    $\K^{\cn}(E^\heartsuit)\rightarrow\K^{\cn}(E)$ of
    connective $K$-theory is an equivalence. Philosophically, this \emph{is}
    Barwick's theorem, but we have defined $K$-theory in terms of stable
    $\infty$-categories instead of using exact $\infty$-categories.

    Consider the commutative triangle
    \begin{equation*}
        \xymatrix{
            &E^\heartsuit\ar[ld]\ar[rd]&\\
            \Dscr^b(E^\heartsuit)\ar[rr]&&E
        }
    \end{equation*}
    of Waldhausen $\infty$-categories in the sense of~\cite{barwick}. For the
    moment, denote by
    $\Kbar$ the (connective) $K$-theory spectrum of a Waldhausen
    $\infty$-category, as constructed in~\cite{barwick-ktheory}. Then,~\cite{barwick}*{Theorem~6.1} shows that the
    $\Kbar(E^\heartsuit)\rightarrow\Kbar(\Dscr^b(E^\heartsuit))$ and
    $\Kbar(E^\heartsuit)\rightarrow\Kbar(E)$ are equivalences. Hence,
    $\Kbar(E^\heartsuit)\rightarrow\Kbar(E)$ is an equivalence.

    By~\cite{barwick-ktheory}*{Corollary~10.6},
    $\Kbar(\Dscr^b(E^\heartsuit))$ and $\Kbar(E)$ are equivalent to the
    Waldhausen $K$-theory of suitable Waldhausen categories, and these are in
    turn equivalent to $\K^{\cn}(\Dscr^b(E^\heartsuit))$ and $\K^{\cn}(E)$,
    respectively, by~\cite{bgt1}*{Theorem~7.8}. This proves the result in
    connective $K$-theory.
    
    Now, in the situation of the theorem, both $\Dscr^b(E^\heartsuit)$ and $E$
    are have bounded $t$-structures with noetherian hearts. It follows from
    Theorem~\ref{thm:gscnh} that $\K_{-n}(E^\heartsuit)=\K_{-n}(E)=0$ for $n\geq 1$.
    This completes the proof.
\end{proof}

\begin{remark}
    If $E$ is a small stable $\infty$-category with a bounded
    $t$-structure, then $\K^{\cn}(E)\we\K^{\cn}(\Dscr^b(E^\heartsuit))$
    is equivalent to the Quillen $K$-theory
    of $E^\heartsuit$ viewed as an exact category. This follows from the
    Gillet-Waldhausen theorem~\cite{thomason-trobaugh}*{Theorem~1.11.7} and a
    theorem of Waldhausen~\cite{thomason-trobaugh}*{Theorem~1.11.2}. In the
    end, the theorem of the heart is a generalization of the Gillet-Waldhausen
    theorem.
\end{remark}

\begin{remark}
    Note that the negative $K$-groups  of a small abelian category $\Ascr$,
    defined in this paper as $\K_{-n}(\Dscr^b(\Ascr))$ for $n\geq 1$, agree with the negative
    $K$-groups of $\Ascr$ as defined by Schlichting~\cite{schlichting}. To see
    this, denote the latter by $\K_{-n}^\S(\Ascr)$ for the moment.

    As it is not necessary for our paper, we only illustrate the argument.
    Recall that a Frobenius pair is a pair $(\Escr,\Escr_0)$ of small Frobenius
    categories where $\Escr_0$ is a full subcategory of $\Escr_0$ such that
    the embedding $\Escr_0\rightarrow\Escr$ preserves projective (or equivalently
    injective) objects. A morphism of Frobenius pairs $(\Escr,\Escr_0)\rightarrow(\Fscr,\Fscr_0)$
    consists of a functor $\Phi:\Escr\rightarrow\Fscr$ such that
    $\Phi(M)\in\Fscr_0$ if $M\in\Escr_0$. (See for example~\cite{schlichting}.)
    Let $\Frob$ denote the $\infty$-category of Frobenius pairs.
    Using the functor
    $\Dscr_{\sing}$
    defined in Appendix~\ref{sec:frobenius}, we obtain a functor
    $\Dscr_{\sing}:\Frob\rightarrow\Cat_{\infty}^{\perf}$ defined by letting
    $$\Dscr_{\sing}(\Escr,\Escr_0)={\left(\Dscr_{\sing}(\Escr)/\Dscr_{\sing}(\Escr_0)\right)}^\sim,$$
    the idempotent completion of the Verdier quotient.
    Note that $\Dscr_{\sing}(\Escr_0)\rightarrow\Dscr_{\sing}(\Escr)$ is fully
    faithful because any map $\Escr_0$ that factors through a projective in
    $\Escr$ also factors through a projective in $\Escr_0$.

    By the definition of an exact sequence in $\Frob$ given
    in~\cite{schlichting}, $\Dscr_{\sing}$ sends exact sequences to
    localization sequences. Moreover, if $(\Escr,\Escr_0)$ is a flasque
    Frobenius pair, meaning that there is an endofunctor $T$ of the pair such
    that $T\we T\oplus\id$, then $\Dscr_{\sing}(\Escr,\Escr_0)$ is flasque.
    Using these facts, and the fact that $K_0$ can be computed either in
    $\Frob$ or in $\Cat_{\infty}^\perf$, it follows
    that $\K_{-n}^{\S}(A)\iso\K_{-n}(A)$.
\end{remark}

\subsection{Counterexamples using non-stably coherent rings}\label{sub:counterexamples}

While our proof of Theorem~\ref{thm:gscnh} uses crucially the hypothesis that
the small stable $\infty$-category $E$ has a bounded $t$-structure with noetherian
heart, much of the proof works for a general  $E$ with any bounded
$t$-structure. In particular, the existence of the sequence
$\K_{\{0\}}(\AA^1,\Cscr)\rightarrow\K(\AA^1,\Cscr)\rightarrow\K(\Gm,\Cscr)$, where
$\Cscr=\Ind(E)$, exists without any hypothesis on $E$ except that it be small
and stable, as does the fact that $\K(E)$ itself is a summand of
$\K_{\{0\}}(\AA^1,\Cscr)$. The fact that $\K_{-1}(E)=0$ whenever $E$ admits a
bounded $t$-structure provides additional
strength to the assertion that $\K_{-n}(E)$ should be zero for all $n\geq 1$.

The noetherian hypothesis is used to prove that the $t$-structure of
Lemma~\ref{lem:tstructureendomorphisms} on $\Dscr(\Gm,\Cscr)$ restricts to a
(bounded) $t$-structure on $\Dscr(\Gm,\Cscr)^\omega$. This leads to the inductive step.
One may ask if this is true in general, with a different proof.
This is not the case.

%
%

Let $R$ be an ordinary ring. A finitely presented right $R$-module $M$ is coherent if every
finitely generated submodule $N\subseteq M$ is finitely presented. The ring $R$
is \df{right coherent} if $R$ is coherent as a right $R$-module. We say that
$R$ is \df{right regular} if every finitely presented right $R$-module has
finite projective dimension. Finally, we call $R$ \df{right regular coherent} if it
is both right coherent and right regular.

\begin{theorem}\label{thm:km1coherent}
    If $R$ is a right regular coherent ring, then $\K_{-1}(R)=0$.
\end{theorem}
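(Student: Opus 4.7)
The plan is to reduce to Theorem~\ref{thm:minus1} by exhibiting a bounded $t$-structure on the small idempotent complete stable $\infty$-category $\Perf(R)$, whose nonconnective $K$-theory is by definition $\K(R)$. The candidate $t$-structure comes from the abelian category $A := \Mod_R^{\heartsuit,\omega}$ of finitely presented right $R$-modules, which is abelian precisely because $R$ is right coherent, so that $\Dscr^b(A)$ is well-defined in the sense of Definition~\ref{def:bounded} and carries its tautological bounded $t$-structure with heart $A$.

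The main step is to show that the natural embedding $\Perf(R) \hookrightarrow \Dscr(R)$ factors through an equivalence with $\Dscr^b(A)$. For the inclusion $\Perf(R) \subseteq \Dscr^b(A)$, I would use that every perfect complex is represented by a bounded complex of finitely generated projectives, together with the fact that under right coherence the finitely generated submodules of finitely generated projectives are themselves finitely presented; this forces each cohomology module of a perfect complex to lie in $A$. For the reverse inclusion, given $M \in A$, right coherence produces a resolution of $M$ by finitely generated projective $R$-modules, and the right regularity hypothesis cuts this resolution off after finitely many steps, so that $M$ is perfect. Since every object of $\Dscr^b(A)$ is a finite iterated extension of shifts of objects of $A$, and $\Perf(R)$ is closed under extensions and shifts in $\Dscr(R)$, it follows that $\Dscr^b(A) \subseteq \Perf(R)$.

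Under this equivalence, the bounded $t$-structure on $\Dscr^b(A)$ transports to a bounded $t$-structure on $\Perf(R)$, and Theorem~\ref{thm:minus1} applied to $E = \Perf(R)$ immediately yields $\K_{-1}(R) = \K_{-1}(\Perf(R)) = 0$. The principal obstacle is the identification $\Perf(R) \we \Dscr^b(A)$; here both hypotheses on $R$ are genuinely used, with coherence controlling the cohomology modules and supplying projective resolutions by finitely generated modules, and regularity ensuring that such resolutions are finite in length.
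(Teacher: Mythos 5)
Your proposal is correct and follows essentially the same route as the paper: identify $\Perf(R)=\Mod_R^\omega$ with $\Dscr^b(\Mod_R^{\heartsuit,\omega})$ using coherence (to make the heart abelian and control homology) and regularity (to make finitely presented modules perfect), then apply Theorem~\ref{thm:minus1} to the resulting bounded $t$-structure. The paper states the equivalence $\Mod_R^\omega\we\Dscr^b(R)$ without proof and also notes the alternative of quoting Schlichting's theorem directly; your fleshing-out of the two inclusions is the standard argument and is fine.
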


\begin{proof}
    The right coherence of $R$ means that the category
    $\Mod_R^{\heartsuit,\omega}$ of finitely presented
    $R$-modules is abelian and hence that
    $\Dscr^b(R)=\Dscr^b(\Mod_R^{\heartsuit,\omega})$ is a well-defined small stable
    presentable $\infty$-category. The right regularity of $R$ means that
    the natural map $\Mod_R^\omega\rightarrow\Dscr^b(R)$ is an equivalence. Hence,
    $\K(R)\we\K(\Dscr^b(R))$. We can conclude in either of two ways. We can appeal
    to Theorem~\ref{thm:minus1} using that the equivalence induces a bounded
    $t$-structure on $\Mod_R^\omega$. Or, we can use that $\K_{-1}(\Dscr^b(R))=0$,
    as proved by Schlichting~\cite{schlichting}*{Theorem 6}.
\end{proof}

Note that if $R$ is an ordinary ring, then $\Dscr(\Gm,\Mod_R)\we\Mod_{R[s^{\pm
1}]}$ and the $t$-structure on $\Dscr(\Gm,\Mod_R)$ induced by that on $\Mod_R$
via Lemma~\ref{lem:tstructureendomorphisms} agrees with the standard
$t$-structure on $\Mod_{R[s^{\pm 1}]}$.

\begin{proposition}
    Suppose that $R$ is right regular coherent but that $R[s^{\pm 1}]$ is not right
    coherent. Then, the $t$-structure on $\Mod_{R[s^{\pm 1}]}$ does not
    restrict to a $t$-structure on compact objects.
\end{proposition}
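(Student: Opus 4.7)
The plan is to prove the contrapositive: assuming the standard $t$-structure on $\Mod_{R[s^{\pm 1}]}$ restricts to the full subcategory of compact objects, I will show that $R[s^{\pm 1}]$ is right coherent, contradicting the hypothesis. Write $A = R[s^{\pm 1}]$, so $\Mod_A^\omega = \mathrm{Perf}(A)$.

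First, I would record a preliminary observation: a discrete $A$-module $M$ that is compact in $\Mod_A$ is finitely presented as an ordinary right $A$-module. Indeed, the inclusion $\Mod_A^{\heartsuit} \hookrightarrow \Mod_A$ preserves filtered colimits (since filtered colimits are $t$-exact for the standard $t$-structure), so compactness of $M$ implies that $\Hom_{\Mod_A^\heartsuit}(M,-)$ commutes with filtered colimits of discrete modules, which is the definition of finite presentability.

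Now let $N \subseteq A^n$ be an arbitrary finitely generated $A$-submodule. Choose a surjection $A^m \twoheadrightarrow N$ and compose with the inclusion to obtain a morphism $g \colon A^m \to A^n$ of perfect $A$-modules whose image is $N$. Form the cofiber $C = \mathrm{cofib}(g)$ in $\Mod_A$; since $\mathrm{Perf}(A)$ is stable, $C$ is perfect. The long exact sequence associated to $g$ (noting $A^m, A^n$ are discrete) gives $\pi_1 C \cong \ker(g)$, $\pi_0 C \cong A^n/N$, and $\pi_i C = 0$ for all other $i$. Therefore, the canonical $t$-structure cofiber sequence $\tau_{\geq 1}C \to C \to \tau_{\leq 0}C$ identifies
$$\tau_{\geq 1}C \simeq \ker(g)[1].$$

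The assumption that the $t$-structure restricts to $\Mod_A^\omega$ forces $\tau_{\geq 1}C$ to be compact, so $\ker(g)$ is a compact discrete $A$-module and hence finitely presented by the preliminary observation. Then the short exact sequence
$$0 \to \ker(g) \to A^m \to N \to 0$$
exhibits $N$ as the cokernel of a map between finitely presented modules, so $N$ itself is finitely presented. Since $N \subseteq A^n$ was arbitrary, $A = R[s^{\pm 1}]$ is right coherent, contradicting the hypothesis.

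The argument is essentially a direct unwinding, and I do not expect any serious obstacle. The only mildly delicate point is to correctly identify $\tau_{\geq 1}C$ with $\ker(g)[1]$ and to confirm that a compact discrete $A$-module is automatically finitely presented in $\Mod_A^\heartsuit$; both are immediate from the description of the standard $t$-structure on $\Mod_A$.
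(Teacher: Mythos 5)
Your proof is correct and is essentially the paper's argument run in the contrapositive direction: the paper fixes a single finitely generated, non-finitely-presented ideal $I\subseteq R[s^{\pm 1}]$, forms the two-term complex $R[s^{\pm 1}]^n\rightarrow R[s^{\pm 1}]$ with image $I$, and observes that a restricted $t$-structure would force its $\H_1$ (the kernel, which is not even finitely generated) to be perfect. Both arguments rest on the same two points you isolate, namely the identification of the connective truncation of the cofiber with the shifted kernel and the fact that a compact discrete module is finitely presented.
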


\begin{proof}
    Note that $R[s^{\pm 1}]$ is right regular. Let
    $I\subseteq R[s^{\pm 1}]$ be a finitely generated ideal that is not finitely presented, and let
    $P:R[s^{\pm 1}]^n\rightarrow R[s^{\pm 1}]$ be a chain complex in degrees $1$
    and $0$, with $d(R[s^{\pm 1}]^n)=I$. By our
    choice of $I$, $\H_1(P)$ is not a finitely generated $R[s^{\pm 1}]$-module. If the $t$-structure on
    $\Mod_{R[s^{\pm 1}]}$ restricted to a $t$-structure on the compact objects, then $\H_1(P)$ would
    have to be perfect, and hence finitely presented, a contradiction.
\end{proof}

\begin{example}
    Glaz presents an example of Soublin in~\cite{glaz}*{Example~7.3.13} showing
    that the ring $$R=\prod_{m\in\ZZ}\QQ[\![x,y]\!]$$ is right regular coherent, but that $R[s]$ is not coherent.
    By~\cite{glaz}*{Theorem~8.2.4(2)}, $R[s^{\pm 1}]$ is not coherent either.
\end{example}

This example implies that the strategy used in the previous
section to prove Conjecture~\hyperlink{conj:b}{B} in the case of a noetherian heart
cannot work for general small stable $\infty$-categories $E$ equipped with
bounded $t$-structures.


Another strategy is to replace $\Dscr(\AA^1,\Cscr)^\omega$ by some
stable $\infty$-category that does have a bounded $t$-structure. For
example, one can consider the abelian closure
$\widetilde{\coh}(\AA^1,\Cscr)$ in
$\Dscr(\AA^1,\Cscr)^\heartsuit$ of the additive category consisting of
$\pi_nx$ as $x$ ranges over all compact objects of $\Dscr(\AA^1,\Cscr)$.
Let $\Dscr_{\widetilde{\coh}}^b(\AA^1,\Cscr)\subseteq\Dscr(\AA^1,\Cscr)$ be
the full subcategory of bounded objects $x$ such that
$\pi_nx\in\widetilde{\coh}(\AA^1,\Cscr)$ for all $n$. This is a small
stable $\infty$-category and the induced $t$-structure is bounded.

Now, consider the localization sequence
$$\Dscr_{\{0\}}(\AA^1,\Cscr)\cap\Dscr_{\widetilde{\coh}}^b(\AA^1,\Cscr)\rightarrow\Dscr_{\widetilde{\coh}}^b(\AA^1,\Cscr)\rightarrow\left(\Dscr_{\widetilde{\coh}}^b(\AA^1,\Cscr)/\Dscr_{\{0\}}(\AA^1,\Cscr)\cap\Dscr_{\widetilde{\coh}}^b(\AA^1,\Cscr)\right)^\sim.$$
Let $\Dscr_{\{0\},\widetilde{\coh}}^b(\AA^1,\Cscr)$ denote the left-hand
side. For this to play the role of the localization sequence
$\Dscr_{\{0\}}(\AA^1,\Cscr)^\omega\rightarrow\Dscr(\AA^1,\Cscr)^\omega\rightarrow\Dscr(\Gm,\Cscr)^\omega$,
we need to guarantee two things:
\begin{enumerate}
    \item[(1)] $\K(E)$ is a summand of
        $\K(\Dscr_{\{0\},\widetilde{\coh}}^b(\AA^1,\Cscr))$;
    \item[(2)] the abelian subcategory
        $$\Dscr_{\{0\},\widetilde{\coh}}^b(\AA^1,\Cscr)^\heartsuit\subseteq\Dscr_{\widetilde{\coh}}^b(\AA^1,\Cscr)^\heartsuit$$
        is Serre.
\end{enumerate}
Condition (2) is easier and is in fact always true.
Condition (1) would follow if $x$ is compact when
$(x,e)$ is an object of $\Dscr_{\{0\},\widetilde{\coh}}^b(\AA^1,\Cscr)$.

\subsection{Stable coherence}\label{sub:fj}

The next theorem was known to Bass and Gersten.

\begin{theorem}\label{thm:stablycoherent}
    Suppose that $R$ is a right regular coherent ring such that $R[s_1,\ldots,s_n]$
    is right coherent. Then, $\K_{-n-1}(R)=0$.
\end{theorem}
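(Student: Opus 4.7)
The plan is to argue by induction on $n$, with the case $n=0$ being Theorem~\ref{thm:km1coherent}. For the inductive step, assume the statement for $n-1$ and let $R$ be a right regular coherent ring with $R[s_1,\ldots,s_n]$ right coherent. Because $R$ is right regular coherent, the $\infty$-category $E=\Mod_R^\omega$ coincides with $\Dscr^b(\Mod_R^{\heart,\omega})$ and carries a canonical bounded $t$-structure, as in the proof of Theorem~\ref{thm:km1coherent}. Setting $\Cscr=\Ind(E)=\Mod_R$, the constructions of Section~\ref{sub:universal} produce the localization sequence
$$\Dscr_{\{0\}}(\AA^1,\Mod_R)^\omega \to \Mod_{R[s]}^\omega \to \Mod_{R[s^{\pm 1}]}^\omega$$
of idempotent complete stable $\infty$-categories, and the additivity argument from the proof of Theorem~\ref{thm:gscnh} shows that $\K(R)$ is a direct summand of $\K(\Dscr_{\{0\}}(\AA^1,\Mod_R)^\omega)$, included via $x\mapsto(x,0)$, with the composite $\K(R)\to\K(\Dscr_{\{0\}}(\AA^1,\Mod_R)^\omega)\to\K(R[s])$ nullhomotopic. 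This portion of the argument does not require noetherianity; the noetherian hypothesis in Theorem~\ref{thm:gscnh} was needed only for the subsequent inheritance of a bounded $t$-structure on $\Dscr(\Gm,\Cscr)^\omega$, which we bypass here.

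From the associated long exact sequence in nonconnective $K$-theory, the summand $\K_{-n-1}(R)\subseteq \K_{-n-1}(\Dscr_{\{0\}}(\AA^1,\Mod_R)^\omega)$ lies in the kernel of the map to $\K_{-n-1}(R[s])$ and is therefore a subquotient of $\K_{-n}(R[s^{\pm 1}])$. So it suffices to prove $\K_{-n}(R[s^{\pm 1}])=0$, and I would obtain this by applying the inductive hypothesis at level $n-1$ to the ring $R[s^{\pm 1}]$.

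The main obstacle — really a sequence of bookkeeping verifications rather than a conceptual difficulty — is to check that $R[s^{\pm 1}]$ satisfies the hypotheses at level $n-1$: namely, that $R[s^{\pm 1}]$ is right regular coherent and that $R[s^{\pm 1}][t_1,\ldots,t_{n-1}]$ is right coherent. These follow from classical results. Right coherence descends from $R[s_1,\ldots,s_n]$ to any subring of the form $R[s_1,\ldots,s_k]$ via a syzygy-descent argument using flatness of the polynomial extension $R[s_1,\ldots,s_k]\to R[s_1,\ldots,s_n]$, so $R[s]$ is right coherent; right regularity of $R$ lifts to right regularity of $R[s]$ once the latter is right coherent, by a Hilbert-syzygy-type theorem; and localization at the central nonzerodivisor $s$ preserves both right coherence and right regularity, giving right regular coherence of $R[s^{\pm 1}]$. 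Finally, $R[s^{\pm 1}][t_1,\ldots,t_{n-1}]=R[s_1,\ldots,s_n][s_1^{-1}]$ is a localization of a right coherent ring at a central nonzerodivisor, hence right coherent. These facts are recorded in Glaz. The conceptual heart of the argument — a localization sequence plus a nullhomotopic summand inclusion — is precisely the input Section~\ref{sub:universal} was designed to provide.
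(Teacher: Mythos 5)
Your proposal is correct and is essentially the paper's argument: the paper likewise deduces the result from Theorem~\ref{thm:km1coherent} together with the fact that $\K_{-n-1}(R)$ is a subquotient of $\K_{-1}(R[s_1^{\pm 1},\ldots,s_n^{\pm 1}])$ (obtained by iterating exactly the localization-sequence-plus-split-summand mechanism you describe), with the coherence of the Laurent extension supplied by Glaz, Theorem~8.2.4(2). The only difference is organizational --- you peel off one Laurent variable per inductive step and re-verify the hypotheses for $R[s^{\pm 1}]$, whereas the paper inverts all $n$ variables at once and applies the $\K_{-1}$-vanishing theorem a single time at the end.
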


\begin{proof}
    It is enough to note that in this case $R[s_1^{\pm 1},\ldots,s_n^{\pm 1}]$
    is right coherent by~\cite{glaz}*{Theorem~8.2.4(2)}. Hence,
    $\K_{-1}(R[s_1^{\pm 1},\ldots,s_n^{\pm 1}])=0$. As $\K_{-n-1}(R)$ is a
    subquotient of $\K_{-1}(R[s_1^{\pm 1},\ldots,s_n^{\pm 1}])$, the result
    follows from Theorem~\ref{thm:km1coherent}.
\end{proof}

The classical proof (due to Bass~\cite{bass-problems}*{Section~2})
uses a specific inductive presentation of $\K_{-n-1}(R)$,
namely as the cokernel of 
$$\K_{-n}(R[s])\oplus\K_{-n}(R[s^{-1}])\rightarrow\K_{-n}(R[s^{\pm 1}]).$$
(See~\cite{thomason-trobaugh}*{Section~6}.) In particular, $\K_{-n-1}(R)$ is a
quotient of $\K_{-1}(R[s_1^{\pm 1},\ldots,s_n^{\pm 1}])$, which vanishes if
$R[s_1^{\pm 1},\ldots,s_n^{\pm 1}]$ is right regular coherent.

One reason to prefer our proof is that it extends immediately to small abelian
categories $A$ such that $A[s_1,\ldots,s_n]$ is abelian, with notation as in
the proof of Proposition~\ref{prop:tstructurea1}. We know of no analogous
general result using Bass' methods in the literature. We restate this result separately.

\begin{theorem}\label{thm:stablycoherenta}
    Let $A$ be a small abelian category such that $A[s_1,\ldots,s_n]$ is abelian.
    Then, $\K_{-n-1}(A)=0$.
\end{theorem}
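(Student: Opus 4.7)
The plan is to prove the theorem by induction on $n$, iterating the $\AA^1/\Gm$ construction used in the inductive step of Theorem~\ref{thm:gscnh} while invoking Theorem~\ref{thm:minus1} at the base rather than the noetherian hypothesis.

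The base case $n = 0$ is Theorem~\ref{thm:minus1} applied to $\Dscr^b(A)$. For the inductive step, write $A_k = A[s_1^{\pm 1}, \ldots, s_k^{\pm 1}]$. The hypothesis that $A[s_1, \ldots, s_n]$ is abelian implies that its full subcategory $A[s_1, \ldots, s_{k+1}]$, consisting of objects on which $s_{k+2}, \ldots, s_n$ act as zero, is closed under kernels and cokernels in $A[s_1, \ldots, s_n]$ and hence is itself abelian; its successive Serre localizations $A_k$ and $A_k[s_{k+1}]$ are then also abelian. I will construct stable $\infty$-categories $E_0 = \Dscr^b(A), E_1, \ldots, E_n$, each equipped with a bounded $t$-structure and $E_k^\heart \simeq A_k$, such that $\K_{-m-1}(E_k)$ is a subquotient of $\K_{-m}(E_{k+1})$ for every $m \geq 0$.

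To construct $E_{k+1}$ from $E_k$, set $\Cscr_k = \Ind(E_k)$ and let $\Dscr^b_{A_k[s_{k+1}]}(\AA^1, \Cscr_k) \subseteq \Dscr(\AA^1, \Cscr_k)$ be the full subcategory of bounded objects whose homotopy objects lie in $A_k[s_{k+1}]$, viewed as a weak Serre subcategory of $\Dscr(\AA^1, \Cscr_k)^\heart \simeq \Ind(A_k)[s_{k+1}]$, in the spirit of the abelian-closure approach of Section~\ref{sub:counterexamples}. The abelianness of $A_k[s_{k+1}]$ ensures that this subcategory is stable and inherits a bounded $t$-structure with heart $A_k[s_{k+1}]$. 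Its full subcategory $\Dscr^b_{\{0\}, A_k[s_{k+1}]}$ of $s_{k+1}$-nilpotent objects is stable with heart the Serre subcategory of $s_{k+1}$-nilpotent objects of $A_k[s_{k+1}]$, so by Proposition~\ref{prop:tlocalization}(i)$\Rightarrow$(ii) the Verdier quotient
\[
E_{k+1} = \widetilde{\Dscr^b_{A_k[s_{k+1}]}(\AA^1, \Cscr_k)/\Dscr^b_{\{0\}, A_k[s_{k+1}]}(\AA^1, \Cscr_k)}
\]
inherits a bounded $t$-structure with heart $A_k[s_{k+1}^{\pm 1}] = A_{k+1}$.

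The summand argument from the proof of Theorem~\ref{thm:gscnh} applies verbatim: the functor $x \mapsto (x, 0)$ from $E_k$ into $\Dscr^b_{\{0\}, A_k[s_{k+1}]}$ has a retraction given by the underlying object functor, and its composite with the inclusion into $\Dscr^b_{A_k[s_{k+1}]}$ is null in $K$-theory by additivity applied to the cofiber sequence $(x[s_{k+1}], s_{k+1}) \xrightarrow{s_{k+1}} (x[s_{k+1}], s_{k+1}) \to (x, 0)$. Hence the localization cofiber sequence in $K$-theory realizes $\K_{-m-1}(E_k)$ as a subquotient of $\K_{-m}(E_{k+1})$. Iterating $n$ times gives that $\K_{-n-1}(A)$ is a subquotient of $\K_{-1}(E_n)$, which vanishes by Theorem~\ref{thm:minus1}. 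The main technical hurdle is verifying at each step that $A_k[s_{k+1}]$ is weak Serre in $\Ind(A_k)[s_{k+1}]$ and that its $s_{k+1}$-nilpotent part is Serre in it; both reduce to the abelianness of $A_k[s_{k+1}]$, which is precisely where the stable coherence hypothesis is used.
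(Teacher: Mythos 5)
Your overall strategy is the right one, and it is essentially the route the paper has in mind (the paper's own ``proof'' of this statement is a one-line remark that the ring-theoretic argument extends, so your write-up is considerably more explicit). In particular, you correctly identify that the compact-object categories $\Dscr(\AA^1,\Cscr_k)^\omega$ cannot be used here --- Proposition~\ref{prop:tstructurea1} genuinely needs noetherianity to restrict the $t$-structure to compact objects, and Section~\ref{sub:counterexamples} shows this can fail --- and you correctly substitute the categories $\Dscr^b_{A_k[s_{k+1}]}(\AA^1,\Cscr_k)$ of bounded objects with prescribed homotopy, which carry a bounded $t$-structure by fiat once $A_k[s_{k+1}]$ is abelian (hence weak Serre in the big heart). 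The appeal to Proposition~\ref{prop:tlocalization}(i)$\Rightarrow$(ii) for the quotient is also correct.

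The one genuine gap is the claim that the summand argument ``applies verbatim.'' In the proof of Theorem~\ref{thm:gscnh}, the retraction $u:(x,e)\mapsto x$ lands in $E$ because $\Dscr_{\{0\}}(\AA^1,\Cscr)^\omega$ is \emph{generated} by the objects $(x,0)$ with $x$ compact, so underlying objects of its compact objects are automatically compact. Your category $\Dscr^b_{\{0\},A_k[s_{k+1}]}$ is not defined by compact generation, and the paper flags exactly this point as the obstruction in Section~\ref{sub:counterexamples} (``Condition (1) would follow if $x$ is compact when $(x,e)$ is an object of $\Dscr^b_{\{0\},\widetilde{\coh}}(\AA^1,\Cscr)$''). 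You must therefore prove: if $\sigma\in A_k[s_{k+1}]$ and $s_{k+1}$ acts nilpotently on $\sigma$, then the underlying object of $\sigma$ lies in $A_k$. This is true but needs an argument --- e.g., filter $\sigma$ by $\ker(s_{k+1}^i)$ (a finite filtration in $A_k[s_{k+1}]$ since $A_k[s_{k+1}]$ is abelian and $s_{k+1}$ is nilpotent), note each graded piece is a finitely presented object with trivial $s_{k+1}$-action, and observe that the forgetful functor from $s_{k+1}^N$-nilpotent module objects to $\Ind(A_k)$ preserves compact objects because its right adjoint is a finite direct sum and hence preserves filtered colimits. Only then does $u$ land in $E_k$ and the splitting of $\K(E_k)$ off of $\K$ of the nilpotent part go through. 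A smaller point in the same vein: the passage from ``$A[s_1,\ldots,s_n]$ abelian'' to ``$A_k[s_{k+1}]$ abelian,'' while morally a composite of restriction to a kernel-and-cokernel-closed subcategory and Serre localization as you say, requires identifying the iteratively defined heart $\Dscr(\AA^1,\Ind(E_k))^{\heartsuit,\omega}$ with that localization; this identification should be recorded rather than asserted.
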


It seems that Schlichting's paper is very close to establishing a result
like this. However, the proof given of~\cite{schlichting}*{Lemma 8} relies on the structure of
injective modules in a noetherian abelian category to establish the long
exact sequence in $K$-groups allowing one to conclude that that
$\K_{-n-1}(A)$ is a subquotient of $\K_{-1}(A[s_1^{\pm 1},\ldots,s_n^{\pm
1}])$. So, that paper allows one to prove the theorem for $A$
noetherian (\cite{schlichting}*{Remark 7}), but not in this stably coherent setting.

We close this section with a discussion relating these vanishing results and the
$K$-theoretic Farrell-Jones conjecture. None of these results are new (as they
all follow from Theorem~\ref{thm:stablycoherent}), but it
serves to illustrate the importance of
Conjectures~\hyperlink{conj:a}{A},~\hyperlink{conj:b}{B},
and~\hyperlink{conj:c}{C} in the non-noetherian case.

The most interesting cases of the conjecture are when $R=\ZZ$ or when $R$
is an arbitrary regular noetherian commutative ring.
Farrell and Jones~\cite{farrell-jones} proved that $\K_{-n}(\ZZ[V])=0$ for $n\geq 2$.
If the Farrell-Jones conjecture holds for $G$, then
it follows from the homotopy colimit spectral sequence
that $\K_{-n}(\ZZ[G])=0$ for $n\geq 2$ as well. In many cases it is suspected
that $K_{-n}(R[G])=0$ for all $n\geq 1$. For example, this follows from the
Farrell-Jones conjecture when the orders of all finite subgroups of $G$ are
invertible in $R$ (see~\cite{luck-reich}*{Conjecture~79}).
Our application to this problem is via a class of groups studied in this
setting by Waldhausen~\cite{waldhausen-free}. Say that a group $G$ is \df{regular
coherent} (resp. {\bf noetherian}) if $R[G]$
is right regular coherent (resp. {\bf noetherian}) for any regular noetherian
commutative ring $R$.

\begin{theorem}
    Let $R$ be a regular noetherian commutative ring, and let $G$ be a regular
    coherent group. Then, $\K_{-n}(R[G])=0$ for $n\geq 1$.
\end{theorem}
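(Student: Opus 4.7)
The plan is to deduce this theorem directly from Theorem~\ref{thm:stablycoherent} by exploiting the defining property of regular coherent groups together with stability of regularity and noetherianity under polynomial extensions over a commutative ring.

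First, I would observe that by Theorem~\ref{thm:km1coherent}, the hypothesis that $G$ is regular coherent already gives $\K_{-1}(R[G]) = 0$, since $R[G]$ is right regular coherent. The task is then to promote this to vanishing in all negative degrees.

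For each $n \geq 1$, I would apply Theorem~\ref{thm:stablycoherent} with the ring $R[G]$ in place of $R$. This requires checking two things: that $R[G]$ is right regular coherent, and that $R[G][s_1,\ldots,s_n]$ is right coherent. The first holds by the definition of a regular coherent group, applied to the regular noetherian commutative ring $R$. For the second, note the canonical identification
\[
    R[G][s_1,\ldots,s_n] \iso R[s_1,\ldots,s_n][G].
\]
Since $R$ is regular noetherian commutative, the polynomial ring $R[s_1,\ldots,s_n]$ is again regular noetherian commutative (by the Hilbert basis theorem together with the fact that regularity is preserved under polynomial extensions). Invoking the definition of regular coherent group once more, with $R[s_1,\ldots,s_n]$ in place of $R$, shows that $R[s_1,\ldots,s_n][G]$ is right regular coherent, and in particular right coherent. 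Theorem~\ref{thm:stablycoherent} then yields $\K_{-n-1}(R[G]) = 0$. Combined with the case $n=0$ handled by Theorem~\ref{thm:km1coherent}, this gives the desired vanishing $\K_{-n}(R[G]) = 0$ for all $n \geq 1$.

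There is no real obstacle here; the entire argument is a formal consequence of the machinery already developed. The only substantive check is the permanence of the ``regular noetherian commutative'' hypothesis under adjoining polynomial variables, which is classical, and the observation that the definition of a regular coherent group is flexible enough to be applied to any regular noetherian commutative ring, in particular to $R[s_1,\ldots,s_n]$. The nontrivial work has already been carried out in the proof of Theorem~\ref{thm:stablycoherent}, which itself rests on Theorem~\ref{thm:km1coherent} and the Bass-style presentation of negative $K$-groups as subquotients of $\K_{-1}$ of an iterated Laurent polynomial ring.
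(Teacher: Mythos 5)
Your argument is correct and is essentially the paper's own proof: the key observation in both is the identification $R[G][s_1,\ldots,s_n]\iso (R[s_1,\ldots,s_n])[G]$ together with the fact that $R[s_1,\ldots,s_n]$ is again regular noetherian commutative, so that the definition of a regular coherent group applies and Theorem~\ref{thm:stablycoherent} finishes the job. Your write-up just spells out the details that the paper leaves implicit.
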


\begin{proof}
    The key point is that $R[G][s]\iso (R[s])[G]$, so that
    under the hypotheses, $R[G]$ is stably coherent. The result follows from
    Theorem~\ref{thm:stablycoherent}.
\end{proof}

The Farrell-Jones conjecture is known in many cases, and hence the vanishing of
negative $K$-theory is known in many cases over the integers by the result of
Farrell and Jones. For a table of known results on the
Farrell-Jones conjecture, see~\cite{luck-reich}*{Section~2.6.3}.

\begin{example}\label{ex:fj}
    Many groups are regular and coherent. The following list is transcribed
    from~\cite{waldhausen-free}. The group $G$ is regular coherent if it is
    (1) a free group, (2) a free abelian group, (3) a polycyclic group,
    (4) a torsion-free one-relator group, (5) a group of the form $\pi_1M$ where $M$ is a $2$-manifold not
    homeomorphic to $\RR\PP^2$,
    (6) a sufficiently large $3$-manifold group,
    (7) a group of the form $\pi_1M$ where $M$ is a submanifold of $S^3$,
    (8) a subgroup of a group of one of the above types, or (9) a filtered
    colimit of inclusions thereof. In particular, for all of these groups, $\K_{-n}(R[G])=0$ for $n\geq 1$.
    Example (8) is particularly interesting as regular coherence passes to
    subgroups by~\cite{waldhausen-free}*{Theorem~19.1} even though this is not known for the Farrell-Jones conjecture.
\end{example}

\subsection{Serre cones of abelian categories}\label{sub:serrecones}

We have seen that the straightforward generalization of Schlichting's inductive
strategy to prove vanishing of negative $K$-theory of noetherian abelian
categories founders because of the failure of the Serre subcategory condition on the
hearts, even though though the weak Serre subcategory condition always holds.

%

Part of the subtlety of Schlichting's conjecture is that the negative
$K$-theory of an abelian category is defined using derived categories. To date,
there is no definition internal to abelian categories. Given a small idempotent
complete stable
$\infty$-category $E$ and an uncountable regular cardinal $\kappa$, let $\Sigma_\kappa(E)$ be the cofiber in
$\Cat_{\infty}^\perf$ fitting into the exact sequence
$$E\rightarrow\Ind(E)^\kappa\rightarrow\Sigma_\kappa(E).$$ This allows us to define
negative $K$-theory inductively as $\K_{-n}(E)=\K_0(\Sigma^{(n)}_\kappa(E))$
since $\K(\Ind(E)^\kappa)\we 0$.

No similar construction is known for the negative $K$-theory of abelian
categories. It would be possible given a positive answer to the next question.

\begin{question}\label{q:serrecone}
    Suppose that $A$ is a small abelian category. Does there exist an exact
    fully faithful inclusion of abelian categories $A\subseteq B$ such that
    $\K(B)\we 0$ and such that $A$ is Serre in $B$?
\end{question}

For example, $B$ might be closed under countable coproducts, which implies the
$K$-acyclicity condition. One natural guess would be to take a category
$\Ind(A)^\kappa$ of $\kappa$-compact objects for an uncountable cardinal
$\kappa$. However, $A\subseteq\Ind(A)^\kappa$ is not typically Serre.

\begin{example}
    Let $R$ be a non-noetherian coherent ring, and let
    $\coh_R\subseteq\Mod_R^{\heartsuit,\kappa}$
    be the full subcategory of coherent right $R$-modules inside all
    $\kappa$-compact $R$-modules (for some regular uncountable cardinal $\kappa$). Then, $R$ itself
    has subobjects (specifically, non-finitely generated ideals) in
    $\Mod_R^{\heartsuit,\kappa}$ not contained in $\coh_R$.
\end{example}

\begin{remark}
    Jacob Lurie informed us that the previous example extends to say that it is not
    generally true that a small abelian category $A$ admits a fully faithful
    exact inclusion $A\subseteq B$ where $B$ is closed under countable
    coproducts and $A$ is Serre inside of $B$. Indeed, if $B$ has countable
    coproducts, then for any object $M$ of $B$, the lattice $\mathrm{Sub}(M)$ of
    subobjects of $M$ is closed under countable joins. This property will be
    true for any Serre subcategory. An example where is not true is as follows.
    Consider the category $\coh_R$ of coherent
    modules for $R=k[x_1,x_2,\ldots]$, the polynomial ring on countably many
    variables over some field $k$. This ring is coherent but not noetherian, so
    that $\coh_R$ is abelian. However, the union of
    the ideals $(x_1)\subseteq (x_1,x_2)\subseteq\cdots$ is not coherent (or
    even finitely generated). So, the lattice of coherent subobjects of $R$ is not closed under countable joins.
    In particular, there is no Serre
    embedding $\coh_R\subseteq B$ for any $B$ closed under countable coproducts.
\end{remark}

\begin{proposition}
    Conjecture~\hyperlink{conj:c}{C} and a positive answer to
    Question~\ref{q:serrecone} imply
    Conjectures~\hyperlink{conj:a}{A} and~\hyperlink{conj:b}{B}.
\end{proposition}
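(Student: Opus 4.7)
The plan is to establish Conjecture~\hyperlink{conj:a}{A} assuming Conjecture~\hyperlink{conj:c}{C} and a positive answer to Question~\ref{q:serrecone}; Conjecture~\hyperlink{conj:b}{B} then follows because the paper already observes that Conjecture~\hyperlink{conj:b}{B} holds if and only if Conjectures~\hyperlink{conj:a}{A} and~\hyperlink{conj:c}{C} both do.

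Given a small abelian category $A$, apply Question~\ref{q:serrecone} to embed $A\subseteq B$ as a Serre subcategory with $\K(B)\we 0$. The induced functor $\Dscr^b(A)\to\Dscr^b(B)$ is $t$-exact and fully faithful between stable $\infty$-categories with bounded $t$-structures, and its restriction to hearts is the Serre inclusion $A\subseteq B$, so condition~(i) of Proposition~\ref{prop:tlocalization} is satisfied. Let $G$ be the cofiber in $\Cat_\infty^\perf$; by the proposition, $G$ carries a bounded $t$-structure for which $\Dscr^b(B)\to G$ is $t$-exact, and in particular $G^{\heartsuit}$ is a small abelian category. Since $\K(\Dscr^b(B))=\K(B)\we 0$, the localization cofiber sequence yields $\K(G)\we\Sigma\K(A)$; applying Conjecture~\hyperlink{conj:c}{C} to the bounded stable $\infty$-category $G$ then gives $\K(G^{\heartsuit})\we\Sigma\K(A)$, whence $\K_{-n+1}(G^{\heartsuit})\iso\K_{-n}(A)$ for every $n$.

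The key observation is that this construction may now be iterated on the small abelian category $G^{\heartsuit}$, producing a sequence of small abelian categories $C_1=G^{\heartsuit},\ C_2,\ C_3,\ldots$ with
\[
\K_{-n}(A)\iso\K_{-n+k}(C_k)\qquad\text{for every }k\ge 1.
\]
For $n\ge 2$, taking $k=n-1$ identifies $\K_{-n}(A)$ with $\K_{-1}(C_{n-1})$, which vanishes by Theorem~\ref{thm:minus1} applied to $\Dscr^b(C_{n-1})$; the case $n=1$ is Theorem~\ref{thm:minus1} directly. This proves Conjecture~\hyperlink{conj:a}{A}, and then Conjecture~\hyperlink{conj:b}{B} follows by combining Conjectures~\hyperlink{conj:a}{A} and~\hyperlink{conj:c}{C}.

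The main obstacle is, of course, Question~\ref{q:serrecone} itself: as the remark following it (due to Lurie) already shows, the natural candidate of enlarging $A$ to a category closed under countable coproducts fails to preserve the Serre condition in general, because lattices of subobjects in the larger category must then be closed under countable joins. An honest proof of the question would therefore require a genuinely different construction of $B$, which is why the authors leave it open.
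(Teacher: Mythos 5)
Your overall strategy is the paper's: embed $A$ as a Serre subcategory of a $K$-acyclic $B$, use Proposition~\ref{prop:tlocalization} to put a bounded $t$-structure on the Verdier quotient, shift degrees via the localization sequence, and terminate with Theorem~\ref{thm:minus1}. (Your explicit iteration $C_1,C_2,\ldots$ is just the paper's induction on $n$ unrolled, with Conjecture~\hyperlink{conj:c}{C} invoked at each stage to pass between $A(n)$ and $B(n)$; that part is fine.)

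There is, however, one genuine gap: you assert that the induced functor $\Dscr^b(A)\to\Dscr^b(B)$ is fully faithful. For a Serre inclusion $A\subseteq B$ this is not known and is false in general at the level one would need: full faithfulness is equivalent to $\Ext^i_A(x,y)\to\Ext^i_B(x,y)$ being an isomorphism for all $i$ and all $x,y\in A$, which fails for $i\geq 2$ without further hypotheses. This is essentially the open problem the paper flags (Weibel's Open Problem~V.5.3, a special case of Conjecture~\hyperlink{conj:c}{C}), so you cannot feed $\Dscr^b(A)\hookrightarrow\Dscr^b(B)$ into Proposition~\ref{prop:tlocalization} as stated. The repair is the one the paper uses: replace $\Dscr^b(A)$ by $\Dscr^b_A(B)$, the full subcategory of $\Dscr^b(B)$ of complexes with homology in $A$. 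This is honestly fully faithful and $t$-exact with heart $A$, so condition~(i) of Proposition~\ref{prop:tlocalization} applies verbatim, and Conjecture~\hyperlink{conj:c}{C} (applied to $\Dscr^b_A(B)$, whose heart is $A$) supplies the identification $\K(\Dscr^b_A(B))\we\K(A)$ that your argument needs in order to read off $\K(G)\we\Sigma\K(A)$. With that substitution made at every stage of your iteration, the proof closes up and agrees with the paper's.
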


\begin{proof}
    For $n\geq 1$, let $A(n)$ denote the statement that $\K_{-n}(A)=0$ for all
    small abelian categories $A$, and let $B(n)$ denote the statement that
    $\K_{-n}(E)=0$ for every small stable $\infty$-category $E$ with a bounded
    $t$-structure. Since we are assuming Conjecture~\hyperlink{conj:c}{C}, $A(n)$ if and
    only if $B(n)$. So, assume $B(n)$ for some $n\geq 1$. It suffices to prove
    $A(n+1)$.

    Let $A$ be a small abelian category, and let $A\subseteq B$
    denote the abelian category guaranteed by the hypothesis of the
    proposition. Conjecture~\hyperlink{conj:c}{C} implies that
    $\K(A)=\K(\Dscr^b(A))\we\K(\Dscr^b_A(B))$. (Note that even this special case
    of Conjecture~\hyperlink{conj:c}{C} is open: see~\cite{weibel-kbook}*{Open
    Problem~V.5.3}.)
    Consider the localization sequence
    $$\Dscr^b_A(B)\rightarrow\Dscr^b(B)\rightarrow\left(\Dscr^b(B)/\Dscr^b_A(B)\right)^\sim.$$
    By our choice of $B$, $\K(\Dscr^b(B))\we 0$, so
    $$\K_{-n}((\Dscr^b(B)/\Dscr^b_A(B))^\sim)\rightarrow\K_{-n-1}(\Dscr_A^b(B))\iso\K_{-n-1}(A)$$
    is surjective.
    The quotient $(\Dscr^b(B)/\Dscr^b_A(B))^\sim$ has a bounded $t$-structure by Proposition~\ref{prop:tlocalization}.
    Thus, by $B(n)$, $\K_{-n-1}(A)=0$.
\end{proof}


\begin{remark}
    As a final philosophical remark, note that negative $K$-theory exists
    because of the need to idempotent complete when constructing a
    localization of stable $\infty$-categories. Since abelian categories are
    idempotent complete, the sequences $A\rightarrow B\rightarrow B/A$ are
    already exact when $A\subseteq B$ is Serre. In particular, the induced map
    $\K_0(B)\rightarrow\K_0(B/A)$ is always surjective for such a localization
    sequence. It follows that, to the extent it exists along the lines
    of~\cite{bgt1}, the universal localizing invariant \emph{of
    abelian categories} should actually be connective $K$-theory. This does not
    imply Schlichting's conjecture by itself, but it would provide some
    evidence.
\end{remark}

\section{Some applications}\label{sec:negative}

We present here applications of the vanishing results above to the $K$-theory
of dg algebras and of ring spectra.

\subsection{Negative $K$-theory of dg algebras}\label{sub:dg}

Our first result has also been
proved independently by Denis-Charles Cisinski in unpublished work.

\begin{theorem}
    Let $k$ be a commutative ring, and let $A$ be a cohomological dg $k$-algebra such that
    $\H^0(A)$ is semisimple, $\H^i(A)$ is a finitely generated
    right $\H^0(A)$-module for all $i$, and $\H^i(A)=0$ for $i<0$. Then, $\K_{-n}(A)=0$ for
    $n\geq 1$.
\end{theorem}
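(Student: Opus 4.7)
The strategy is to apply Theorem~\ref{thm:mainintro} to $\Perf(A)$, so the task reduces to exhibiting a bounded $t$-structure on $\Perf(A)$ whose heart is a noetherian abelian category.

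First I would invoke the work of Keller and Nicol\'as~\cite{keller-nicolas} to construct a natural $t$-structure on $\Perf(A)$. The hypotheses of the corollary---$A$ has cohomology concentrated in non-negative cohomological degrees, $\H^0(A)$ is semisimple (hence in particular right coherent), and each $\H^i(A)$ is finitely generated as a right $\H^0(A)$-module---are exactly what is needed to produce such a $t$-structure, with heart identified with the category of finitely presented right $\H^0(A)$-modules. This step is where the cited input of~\cite{keller-nicolas} enters, and it is the one that goes beyond the results of the present paper.

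Second, I would verify that this $t$-structure is bounded with noetherian heart. Semisimplicity of $\H^0(A)$ immediately implies that the category of finitely presented right $\H^0(A)$-modules is noetherian, since every submodule of a finitely generated module over a semisimple ring is itself a direct summand and hence finitely generated. Boundedness of the $t$-structure on $\Perf(A)$ follows from the fact that any perfect $A$-module has only finitely many non-vanishing cohomology modules, each finitely generated over $\H^0(A)$, and can therefore be built in finitely many steps from shifts of finitely generated $\H^0(A)$-modules, using the finite-generation hypothesis on each $\H^i(A)$ together with the splittings in $\Mod_{\H^0(A)}^\heartsuit$ afforded by semisimplicity.

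Having produced a bounded $t$-structure on $\Perf(A)$ with noetherian heart, Theorem~\ref{thm:mainintro} immediately yields $\K_{-n}(A)=0$ for all $n\geq 1$. The main obstacle is the appeal to~\cite{keller-nicolas}; once the $t$-structure exists, the verifications of boundedness and noetherianity of the heart are formal consequences of the hypotheses, and the vanishing is then a direct application of the main theorem of this paper.
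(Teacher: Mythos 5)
Your overall strategy coincides with the paper's: cite Keller--Nicol\'as to obtain a bounded $t$-structure on $\Perf(A)=\Mod_A^\omega$ with noetherian heart, then apply the vanishing theorem. However, both of your ``verification'' steps rest on misidentifications and would fail as written. First, the heart of the Keller--Nicol\'as $t$-structure is \emph{not} the category of finitely presented right $\H^0(A)$-modules. What their Theorem~7.1 actually provides is that the heart is a \emph{length category} whose simple objects are in bijection with the simple $\H^0(A)$-modules; its extension structure is governed by the higher cohomology of $A$ (via the $A_\infty$-structure) and is in general much richer than $\Mod_{\H^0(A)}^{\heartsuit,\omega}$. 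The paper's own example makes this vivid: for $A=\R\End_R(R/\mathfrak{m})$ with $R$ noetherian local, $\H^0(A)$ is the residue field, yet the heart is the category of finite-length $R$-modules. So your appeal to semisimplicity of $\H^0(A)$ to deduce noetherianity of the heart is aimed at the wrong category; the correct (and easier) argument is that every object of a length category has finite length, hence the heart is noetherian for that reason.

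Second, your boundedness argument is also incorrect: it is false that a perfect $A$-module has only finitely many nonvanishing cohomology modules --- $A$ itself is perfect and may have $\H^i(A)\neq 0$ for infinitely many $i\geq 0$ (again, $\Ext^*_R(R/\mathfrak{m},R/\mathfrak{m})$ for a singular local ring). Boundedness here refers to the truncations of the Keller--Nicol\'as $t$-structure, not to ordinary cohomological amplitude, and it is part of what their theorem asserts rather than something you can read off from finiteness of cohomology. The corollary itself is fine, and the proof is repaired simply by quoting Keller--Nicol\'as correctly: their theorem delivers a bounded $t$-structure whose heart is a length category, hence noetherian, and then Theorem~\ref{thm:mainintro} (equivalently, Theorem~\ref{thm:heartintro}) applies.
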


\begin{proof}
    Keller and Nicol\'as prove in~\cite{keller-nicolas}*{Theorem~7.1} that
    under these hypotheses, $\Mod_A^\omega$ admits a bounded $t$-structure
    whose heart is a length category. Recall that a length category is a small
    abelian category in which every object has finite length. In particular, it
    is noetherian. The result follows now from Theorem~\ref{thm:heart}.
\end{proof}

\begin{example}
    Suppose that $k$ is a field and $R$ is a noetherian local commutative
    $k$-algebra with maximal ideal $\mathfrak{m}$. Then, the derived
    endomorphism algebra $A$ of $R/\mathfrak{m}$, which computes
    $\Ext^*_R(R/\mathfrak{m},R/\mathfrak{m})$ satisfies the hypotheses of the
    theorem, and hence the negative $K$-theory of $A$ vanishes. We can see this
    in another way. Let $\Ascr\subseteq\Mod_R^{\heartsuit,\omega}$ be the full
    subcategory of finitely presented $R$-modules supported set theoretically
    on $\Spec R/\mathfrak{m}\subseteq\Spec R$. Then, $\Dscr^b(\Ascr)$ is a
    fully subcategory of $\Dscr^b(R)$ and $R/\mathfrak{m}$ is a compact
    generator. Hence, $\Mod_A^\omega\we\Dscr^b(\Ascr)$. So, since $R$ is
    noetherian, $\Ascr$ is noetherian, and the fact that $\K_{-n}(A)=0$ for
    $n\geq 1$ follows from Schlichting's theorem.
\end{example}

\begin{example}
    If $k$ is a field and $X$ is a smooth proper $k$-scheme, then the algebraic
    de Rham complex, which computes the algebraic de Rham cohomology
    $\H^*_{\dR}(X/k)$, satisfies the conditions of the theorem.
\end{example}

\subsection{Negative $K$-theory of periodic and related ring spectra}\label{sub:nperiodic}

Let $R$ be a connective ring spectrum. 
A right $R$-module $M$ is \df{$\pi_*$-finitely presented} if $\bigoplus_n\pi_nM$ is
a finitely presented (right) $\pi_0R$-module. In particular, this means that
$M$ is bounded and that each $\pi_nM$ is a finitely presented
$\pi_0R$-module.
A discrete ring $R$ is said to be \df{right noetherian} if every submodule of a finitely generated $R$-module is
finitely generated. A connective ring spectrum $R$ is \df{right noetherian} if
$\pi_0R$ is right noetherian and if $\pi_nR$ is finitely generated as a right
$\pi_0R$-module for all $n\in\NN$.

Following~\cite{mcconnell-robson}, a discrete ring $R$ is said to be \df{right regular} if every
finitely generated discrete (right) $R$-module has finite projective dimension. A
connective ring
spectrum $R$ will be said to be \df{right regular} if $\pi_0R$ is right regular and
if each $\pi_*$-finitely presented (right) $R$-module spectrum $M$ is compact.
A connective ring
spectrum $R$ will be called \df{right regular noetherian} if it is right
noetherian and right regular.

For the purposes of this section,
a map $R\rightarrow S$ of ring spectra will be called a \df{localization} if the induced map
$\Mod_R\rightarrow\Mod_S$ is a localization with kernel generated by a
compact object, or equivalently by a finite set of compact objects.

The next result extends those of Barwick and Lawson in~\cite{barwick-lawson}.

\begin{proposition}\label{prop:bl}
    Let $R$ be a right regular noetherian ring spectrum.
    Suppose that $R\rightarrow S$ is a localization of $R$ such that for a
    compact $R$-module $M$, $S\otimes_R
    M\we 0$ if and only if $M$ is $\pi_*$-finitely presented. Then, there is a fiber sequence
    $$\K(\pi_0R)\rightarrow\K(R)\rightarrow\K(S)$$  of
    nonconnective $K$-theory spectra.
\end{proposition}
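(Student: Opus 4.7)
The plan is to produce the desired fiber sequence by applying nonconnective $K$-theory to a localization sequence of small idempotent complete stable $\infty$-categories coming from $R\to S$, and then to identify the $K$-theory of the kernel with $\K(\pi_0R)$ via the nonconnective theorem of the heart (Theorem~\ref{thm:heartintro}).

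First I would set up the localization of compact objects. Since $R\to S$ is a localization with kernel generated by finitely many compact $R$-modules, there is a localization sequence of compactly generated stable presentable $\infty$-categories $\Mod_R^{\ker}\to\Mod_R\to\Mod_S$. Passing to compact objects produces an exact sequence
\[
    (\Mod_R^{\ker})^\omega\to\Mod_R^\omega\to\Mod_S^\omega
\]
in $\Cat_\infty^{\perf}$ (up to idempotent completion of the quotient), and hence a cofiber sequence $\K((\Mod_R^{\ker})^\omega)\to\K(R)\to\K(S)$ of nonconnective $K$-theory spectra. By hypothesis, a compact $R$-module $M$ lies in $\Mod_R^{\ker}$ if and only if it is $\pi_*$-finitely presented, so $(\Mod_R^{\ker})^\omega$ is the full subcategory $E\subseteq\Mod_R^\omega$ of $\pi_*$-finitely presented compact $R$-modules. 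Because $R$ is right regular noetherian, every $\pi_*$-finitely presented $R$-module is automatically compact, so $E$ coincides with the stable $\infty$-category of all $\pi_*$-finitely presented $R$-modules.

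Next I would equip $E$ with a bounded $t$-structure whose heart is the abelian category of finitely presented discrete $\pi_0R$-modules. The Postnikov $t$-structure on $\Mod_R$ restricts to $E$: if $M$ is $\pi_*$-finitely presented then so are $\tau_{\geq 0}M$ and $\tau_{\leq 0}M$, since $\pi_0R$ is right noetherian (so kernels and cokernels of maps of finitely presented modules are finitely presented) and $\pi_iR$ is finitely generated over $\pi_0R$ for all $i$. By definition of $\pi_*$-finite presentation the homotopy groups are bounded, so the induced $t$-structure on $E$ is bounded. Its heart $E^\heartsuit$ is the category $\Mod_{\pi_0R}^{\heartsuit,\omega}$ of finitely presented discrete right $\pi_0R$-modules, which is noetherian because $\pi_0R$ is.

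With these hypotheses in place, Theorem~\ref{thm:heartintro} yields
\[
    \K(\pi_0R)=\K(\Mod_{\pi_0R}^{\heartsuit,\omega})=\K(E^\heartsuit)\xrightarrow{\we}\K(E),
\]
where the first equality is the definition of $\K(\pi_0R)$ (after identifying $\Mod_{\pi_0R}^\omega\we\Dscr^b(\Mod_{\pi_0R}^{\heartsuit,\omega})$ using that $\pi_0R$ is right regular noetherian). Substituting into the cofiber sequence above gives the asserted fiber sequence $\K(\pi_0R)\to\K(R)\to\K(S)$.

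The bulk of the work, and the only genuine technical point, is the verification that $E$ really carries a bounded $t$-structure with noetherian heart; once this is established, everything else is formal. The delicate step is checking that truncation preserves $\pi_*$-finite presentation, which is where both pieces of the noetherian hypothesis on $R$ (namely that $\pi_0R$ is right noetherian and each $\pi_iR$ is finitely generated over $\pi_0R$) are essential.
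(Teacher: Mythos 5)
Your proposal is correct and follows essentially the same route as the paper: pass to the subcategory of $\pi_*$-finitely presented compact $R$-modules as the kernel of the localization, observe that the Postnikov $t$-structure restricts to a bounded $t$-structure with noetherian heart $\Mod_{\pi_0R}^{\heartsuit,\omega}$, and conclude by the nonconnective theorem of the heart. You supply more detail than the paper on the two points it leaves implicit (that regularity identifies the kernel with all $\pi_*$-finitely presented modules, and that truncations preserve $\pi_*$-finite presentation), but the argument is the same.
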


\begin{proof}
    Let $\Mod_{R}^{\pi_*\textrm{-fp}}\subseteq\Mod_R^{\omega}$ be the full
    subcategory of $\pi_*$-finitely presented $R$-modules. The localization
    theorem in algebraic $K$-theory gives a fiber sequence
    $$\K(\Mod_R^{\pfp})\rightarrow\K(R)\rightarrow\K(S).$$
    Since $R$ is connective, there is a bounded $t$-structure on
    $\Mod_R^{\pfp}$ with noetherian heart (the category of finitely presented
    discrete right $R$-modules). The result follows from
    Theorem~\ref{thm:heart}.
%
\end{proof}

\begin{corollary}
    If $R$ is a right regular noetherian ring spectrum and $R\rightarrow S$ is a localization of
    $R$ such that for a compact $R$-module $M$, $S\otimes_RM\we 0$ if and only
    if $M$ is $\pi_*$-finitely presented, then
    $\K_{-n}(S)=0$ for all $n\geq 1$.
\end{corollary}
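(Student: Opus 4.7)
The plan is to apply Proposition~\ref{prop:bl} directly and then read off the vanishing from the associated long exact sequence. The proposition supplies a fiber sequence
$$\K(\pi_0R)\rightarrow\K(R)\rightarrow\K(S)$$
of nonconnective $K$-theory spectra, and taking homotopy groups yields
$$\cdots\rightarrow\K_{-n}(\pi_0R)\rightarrow\K_{-n}(R)\rightarrow\K_{-n}(S)\rightarrow\K_{-n-1}(\pi_0R)\rightarrow\cdots.$$
So it suffices to establish, for every $n\geq 1$, that $\K_{-n}(R)=0$ and $\K_{-n-1}(\pi_0R)=0$, since then the term $\K_{-n}(S)$ is squeezed between two zero groups.

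For $\pi_0R$, a right regular noetherian discrete ring, the abelian category of finitely presented right $\pi_0R$-modules is noetherian and provides a bounded $t$-structure on $\Mod_{\pi_0R}^\omega\we\Dscr^b(\pi_0R)$ whose heart is noetherian. Hence $\K_{-m}(\pi_0R)=0$ for all $m\geq 1$ by Theorem~\ref{thm:heart} (or, equivalently, by Schlichting's original theorem). For $R$ itself, which is connective by hypothesis (built into our definition of right regular noetherian ring spectrum), we invoke~\cite{bgt1}*{Theorem~9.53} to identify $\K_{-n}(R)\iso\K_{-n}(\pi_0R)$ for all $n\geq 0$; this reduces the vanishing for $R$ to the case just handled. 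Combining these two inputs kills both outer terms in the long exact sequence, forcing $\K_{-n}(S)=0$ for $n\geq 1$.

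There is no real obstacle in this argument: all the work has already been done in Proposition~\ref{prop:bl}, whose proof relies on the localization sequence in $K$-theory together with the nonconnective theorem of the heart (Theorem~\ref{thm:heart}). The corollary is essentially a bookkeeping consequence of that proposition and the well-known fact that negative $K$-groups of a connective ring spectrum depend only on $\pi_0$.
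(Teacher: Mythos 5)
Your proposal is correct and follows essentially the same route as the paper: apply Proposition~\ref{prop:bl}, note that $\K_{-n}(\pi_0R)=0$ for $n\geq 1$ since $\pi_0R$ is right regular noetherian, identify $\K_{-n}(R)\iso\K_{-n}(\pi_0R)$ via~\cite{bgt1}*{Theorem~9.53}, and conclude from the long exact sequence. Nothing is missing.
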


\begin{proof}
    Indeed, $\K_{-n}(\pi_0R)=0$ for $n\geq 1$ since $R$ is right regular
    noetherian. Moreover, $\K_{-n}(R)\iso\K_{-n}(\pi_0R)$ for $n\geq 1$
    by~\cite{bgt1}*{Theorem~9.53}.
\end{proof}

There are many examples of regular ring spectra admitting localizations
satisfying the condition of the theorem. The consequences for negative
$K$-theory are new and
require the methods of this paper.

\begin{example}\label{ex:negative}
    \begin{enumerate}
        \item   If $R$ is a ring spectrum with $\pi_*R\iso\pi_0R[u]$ where
            $|u|=2m>0$ and $\pi_0R$ is right regular noetherian, then $R\rightarrow R[u^{-1}]$
            satisfies the conditions of the theorem. In particular, if $S$ is
            an even periodic ring spectrum with $\pi_0S$ right regular
            noetherian, then $\K_{-n}(S)=0$ for $n\geq 1$.
        \item   In particular, $\K_{-n}(\KU)=0$ for $n\geq 1$. This extends
            the theorem of Blumberg and Mandell~\cite{blumberg-mandell}.
        \item   Similarly, $\K_{-n}(\E_m)=0$, $\K_{-n}(\K_m)=0$, and
            $\K_{-n}(\K(m))=0$ for $n\geq 1$ and $m\geq 0$, where
            $\E_m$ is the Morava $E$-theory spectrum, $\K_m$ is the $2$-periodic
            Morava $K$-theory spectrum, and $\K(m)$ is the $2(p^m-1)$-periodic
            Morava $K$-theory spectrum.
        \item   Barwick and Lawson show in~\cite{barwick-lawson} that $\ko$ is
            right regular noetherian, and that
            $\ko\rightarrow\KO$ satisfies the hypothesis of the theorem. Hence,
            $\K_{-n}(\KO)=0$ for $n\geq 1$.
        \item   They also show that $\mathrm{tmf}$ is right regular noetherian, and that
            $\mathrm{tmf}\rightarrow\mathrm{Tmf}$ satisfies the hypothesis of
            Proposition~\ref{prop:bl}. Therefore, $\K_{-n}(\mathrm{Tmf})=0$ for
            $n\geq 1$.
    \end{enumerate}
\end{example}


\begin{example}
    Not all periodic ring spectra concentrated in even degrees satisfy the
    hypotheses of Example~\ref{ex:negative}(1). For example, the Johnson-Wilson theories $\E(m)$
    with $m\geq 2$ have $$\pi_*\E(m)=\ZZ_{(p)}[v_1,\ldots,v_{m-1},v_m^{\pm
    1}],$$ where $|v_i|=2p^i-2$. Hence, they are periodic with period $2(p^m-1)$,
    but they are not concentrated in multiples of this degree. We do not know
    if $\K_{-n}(\E(m))=0$ for $m\geq 2$ and $n\geq 1$. 
\end{example}

\subsection{Negative $K$-theory of cochain algebras}\label{sub:ncochains}

In a different direction, we consider cochain algebras.

\begin{theorem}\label{thm:cochain}
    Let $X$ be a compact space and $R$ a regular noetherian
    discrete commutative ring.
    There is an equivalence $\oplus_{x\in\pi_0X}\K(R)\we\K(\C^*(X,R))$ of nonconnective $K$-theory
    spectra. In particular, $\K_{-n}(\C^*(X,R))=0$ for $n\geq 1$.
\end{theorem}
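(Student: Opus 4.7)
The plan is to reduce to the case where $X$ is connected and then invoke the nonconnective theorem of the heart (Theorem~\ref{thm:heart}) for a natural bounded $t$-structure on $\Perf(\C^*(X, R))$. First I would use that, since $X$ is finitely dominated, $\pi_0 X$ is finite; the decomposition $X = \bigsqcup_{x\in\pi_0 X} X_x$ into path components gives $\C^*(X, R) \simeq \prod_{x\in\pi_0 X} \C^*(X_x, R)$ as $\EE_\infty$-$R$-algebras, and since modules (and hence $K$-theory) split over a finite product of ring spectra, it suffices to prove $\K(R) \simeq \K(\C^*(X, R))$ when $X$ is compact and connected, which I assume from here on.

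Set $A := \C^*(X, R)$; this is a bounded coconnective $\EE_\infty$-algebra over $R$ with $\pi_0 A = \H^0(X, R) = R$, and is perfect as an $R$-module by the finite domination of $X$. The key step is to construct a bounded $t$-structure on $\Perf(A)$ whose heart is equivalent to $\Mod_R^{\heartsuit,\omega}$, the category of finitely generated $R$-modules. Modeling $A$ as a dg algebra over $R$ concentrated in nonnegative cohomological degrees, one obtains a $t$-structure on $\Mod_A$ using the fact that $A$ acts on any dg module by raising cohomological degree, so the positive cohomological part of any dg module is an $A$-submodule and the corresponding good truncations exist honestly in $\Mod_A$. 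In homological conventions this is the $t$-structure whose connective part is $\{M : \pi_i M = 0 \text{ for } i < 0\}$; the heart consists of $A$-modules concentrated in degree $0$, and on any such module the strictly positive cohomological part of $A$ acts by zero for degree reasons, so the action factors through $\H^0(A) = R$, giving an equivalence $\Mod_A^\heartsuit \simeq \Mod_R^\heartsuit$ via restriction along the augmentation $\epsilon: A \to R$ coming from a basepoint of $X$.

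Next, I would verify that this $t$-structure restricts to a bounded $t$-structure on $\Perf(A)$ and identify its heart. Boundedness is immediate since every compact $A$-module is a finite iterated cofiber of shifts of $A$ and hence has bounded homotopy. A perfect $A$-module in the heart is a discrete $R$-module which is moreover perfect as an $R$-module (because $A$ is perfect over $R$), hence finitely generated as $R$ is noetherian; conversely, every finitely generated $R$-module yields a perfect $A$-module via $\epsilon^*$, using that $R$ itself is perfect as an $A$-module (exhibited as a finite cellular $A$-module via a cellular decomposition of a finite CW complex dominating $X$). Thus the restricted heart is $\Mod_R^{\heartsuit,\omega}$, which is noetherian as $R$ is. Theorem~\ref{thm:heart} then yields the chain of equivalences $\K(R) = \K(\Mod_R^{\heartsuit,\omega}) \simeq \K(\Perf A) = \K(\C^*(X, R))$.

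The main obstacle is the construction of the $t$-structure on $\Perf(A)$ for this non-connective ring spectrum $A$: naive spectrum-level Postnikov truncations need not inherit the $A$-module structure, so one must work with an underlying dg model in order that the truncations exist as genuine $A$-submodules. One then needs to verify that $R$ is perfect over $A$ (so that the identification of the heart with $\Mod_R^{\heartsuit,\omega}$ is an honest equivalence) and that the truncations preserve perfectness in $\Perf(A)$; these rely on the boundedness of $A$, the finite CW-type structure of a finite CW complex dominating $X$, and the noetherianity of $R$.
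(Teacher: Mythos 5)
Your reduction to connected $X$ is fine and matches the paper, but the core of your argument --- the direct construction of a bounded $t$-structure on $\Perf(\C^*(X,R))$ with heart $\Mod_R^{\heartsuit,\omega}$ --- has a genuine gap, and in fact the key claims are false as stated. First, the truncation argument is backwards: for a dg algebra $A$ concentrated in nonnegative cohomological degrees, the positive cohomological part $M^{\geq 1}$ of a dg module is indeed an $A$-submodule, but it is the \emph{coconnective} piece sitting as a submodule with \emph{connective} quotient; a $t$-structure requires the connective piece to be the submodule. What the brutal truncations naturally produce for a coconnective algebra is a weight structure (the Keller--Nicol\'as/Sosnilo setting), not a $t$-structure, and the good truncations that would give the correct variance are \emph{not} $A$-submodules (the differential of $A$ obstructs closure of $\ker d$ under the action). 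Second, $R$ is not perfect over $A=\C^*(X,R)$ in general: already for $X=S^1$ one has $A\we\ZZ\oplus\ZZ[-1]$ and $\ZZ\otimes_A\ZZ$ has infinitely generated $\pi_0$ (the bar construction contributes a copy of $\ZZ$ in degree $0$ for every $n$), so $\ZZ\notin\Perf(A)$. The compactness statement you are reaching for (\cite{dgi}*{Proposition~5.3}) says that $R$ is compact over the Koszul-dual \emph{connective} ring $\C_*(\Omega X,R)$, not over $\C^*(X,R)$. Consequently your identification of the heart with $\Mod_R^{\heartsuit,\omega}$ collapses, and for $X=S^1$ your proposed $t$-structure would have essentially no discrete perfect modules in its heart, so it could not be bounded.

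The paper's proof routes around exactly this obstruction. One embeds $\Mod_{\C^*(X,R)}$ fully faithfully into $\Loc_X(\Mod_R)\we\Mod_{\C_*(\Omega X,R)}$ as the subcategory generated by the constant local system $R$ (whose endomorphism ring is $\C^*(X,R)$). Since $\C_*(\Omega X,R)$ is \emph{connective}, this category carries the standard Postnikov $t$-structure; Mathew's identification of the essential image as the ind-unipotent local systems --- a condition on homotopy groups --- shows the $t$-structure restricts to $\Mod_{\C^*(X,R)}$, and the DGI compactness of $R$ over $\C_*(\Omega X,R)$ (this is where finite domination of $X$ enters) shows compact $\C^*(X,R)$-modules become bounded local systems. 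Note that under this $t$-structure the module $\C^*(X,R)$ itself (corresponding to the constant local system $R$) lies in the \emph{heart}; it is not the Postnikov-type $t$-structure on underlying spectra that you propose, and its homotopy groups are reindexed by the Koszul duality. If you want to salvage a direct approach, you must either pass to the loop-space side as the paper does, or restrict to situations covered by Keller--Nicol\'as (which requires $\H^0(A)$ semisimple, hence fails for $R$ not a field).
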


\begin{proof}
    It is enough to consider the case when $X$ is connected, so that $X\simeq B\Omega X$.
    Let $$\Loc_X(\Mod_R)\we\Fun(X^{\op},\Mod_R)\we\Mod_{\C_*(\Omega X,R)}$$
    be the $\infty$-category of local systems on $X$ with coefficients in
    the stable $\infty$-category $\Mod_R$ of complexes of $R$-modules. Since
    the endomorphism algebra of the constant local system on $R$ is
    $\C^*(X,R)$, there is a fully faithful
    functor $$\Mod_{\C^*(X,R)}\rightarrow\Mod_{\C_*(\Omega
    X,R)}.$$ As $R$ is connective, so is $\C_*(\Omega X,R)$, and hence there is an induced $t$-structure on
    $\Loc_X(\Mod_R)$.

    If $X$ is compact (in the $\infty$-category of spaces), then $R$ is compact when viewed as a
    $\C_*(\Omega X,R)$-module (for example by~\cite{dgi}*{Proposition~5.3}). But, $R$ corresponds to $\C^*(X,R)$ under the
    functor above. It follows that $\Mod_{\C^*(X,R)}\rightarrow\Loc_X(\Mod_R)$
    sends compact objects to bounded objects with respect to the $t$-structure on
    $\Loc_X(\Mod_R)$. Moreover, the $t$-structure restricts to a $t$-structure on
    $\Mod_{\C^*(X,R)}$ by Mathew's
    description~\cite{mathew-galois}*{Proposition~7.8} of the essential image as the
    ind-unipotent modules over $\C_*(\Omega X,R)$, a condition which depends only
    on the action of $\pi_1X$ on the homotopy groups of the $R$-module of the underlying local system.
    Hence, $\Mod_{\C^*(X,R)}^\omega$ has a bounded $t$-structure, with heart
    easily seen to be the
    abelian category of finitely presented $R$-modules.

    The theorem now follows immediately from the nonconnective theorem of the
    heart (Theorem~\ref{thm:heart}) and
    the fact that $\K_{-n}(R)=0$ for $n\geq 1$.
\end{proof}

\appendix

\section{Frobenius nerves}\label{sec:frobenius}

We examine an $\infty$-categorical model of the stable category of a Frobenius
category. This material is used in the main body of the paper
to verify that Schlichting's definition of the negative $K$-theory of a
small abelian category $A$ agrees with the negative $K$-theory of the small
stable $\infty$-category $\Dscr^b(A)$, as defined in~\cite{bgt1}.

Let $\Escr$ be a small exact category in the sense of Quillen~\cite{quillen}. We
will identify $\Escr$ with a full subcategory of
$\Ascr=\Fun^{\lex}(\Escr^{\op},\Mod_{\ZZ}^\heartsuit)$, the category of left
exact additive functors $\Escr^{\op}\rightarrow\Mod_{\ZZ}^{\heartsuit}$. The
(Yoneda) embedding $\Escr\rightarrow\Ascr$ is exact and reflects exactness.
Moreover, $\Escr$ is closed under extensions in $\Ascr$. If, additionally,
$\Escr$ is idempotent complete, then $\Escr$ is closed under taking kernels of
epimorphisms in $\Ascr$. See~\cite{thomason-trobaugh}*{Proposition~A.7.16}.
In other words, $\Escr$ satisfies
hypothesis~\cite{thomason-trobaugh}*{1.11.3.1}, the key assumption needed for
the Gillet-Waldhausen theorem~\cite{thomason-trobaugh}*{Theorem~1.11.7}. We
refer to~\cite{thomason-trobaugh}*{Appendix A} in general for details about the
Gabriel-Quillen embedding.

Mimicking the definitions in
an abelian category, we say that an object $P$ of $\Escr$ is {\bf projective} if
for every admissible epi $M\rfib N$ the induced map
$\Hom_{\Escr}(P,M)\rightarrow\Hom_\Escr(P,N)$ is surjective. Dually, an object
$I$ of $\Escr$
is {\bf injective} if $\Hom_\Escr(N,I)\rightarrow\Hom_\Escr(M,I)$ is surjective for
every admissible mono $M\rcof N$ in $\Escr$.

We say that $\Escr$ has {\bf enough projectives} if for every object $M$ of
$\Escr$ there is an admissible epi $P\rfib M$ where $P$ is projective.
Let $\Escr^{\proj}$ denote the full subcategory of projective objects of
$\Escr$. Similarly, $\Escr$
has {\bf enough injectives} if for every object $M$ of $\Escr$ there is an
admissible mono $M\rcof I$ where $I$ is injective.

A {\bf Frobenius category} is an exact category which has enough injectives and
projectives and an object of $\Escr$ is projective if and only if it is
injective. 

\begin{construction}
    If $\Escr$ is a Frobenius category, the {\bf stable category}
    $\underline{\Escr}$ of
    $\Escr$ has the same objects as $\Escr$ with morphisms
    $\Hom_{\underline{\Escr}}(M,N)$ the
    quotient of $\Hom_\Escr(M,N)$ by the subgroup of morphisms $f:M\rightarrow N$
    factoring through a projective (or equivalently injective) object of $\Escr$.
\end{construction}

\begin{remark}
    The stable category $\underline{\Escr}$ of a Frobenius category $\Escr$ is
    triangulated. This was first observed by
    Happel~\cite{happel}*{Theorem~9.4} following ideas of 
    A. Heller~\cite{heller}. The loopspace of an object $M$ is obtained by taking
    an admissible exact sequence $\Omega M\rcof P\rfib M$ with $P$ projective. Then,
    $\Omega M$ is isomorphic to $M[-1]$ in $\underline{\Escr}$. We will write
    $\Omega_nM$
    for the $n$-fold iteration $\Omega\cdots\Omega M$. Note that $\Omega_nM$ is not
    in general a well-defined endofunctor of $\Escr$, but that it defines an
    endofunctor of $\underline{\Escr}$.
\end{remark}

Let $\Escr$ be an idempotent complete exact category. In this section, we will associate to
$\Escr$ a stable $\infty$-category $\Dscr_{\sing}(\Escr)$, the singularity
$\infty$-category of $\Escr$, and show that its homotopy category is naturally
equivalent to $\underline{\Escr}$ when $\Escr$ is Frobenius.

A special case of such a construction can be extracted from Hovey~\cite{hovey}*{Section~2.2}.
A right noetherian ring $R$ is {\bf quasi-Frobenius} if $R$ is injective as a right $R$-module.
See~\cite{curtis-reiner}*{Section~58}. In this case, the category $\Mod_R^{\heartsuit}$ of right $R$-modules
is Frobenius, and Hovey constructs a model category structure on $\Mod_R^{\heartsuit}$
whose homotopy category is equivalent to the stable category of
$\Mod_R^{\heartsuit}$. Hovey's construction does not seem to generalize because a small Frobenius category
need not embed into a Grothendieck abelian category which is also Frobenius.
Specifically, there are examples where the Gabriel-Quillen embedding $\Escr\rightarrow\Ascr$ does not
preserve injectives. Hence, we take a different approach.

\begin{example}
    Let $k$ be a field and let $G$ be a locally finite group that is not
    finite (such as $\QQ/\ZZ$).
    Then, $k[G]$ is not (right) self-injective by Renault~\cite{renault}, so in
    particular $\Mod_{k[G]}^\heartsuit$ is not Frobenius.
    On the other hand,
    $G$ is the filtered colimit of its finite subgroups, and hence $k[G]$ is
    the filtered colimit of Frobenius sub-algebras along flat transition maps.
    In particular, $k[G]$ is coherent, for example
    by~\cite{glaz}*{Theorem~2.3.3}. It follows
    that the category of finitely presented (right) $k[G]$-modules is abelian.
    It is not hard to check that $k[G]$ is injective in
    $\Mod_{k[G]}^{\heartsuit,\omega}$, which shows that the category of
    finitely presented $k[G]$-modules is Frobenius.
\end{example}

Let $\Escr$ be an exact category.
Let $\Ch^-(\Escr)$ denote the category of bounded below chain complexes of objects in
$\Escr$. This is a dg category and the dg nerve $\N_{\dg}(\Ch^-(\Escr))$
of~\cite{ha}*{Construction~1.3.1.6} is a stable $\infty$-category
by~\cite{ha}*{Proposition~1.3.2.10}. The homotopy category of
$\N_{\dg}(\Ch^-(\Escr))$ is the category of bounded chain complexes up to chain
homotopy. For simplicity, we will write $\Ch_{\dg}^-(\Escr)$ for
$\N_{\dg}(\Ch^-(\Escr))$.

\begin{definition}
    A complex $X$ in $\Ch^-(\Escr)$ is \df{acyclic in degree $n$} if there is a
    factorization of the differential $X_n\leftarrow X_{n+1}:d_{n+1}$ into
    $$X_n\lcof Z_n\lfib X_{n+1}$$ where $X_n\lcof Z_n$ is an admissible mono
    and a kernel for $d_n$ and $X_{n+1} \rfib Z_{n}$ is an admissible epi and a 
    cokernel for $d_{n+2}$. The complex $X$ is \df{acyclic} if it is acyclic in
    degree $n$ for all $n\in\ZZ$.
\end{definition}

Consider the following full stable subcategories of
$\Ch_{\dg}^-(\Escr)$:
\begin{enumerate}
    \item[(i)] $\Ch^b_{\dg}(\Escr)$, the dg nerve of the dg category of
        bounded complexes in $\Escr$;
    \item[(ii)] $\Ac^-_{\dg}(\Escr)$ and $\Ac^b_{\dg}(\Escr)$, the dg nerve of the dg category of
        acyclic bounded below and bounded complexes, respectively, in $\Escr$;
    \item[(iii)] $\Ch^{-,b}_{\dg}(\Escr)$ the dg nerve of the dg category of
        bounded below complexes in $\Escr$ which are acyclic in all
        sufficiently high degrees.
\end{enumerate}

\begin{remark}
    If $\Escr\subseteq\Fscr$ is fully faithful, then
    $\Ch^-(\Escr)\rightarrow\Ch^-(\Fscr)$ is fully faithful, which leads to
    fully faithful maps between all of the subcategories above.
\end{remark}

\begin{lemma}\label{lem:idemomni}
    Let $\Escr$ be an idempotent complete exact category.
    \begin{enumerate}
        \item[\emph{(a)}]   The stable $\infty$-categories $\Ac^b_{\dg}(\Escr)$
            and $\Ac^-_{\dg}(\Escr)$ are idempotent complete in
            $\Ch^b_{\dg}(\Escr)$ and $\Ch^-_{\dg}(\Escr)$, respectively.
        \item[\emph{(b)}]   Any chain complex in $\Ch^-_{\dg}(\Escr)$ chain homotopy
            equivalent to an acyclic chain complex is itself acyclic. In other
            words, $\Ac^b_{\dg}(\Escr)$ and $\Ac^-_{\dg}$ are closed under equivalence in
            $\Ch^b_{\dg}(\Escr)$ and $\Ch^-_{\dg}(\Escr)$, respectively.
    \end{enumerate}
\end{lemma}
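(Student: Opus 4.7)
The plan is to reduce both claims to standard facts about acyclic complexes in the ambient abelian category $\Ascr=\Fun^{\lex}(\Escr^{\op},\Mod_\ZZ^{\heartsuit})$ furnished by the Gabriel-Quillen embedding $\Escr\hookrightarrow\Ascr$. This embedding is fully faithful, exact, and reflects exactness, with image closed under extensions; moreover, since $\Escr$ is idempotent complete,~\cite{thomason-trobaugh}*{Proposition~A.7.16} supplies the additional closure property that any morphism in $\Escr$ which is an epimorphism when viewed in $\Ascr$ is in fact an admissible epi in $\Escr$, with kernel (computed in $\Ascr$) lying in $\Escr$.

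The key intermediate claim I would establish is the following: a complex $X$ in $\Ch^-(\Escr)$ is $\Escr$-acyclic in the sense of the definition if and only if the underlying complex in $\Ch^-(\Ascr)$ has vanishing homology $\H_n(X)=0$ for all $n\in\ZZ$. The forward direction is immediate from exactness of the embedding. For the converse, I would induct on the degree, starting from the lowest nonzero degree $a$ of $X$: there $Z_a=X_a\in\Escr$ trivially. Assuming inductively that $Z_{n-1}\in\Escr$ sits as an admissible subobject of $X_{n-1}$, $\Ascr$-acyclicity of $X$ in degree $n-1$ gives that $d_n:X_n\rightarrow X_{n-1}$ factors through an epimorphism $X_n\rfib Z_{n-1}$ in $\Ascr$; this is a morphism of $\Escr$ which is epi in $\Ascr$, hence an admissible epi in $\Escr$ by the closure property above, with kernel $Z_n\in\Escr$. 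This simultaneously handles the bounded case.

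Granted this bridge, parts (a) and (b) both reduce to the trivial observation that the zero object of $\Ascr$ is preserved under retracts and isomorphisms. For (b), a chain homotopy equivalence $X\simeq Y$ in $\Ch^-_{\dg}(\Escr)$ is a quasi-isomorphism when viewed in $\Ch^-_{\dg}(\Ascr)$, so $\H_n(X)\iso\H_n(Y)$ for all $n$; if $Y$ is $\Escr$-acyclic, the key claim gives $\H_n(Y)=0$, whence $\H_n(X)=0$, and the key claim again gives $\Escr$-acyclicity of $X$. For (a), if $X$ is a retract of $Y$ in the homotopy category of $\Ch^-_{\dg}(\Escr)$, then $\H_n(X)$ is a retract of $\H_n(Y)$ in $\Ascr$; when $Y$ is acyclic this forces $\H_n(X)=0$ and the key claim concludes. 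The bounded case is identical.

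The main obstacle is the inductive step in the key intermediate claim, which is precisely where the idempotent completeness of $\Escr$ is genuinely used, via the closure of $\Escr$ in $\Ascr$ under kernels of maps which become epimorphisms in $\Ascr$. Without this hypothesis, a complex in $\Escr$ whose $\Ascr$-homology vanishes need not have its cycles in $\Escr$, and the two notions of acyclicity would diverge, breaking both closure properties as stated.
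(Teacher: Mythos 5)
Your proposal is correct and follows essentially the same route as the paper: both establish the bridge that $\Escr$-acyclicity coincides with vanishing homology in the Gabriel--Quillen abelian envelope $\Ascr$ (by the same induction on degree, using idempotent completeness of $\Escr$ to recognize cycle objects), and then deduce (a) and (b) from the fact that retracts and homotopy equivalences preserve vanishing homology. No further comment is needed.
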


\begin{proof}
    Let $\Escr\rightarrow\Ascr$ denote the Gabriel-Quillen embedding.
    We prove first that if $X\in\Ch_{\dg}^-(\Escr)$, then $X$ is acyclic if and
    only if $\H_*(X)=0$ when $X$ is viewed as a complex of objects in $\Ascr$.
    If $X$ is acyclic, then $\H_*(X)=0$ by definition. We can suppose that $X$
    is of the form $0\leftarrow X_0\leftarrow X_1\leftarrow\cdots$. Since
    $\H_*(X)=0$, it follows that $X_0\leftarrow X_1$ is surjective. Since
    $\Escr$ is idempotent complete, it is closed under kernels of admissible
    epimorphisms in $\Ascr$. Hence, there is a factorization $X_1\lcof Z_1\lfib
    X_2$ where $Z_1$ is the kernel of $X_0\leftarrow X_1$ and the cokernel of
    $X_1\leftarrow X_2$. By induction, the claim follows.

    Now, part (a) follows immediately. Indeed, if $X\we Y\oplus Z$ in
    $\Ch^b_{\dg}(\Escr)$ (resp. $\Ch^-_{\dg}(\Escr)$) where $X$ is acyclic, then $\H_*(X)=0$, so
    $\H_*(Y)=\H_*(Z)=0$. So, $Y$ and $Z$ are bounded (resp. bounded below) complexes with vanishing
    homology. By the previous paragraph, they are acyclic.

    Part (b) follows as well, since if $X\rightarrow Y$ is an equivalence in
    $\Ch^-_{\dg}(\Escr)$ with $X$ acyclic, we find that $\H_*(Y)=0$, so $Y$ is
    acyclic.
\end{proof}

\begin{definition}
    Let $\Escr$ be an idempotent complete exact category.
    The \df{bounded derived $\infty$-category} $\Dscr_{\dg}^b(\Escr)$ of
    $\Escr$ is the Verdier quotient
    $$\Ch^b_{\dg}(\Escr)/\Ac^b_{\dg}(\Escr).$$ The homotopy category of
    $\Dscr_{\dg}^b(\Escr)$ is equivalent to the usual bounded derived category of
    $\Escr$. Similarly, the \df{bounded below derived $\infty$-category} $\Dscr_{\dg}^-(\Escr)$ is the Verdier quotient
    $$\Ch^-_{\dg}(\Escr)/\Ac^-_{\dg}(\Escr),$$ while the \df{homologically
        bounded derived
    $\infty$-category}
    $\Dscr_{\dg}^{-,b}(\Escr)$ is the Verdier quotient
    $$\Ch^{-,b}_{\dg}(\Escr)/\Ac^-_{\dg}(\Escr).$$ A map in
    $\Ch^-_{\dg}(\Escr)$ which is an equivalence in any of these derived
    $\infty$-categories is called a {\bf quasi-isomorphism}. By
    Lemma~\ref{lem:idemomni}(b), these are precisely the maps in
    $\Ch^-_{\dg}(\Escr)$ whose cones are acyclic.
\end{definition}

\begin{remark}
    If $\Ascr$ is a small abelian category viewed as an exact category in the
    usual way, then $\Dscr^b_{\dg}(\Ascr)\we\Dscr^b(\Ascr)$, where
    $\Dscr^b(\Ascr)$ is defined as in Section~\ref{sub:abelian} as
$\check{\Dscr}(\Ind(\Ascr))^\omega$.
\end{remark}

\begin{proposition}
    The natural functor
    $\Dscr^b_{\dg}(\Escr)\rightarrow\Dscr^{-,b}_{\dg}(\Escr)$ is an equivalence
    and the natural functor $\Dscr^{-,b}_{\dg}(\Escr)\rightarrow\Dscr^-_{\dg}(\Escr)$
    is fully faithful.
\end{proposition}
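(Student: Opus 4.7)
My plan is to prove the two claims separately: essential surjectivity of $\Dscr^b_{\dg}(\Escr)\to\Dscr^{-,b}_{\dg}(\Escr)$ via an explicit truncation construction, and full faithfulness of both maps via (a version of) Verdier's lemma.

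For essential surjectivity, given $X\in\Ch^{-,b}_{\dg}(\Escr)$ acyclic in degrees $\geq N$, I would construct an acyclic subcomplex $T\subseteq X$ with bounded quotient. Set $T_n=X_n$ for $n\geq N+1$, $T_N=Z_N\subseteq X_N$ (the kernel of $d_N$, furnished by acyclicity at $N$), and $T_n=0$ for $n<N$. The acyclicity factorization $X_{N+1}\twoheadrightarrow Z_N\hookrightarrow X_N$ supplies the differential $d^T_{N+1}$ and identifies $T$ as a subcomplex of $X$. A direct check verifies that $T$ is acyclic, while the quotient $X/T$ agrees with $X_n$ for $n\leq N-1$, equals $X_N/Z_N$ in degree $N$ (a cokernel of an admissible mono, hence in $\Escr$), and vanishes above, so $X/T\in\Ch^b_{\dg}(\Escr)$. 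The degreewise admissible short exact sequence $0\to T\to X\to X/T\to 0$ induces, in the Gabriel-Quillen envelope $\Ascr$, an isomorphism $\H_*(X)\xrightarrow{\sim}\H_*(X/T)$, so the mapping cone of $X\to X/T$ has vanishing $\H_*$ and is bounded below; by the characterization of acyclicity established in the proof of Lemma~\ref{lem:idemomni}, the cone is acyclic in the exact category sense, exhibiting $X\to X/T$ as a quasi-isomorphism.

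For fully faithfulness, I would invoke the standard criterion from Verdier's lemma: if $\mathcal{A}_0=\mathcal{A}\cap\mathcal{C}_0$ inside a stable $\infty$-category $\mathcal{C}$, then $\mathcal{C}_0/\mathcal{A}_0\to\mathcal{C}/\mathcal{A}$ is fully faithful provided every morphism $A\to C$ with $A\in\mathcal{A}$ and $C\in\mathcal{C}_0$ factors through an object of $\mathcal{A}_0$. For the second map $\Dscr^{-,b}_{\dg}(\Escr)\to\Dscr^-_{\dg}(\Escr)$ this is automatic, since $\Ac^-_{\dg}(\Escr)\subseteq\Ch^{-,b}_{\dg}(\Escr)$ already (acyclic bounded-below complexes are vacuously acyclic in sufficiently high degrees), so $\mathcal{A}_0=\mathcal{A}$ and the factorization through the identity of the source suffices. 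For the first map $\Dscr^b_{\dg}(\Escr)\to\Dscr^{-,b}_{\dg}(\Escr)$, given a chain map $f\colon A\to C$ with $A\in\Ac^-_{\dg}$ and $C\in\Ch^b_{\dg}$ supported in degrees $\leq N$, I would factor $f$ through the bounded acyclic complex $\tilde A$ defined by $\tilde A_n=A_n$ for $n\leq N$, $\tilde A_{N+1}=Z_N(A)$, and $\tilde A_n=0$ for $n>N+1$, with $d^{\tilde A}_{N+1}$ the admissible mono $Z_N\hookrightarrow A_N$. The factorization $A\to\tilde A\to C$ uses the identity on $A$ below degree $N+1$ and the admissible epi $A_{N+1}\twoheadrightarrow Z_N(A)$ in degree $N+1$ for the first map, and $f_n$ together with the zero map in degree $N+1$ for the second; this latter is a chain map because $f_N$ annihilates $Z_N=\mathrm{im}(d^A_{N+1})$ by the chain map identity $f_N\circ d^A_{N+1}=d^C_{N+1}\circ f_{N+1}=0$.

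The principal obstacle throughout is ensuring that every construction is valid within $\Escr$ rather than only in the Gabriel-Quillen envelope $\Ascr$: the kernels $Z_N$, the cokernels $X_N/Z_N$, and all the attendant monomorphisms and epimorphisms used above must be admissible in the exact category sense. These admissibility conditions follow from the acyclicity hypotheses together with the Thomason-Trobaugh result (\cite{thomason-trobaugh}*{Proposition~A.7.16}) that an idempotent complete exact category embedded in its Gabriel-Quillen envelope is closed under kernels of admissible epimorphisms and cokernels of admissible monomorphisms, which is the same input powering Lemma~\ref{lem:idemomni}.
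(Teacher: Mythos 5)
Your proof is correct and follows essentially the same route as the paper: Verdier's criterion for full faithfulness, with the required factorization supplied by a good truncation of the acyclic complex, and essential surjectivity via the quasi-isomorphism from a homologically bounded complex to its good truncation (your $X/T$ and $\tilde A$ are exactly the good truncations the paper invokes, just written out termwise). The only cosmetic difference is that the paper verifies Verdier's criterion for the composite $\Dscr^b_{\dg}(\Escr)\rightarrow\Dscr^-_{\dg}(\Escr)$ and deduces full faithfulness of the first functor from that of the second, whereas you apply the criterion to the first functor directly; both are valid.
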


\begin{proof}
    The second functor is trivially fully faithful since
    $\Ac^-_{\dg}(\Escr)\subseteq\Ch^{-,b}_{\dg}(\Escr)$. We prove that the
    composition is fully faithful. For this, it suffices to verify Verdier's
    criterion~\cite{verdier}*{Proposition~II.2.3.5}. Thus, if $f:M\rightarrow X$
    is a map in $\Ch^-_{\dg}(\Escr)$ with $M$ in $\Ac^-_{\dg}(\Escr)$ and $X$
    in $\Ch^b_{\dg}(\Escr)$, we show that $f$ factors through $M\rightarrow M'$
    where $M'$ is in $\Ac^b_{\dg}(\Escr)$. Choose $n$ such that $X_i=0$ for
    $i\geq n$. Since $M$ is acyclic, the good truncation $\tau_{\leq n}M$
    exists in $\Ch^b_{\dg}(\Escr)$ and is also acyclic. The map
    $M\rightarrow X$ factors through $M\rightarrow\tau_{\leq n}M$ since $X_n=0$.

    To see essential surjectivity, let $X$ be in $\Ch^{-,b}_{\dg}(\Escr)$ and
    choose $n$ such that $X$ is acyclic in degrees $n$ and higher. Then, the
    good truncation $\tau_{\leq n}X$ exists in $\Ch^b_{\dg}(\Escr)$ and $X\rightarrow\tau_{\leq n}X$ is
    a quasi-isomorphism because the cone has zero homology and is hence
    acyclic by the argument in the proof of Lemma~\ref{lem:idemomni}.
\end{proof}

\begin{theorem}[Balmer-Schlichting~\cite{balmer-schlichting}*{Theorem~2.8}]\label{thm:bs}
    If $\Escr$ is idempotent complete, then the derived $\infty$-category
    $\Dscr^b_{\dg}(\Escr)$ is idempotent complete.
\end{theorem}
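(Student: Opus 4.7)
The strategy is to bootstrap from the triangulated version of the theorem, due to Balmer--Schlichting, to its $\infty$-categorical counterpart by working at the level of homotopy categories and then promoting the result.

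First I would establish that $\Ch^b_{\dg}(\Escr)$ itself is idempotent complete. Since $\Escr$ is idempotent complete by hypothesis, every idempotent endomorphism of a bounded complex $(X_\bullet,d)$ splits termwise, and the naturality of the termwise splittings forces compatibility with the differentials; thus the entire chain complex splits. This property transfers to the dg nerve because idempotent splittings in a pretriangulated dg category correspond to splittings of idempotents in its underlying additive category of $0$-cycles. Combined with Lemma~\ref{lem:idemomni}(a), which says that $\Ac^b_{\dg}(\Escr)\subseteq\Ch^b_{\dg}(\Escr)$ is closed under retracts, we obtain that $\Ac^b_{\dg}(\Escr)$ is a thick subcategory (in the triangulated sense) of an idempotent complete stable $\infty$-category.

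Next I would identify the homotopy category $\Ho(\Dscr^b_{\dg}(\Escr))$ with the classical bounded derived category $\Dscr^b(\Escr)$ of the exact category $\Escr$, using the description of $\Dscr^b_{\dg}(\Escr)$ as the Verdier quotient $\Ch^b_{\dg}(\Escr)/\Ac^b_{\dg}(\Escr)$. At this point I invoke Balmer--Schlichting~\cite{balmer-schlichting}*{Theorem~2.8}: the bounded derived category of an idempotent complete exact category is idempotent complete as a triangulated category. Hence $\Ho(\Dscr^b_{\dg}(\Escr))$ is idempotent complete.

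The final step is to promote idempotent completeness from the homotopy category to the stable $\infty$-category. The key observation is that in a stable $\infty$-category $C$, a coherent idempotent $e\colon X\to X$ always admits a candidate splitting, namely $Y=\mathrm{cofib}(1-e)$, which exists since $C$ has all finite colimits. If the homotopy category $\Ho(C)$ is idempotent complete, one checks directly (by comparing with the splitting already present in $\Ho(C)$) that the natural map $X\to Y$ exhibits $Y$ as the splitting of $e$ in $C$. Equivalently, one may appeal to the general principle from~\cite{ha}*{Chapter~1} that a stable $\infty$-category is idempotent complete if and only if its homotopy category is.

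The main obstacle is the last step, i.e., the careful bridge between triangulated and $\infty$-categorical coherence of idempotents; all other steps are essentially formal. In our setting the obstacle dissolves because the splitting $\mathrm{cofib}(1-e)$ exists unconditionally and its universal property is detected at the homotopy-category level, so Balmer--Schlichting's triangulated-category statement immediately yields the stable $\infty$-categorical refinement.
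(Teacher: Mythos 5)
Your proposal follows essentially the same route as the paper, whose entire proof is the observation that idempotent completeness of a stable $\infty$-category can be checked on its homotopy category, where the statement is exactly Balmer--Schlichting's theorem; the first paragraph about $\Ch^b_{\dg}(\Escr)$ being idempotent complete is not needed. One caveat: your explicit candidate splitting $Y=\mathrm{cofib}(1-e)$ is not correct --- for $e=\id_X$ it produces $X\oplus X[1]$ rather than $X$ --- and in a small stable $\infty$-category the genuine splitting (a colimit over the telescope $X\xrightarrow{e}X\xrightarrow{e}\cdots$) need not exist a priori. The correct justification is the alternative you also cite: for a stable $\infty$-category $C$, a coherent idempotent splits in the idempotent completion $\widetilde{C}$, and if $\Ho(C)$ is idempotent complete the splitting object is forced, by uniqueness of splittings in ordinary categories, to lie in $C$; so $C$ is idempotent complete if and only if $\Ho(C)$ is.
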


\begin{proof}
    This can be checked on the homotopy category, which is done
    in~\cite{balmer-schlichting}.
\end{proof}

\begin{lemma}\label{lem:contractible}
    Any complex $P$ in $\Ac^-_{\dg}(\Escr)\cap\Ch^-_{\dg}(\Escr^\proj)$ is
    contractible.
\end{lemma}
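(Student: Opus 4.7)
The plan is to construct a contracting chain homotopy $s_\bullet$ for $P$ by induction, using the projectivity of each $P_n$ to lift against the admissible epimorphisms supplied by acyclicity. Concretely, since $P$ is bounded below, after reindexing we may assume $P_n=0$ for $n<0$, and we shall produce maps $s_n\colon P_n\to P_{n+1}$ (with $s_n=0$ for $n<0$) satisfying $d_{n+1}s_n+s_{n-1}d_n=\id_{P_n}$ for every $n\geq 0$. This identity exhibits the identity of $P$ as chain-homotopic to zero, and hence $P\simeq 0$ in $\Ch^-_{\dg}(\Escr)$, which is the meaning of contractibility in the dg nerve.

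For the base case, acyclicity in degree $0$ (together with $P_{-1}=0$, so that $Z_0=P_0$) tells us that $d_1\colon P_1\twoheadrightarrow P_0$ is an admissible epimorphism. Since $P_0$ is projective in the exact category $\Escr$, we may lift $\id_{P_0}$ along $d_1$ to obtain a section $s_0\colon P_0\to P_1$, giving $d_1s_0=\id_{P_0}$. For the inductive step, suppose $s_0,\ldots,s_{n-1}$ have been constructed satisfying the homotopy identity in degrees $\leq n-1$. By acyclicity the differential factors as $d_{n+1}\colon P_{n+1}\twoheadrightarrow Z_n\rightarrowtail P_n$, where $P_{n+1}\twoheadrightarrow Z_n$ is admissible epi. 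Consider the map $f_n=\id_{P_n}-s_{n-1}d_n\colon P_n\to P_n$. Using $d_nd_{n+1}=0$ and the inductive homotopy identity $d_ns_{n-1}=\id_{P_{n-1}}-s_{n-2}d_{n-1}$ on $P_{n-1}$, we compute
\[
d_nf_n=d_n-d_ns_{n-1}d_n=d_n-(\id_{P_{n-1}}-s_{n-2}d_{n-1})d_n=0,
\]
so $f_n$ factors as $P_n\to Z_n\rightarrowtail P_n$. Projectivity of $P_n$ then lifts the induced map $P_n\to Z_n$ along the admissible epi $P_{n+1}\twoheadrightarrow Z_n$ to give $s_n\colon P_n\to P_{n+1}$ with $d_{n+1}s_n=f_n=\id_{P_n}-s_{n-1}d_n$, completing the induction.

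There is essentially no obstacle here: the argument is the standard one showing that bounded-below acyclic complexes of projectives in an abelian category split, adapted to the exact-category setting by replacing ``epimorphism'' with ``admissible epimorphism'' (which is exactly what the definition of acyclicity in $\Escr$ supplies) and using the exact-category notion of projectivity (i.e., $\Hom_\Escr(P,-)$ carries admissible epis to surjections). The only subtlety worth flagging is that ``contractible'' must be interpreted in the dg nerve: two morphisms in $\N_{\dg}(\Ch^-(\Escr))$ are homotopic in the $\infty$-categorical sense if and only if they are chain homotopic in the classical sense, so the homotopy $s_\bullet$ constructed above really does exhibit $\id_P\simeq 0$, whence $P\simeq 0$ in $\Ch^-_{\dg}(\Escr)$.
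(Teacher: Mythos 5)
Your proof is correct and is exactly the argument the paper has in mind: the paper's own proof is the one-line remark that the claim ``follows immediately from the projectivity of the terms of $P$,'' and your inductive construction of the contracting homotopy (lifting $\id_{P_n}-s_{n-1}d_n$ through the admissible epi $P_{n+1}\twoheadrightarrow Z_n$) is the standard way to make that precise. Your closing remark about interpreting contractibility via the dg nerve is also consistent with the paper's identification of $\Ho(\N_{\dg}(\Ch^-(\Escr)))$ with the chain homotopy category.
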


\begin{proof}
    This follows immediately from the projectivity of the terms of $P$.
\end{proof}

\begin{remark}
    It follows that $\Ch^b_{\dg}(\Escr^\proj)\we\Dscr^b_{\dg}(\Escr^\proj)$ and
    $\Ch^-_{\dg}(\Escr^\proj)\we\Dscr^-_{\dg}(\Escr^\proj)$, since
    the acyclic complexes are already equivalent to zero in
    $\Ch^b_{\dg}(\Escr^\proj)$.
\end{remark}

\begin{corollary}
    The stable $\infty$-category $\Ch^b_{\dg}(\Escr^\proj)$ is idempotent
    complete.
\end{corollary}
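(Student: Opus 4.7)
The plan is to reduce the claim to Theorem~\ref{thm:bs} (Balmer--Schlichting) applied to the exact category $\Escr^\proj$ itself. Combined with the Remark immediately preceding the Corollary, this will do the job with very little work.

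First I would verify that $\Escr^\proj$, viewed as a full subcategory of $\Escr$, inherits the structure of an idempotent complete exact category. The exact structure is the one declaring a sequence in $\Escr^\proj$ to be admissible if it is admissible in $\Escr$; since $\Escr^\proj$ is closed under finite direct sums and since split admissible sequences of projectives are admissible in $\Escr$, this is an exact category. For idempotent completeness, let $e\colon P\to P$ be an idempotent with $P\in\Escr^\proj$. Since $\Escr$ is idempotent complete by hypothesis, $e$ splits in $\Escr$ as $P\simeq Q\oplus Q'$. The standard observation that a direct summand of a projective object is projective (each of the lifting problems that $Q$ or $Q'$ must solve is a retract of a lifting problem for $P$, which is solvable by projectivity of $P$) then shows that both $Q$ and $Q'$ lie in $\Escr^\proj$, so $e$ splits in $\Escr^\proj$.

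Next I would invoke the Remark just before the Corollary, which asserts an equivalence
\[
\Ch^b_{\dg}(\Escr^\proj)\we\Dscr^b_{\dg}(\Escr^\proj).
\]
This is a consequence of Lemma~\ref{lem:contractible}: every acyclic complex of projectives is contractible, so the subcategory $\Ac^b_{\dg}(\Escr^\proj)$ being quotiented out is already zero in $\Ch^b_{\dg}(\Escr^\proj)$.

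Finally, I would apply Theorem~\ref{thm:bs} with $\Escr^\proj$ playing the role of $\Escr$: since $\Escr^\proj$ is idempotent complete (by the first step), $\Dscr^b_{\dg}(\Escr^\proj)$ is idempotent complete, and hence so is $\Ch^b_{\dg}(\Escr^\proj)$ via the equivalence in the second step. There is no real obstacle here; the only point that requires any care is the verification that retracts of projectives are projective in the Quillen-exact setting, and that the splitting of an idempotent on $P\in\Escr^\proj$ obtained in $\Escr$ actually lands in $\Escr^\proj$, which is the brief argument sketched above.
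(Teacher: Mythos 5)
Your proposal is correct and follows essentially the same route as the paper: the paper's proof is simply "this is a special case of Theorem~\ref{thm:bs}," which implicitly uses the preceding Remark's identification $\Ch^b_{\dg}(\Escr^\proj)\we\Dscr^b_{\dg}(\Escr^\proj)$ together with the (unstated but true) fact that $\Escr^\proj$ is an idempotent complete exact category. Your explicit verification that idempotents on projectives split within $\Escr^\proj$ is a reasonable bit of added care, but there is no substantive difference in approach.
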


\begin{proof}
    This is a special case of Theorem~\ref{thm:bs}.
\end{proof}

\begin{lemma}\label{lem:chainhomotopic}
    If $X$ is in $\Ac^-_{\dg}(\Escr)$ and $P$ is in
    $\Ch^-_{\dg}(\Escr^\proj)$, then any map $f:P\rightarrow X$ is chain
    homotopic to zero.
\end{lemma}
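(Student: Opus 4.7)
The plan is to construct the null-homotopy $h:P\to X[1]$ degree by degree, using projectivity of each $P_n$ together with the factorization $X_{n+1}\rfib Z_n\rcof X_n$ of the differentials of $X$ provided by acyclicity. After shifting if necessary, I may assume $P_n=0$ for $n<0$, and I will construct maps $h_n\colon P_n\to X_{n+1}$ satisfying $f_n=d^X_{n+1}h_n+h_{n-1}d^P_n$ by induction on $n\geq 0$, setting $h_{-1}=0$.

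For the base case $n=0$, the chain-map condition gives $d^X_0 f_0 = f_{-1}d^P_0=0$, so $f_0\colon P_0\to X_0$ factors through the admissible mono $Z_0\rcof X_0$. Since $X_1\rfib Z_0$ is an admissible epi and $P_0$ is projective in $\Escr$, the resulting map $P_0\to Z_0$ lifts to $h_0\colon P_0\to X_1$ with $d^X_1 h_0=f_0$. For the inductive step, assume $h_0,\ldots,h_{n-1}$ have been constructed. Set $g_n=f_n-h_{n-1}d^P_n\colon P_n\to X_n$ and compute
\[
d^X_n g_n = d^X_n f_n - d^X_n h_{n-1}d^P_n = f_{n-1}d^P_n - (f_{n-1}-h_{n-2}d^P_{n-1})d^P_n = 0,
\]
using the chain-map condition and the inductive hypothesis in degree $n-1$. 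Hence $g_n$ factors through $Z_n\rcof X_n$, and projectivity of $P_n$ together with the admissible epi $X_{n+1}\rfib Z_n$ produces a lift $h_n\colon P_n\to X_{n+1}$ with $d^X_{n+1}h_n=g_n$, as required.

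The only genuine content is the observation that at each step we really do obtain a map into the kernel $Z_n$: this uses both parts of the acyclicity hypothesis, since we need $Z_n$ to be simultaneously a kernel of $d^X_n$ (so that $g_n$ lifts to $Z_n$) and a cokernel of $d^X_{n+2}$ (so that $X_{n+1}\to Z_n$ is an admissible epi, against which we can apply projectivity of $P_n$). Nothing else requires care, since the bounded-below hypothesis on $P$ makes the induction well-founded and provides the vanishing $h_{-1}=0$ that initializes it.
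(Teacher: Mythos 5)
Your proof is correct and is essentially identical to the paper's argument: the same inductive construction of the contracting homotopy degree by degree, using the factorization $X_{n+1}\rfib Z_n\rcof X_n$ from acyclicity and the projectivity of $P_n$ to produce each lift. The only cosmetic difference is that you treat the base case separately, whereas the paper folds it into the induction by declaring the homotopy zero in negative degrees.
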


\begin{proof}
    We assume that $P_n=0$ for $n\leq -1$. Let $s_n:P_n\rightarrow X_{n+1}$ be
    the zero map for $n\leq -1$. Assume that $s_n$ has been constructed for
    $n\leq N-1$ such that $f_i=d_{i+1}^X\circ s_i+s_{i-1}\circ d_{i}^P$ for
    $i\leq N-1$. Then,
    \begin{align*}
        d_N^X\circ(f_N-s_{N-1}\circ d_N^P)  &= d_{N-1}^X\circ f_N-d_n^X\circ s_{N-1}\circ d_N^P\\
        &=d_{N-1}^X\circ f_N-(f_{N-1}-s_{N-2}\circ d_{N-1}^P)\circ d_{N}^P\\
        &=d_{N-1}^X\circ f_N-f_{N-1}\circ d_N^p\\
        &=0
    \end{align*}
    since $f$ is a map of chain complexes. It follows from the acyclicity of
    $X$ that $f_N-s_{N-1}\circ
    d_N^p$ factors through the admissible mono $Z_{N}\rcof X_N$. Since $X_{N+1}\rightarrow
    Z_N$ is an admissible epi, there is a lift of $f_N-s_{N-1}\circ d_N^p$ to
    $s_N:P_N\rightarrow X_{N+1}$ such that $d_{N+1}^X\circ s_N=f_N-s_{N-1}\circ
    d_N^P$. By induction, this proves the existence of a contracting homotopy
    for $f$.
\end{proof}

\begin{proposition}
    The functors $\Ch^b_{\dg}(\Escr^\proj)\rightarrow\Dscr^b_{\dg}(\Escr)$
    and $\Ch^-_{\dg}(\Escr^\proj)\rightarrow\Dscr^-_{\dg}(\Escr)$
    are fully faithful.
\end{proposition}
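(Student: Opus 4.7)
The plan is to reduce both full faithfulness claims to a single \emph{locality} property, which then follows immediately from Lemma~\ref{lem:chainhomotopic}. Concretely, the Verdier quotient $\Dscr^-_{\dg}(\Escr)=\Ch^-_{\dg}(\Escr)/\Ac^-_{\dg}(\Escr)$, viewed as a cofiber in $\Cat_{\infty}^{\ex}$, is equivalent to the Bousfield localization of $\Ch^-_{\dg}(\Escr)$ at the quasi-isomorphisms (equivalently, at the maps whose cone lies in $\Ac^-_{\dg}(\Escr)$). For an object $P\in\Ch^-_{\dg}(\Escr)$ which is \emph{local}, meaning $\mathrm{map}_{\Ch^-_{\dg}(\Escr)}(P,X)\simeq 0$ for every $X\in\Ac^-_{\dg}(\Escr)$, the localization functor induces an equivalence $\mathrm{map}_{\Ch^-_{\dg}(\Escr)}(P,Q)\we\mathrm{map}_{\Dscr^-_{\dg}(\Escr)}(P,Q)$ for every $Q$: indeed, for any quasi-isomorphism $Q\to Q'$ with cone $X$, the fiber sequence $\mathrm{map}(P,Q)\to\mathrm{map}(P,Q')\to\mathrm{map}(P,X)$ shows $\mathrm{map}(P,Q)\we\mathrm{map}(P,Q')$, and taking the filtered colimit over such $Q'$ computes the mapping spectrum in the localization. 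So the whole proof is reduced to showing that every $P\in\Ch^-_{\dg}(\Escr^\proj)$ is local.

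To verify locality, unwind the mapping spectrum: for any $X\in\Ac^-_{\dg}(\Escr)$,
\[
\pi_n\,\mathrm{map}_{\Ch^-_{\dg}(\Escr)}(P,X)\iso\pi_0\,\mathrm{map}_{\Ch^-_{\dg}(\Escr)}(P,X[n]),
\]
which is the set of chain homotopy classes of chain maps $P\to X[n]$. Since $\Ac^-_{\dg}(\Escr)$ is stable under shifts, $X[n]$ is again a bounded below acyclic complex, so Lemma~\ref{lem:chainhomotopic} applies and produces an explicit contracting homotopy for any chain map $P\to X[n]$. Hence $\pi_n\,\mathrm{map}(P,X)=0$ for all $n\in\ZZ$, so $\mathrm{map}(P,X)\simeq 0$ as required.

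For the bounded variant $\Ch^b_{\dg}(\Escr^\proj)\to\Dscr^b_{\dg}(\Escr)$, I would run the identical argument, with $\Ac^-_{\dg}(\Escr)$ replaced by $\Ac^b_{\dg}(\Escr)$: a bounded acyclic complex is in particular bounded below, so Lemma~\ref{lem:chainhomotopic} still produces the needed contracting homotopies and locality. Alternatively, the bounded case is immediate from the bounded below case by contemplating the commutative square with horizontal arrows the two functors in question and vertical arrows the fully faithful inclusions $\Ch^b_{\dg}(\Escr^\proj)\hookrightarrow\Ch^-_{\dg}(\Escr^\proj)$ and $\Dscr^b_{\dg}(\Escr)\hookrightarrow\Dscr^-_{\dg}(\Escr)$ (the latter coming from the previous proposition).

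The one potentially delicate point — the main (and really only) obstacle — is setting up correctly the identification of the Verdier quotient in $\Cat_{\infty}^{\ex}$ with the Bousfield localization at the quasi-isomorphisms, and hence the characterization of mapping spaces out of local objects. Once that $\infty$-categorical bookkeeping is in place, Lemma~\ref{lem:chainhomotopic} (together with its trivial extension to arbitrary shifts) does the entire computation with no further input.
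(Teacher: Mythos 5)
Your proof is correct and rests on the same key input as the paper's, namely Lemma~\ref{lem:chainhomotopic}: the paper packages the deduction via Verdier's criterion for quotient functors (together with Lemma~\ref{lem:contractible} to identify $\Ch^-_{\dg}(\Escr^\proj)$ with its quotient by $\Ch^-_{\dg}(\Escr^\proj)\cap\Ac^-_{\dg}(\Escr)$), while you phrase the identical deduction as a locality statement for mapping spectra in the localization. Your handling of the bounded case, either by rerunning the argument for $\Ac^b_{\dg}(\Escr)$ or by the commutative square of fully faithful inclusions, also matches the paper's.
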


\begin{proof}
    We use Verdier's criterion~\cite{verdier}*{Proposition~II.2.3.5}, which
    says in our case that if every map $P\rightarrow X$ with $P$ in
    $\Ch^-_{\dg}(\Escr^\proj)$ and $X$ in $\Ac^-_{\dg}(\Escr)$ factors through
    a map $X'\rightarrow X$ where $X'$ is in
    $\Ch^-_{\dg}(\Escr^\proj)\cap\Ac^-_{\dg}(\Escr)$, then
    $$\Ch^-_{\dg}(\Escr^\proj)/\Ch^-_{\dg}(\Escr^\proj)\cap\Ac^-_{\dg}(\Escr)\rightarrow\Dscr^-_{\dg}(\Escr)$$
    is fully faithful. But, Lemma~\ref{lem:chainhomotopic} says that in fact
    every such map factors through zero, so the criterion is satisfied. On the
    other hand, Lemma~\ref{lem:contractible} says every complex in 
    $\Ch^-_{\dg}(\Escr^\proj)\cap\Ac^-_{\dg}(\Escr)$ is already equivalent to
    zero, so that the conclusion of Verdier's criterion reduces to the statement
    of the proposition. The bounded case is similar, or follows from the fully
    faithfulness of
    $\Ch^b_{\dg}(\Escr^\proj)\rightarrow\Ch^-_{\dg}(\Escr^\proj)$ and
    $\Dscr^b_{\dg}(\Escr)\rightarrow\Dscr^-_{\dg}(\Escr)$.
\end{proof}

\begin{corollary}
    The natural functor
    $\Ch^-_{\dg}(\Escr^\proj)\rightarrow\Dscr^-_{\dg}(\Escr)$ is an
    equivalence.
\end{corollary}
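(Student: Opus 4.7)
The plan is to use the fully faithfulness established in the previous proposition and reduce to showing essential surjectivity. So given any $X\in\Ch^-_{\dg}(\Escr)$, I must exhibit a complex $P\in\Ch^-_{\dg}(\Escr^\proj)$ together with an equivalence $P\we X$ in $\Dscr^-_{\dg}(\Escr)$, which I will produce as a chain map $\epsilon:P\to X$ whose cone lies in $\Ac^-_{\dg}(\Escr)$.

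First, after shifting, I may assume $X_i=0$ for $i<0$. Using that $\Escr$ has enough projectives (we are working with a Frobenius, hence projective-enough, exact category), I would build $P$ and $\epsilon$ inductively on the degree, mimicking the construction of a projective resolution in an abelian category. Concretely, pick an admissible epi $P_0\rfib X_0$ with $P_0$ projective. At stage $n$, suppose we have constructed $\epsilon_{\leq n}:P_{\leq n}\to X_{\leq n}$. Form, inside the Gabriel--Quillen embedding $\Ascr$, the pullback object $K_n$ parametrizing pairs of an element of $P_n$ whose image in $P_{n-1}$ has already been killed and a lift in $X_{n+1}$ of its image in $X_n$; this is precisely the data needed so that a lift $P_{n+1}\to K_n$ simultaneously defines the next differential $d^P_{n+1}$ and the next component $\epsilon_{n+1}$ of the chain map while making the cone of $\epsilon$ acyclic through degree $n$. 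Because $\Escr$ is closed in $\Ascr$ under extensions and (by idempotent completeness) under kernels of admissible epis, the object $K_n$ lies in $\Escr$; then enough projectives produces an admissible epi $P_{n+1}\rfib K_n$ with $P_{n+1}$ projective, completing the induction.

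To conclude, I would verify that the resulting cone $C=\cone(\epsilon)$ lies in $\Ac^-_{\dg}(\Escr)$. Here the criterion established in the proof of Lemma~\ref{lem:idemomni}, namely that a bounded-below complex in $\Ch^-_{\dg}(\Escr)$ is acyclic if and only if its homology in the Gabriel--Quillen embedding $\Ascr$ vanishes, is the essential tool: the inductive construction is arranged exactly so that the map $\epsilon$ induces an isomorphism on homology in $\Ascr$, so $C$ has zero $\Ascr$-homology and is therefore acyclic in $\Escr$. This forces $\epsilon$ to be an equivalence in $\Dscr^-_{\dg}(\Escr)$, proving essential surjectivity. The main obstacle is the inductive construction of $K_n$ within $\Escr$; unlike in an abelian category, one cannot freely take kernels, and keeping $K_n$ inside $\Escr$ at each step is what makes idempotent completeness indispensable here.
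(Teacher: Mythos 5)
Your approach is the same as the paper's: fully faithfulness from the previous proposition, plus essential surjectivity by building a term-by-term projective resolution of a bounded-below complex. The construction you outline is the right one, but the step that carries all the weight --- that $K_n$ lies in $\Escr$ --- is not justified by the closure properties you cite. As you describe it, $K_n$ is the pullback $Z_n(P)\times_{X_n}X_{n+1}$ with $Z_n(P)=\ker(d^P_n)$. But $d^P_n$ is in general neither an admissible epi nor an epi at all, so $Z_n(P)$ is not covered by ``closed under kernels of admissible epis,'' and a pullback along the non-admissible maps $Z_n(P)\to X_n\leftarrow X_{n+1}$ is not covered by closure under extensions; idempotent completeness does not rescue this.

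The repair is to exhibit $K_n$ as the kernel of an admissible epi between objects of $\Escr$, with the admissible epi supplied by the induction itself. Carry along, as part of the inductive data, the chosen admissible epi $\pi_n\colon P_n\twoheadrightarrow K_{n-1}$ whose two components are $d^P_n$ and $\epsilon_n$. Then one checks directly that $K_n=\ker\bigl(\phi\colon P_n\oplus X_{n+1}\to K_{n-1}\bigr)$ with $\phi(q,y)=\pi_n(q)-(0,d^X y)$ (note $(0,d^Xy)$ does lie in $K_{n-1}$). Since $\phi$ restricts to the admissible epi $\pi_n$ on the summand $P_n$, it is itself an admissible epi ($\phi$ factors as the admissible epi $\pi_n\oplus 1$ followed by a split epi), so $K_n\in\Escr$ by the axioms of an exact category alone --- no idempotent completeness is needed for this containment. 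Idempotent completeness enters, as you say, only when you verify acyclicity of the cone via the homology criterion from Lemma~\ref{lem:idemomni}; alternatively, the admissible exact sequences $K_n\rightarrowtail P_n\oplus X_{n+1}\twoheadrightarrow K_{n-1}$ splice together to show directly that the cone is acyclic in the intrinsic sense. With this repair your argument is complete and fleshes out the paper's one-line appeal to ``taking projective resolutions.''
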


\begin{proof}
    Thanks to the previous proposition it suffices to check essential
    surjectivity, which follows by taking projective resolutions.
\end{proof}

\begin{definition}
    Let $\Escr$ be an idempotent complete exact category. The {\bf singularity
    $\infty$-category} $\Dscr_{\sing}(\Escr)$ of $\Escr$ is the Verdier
    quotient $$\Dscr^b_{\dg}(\Escr)/\Ch^b_{\dg}(\Escr^\proj).$$ We will write
    $\Dscr_{\sing}$ for the induced functor from the $\infty$-category of exact
    categories and exact functors to $\Cat_{\infty}^\perf$.
\end{definition}

\begin{definition}
    Syzygys play a crucial role in the proof of the next theorem. Let $X$ in
    $\Ch^-_{\dg}(\Escr)$ be acyclic in degree $n-1$. Then, the $n$th syzygy
    $\Omega_nX$ is an object of $\Escr$, being the kernel of
    $d_{n-1}:X_{n-1}\rightarrow
    X_{n-2}$. Moreover, in this case, the brutal truncation $\sigma_{\geq
    n}X$ admits a canonical map to $\Omega_{n}X[n]$. When $X$ is acyclic,
    $\sigma_{\geq n}X\rightarrow\Omega_nX[n]$ is a quasi-isomorphism. Finally,
    if $X$ is a complex of projectives which is acyclic in degree $i$ for $i\geq n-1$, then
    $\Omega_{i}X\iso\Omega_{i-n}\Omega_nX$ in $\underline{\Escr}$.
\end{definition}

\begin{theorem}\label{thm:frobeniusstable}
    There is a natural equivalence $\Ho(\Dscr_{\sing}(\Escr))\we\underline{\Escr}$
    when $\Escr$ is an idempotent complete Frobenius category.
\end{theorem}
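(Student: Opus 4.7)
The plan is to build a natural functor $F:\Escr\to\Dscr_{\sing}(\Escr)$ by $F(M)=M[0]$ (the complex concentrated in degree zero) and show it descends to an equivalence $\bar F:\underline{\Escr}\xrightarrow{\we}\Ho(\Dscr_{\sing}(\Escr))$. The factoring through $\underline{\Escr}$ is automatic: every projective $P\in\Escr$ is already a bounded complex of projectives, so $F(P)\simeq 0$ in $\Dscr_{\sing}(\Escr)$, whence $F$ kills any morphism factoring through a projective. Naturality in $\Escr$ is built into the construction, since $\Dscr_{\sing}$ and $\underline{(\,-\,)}$ are both functorial in exact functors of Frobenius categories.

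For essential surjectivity I would use the syzygy construction already highlighted in the preceding definition. Given $X\in\Ch^b_{\dg}(\Escr)$ concentrated in degrees $[0,n]$, the enough-projectives hypothesis furnishes a bounded-below quasi-isomorphism $P_\bullet\to X$ with each $P_i$ projective, and idempotent completeness of $\Escr$ guarantees that the syzygy $\Omega_{n+1}X=\ker(P_n\to P_{n-1})$ is an object of $\Escr$. The bounded truncation $Y=[\Omega_{n+1}X\to P_n\to\cdots\to P_0]$ is quasi-isomorphic to $X$ in $\Dscr^b_{\dg}(\Escr)$, and its subcomplex $[P_n\to\cdots\to P_0]\in\Ch^b_{\dg}(\Escr^\proj)$ becomes trivial in the Verdier quotient, giving $X\simeq(\Omega_{n+1}X)[n+1]$ in $\Dscr_{\sing}(\Escr)$. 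The Frobenius hypothesis then lets me absorb the shift: for $M\in\Escr$, an admissible mono $M\rcof I\rfib M'$ with $I$ projective-injective yields a cofiber sequence $M\to I\to M'$ in $\Dscr^b_{\dg}(\Escr)$ with $I\simeq 0$ in $\Dscr_{\sing}(\Escr)$, so $M[1]\simeq M'$; iterating $n{+}1$ times produces $M''\in\Escr$ with $F(M'')\simeq X$.

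For full faithfulness I would apply the calculus of fractions for $\Dscr_{\sing}(\Escr)=\Dscr^b_{\dg}(\Escr)/\Ch^b_{\dg}(\Escr^\proj)$. Faithfulness: a morphism $f:M\to N$ in $\Escr$ vanishing in $\Dscr_{\sing}(\Escr)$ must factor in $\Dscr^b_{\dg}(\Escr)$ through some $K\in\Ch^b_{\dg}(\Escr^\proj)$; since every $K_j$ is \emph{both} projective and injective, all higher Ext groups $\Ext^{>0}(M,K_j)$ and $\Ext^{>0}(K_j,N)$ vanish, and the spectral sequence computing $\Hom_{\Dscr^b(\Escr)}(M,K)$ collapses onto the row $q=0$. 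One concludes that the factorization is represented by a composite $M\to K_0\to N$ of honest morphisms of $\Escr$ with $K_0$ projective, so $f$ vanishes in $\underline{\Escr}$. Fullness: any morphism $\alpha:M\to N$ in $\Ho(\Dscr_{\sing}(\Escr))$ is represented by a roof $M\xleftarrow{s}Y\to N$ with $\cone(s)\in\Ch^b_{\dg}(\Escr^\proj)$; the syzygy argument of the previous paragraph lets me replace $Y$ by an object of $\Escr$, after which the vanishing of the relevant Exts together with a further application of the calculus of fractions converts the roof into an honest morphism $M\to N$ in $\Escr$ representing $\alpha$ modulo factorizations through projectives.

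The main obstacle will be this fullness step: unwinding the Verdier-quotient calculus and simultaneously exploiting the Frobenius structure to eliminate arbitrary ``weak equivalences'' on the left leg of a roof demands careful bookkeeping of the projective factorizations and the iterated syzygies. The essential surjectivity, by contrast, is a routine application of the classical syzygy technique once one has the Frobenius hypothesis and idempotent completeness in hand.
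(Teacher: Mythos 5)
Your essential surjectivity argument is sound and is essentially the paper's: truncate a projective resolution at a degree where it has become acyclic to get $X\we B[n+1]$ with $B\in\Escr$ a syzygy, then absorb the positive shift with cosyzygies $M\rcof I\rfib M'$ using the Frobenius hypothesis. Your faithfulness argument takes a different but viable route: instead of the paper's syzygy argument ($\Omega_n f$ factors through a projective and $\Omega_n$ is an autoequivalence of $\underline{\Escr}$), you use that maps in $\Dscr^b_{\dg}(\Escr)$ into a bounded complex of injectives, and out of a bounded complex of projectives, are represented by honest chain maps, so a factorization through $K\in\Ch^b_{\dg}(\Escr^\proj)$ yields a factorization through the projective $K_0$. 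That works, provided you justify $\Hom_{\Escr}(M,N)\iso\Hom_{\Ho\Dscr^b_{\dg}(\Escr)}(M,N)$ and the Ext-vanishing intrinsically to $\Escr$ (via projective resolutions in $\Escr$, not via the Gabriel--Quillen embedding, which need not preserve injectives).

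The genuine gap is in fullness, exactly where you predicted trouble. Given a roof $M\xleftarrow{s}Y\xrightarrow{g}N$ with $\cone(s)\in\Ch^b_{\dg}(\Escr^\proj)$, your plan is to ``replace $Y$ by an object of $\Escr$'' and then convert the roof to an honest morphism. But the replacement $Y\we Y'$ with $Y'\in\Escr$ is only an isomorphism in $\Dscr_{\sing}(\Escr)$, realized by a zig-zag; after substituting it, the left leg $Y'\to M$ is an arbitrary isomorphism of $\Dscr_{\sing}(\Escr)$ between objects of $\Escr$, and inverting it inside $\underline{\Escr}$ is precisely the full faithfulness you are trying to establish --- the argument is circular as stated. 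Conversely, if you keep the original apex $Y$ so that the denominator has cone a bounded complex of projectives, then $Y\notin\Escr$ and your Ext-collapse argument does not apply to it. The missing ingredient is the paper's mechanism: apply $\Omega_n$ to the entire roof for $n\gg0$. Because $\cone(s)$ is a bounded complex of projectives, its high syzygies are projective, so $\Omega_n s:\Omega_nY\to\Omega_nM$ becomes an isomorphism up to projective summands, i.e.\ an isomorphism in $\underline{\Escr}$; one thereby obtains an honest morphism $\Omega_nM\to\Omega_nN$ in $\underline{\Escr}$, and the Frobenius hypothesis enters a second time through the fact that $\Omega_n$ is an \emph{autoequivalence} of $\underline{\Escr}$, which lets you descend to the desired morphism $M\to N$. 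Without this (or an equivalent device), the fullness step does not close.
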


\begin{proof}
    The proof of this theorem is due in spirit to Buchweitz~\cite{buchweitz},
    though only a special case is given there. For simplicity, we
    avoid the comparison with the homotopy category of acyclic complexes of
    projectives, instead giving a direct argument for the equivalence.

    There is an evident composition of functors
    $\N(\Escr)\rightarrow\Ch^b_{\dg}(\Escr)\rightarrow\Dscr^b_{\dg}(\Escr)\rightarrow\Dscr_{\sing}(E)$,
    where the first is given by viewing an object of $\Escr$ as a chain complex
    concentrated in degree zero. This first functor is evidently fully
    faithful. The second and third functors are the Verdier quotient functors.

    Let $\Ch^{-,b}(\Escr^\proj)$ be the full subcategory of
    $\Ch^-(\Escr^\proj)$ consisting of {\bf homologically bounded complexes of
    projectives}, i.e., those complexes which are acyclic when viewed
    in $\Ch^-(\Escr)$ in all sufficiently high degrees. It is clear that the
    natural functor $\Ch^{-,b}_{\dg}(\Escr^\proj)\rightarrow\Dscr^-(\Escr)$
    induces an equivalence $\Ch^{-,b}_{\dg}(\Escr^\proj)\we\Dscr^{-,b}(\Escr)$.
    Hence, there are equivalences
    $$\Ch^{-,b}_{\dg}(\Escr^\proj)/\Ch^b_{\dg}(\Escr^\proj)\we\Dscr^{-,b}_{\dg}(\Escr)/\Ch^b_{\dg}(\Escr^\proj)\we\Dscr_\sing(\Escr).$$
    We are therefore free in our arguments to replace bounded complexes in
    $\Escr$ with homologically bounded complexes of projectives.

    We claim first that the functor
    $\underline{\Escr}\rightarrow\Dscr_{\sing}(\Escr)$ is essentially surjective.
    Pick a complex $X$ of $\Ch^b_{\dg}(\Escr)$, and let $P\rightarrow X$ be a
    quasi-isomorphism where $P$ is a bounded below complex of projectives.
    Choose $n\geq 0$ sufficiently large so that $P$ is acyclic in degree $i$
    for all $i\geq n$. In this case, the brutal truncation $\sigma_{\geq i}P$
    admits a quasi-isomorphism $\sigma_{\geq i}P\rightarrow\Omega_iP[i]$ for
    all $i>n$, where $\Omega_iP$ is some object of $\Escr$. Fix $i>n$
    and extend $\sigma_{\geq i}P$ to an unbounded acyclic complex $Q$ of
    projectives, by taking a projective co-resolution of $\Omega_iP$ (which
    exists because $\Escr$ is Frobenius). There is a diagram of morphisms
    $$X\leftarrow P\rightarrow\sigma_{\geq i}P=\sigma_{\geq i}Q\leftarrow
    \sigma_{\geq 0}Q\rightarrow \Omega_0Q$$ in $\Ch^{-,b}_{\dg}(\Escr)$. The outside arrows are
    quasi-isomorphisms and hence already equivalences in
    $\Dscr^{-,b}_{\dg}(\Escr)$. The inside arrows have cones in
    $\Ch^b_{\dg}(\Escr^\proj)$, and hence
    they become equivalences in $\Dscr_{\sing}(\Escr)$. But, this shows that
    $X\we\Omega_0Q$ in $\Dscr^{-,b}_{\dg}(\Escr)/\Ch^b_{\dg}(\Escr^\proj)\we\Dscr_{\sing}(\Escr)$.

    To finish the proof, it is enough to prove that
    $\underline{\Escr}\rightarrow\Ho(\Dscr_{\sing}(\Escr))$ is fully faithful.
    We check faithfulness and fullness separately.
    Let $M$ and $N$ be objects of $\Escr$. Since
    $\Ch^-_{\dg}(\Escr^\proj)\rightarrow\Dscr^-_{\dg}(\Escr)$ is fully
    faithful, by replacing $M$ and $N$ by projective resolutions,
    we see that
    $\Hom_\Escr(M,N)\rightarrow\pi_0\Map_{\Dscr^b_{\dg}(\Escr)}(M,N)$ is a
    bijection. Now, consider a diagram
    $$
    M\xleftarrow{s} X\xrightarrow{f} N
    $$
    in $\Ch^{-,b}_{\dg}(\Escr^\proj)$ 
    with $\cone(s)\in\Ch^b_{\dg}(\Escr^\proj)$. Then, $\Omega_n M\leftarrow \Omega_nX$ is an
    isomorphism up to projective summands for $n$ sufficiently large, so there is an induced map
    $\Omega_n M\rightarrow\Omega_n N$. Since $\Omega_n$ is an autoequivalence of
    $\underline{\Escr}$, fullness follows.

    To prove faithfulness, suppose that $f:M\rightarrow N$ maps to zero in
    $\pi_0\Map_{\Dscr_{\sing}}(M,N)$. Then, there is $X\xrightarrow{s}M$ such
    that $\cone(s)$ is quasi-isomorphic to an object of
    $\Ch^b_{\dg}(\Escr^\proj)$ and $f\circ s$ is zero in $\Dscr^b_{\dg}(\Escr)$.
    Working with bounded below complexes of projectives, we can assume in fact
    that $f\circ s$ is nullhomotopic in $\Ch^{-,b}_{\dg}(\Escr^\proj)$. In this
    case, $M\rightarrow N$ factors through $X\rightarrow\cone(s)$. A
    sufficiently high syzygy of $\cone(s)$ is projective, so this means that
    $\Omega_n f$ factors through a projective, and hence is zero in
    $\underline{\Escr}$. Again using that $\Omega_n$ is an autoequivalence, we
    find that $f=0$ in $\underline{\Escr}$, as desired.
\end{proof}

\begin{example}
    In general $\Dscr_{\sing}(\Escr)$ is not idempotent complete, and hence
    neither is $\underline{\Escr}$. It is enough to find a Gorenstein noetherian
    commutative ring $R$ with $\K_{-1}(R)\neq 0$, since in this case there is an isomorphism
    $$\K_0(\widetilde{\underline{\Escr}})/\K_0(\underline{\Escr})\iso\K_{-1}(R)$$
    (as $\K_{-1}(\Dscr^b_{\dg}(R))=0$ by Schlichting's theorem).
    The complete intersection $R=\ZZ[x_0,x_1]/(x_0x_1(1-x_0-x_1))$ works. In
    this case, $\K_{-1}(R)\iso\ZZ$, as can be checked
    from~\cite{weibel-isolated}.
\end{example}

Let $R$ be a noetherian commutative ring. The abelian category
$\Mod_R^{\heartsuit,\omega}$ of finitely presented discrete $R$-modules is
exact, and its negative $K$-theory vanishes by Schlichting. Hence,
$\K_{-n}(\Dscr_{\sing}(\Mod_R^{\heartsuit,\omega}))\rightarrow\K_{-n-1}(R)$ is
a surjection for $n\geq 0$ and an isomorphism for $n\geq 1$. In this way, the
singularity category supports the negative $K$-theory of $R$ and gives one
measurement of the singularities of $R$ itself.

When $R$ is not noetherian, the question of whether or not this connection
continues is precisely bound up in Schlichting's conjecture. For example, if
$R$ is merely coherent, then it is no longer known in general that
$\K_{-n}(\Dscr_{\sing}(\Mod_R^{\heartsuit,\omega}))\rightarrow\K_{-n-1}(R)$ is
an isomorphism for $n\geq 1$. This would follow from
Conjecture~\hyperlink{conj:a}{A}.


\addcontentsline{toc}{section}{References}

\begin{bibdiv}
\begin{biblist}

\bib{abramovich-polishchuk}{article}{
    author={Abramovich, Dan},
    author={Polishchuk, Alexander},
    title={Sheaves of $t$-structures and valuative criteria for
    stable
    complexes},
    journal={J. Reine Angew. Math.},
    volume={590},
    date={2006},
    pages={89--130},
    issn={0075-4102},
}


\bib{ag}{article}{
    author={Antieau, Benjamin},
    author={Gepner, David},
    title={Brauer groups and \'etale cohomology in derived
    algebraic
    geometry},
    journal={Geom. Topol.},
    volume={18},
    date={2014},
    number={2},
    pages={1149--1244},
    issn={1465-3060},
}

\bib{balmer-schlichting}{article}{
    author={Balmer, Paul},
    author={Schlichting, Marco},
    title={Idempotent completion of triangulated categories},
    journal={J. Algebra},
    volume={236},
    date={2001},
    number={2},
    pages={819--834},
    issn={0021-8693},
}


\bib{barwick}{article}{
    author={Barwick, Clark},
    title={On exact $\infty$-categories and the Theorem of the Heart},
    journal={Compos. Math.},
    volume={151},
    date={2015},
    number={11},
    pages={2160--2186},
    issn={0010-437X},
}

\bib{barwick-ktheory}{article}{
    author={Barwick, Clark},
    title={On the algebraic $K$-theory of higher categories},
    journal={J. Topol.},
    volume={9},
    date={2016},
    number={1},
    pages={245--347},
    issn={1753-8416},
}

\bib{barwick-lawson}{article}{
    author = {Barwick, Clark},
    author = {Lawson, Tyler},
    title = {Regularity of structured ring spectra and localization in K-theory},
    journal = {ArXiv e-prints},
    eprint = {http://arxiv.org/abs/1402.6038},
    year = {2014},
}

\bib{bass-problems}{article}{
    author={Bass, Hyman},
    title={Some problems in `classical'' algebraic $K$-theory},
    conference={
    title={Algebraic $K$-theory, II: `Classical'' algebraic
    $K$-theory
    and connections with arithmetic},
    address={Proc. Conf., Battelle Memorial
    Inst., Seattle, Wash.},
    date={1972},
    },
    book={
    publisher={Springer,
    Berlin},
    },
    date={1973},
    pages={3--73.
    Lecture
    Notes
    in
    Math.,
    Vol.
    342},
}


\bib{bayer-macri-toda}{article}{
    author={Bayer, Arend},
    author={Macr\`\i , Emanuele},
    author={Toda, Yukinobu},
    title={Bridgeland stability conditions on threefolds I:
    Bogomolov-Gieseker type inequalities},
    journal={J. Algebraic Geom.},
    volume={23},
    date={2014},
    number={1},
    pages={117--163},
    issn={1056-3911},
}

\bib{bbd}{article}{
    author={Be{\u\i}linson, A. A.},
    author={Bernstein, J.},
    author={Deligne, P.},
    title={Faisceaux pervers},
    conference={
    title={Analysis and topology on singular
    spaces, I},
    address={Luminy},
    date={1981},
    },
    book={
    series={Ast\'erisque},
    volume={100},
    publisher={Soc.
    Math.
    France,
    Paris},
    },
    date={1982},
    pages={5--171},
}
\bib{bgt1}{article}{
    author={Blumberg, Andrew J.},
    author={Gepner, David},
    author={Tabuada, Gon{\c{c}}alo},
    title={A universal characterization of higher algebraic
    $K$-theory},
    journal={Geom. Topol.},
    volume={17},
    date={2013},
    number={2},
    pages={733--838},
    issn={1465-3060},
}

\bib{bgt3}{article}{
    author={Blumberg, Andrew J.},
    author={Gepner, David},
    author={Tabuada, Gon{\c{c}}alo},
    title={$K$-theory of endomorphisms via noncommutative motives},
    journal={Trans. Amer. Math. Soc.},
    volume={368},
    date={2016},
    number={2},
    pages={1435--1465},
    issn={0002-9947},
}


\bib{blumberg-mandell}{article}{
    author={Blumberg, Andrew J.},
    author={Mandell, Michael A.},
    title={The localization sequence for the algebraic $K$-theory of topological $K$-theory},
    journal={Acta Math.},
    volume={200},
    date={2008},
    number={2},
    pages={155--179},
    issn={0001-5962},
}




\bib{braunling-groechenig-wolfson}{article}{
author={Braunling, Oliver},
author={Groechenig, Michael},
author={Wolfson, Jesse},
title={Tate objects in exact categories},
note={With an appendix by Jan \v{S}\v{t}ov\'{i}\v{c}ek and Jan
Trlifaj},
journal={Mosc. Math. J.},
volume={16},
date={2016},
number={3},
pages={433--504},
issn={1609-3321},
}

\bib{bridgeland}{article}{
    author={Bridgeland, Tom},
    title={Stability conditions on triangulated categories},
    journal={Ann. of Math. (2)},
    volume={166},
    date={2007},
    number={2},
    pages={317--345},
    issn={0003-486X},
}


\bib{buchweitz}{article}{
    author={Buchweitz, Ragnar-Olaf},
    title={Maximal Cohen-Macaulay modules and Tate-cohomology over Gorenstein rings},
    date={1986},
    eprint={http://hdl.handle.net/1807/16682},
}

\bib{curtis-reiner}{book}{
    author={Curtis, Charles W.},
    author={Reiner, Irving},
    title={Representation theory of finite groups and associative
    algebras},
    series={Pure and Applied Mathematics, Vol. XI},
    publisher={Interscience Publishers, a division of John Wiley
    \& Sons, New
    York-London},
    date={1962},
    pages={xiv+685},
}

\bib{dgi}{article}{
    author={Dwyer, W. G.},
    author={Greenlees, J. P. C.},
    author={Iyengar, S.},
    title={Duality in algebra and topology},
    journal={Adv. Math.},
    volume={200},
    date={2006},
    number={2},
    pages={357--402},
    issn={0001-8708},
}


\bib{farrell-jones}{article}{
    author={Farrell, F. T.},
    author={Jones, L. E.},
    title={The lower algebraic $K$-theory of virtually infinite cyclic
    groups},
    journal={$K$-Theory},
    volume={9},
    date={1995},
    number={1},
    pages={13--30},
    issn={0920-3036},
}

\bib{glaz}{book}{
    author={Glaz, Sarah},
    title={Commutative coherent rings},
    series={Lecture Notes in Mathematics},
    volume={1371},
    publisher={Springer-Verlag, Berlin},
    date={1989},
    pages={xii+347},
    isbn={3-540-51115-6},
}

%

\bib{happel}{article}{
    author={Happel, Dieter},
    title={On the derived category of a finite-dimensional algebra},
    journal={Comment. Math. Helv.},
    volume={62},
    date={1987},
    number={3},
    pages={339--389},
    issn={0010-2571},
}


\bib{heller}{article}{
    author={Heller, Alex},
    title={The loop-space functor in homological algebra},
    journal={Trans. Amer. Math. Soc.},
    volume={96},
    date={1960},
    pages={382--394},
    issn={0002-9947},
}

\bib{hennion}{article}{
author={Hennion, Benjamin},
title={Tate objects in stable $(\infty,1)$-categories},
journal={Homology Homotopy Appl.},
volume={19},
date={2017},
number={2},
pages={373--395},
issn={1532-0073},
}


\bib{hennion-porta-vezzosi}{article}{
    author={Hennion, Benjamin},
    author={Porta, Mauro},
    author={Vezzosi, Gabriele},
    title={Formal glueing for non-linear flags},
    journal = {ArXiv e-prints},
    eprint = {http://arxiv.org/abs/1607.04503},
    year = {2016},
}


\bib{hovey}{book}{
    author={Hovey, Mark},
    title={Model categories},
    series={Mathematical Surveys and Monographs},
    volume={63},
    publisher={American Mathematical Society, Providence, RI},
    date={1999},
    pages={xii+209},
    isbn={0-8218-1359-5},
}


\bib{huybrechts}{book}{
    author={Huybrechts, D.},
    title={Fourier-Mukai transforms in algebraic geometry},
    series={Oxford Mathematical Monographs},
    publisher={The Clarendon Press, Oxford University Press,
    Oxford},
    date={2006},
    pages={viii+307},
    isbn={978-0-19-929686-6},
    isbn={0-19-929686-3},
}

\bib{kahn-conjectures}{article}{
    author={Kahn, Bruno},
    title={Algebraic $K$-theory, algebraic cycles and arithmetic
    geometry},
    conference={
    title={Handbook of $K$-theory. Vol. 1, 2},
    },
    book={
    publisher={Springer, Berlin},
    },
    date={2005},
    pages={351--428},
}


\bib{keller-nicolas}{article}{
    author={Keller, Bernhard},
    author={Nicol{\'a}s, Pedro},
    title={Weight structures and simple dg modules for positive dg
    algebras},
    journal={Int. Math. Res. Not. IMRN},
    date={2013},
    number={5},
    pages={1028--1078},
    issn={1073-7928},
}


\bib{krause}{article}{
    author={Krause, Henning},
    title={Deriving Auslander's formula},
    journal={Doc. Math.},
    volume={20},
    date={2015},
    pages={669--688},
    issn={1431-0635},
}

\bib{luck-reich}{article}{
    author={L{\"u}ck, Wolfgang},
    author={Reich, Holger},
    title={The Baum-Connes and the Farrell-Jones conjectures in $K$-
    and
    $L$-theory},
    conference={
    title={Handbook of $K$-theory. Vol. 1, 2},
    },
    book={
    publisher={Springer, Berlin},
    },
    date={2005},
    pages={703--842},
}

\bib{htt}{book}{
      author={Lurie, Jacob},
       title={Higher topos theory},
      series={Annals of Mathematics Studies},
   publisher={Princeton University Press},
     address={Princeton, NJ},
        date={2009},
      volume={170},
        ISBN={978-0-691-14049-0; 0-691-14049-9},
}

\bib{ha}{article}{
    author={Lurie, Jacob},
    title={Higher algebra},
    date={2012},
    eprint={http://www.math.harvard.edu/~lurie/},
    note={Version dated 10 March 2016},
}

\bib{sag}{article}{
    author={Lurie, Jacob},
    title={Spectral algebraic geometry},
    eprint={http://www.math.harvard.edu/~lurie/},
    note={Version dated 13 October 2016},
}


\bib{mathew-galois}{article}{
    author={Mathew, Akhil},
    title={The Galois group of a stable homotopy theory},
    journal={Adv. Math.},
    volume={291},
    date={2016},
    pages={403--541},
    issn={0001-8708},
}


\bib{mcconnell-robson}{book}{
    author={McConnell, J. C.},
    author={Robson, J. C.},
    title={Noncommutative Noetherian rings},
    series={Graduate Studies in Mathematics},
    volume={30},
    edition={Revised edition},
    note={With the cooperation of L. W. Small},
    publisher={American Mathematical Society,
    Providence, RI},
    date={2001},
    pages={xx+636},
    isbn={0-8218-2169-5},
}

\bib{quillen}{article}{
    author={Quillen, Daniel},
    title={Higher algebraic $K$-theory. I},
    conference={
        title={Algebraic $K$-theory, I: Higher $K$-theories (Proc. Conf.,
        Battelle Memorial Inst., Seattle, Wash., 1972)},
    },
    book={
        publisher={Springer},
        place={Berlin},
    },
    date={1973},
    pages={85--147. Lecture Notes in Math., Vol.  341},
}

\bib{renault}{article}{
    author={Renault, Guy},
    title={Sur les anneaux de groupes},
    journal={C. R. Acad. Sci. Paris S\'er. A-B},
    volume={273},
    date={1971},
    pages={A84--A87},
}

\bib{saito}{article}{
    author={Saito, Sho},
    title={On Previdi's delooping conjecture for $K$-theory},
    journal={Algebra Number Theory},
    volume={9},
    date={2015},
    number={1},
    pages={1--11},
    issn={1937-0652},
}


\bib{schlichting-triangulated}{article}{
    author={Schlichting, Marco},
    title={A note on $K$-theory and triangulated categories},
    journal={Invent. Math.},
    volume={150},
    date={2002},
    number={no.~1},
    pages={111--116},
    issn={0020-9910},
}
\bib{schlichting}{article}{
    author={Schlichting, Marco},
    title={Negative $K$-theory of derived categories},
    journal={Math. Z.},
    volume={253},
    date={2006},
    number={1},
    pages={97--134},
    issn={0025-5874},
}

\bib{sosnilo}{article}{
    author = {Sosnilo, Vladimir},
    title = {Theorem of the heart in negative $K$-theory for weight structures},
    journal = {ArXiv e-prints},
    eprint = {http://arxiv.org/abs/1705.07995},
    year = {2017},
}

\bib{swan}{book}{
    author={Swan, R. G.},
    title={Algebraic $K$-theory},
    series={Lecture Notes in Mathematics, No. 76},
    publisher={Springer-Verlag, Berlin-New York},
    date={1968},
    pages={iv+262},
}

\bib{thomason-classification}{article}{
    author={Thomason, R. W.},
    title={The classification of triangulated subcategories},
    journal={Compositio Math.},
    volume={105},
    date={1997},
    number={1},
    pages={1--27},
    issn={0010-437X},
}

\bib{thomason-trobaugh}{article}{
author={Thomason, R. W.},
author={Trobaugh, Thomas},
title={Higher algebraic $K$-theory of schemes and of derived
categories},
conference={
title={The Grothendieck Festschrift, Vol.\ III},
},
book={
series={Progr. Math.},
volume={88},
publisher={Birkh\"auser
Boston, Boston, MA},
},
date={1990},
pages={247--435},
}

\bib{toen-vezzosi}{article}{
    author={To\"en, Bertrand},
    author={Vezzosi, Gabriele},
    title={A remark on $K$-theory and $S$-categories},
    journal={Topology},
    volume={43},
    date={2004},
    number={no.~4},
    pages={765--791},
}


\bib{verdier}{article}{
    author={Verdier, Jean-Louis},
    title={Des cat\'egories d\'eriv\'ees des cat\'egories ab\'eliennes},
    note={With a preface by Luc Illusie;
    Edited and with a note by Georges Maltsiniotis},
    journal={Ast\'erisque},
    number={239},
    date={1996},
    pages={xii+253 pp. (1997)},
    issn={0303-1179},
}


\bib{voevodsky-nilpotence}{article}{
    author={Voevodsky, V.},
    title={A nilpotence theorem for cycles algebraically equivalent to
    zero},
    journal={Internat. Math. Res. Notices},
    date={1995},
    number={4},
    pages={187--198},
    issn={1073-7928},
}

\bib{voevodsky-triangulated}{article}{
    author={Voevodsky, Vladimir},
    title={Triangulated categories of motives over a field},
    conference={
    title={Cycles, transfers, and motivic homology
    theories},
    },
    book={
    series={Ann. of Math. Stud.},
    volume={143},
    publisher={Princeton Univ.
    Press, Princeton, NJ},
    },
    date={2000},
    pages={188--238},
}

\bib{waldhausen-free}{article}{
    author={Waldhausen, Friedhelm},
    title={Algebraic $K$-theory of generalized free products. I, II},
    journal={Ann. of Math. (2)},
    volume={108},
    date={1978},
    number={1},
    pages={135--204},
    issn={0003-486X},
}




\bib{weibel-isolated}{article}{
    author={Weibel, Charles A.},
    title={Negative $K$-theory of varieties with isolated singularities},
    booktitle={Proceedings of the Luminy conference on algebraic
    $K$-theory
    (Luminy, 1983)},
    journal={J. Pure Appl. Algebra},
    volume={34},
    date={1984},
    number={2-3},
    pages={331--342},
    issn={0022-4049},
}

\bib{weibel-kbook}{book}{
    author={Weibel, Charles A.},
    title={The $K$-book},
    series={Graduate Studies in Mathematics},
    volume={145},
    note={An introduction to algebraic $K$-theory},
    publisher={American Mathematical Society, Providence,
    RI},
    date={2013},
    pages={xii+618},
}


\end{biblist}
\end{bibdiv}

\vspace{20pt}
\scriptsize
\noindent
Benjamin Antieau\\
University of Illinois at Chicago\\
Department of Mathematics, Statistics, and Computer Science\\
851 South Morgan Street, Chicago, IL 60607\\
\texttt{antieau@math.uic.edu}

\vspace{10pt}
\noindent
David Gepner\\
School of Mathematics and Statistics\\
The University of Melbourne\\
813 Swanston Street\\
Parkville VIC 3010 Australia\\
\texttt{david.gepner@unimelb.edu.au}

\vspace{10pt}
\noindent
Jeremiah Heller\\
University of Illinois at Urbana-Champaign\\
Department of Mathematics\\
1409 W. Green Street, Urbana, IL 61801\\
\texttt{jbheller@math.uiuc.edu}

\end{document}